\def\baselinestretch{1.0}
\newcommand{\al}{\alpha}
\newcommand{\fy}{\varphi}
\def\Dal{{\partial_t^\al}}
\def\bDal{{\bar\partial_\tau^\al}}
\theoremstyle{plain}% default
\newtheorem{theorem}{Theorem}[section]
\newtheorem{remark}{Remark}[section]
\newtheorem{lemma}{Lemma}[section]
\newtheorem{corollary}{Corollary}[section]
\numberwithin{equation}{section}
\titleformat{\section}{\vskip10pt\large\bfseries}{\thesection.}{0.5em}{\centering\vspace{5pt}}
\titleformat{\subsection}{\vskip10pt\normalsize\bfseries}{\thesubsection.}{0.5em}{}
\def\al{\alpha}
\def\Dal{{\partial^\alpha_t}}
\def\dH#1{\dot H^{#1}(\Omega)}
\def\tu{\widetilde{u}}
\def\II{(\Omega)}
\newcommand{\ds}{\mathrm ds}
\newcommand{\dy}{\mathrm dy}
\newcommand{\dz}{\mathrm dz}
\def\Dal{{\partial_t^\al}}
\def\Om{\Omega}
\def\II{(\Om)}
\def\dH#1{\dot H^{#1}(\Omega)}
\def\bDal{\bar\partial_\tau^\alpha}
\def \contour{{\Gamma_{\theta,\sigma}}}
\def\L2Om{{L^2(\Omega)}}
\def\tu{{u_\gamma}}
\def\tud{{u_\gamma^\delta}}
  \def\tuh{u_{\gamma,h}}
\begin{document}

\title[Backward problems of semilinear subdiffusion]{Numerical Reconstruction and Analysis of Backward Semilinear Subdiffusion Problems}

% \author{
% Xu Wu\thanks{Department of Applied Mathematics, The Hong Kong Polytechnic University, Hung Hom, Hong Kong(\email{polyuxu.wu@polyu.edu.hk}, \email{zhi.zhou@polyu.edu.hk}).}
% \and
% Jiang Yang\thanks{Department of Mathematics, Southern University of Science and Technology, Shenzhen,China  (\email{yangj7@sustech.edu.cn}).}
% \and Zhi Zhou\footnotemark[1] 
% }

\author[Xu Wu]{$\,\,$Xu Wu$\,$}
\address{Department of Applied Mathematics,
The Hong Kong Polytechnic University, Kowloon, Hong Kong}
\email {polyuxu.wu@polyu.edu.hk}

\author[Jiang Yang]{$\,\,$Jiang Yang$\,$}
\address{Department of Mathematics, Southern University of Science and Technology, Shenzhen,China }
\email {yangj7@sustech.edu.cn}

\author[Zhi Zhou]{$\,\,$Zhi Zhou$\,$}
\address{Department of Applied Mathematics,
The Hong Kong Polytechnic University, Kowloon, Hong Kong}
\email {zhizhou@polyu.edu.hk}

\keywords{semilinear subdiffusion, backward problem, stability, numerical discretization, error estimate, iterative algorithm}
% \subjclass[2010]{Primary: 65M30, 65M15, 65M12.}

\begin{abstract}
This paper aims to develop and analyze a numerical scheme for solving the backward problem of semilinear subdiffusion equations. We establish the existence, uniqueness, and conditional stability of the solution to the inverse problem by applying the smoothing and asymptotic properties of solution operators and constructing a fixed-point iteration. This derived conditional stability further inspires a numerical reconstruction scheme. To address the mildly ill-posed nature of the problem, we employ the quasi-boundary value method for regularization. A fully discrete scheme is proposed,   utilizing the finite element method for spatial discretization and convolution quadrature for temporal discretization. A thorough error analysis of the resulting discrete system is provided for both smooth and nonsmooth data. This analysis relies on the smoothing properties of discrete solution operators, some nonstandard error estimates optimal with respect to data regularity in the direct problem, and the arguments used in stability analysis. The derived \textsl{a priori}  error estimate offers guidance for selecting the regularization parameter and discretization parameters based on the noise level. Moreover, we propose an easy-to-implement iterative algorithm for solving the fully discrete scheme and prove its linear convergence. Numerical examples are provided to illustrate the theoretical estimates and demonstrate the necessity of the assumption required in the analysis.
\end{abstract}

\maketitle

\renewcommand{\baselinestretch}{0.95}
\setlength\abovedisplayskip{4.0pt}
\setlength\belowdisplayskip{4.0pt}

\section{Introduction}
Let $\Omega \subset \mathbb{R}^d$ with $d \geq 1$ be
a bounded convex polygonal domain. 
We consider the following initial boundary value problem of the  semilinear time-fractional diffusion 
\begin{align}\label{eqn:fde}
\left\{
\begin{aligned}
\partial_t^\alpha u - \Delta u &= f(u) &&\text{in}\,\,\Omega \times (0,T],\\
u &= 0 &&\text{on}\,\,\partial\Omega \times (0,T],\\
u(0) &= u_0 &&\text{in}\,\,\Omega,
\end{aligned}
\right.
\end{align}
where $f(u)$ and $u(0) = u_0$ represent the nonlinear source term and initial value, respectively. The fractional order $\alpha \in (0,1)$ is fixed, and the notation $\partial^{\alpha}_t u$ denotes the Djrbashian--Caputo fractional derivative of order $\alpha$  with respect to time, as defined in  \cite[Definition 2.3]{Jin:book2021}
\begin{equation}
    \partial^{\alpha}_t u(t) = \frac{1}{\Gamma (1 - \alpha)} \int_0^t (t-s)^{-\alpha} u'(s){\rm d}s,
\end{equation}
where $\Gamma(z) = \int_0^{\infty} s^{z-1}e^{-s}\ds$ for
$\Re (z)>0$ denotes Euler's Gamma function.

The model \eqref{eqn:fde} is frequently employed to describe the subdiffusive process that occurs in complex systems where the path of a particle or an ensemble of particles is hindered by obstacles or constraints, leading to a slower-than-normal spread over time. Unlike normal diffusion, where the mean squared displacement (MSD) of a particle grows linearly with time, subdiffusion is characterized by the MSD growing less rapidly, typically following a power-law relation with an exponent less than one. This phenomenon is observed in various fields such as physics, biology, and geology, and it is particularly relevant in the study of transport through cellular membranes, movement in disordered media, and the spread of pollutants in the environment. See thorough reviews \cite{MetzlerKlafter:2000,MetzlerJeon:2014} for the applications and monographs \cite{Du:book,Jin:book2021} for more details about the modeling.

The direct problem associated with the semilinear subdiffusion model \eqref{eqn:fde} has been extensively studied from both theoretical and numerical perspectives. The well-posedness and pointwise-in-time regularity for this model were established in \cite{JinLiZhou:SINUM2018} under the assumption that $u_0 \in H^2\II\cap H_0^1\II$. This proof utilized fractional maximal $L^p$ regularity, and the authors also proposed a fully discrete scheme with error estimates optimal with respect to the data regularity. Subsequent analysis, extended to nonsmooth initial data $u_0 \in \dot H^s\II$ with $s\in [0,2)$, was conducted  in \cite{Karaa:2019}. For smooth initial condition $u_0 \in W^{2,\infty}\II$, high-order time stepping schemes using convolution quadrature generated by backward differentiation formulas were constructed and analyzed in \cite{WangZhou:2020}. In cases of nonsmooth initial data $u_0\in L^\infty\II$, high-order schemes utilizing exponential convolution quadrature and exponential spectral methods were developed in \cite{LiMa:2022} and \cite{LiLinMaRao:2023}, respectively.
A typical example of a semilinear subdiffusion model \eqref{eqn:fde} includes nonlocal-in-time phase-field models,
 which has recently seen significant advancements in mathematical and numerical analysis.
% the mathematical and numerical analyses of which have seen significant development recently. 
For further reading, see \cite{FritzKhristenkoWohlmuth:2023, DuYangZhou:2020, LiSalgado:2023, Karaa:2021, QuanTangWangYang:2023, TangYuZhou:2019} for a selection of relevant references.
 Additionally, \cite{Banjai:2022, Kopteva:2023} provided insights into posterior error estimation, \cite{Lopez:2008, Fischer:2019} discussed convolution quadrature-based fast algorithms, and \cite{BonitoLeiPasciak:2017, Melenk:2023} explored sinc quadrature-based methods.
% We also refer to \cite{Banjai:2022,Kopteva:2023} for some posterior error estimation, \cite{Lopez:2008, Fischer:2019} for some convolution quadrature-based fast algorithms, and \cite{BonitoLeiPasciak:2017,Melenk:2023} for sinc quadrature-based methods.
We also recommend a recent monograph on the numerical analysis of time-fractional evolution models \cite{JinZhou:2023book}, as well as a monograph discussing various applications of convolution quadrature for evolutionary PDEs \cite{Banjai:CQ}.
 
In the past decade, inverse problems related to subdiffusion models have also been extensively studied, primarily from a theoretical perspective.  We direct readers to the comprehensive review articles \cite{JinRundell:2015,LiYamamoto:2019,LiLiuYamamoto:2019,LiuLiYamamoto:2019}, as well as the references therein for further details. 
In this paper, we focus on the backward problem associated with the subdiffusion model \eqref{eqn:fde}, aiming to reconstruct the initial data
% In this paper, we focus on the backward problem concerning the subdiffusion model \eqref{eqn:fde}, with the aim of reconstructing the initial data 
$u_0(x)$ for $x \in \Omega$ from the terminal observation:
\begin{equation}\label{eqn:obs-data}
u(x,T) = g(x), \quad \text{for all}\quad x \in \Omega.
\end{equation}
In practice, observational data often contains noise. In this work, we consider the empirical observational data $g_\delta$ satisfying
\begin{equation}\label{eqn:noisy data}
\|g_\delta-g\|_{L^2{(\Omega)}} = \delta,
\end{equation}
where $\delta$ denotes the noise level. Our objectives are to discuss the solvability of the backward problem, develop a numerical scheme to solve it, and provide an error estimate for the numerical reconstruction of the initial data. This derived error estimate will serve as a guideline for selecting appropriate discretization parameters, namely the spatial mesh size and temporal step size, as well as the regularization parameter in our numerical scheme.

The backward subdiffusion problem has attracted considerable attention in recent literature, primarily focusing on linear variants. The pioneer work \cite{sakamoto2011initial} provided results on uniqueness and some useful stability estimates for linear models. Notably, unlike its integer-order parabolic counterpart ($\alpha=1$), which is severely ill-posed, the backward subdiffusion problem is only mildly ill-posed, as highlighted in  \cite[Theorem 2.1]{sakamoto2011initial}. This work  subsequently inspired numerous studies on
% then motivates many consequent works on
the development and analysis of regularization methods for solving the backward subdiffusion problem  \cite{liu2010backward, wang2013total, wei2014modified, yang2013solving, wei2019variational}. 
 Interestingly, the fractional backward problem could also serve as a regularization method for backward parabolic problems, a strategy explored in \cite{KaltenbacherRundell:2019}.
% Besides, it is interesting to see that  the fractional backward problem could serve as a regularization method for backward parabolic problems, a strategy explored in \cite{KaltenbacherRundell:2019}
 Despite the extensive theoretical work, research on numerical discretization and error analysis remains limited. Zhang et al.~\cite{zhang2020numerical} investigated a fully discrete scheme for solving the backward problem and extended their analysis to include time-dependent coefficients using a perturbation argument in \cite{zhang2023stability}. However, the methods predominantly depend on the asymptotic behaviors of Mittag--Leffler functions and the smoothing properties of linear solution operators, which do not readily extend to nonlinear models. This presents a major challenge for theoretical analysis and also complicates the development and rigorous examination of numerical approximations. In \cite{tuan2020existence}, the authors presented a compelling discussion on the existence and regularity of the solution to the inverse problem in a Bochner space $L^p(0,T;H^q\II)$ employing a fixed-point argument. However, the result cannot be extended to the determination of the initial value $u(0)$.  A similar argument for the backward problem for the fractional diffusion-wave model with $\alpha\in(1,2)$  can be found in \cite{BaoCaraballoTuanZhou:2021}.
 % See a similar argument for the backward problem for fractional diffusion-wave model with $\alpha\in(1,2)$ in \cite{BaoCaraballoTuanZhou:2021}.
 A related model incorporating the Riemann–Liouville fractional derivative was discussed in \cite{tuan2020final}, where the authors devised regularized problems using the truncated expansion method and the quasi-boundary value method for numerical approximation.
 % A similar model incorporating the  Riemann--Liouville fractional derivative was discussed in \cite{tuan2020final}. The authors devised regularized problems using the truncated expansion method and the quasi-boundary value method for numerical approximation.
 Nevertheless, the argument, that highly relies on the explicit form of eigenvalues and eigenfunctions, is restricted to the case that the domain $\Omega$ is rectangular, and cannot be generalized to arbitrary domains. In conclusion, the theoretical framework for determining the initial data $u_0$ in the semilinear model \eqref{eqn:fde} from the terminal observation \eqref{eqn:obs-data} is not yet adequately developed. 
Moreover, we currently lack an effective numerical algorithm with appropriate discretization that can recover the initial data and yield provable error estimates. This gap highlights the need for further research into both the theoretical study and numerical analysis for this inverse problem,  thereby motivating the current work.
% and hence motivates the current work.

The first contribution of this paper is to establish the existence, uniqueness, and stability estimates of the backward semilinear subdiffusion problem. 
The proof combines several nonstandard \textsl{a priori} estimates of the direct problem, the smoothing properties of solution operators, and a constructive fixed point iteration.
 The argument in the stability estimate
lays a key role in the analysis of the regularization scheme proposed in Section \ref{sec:3} and the completely discrete approximation in Section~\ref{sec:4}. 

The next contribution of this paper is to develop a fully discrete scheme with thorough error analysis.
To numerically recover the initial data, we discretize the proposed regularization scheme 
using piecewise linear finite element method (FEM) in space with spatial mesh size $h$, 
and backward Euler convolution quadrature scheme (CQ-BE) in time with temporal step size $\tau$.
The numerical discretization introduces additional discretization errors. 
We establish \textsl{a priori} error bounds for the numerical reconstruction of the initial data.
Specifically, let $U_{h,\gamma}^{0,\delta}$ be the numerical reconstruction of initial data derived by the fully discrete scheme \eqref{eqn:back-fully}, where the positive constant $\gamma$ denotes the regularization parameter. For an arbitrarily and fixed $\mu\in(0,1]$, under some mild conditions on terminal time $T$,
we show that (Theorem~\ref{thm:fully-err})
\begin{equation*}
\|U_{h,\gamma}^{0,\delta}-u_0\|_{\dot{H}^{-\mu}(\Omega)}\le c\left(\gamma^\frac q2+ \gamma^{-1} \delta +
\gamma^{-1}h^2|\log h|+\tau {|\log \tau|^2} \left(\gamma^{-1}h^2|\log h|+h^{\min\{-\mu+q,0\}} \right)  \right),
\end{equation*}
provided that $\| u_0 \|_{\dH {-\mu+q}}\le c$ with some $q\in (0,2]$. Then with the choice
$\gamma \sim \delta^{\frac{2}{q+2}}$, $h^2|\log h|\sim \delta$ and $\tau |\log \tau|^2h^{\min\{-\mu+q,0\}} \sim \delta^{\frac{q}{q+2}}$,
we obtain the optimal approximation error of order 
$O(\delta^{\frac{q}{q+2}})$.
Moreover, for $u_0\in {\dH {-\mu}}$, there holds
\begin{equation*}
\| U_{h,\gamma}^{0,\delta} -  u_0\|_{\dH {-\mu}}\rightarrow 0\quad \text{as}~~\delta,\gamma,h\rightarrow0^+,
~~\frac\delta\gamma\rightarrow0^+,~~\frac{\tau{|\log \tau|^2}}{h^{\mu}}\rightarrow0^+ ~~\text{and}~~ \frac{h^2|\log h|}{\gamma} \rightarrow0^+.
\end{equation*}
To prove the error bound, we first establish new error estimates for the direct problem that is optimal with respect to the regularity of the problem data, as detailed in Lemma~\ref{lem:error-linear} through Lemma~\ref{lem:err-reg:H2}. We then apply the smoothing properties of discrete solution operators, combined with the methodology outlined in the stability analysis (i.e., Theorem \ref{thm:stability}), to derive the desired results. These error estimates are crucial for guiding the selection of discretization parameters $h$ and $\tau$, as well as the regularization parameter $\gamma$, according to the \textsl{a priori} known noise level $\delta$. It is important to note that our theory imposes a restriction on the terminal time $T$, which cannot be arbitrarily large, even though the solution to the direct problem exists for any $T>0$ provided the global Lipschitz condition on the function $f$ is satisfied.
The necessity of this restriction is supported by numerical experiments.
This presents a significant difference from its linear counterpart \cite{sakamoto2011initial, zhang2020numerical} where the reconstruction is always feasible for any $T>0$.

Moreover, we propose an iterative algorithm based on Theorem \ref{thm:stability}, 
as outlined in Algorithm~\ref{alg}. In each iteration,  
% one need to solve an linear backward problem,
 a linear backward problem needs to be solved, which could be efficiently addressed using
% be effectively solved by using 
conjugated gradient method \cite{zhang2020numerical,zhang2023stability}. 
The contraction property established in Theorem \ref{lemma:stability_fully} guarantees the convergence of the iteration. Numerical results are presented to illustrate our theoretical findings and demonstrate the effectiveness of the proposed algorithm.

The rest of the paper is organized as follows.  In Section \ref{sec:2}, we present preliminary results on solution regularity and the smoothing properties of solution operators. Additionally, we establish the existence, uniqueness, and stability of the inverse problem. Section \ref{sec:3} is dedicated to discussing the regularization approach using the quasi-boundary value method. In Section \ref{sec:4}, we introduce and analyze semi-discrete and fully discrete schemes for solving the backward problem. Finally, in Section \ref{sec:5}, we provide numerical examples to illustrate the theoretical estimates and demonstrate the necessity of the assumption required in the analysis. 
Concluding remarks are given in Section \ref{sec:conclusion}. In the appendices, we show several technical error estimates for the direct problems.
The notation $c$ denotes a generic constant that may change at each occurrence, but it is always independent of the noise level $\delta$ and the discretization parameters $h$ and $\tau$, and the regularization parameter $\gamma$.

\section{Well-posedness of the backward semilinear subdiffusion problem}\label{sec:2}
In this section, we will present  some preliminary results about the semilinear subdiffusion problem~\eqref{eqn:fde},
  including solution representation, and solution regularity. 
 Subsequently, we will establish the well-posedness of the backward problem for the semilinear subdiffusion equation~\eqref{eqn:fde}, specifically addressing the existence and uniqueness of the reconstructing initial data from terminal observation.
 % Then we propose the well-posedness of the backward problem for nonlinear subdiffusion equation~\eqref{eqn:fde}. i.e. the existence and uniqueness the of the  reconstructing the initial data $u_0(x)$  from terminal observation.

% In this section, we will introduce some initial findings regarding the nonlinear subdiffusion problem \eqref{eqn:fde}, which will cover solution representation and solution regularity. 
\subsection{Preliminaries}

Let $A=-\Delta$ with homogeneous Dirichlet boundary condition.  
$\{(\lambda_j, \fy_j)\}_{j=1}^\infty$ denote the eigenpairs of $A$, where $\{\fy_j\}_{j=1}^\infty$ forms an orthonormal basis in $L^2(\Omega)$. 
Throughout, we denote by $\dH q$ the Hilbert space induced by the norm
$\|v\|_{\dH q}^2:=\|A^\frac{q}{2}v\|_{L^2\II}^2=\sum_{j=1}^{\infty}\lambda_j^q ( v,\fy_j )^2, \  q\ge -1.$
It is easy to see that $\|v\|_{\dH 0}=\|v\|_{L^2\II} $ is the norm in $L^2(\Omega)$,  
$\|v\|_{\dH 1}= \|\nabla v\|_{L^2\II}$ is a norm in $H_0^1(\Om)$, and $\|v\|_{\dH 2}=\|A v\|_{L^2\II}$ is a norm in $H^2(\Om)\cap H^1_0(\Om)$. In general, the space $\dH q$ is the interpolation space $(L^2\II, H^2\II\cap H_0^1\II)_{\frac{q}2}$ for $q\in (0,2)$. Besides, for the negative norm, it is easy to see that $\| \cdot \|_{\dH{-q}}$ is a norm of the dual space of $\dH q$, for $q\in[0,1]$.

Throughout this paper, we assume that the function $f$ satisfies the following global Lipschitz continuity condition:
\begin{equation}\label{eqn:lipconstant}
|f(u) - f(v)| \le L |u - v| \quad \text{for all} \quad u, v \in \mathbb{R},
\end{equation}
where $L > 0$ is the Lipschitz constant.

The argument in this paper can be easily extended to the case where $f$ is locally Lipschitz continuous and the solution to \eqref{eqn:fde} is uniformly bounded. A notable example is the time-fractional Allen–Cahn equation, which satisfies the maximum bound principle; See e.g., \cite{DuYangZhou:2020,TangYuZhou:2019,LiSalgado:2023,FritzKhristenkoWohlmuth:2023}.

% {{\color{red}\begin{align*}
%     \|f(u) - f(v)\|_{L^2\II} \le L( \|u' - v'\|_{L^2\II}+ \|v'\|\|v-u\|_{L^2\II})
% \end{align*}}

For simplicity, we further assume that
\begin{equation}\label{eqn:f0}
f(0) = 0.
\end{equation}
However, our discussion can be readily extended to the case where $f(0) \neq 0$.

By mean of Laplace Transform,  the solution of the semilinear problem \eqref{eqn:fde} can be represented by \cite[equation 3.12]{JinLiZhou:SINUM2018}
 \begin{equation}\label{eqn:sol-rep}
\begin{aligned}
   u(t)=F(t)u_0+\int_0^t E(t-s)f(u(s))\ \ds
   =:  S(t) u_0.
\end{aligned}
\end{equation}
Here,
% we use the fact $(I-F(t))A^{-1}=\int_0^t E(t-s)\ds$, 
$ F(t)$ and $E(t)$ denotes linear solution operators defined by
\begin{equation}\label{eqn:FE-MLcop}
\begin{aligned}
F(t) = \frac{1}{2\pi i }\int_\contour e^{zt} z^{\alpha-1}(z^\alpha+A)^{-1}dz\quad\text{and}\quad
E(t) = \frac{1}{2\pi i}\int_\contour  e^{zt} (z^\alpha+A)^{-1} dz,
 \end{aligned}
\end{equation}
respectively.
Here $\contour$ denotes the integral contour in the complex plane $\mathbb{C}$, defined by
\begin{equation*}
\contour =\{z\in \mathbb{C}: |z| = \delta , |\arg z|\le \theta\} \cup \{ z\in \mathbb{C}: z =\rho e^{\pm i\theta},\rho\ge \sigma\}
\end{equation*}
with $\sigma\ge 0$ and $\frac{\pi}{2}<\theta< \frac{\pi}{\alpha}$,
oriented counterclockwise. 
In addition, we employ 
$ S(t)$ to denote the nonlinear solution operator. Then we  can rewrite \eqref{eqn:sol-rep} as 
% (\textcolor{red}{how the second term in \eqref{eqn:sol-rep} gone, using $f(0)=0$ or $f(u(0))=0$?})
\begin{equation}\label{eq:sol-rep-ope}
     u(t)=S(t)u_0=F(t)u_0+\int_0^t E(t-s)f(S(s)u_0)\ \ds.
\end{equation}

The following lemma provides smoothing properties and asymptotic behavior of solution operators $F(t)$ and $E(t)$ defined in \eqref{eqn:FE-MLcop}. 
% The proof relies on the resolvent estimate \cite[Example 3.7.5 and Theorem 3.7.11]{arendt1987vector}:
% \begin{equation*} \label{eqn:resol}
%   \| (z+A)^{-1} \|_{L^2(\Omega)\rightarrow L^2(\Omega)}\le c_\phi (|z|^{-1},\lambda^{-1})   \quad \forall z \in \Sigma_{\phi},
%   \,\,\,\forall\,\phi\in(0,\pi),
% \end{equation*}
% where $\lambda$ denotes the smallest eigenvalue of $-\Delta$ with homogeneous Dirichlet boundary condition.
The proof of (i) was provided in \cite[Theorems 6.4 and 3.2]{Jin:book2021}, while (ii) was established by Sakamoto and Yamamoto in \cite[Theorem 4.1]{sakamoto2011initial}. 
We will present the proof of (iii) subsequently.
\begin{lemma}\label{lem:op}
Let $F(t)$ and $E(t)$ be the solution operators defined in \eqref{eqn:FE-MLcop}.
Then they satisfy the following properties for all $t>0$
\begin{itemize}
\item[$\rm(i)$] 
$\|A^{\nu} F (t)v\|_{\dot H^p(\Omega)}+   
t^{1-\alpha}  \| A^{\nu}E (t)v  \|_{\dot H^p(\Omega)}\le c_1\,  \min(t^{-\alpha},t^{-\nu\alpha})\|v\|_{\dot H^p(\Omega)}$ with $0\le \nu\le 1$, $p\in \mathbb{R}$;
% \item[$\rm(ii)$] 
% {\color{blue}$\|A^{\nu} F (t)v\|_{\dot H^p(\Omega)}+   
% t^{1-\alpha}  \| A^{\nu}E (t)v  \|_{\dot H^p(\Omega)}\le c_1\,  \min(t^{-\alpha},t^{-\nu\alpha})\|v\|_{\dot H^p(\Omega)}$ with $0\le \nu\le 1$, $p\in \mathbb{R}$;}
%\item[$\rm(ii)$] $\|F(t)v\|_{L^2(\Omega)}+ t^{1-\alpha}\|E(t)v\|_{L^2(\Omega)}\le c_2 \min(1,  t^{-\alpha}) \|v\|_{L^2(\Omega)}$;
\item[$\rm(ii)$] $\| F(t)^{-1} v \|_{L^2(\Omega)}\le c_2 \, (1+ t^\alpha) \| v \|_{\dH2}$ for all $v\in \dH 2$;
\item[$\rm(iii)$] $ \|A^{-\nu}F(T)^{-1}E(t)v\|_{L^2(\Omega)}\le c_3(t^{\alpha-1}+t^{\alpha\nu-1}T^\alpha) \|v\|_{L^2(\Omega)} $  with $0\le \nu\le 1$.  

\end{itemize}
The constants $c_1$, $c_2$ and $c_3$ are independent of $t$.
\end{lemma}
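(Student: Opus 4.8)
The plan is to diagonalize all four operators against the Dirichlet eigenbasis $\{\varphi_j\}$ and reduce the claimed operator bound to a single uniform scalar estimate. Recall that the solution operators act diagonally: evaluating the contour integrals in \eqref{eqn:FE-MLcop} on each eigenmode gives $F(t)\varphi_j=E_{\alpha,1}(-\lambda_j t^\alpha)\varphi_j$ and $E(t)\varphi_j=t^{\alpha-1}E_{\alpha,\alpha}(-\lambda_j t^\alpha)\varphi_j$, where $E_{\alpha,\beta}(z)=\sum_{k\ge0}z^k/\Gamma(\alpha k+\beta)$ is the two-parameter Mittag--Leffler function. Since $A^{-\nu}\varphi_j=\lambda_j^{-\nu}\varphi_j$ and $F(T)^{-1}\varphi_j=E_{\alpha,1}(-\lambda_j T^\alpha)^{-1}\varphi_j$, writing $v=\sum_j(v,\varphi_j)\varphi_j$ yields
\begin{equation*}
A^{-\nu}F(T)^{-1}E(t)v=\sum_{j=1}^\infty m_j\,(v,\varphi_j)\,\varphi_j,\qquad m_j:=\lambda_j^{-\nu}\,t^{\alpha-1}\,\frac{E_{\alpha,\alpha}(-\lambda_j t^\alpha)}{E_{\alpha,1}(-\lambda_j T^\alpha)}.
\end{equation*}
By Parseval's identity it therefore suffices to prove the uniform bound $\sup_j m_j\le c_3(t^{\alpha-1}+t^{\alpha\nu-1}T^\alpha)$; squaring and summing against $\sum_j(v,\varphi_j)^2=\|v\|_{L^2(\Omega)}^2$ then gives the assertion.

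For the scalar estimate I would invoke the standard two-sided bounds for the Mittag--Leffler functions with negative argument: for $0<\alpha<1$ there are constants $c,C>0$ (depending only on $\alpha$) such that $0<E_{\alpha,\alpha}(-x)\le C/(1+x)$ and $c/(1+x)\le E_{\alpha,1}(-x)$ for all $x\ge0$ (see e.g.\ \cite{sakamoto2011initial,Jin:book2021}). Applying these with $x=\lambda_j t^\alpha$ and $x=\lambda_j T^\alpha$ respectively, and abbreviating $\lambda=\lambda_j$, gives
\begin{equation*}
m_j\le C'\,\lambda^{-\nu}\,t^{\alpha-1}\,\frac{1+\lambda T^\alpha}{1+\lambda t^\alpha}
=C'\,t^{\alpha-1}\left(\frac{\lambda^{-\nu}}{1+\lambda t^\alpha}+T^\alpha\,\frac{\lambda^{1-\nu}}{1+\lambda t^\alpha}\right).
\end{equation*}
The first term is harmless: since $\nu\ge0$ and $\lambda\ge\lambda_1>0$ we have $\lambda^{-\nu}/(1+\lambda t^\alpha)\le\lambda_1^{-\nu}$, contributing an $O(t^{\alpha-1})$ term. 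For the second term the key elementary inequality is $\lambda^{1-\nu}/(1+\lambda t^\alpha)\le C\,t^{-\alpha(1-\nu)}$, valid for $0\le\nu\le1$; this follows from the substitution $s=\lambda t^\alpha$, under which the left side equals $t^{-\alpha(1-\nu)}\,s^{1-\nu}/(1+s)$, and $s\mapsto s^{1-\nu}/(1+s)$ is bounded on $[0,\infty)$ precisely because $0\le1-\nu\le1$. Multiplying by $t^{\alpha-1}T^\alpha$ produces the $O(t^{\alpha\nu-1}T^\alpha)$ term, and combining the two bounds yields the claimed estimate.

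The main obstacle---and the place where the restriction $0\le\nu\le1$ and the structure of the problem really enter---is the lower bound $E_{\alpha,1}(-\lambda_j T^\alpha)\ge c/(1+\lambda_j T^\alpha)$, which is what makes $F(T)^{-1}$ controllable. This relies on the fact that for $0<\alpha<1$ the function $E_{\alpha,1}(-x)$ is completely monotone, hence strictly positive and bounded below by $c/(1+x)$ on $[0,\infty)$; without this positivity $F(T)^{-1}$ would not even be well defined mode-by-mode. I would cite this monotonicity/asymptotics result rather than reprove it. Everything else is the routine verification that the smoothing furnished by $A^{-\nu}$ together with the $t^{\alpha-1}E_{\alpha,\alpha}(-\lambda t^\alpha)$ factor exactly compensates the growth $E_{\alpha,1}(-\lambda T^\alpha)^{-1}\sim\lambda T^\alpha$ of the inverse solution operator, which is the reason the bound can be taken in $L^2(\Omega)$ rather than requiring the higher regularity needed in part (ii).
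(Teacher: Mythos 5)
Your proposal is correct and follows essentially the same route as the paper: diagonalize against the Dirichlet eigenbasis, invoke the two-sided Mittag--Leffler bounds $E_{\alpha,\alpha}(-x)\le c/(1+x)$ and $E_{\alpha,1}(-x)\ge c/(1+x)$, and reduce to the elementary fact that $s^{1-\nu}/(1+s)$ is bounded for $0\le\nu\le1$. The only cosmetic difference is that you split the factor $1+\lambda T^\alpha$ additively while the paper splits the eigenvalue sum into the regimes $\lambda_n T^\alpha\le1$ and $\lambda_n T^\alpha>1$; the two bookkeeping choices yield the identical estimate $c(t^{\alpha-1}+t^{\alpha\nu-1}T^\alpha)$.
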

% \begin{comment}
\begin{proof} 
 We have the following equivalence formulas of the solution operators $F(t)$ and $E(t)$
\begin{equation*}
\begin{aligned}
F(t) v &= \sum_{j=1}^\infty E_{\alpha,1}(-\lambda_jt^\alpha)(v,\fy_j)\fy_j, \quad
E(t)v &= \sum_{j=1}^\infty t^{\alpha-1}E_{\alpha,\alpha}(-\lambda_j t^\alpha)(v,\fy_j)\fy_j,
\end{aligned}
\end{equation*}
for any $v\in L^2(\Omega)$, where $E_{\alpha,\beta}(z)$ denotes the two-parameter Mittag--Leffler function.  It is well-known that, with $\alpha\in(0,1)$, there hold \cite[Theorem 3.3 and Corollary 3.3]{Jin:book2021}  for all $t\ge0$
\begin{equation*}
   0\le E_{\alpha,\alpha}(-t) \le \frac{c}{1+t}\quad \text{and}\quad\frac{1}{1+\Gamma(1-\alpha)t}\le E_{\alpha,1}(-t)\le \frac{1}{1+\Gamma(1+\alpha)^{-1}t}.
\end{equation*}
  Therefore, we can obtain
\begin{equation*}
% \small
   \begin{aligned}
    & \|A^{-
    \nu}F(T)^{-1}E(t)v\|^2_{L^2(\Omega)}\le c\sum_{n=1}^{\infty}\Big|\frac{(1+\lambda_nT^\alpha)t^{\alpha-1}}{\lambda_n^\nu(1+\lambda_nt^\alpha)}\Big|^2(v,\fy_n)^2
     \\=&c\left(\sum_{n\in \{\lambda_nT^\alpha\le 1\}}\bigg|\frac{(1+\lambda_nT^\alpha)t^{\alpha-1}}{\lambda_n^\nu(1+\lambda_nt^\alpha)}\bigg|^2(v,\fy_n)^2 +\sum_{n\in \{\lambda_nT^\alpha> 1\}}\bigg|\frac{(1+\lambda_nT^\alpha)t^{\alpha-1}}{\lambda_n^\nu(1+\lambda_nt^\alpha)}\bigg|^2(v,\fy_n)^2\right)\\
     \le& c\left(\sum_{n\in \{\lambda_nT^\alpha\le 1\}}t^{2\alpha-2}(v,\fy_n)^2 +\sum_{n\in \{\lambda_nT^\alpha> 1\}}\bigg|\frac{(\lambda_n^{1-\nu}T^\alpha)t^{\alpha-1}}{(1+\lambda_nt^\alpha)^{1-\nu}(1+\lambda_nt^\alpha)^{\nu}}\bigg|^2(v,\fy_n)^2\right)\\
      \le& c\left(\sum_{n\in \{\lambda_nT^\alpha\le 1\}}t^{2\alpha-2}(v,\fy_n)^2 +\sum_{n\in \{\lambda_nT^\alpha> 1\}}T^{2\alpha} t^{2\alpha\nu-2}(v,\fy_n)^2\right)\\
      \le& c\left(t^{2\alpha-2} +t^{2\alpha\nu-2}T^{2\alpha}\right)\sum_{n=1}^{\infty}(v,\fy_n)^2.
\end{aligned} 
\end{equation*}
This completes the proof of the desired estimate  (iii).
\end{proof}
% \end{comment}

% {\color{red} In our analysis, we employ a generalized version of Gronwall's inequality, as presented in the following lemma. While the proof can be found in \cite[Lemma 1]{ChenThomeeWahlbin:1992}, we provide a detailed proof to explicitly demonstrate how the constants depend on   $T$ and $\beta_0$.
% This explicit dependence is particularly important for the stability analysis of the inverse problem under consideration.}
 
In our analysis, we employ a generalized version of Gronwall's inequality, which is given in the following lemma. Although the proof is available in \cite[Lemma 1]{ChenThomeeWahlbin:1992}, we provide a detailed proof that highlights how the constants explicitly depend on $T$ and $\beta_0$. This explicit dependence is of particular significance for the stability analysis of the inverse problem we are examining.
 
\begin{lemma}\label{lemma:Gronwall}
Assume that $y$ is a nonnegative function in $L^1(0,T )$ which satisfies
\begin{equation}\label{eqn:gron}
y(t)\le b(t) + \beta_0 \int_0^t (t-s)^{\alpha-1} y(s)\ds\quad \text{for}\quad  t\in (0,T] ,
\end{equation}
where $b(t)\ge 0,\ \beta_0 \ge 0$, and $0<\alpha<1$. There exists a constant $c_{\al}$ independent of $T$ and $\beta_0$, such that
\begin{equation*}
y(t)\le b(t) + c_{\al}\beta_0 K(\beta_0T^\alpha) \int_0^t (t-s)^{\alpha-1} b(s)\ds\quad \text{for}\quad   t \in (0,T],
\end{equation*}
where the function $K(s)$ is given by 
%\begin{equation} \label{eqn:gbeatT}
%K(\beta_0,T)= \\
%\left\{
%\begin{aligned}
%& \frac{1-(\beta_0T^{\al})^{i-1}}{1-\beta_0T^{\al}}+
%      \exp(c\beta_0^i T^{i\al})T^{i\al}\Big(\beta_0^{i-1}T^{-\al}+\beta_0^{i}\frac{1-(\beta_0T^{\al})^{i-1}}{\al(1-\beta_0T^{\al})}\Big), \ \beta_0T^{\al}\neq1, \\
%    &i-1+\exp(c\beta_0^i T^{i\al})T^{i\al}\Big(\beta_0^{i-1}T^{-\al}+\frac{(i-1)\beta_0^{i}}{\al}\Big),\ \beta_0T^{\al}=1.
%\end{aligned}
%\right.
%\end{equation}
\begin{equation} \label{eqn:gbeatT}
K(s)=  \frac{1-s^{i-1}}{1-s} + \exp(c_{\al}s^i) \Big( 
s^{i-1} + \frac{s^i-s^{2i-1}}{\alpha(1-s)} \Big)\quad \text{for all}~s\neq1
\end{equation}
with $i=\lceil \frac{1}{\al} \rceil$ and $K(1) = \lim_{s\rightarrow 1} K(s)$.
\end{lemma}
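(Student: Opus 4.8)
The plan is to iterate the integral inequality exactly $i:=\lceil 1/\alpha\rceil$ times in order to regularize the weakly singular kernel, and then to close the estimate with the classical (integer-order) Gronwall inequality. Writing $(\mathcal{T}v)(t)=\beta_0\int_0^t(t-s)^{\alpha-1}v(s)\,\mathrm{d}s$, the hypothesis reads $y\le b+\mathcal{T}y$. Since the kernel is nonnegative, $\mathcal{T}$ is monotone, so substituting the inequality into itself $i$ times yields
\begin{equation*}
y(t)\le \sum_{n=0}^{i-1}(\mathcal{T}^n b)(t)+(\mathcal{T}^i y)(t).
\end{equation*}
The only nonelementary input is the closed form of the iterated operator: by induction, using the Beta integral $\int_s^t(t-r)^{\alpha-1}(r-s)^{k\alpha-1}\,\mathrm{d}r=\frac{\Gamma(\alpha)\Gamma(k\alpha)}{\Gamma((k+1)\alpha)}(t-s)^{(k+1)\alpha-1}$, one obtains $(\mathcal{T}^n v)(t)=\frac{(\beta_0\Gamma(\alpha))^n}{\Gamma(n\alpha)}\int_0^t(t-s)^{n\alpha-1}v(s)\,\mathrm{d}s$ (here one needs $b\in L^1(0,T)$, which is implicit in the statement).

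Next I would exploit that $i\alpha\ge 1$, so the residual kernel $(t-s)^{i\alpha-1}$ is bounded by $T^{i\alpha-1}$. Setting $M:=\frac{(\beta_0\Gamma(\alpha))^i}{\Gamma(i\alpha)}T^{i\alpha-1}$ gives $(\mathcal{T}^i y)(t)\le M\int_0^t y(s)\,\mathrm{d}s$, hence with $w:=\sum_{n=0}^{i-1}\mathcal{T}^n b$ one has $y(t)\le w(t)+M\int_0^t y(s)\,\mathrm{d}s$. The classical Gronwall inequality then produces
\begin{equation*}
y(t)\le w(t)+Me^{MT}\int_0^t w(s)\,\mathrm{d}s .
\end{equation*}
The crucial observation is that $MT=\frac{\Gamma(\alpha)^i}{\Gamma(i\alpha)}(\beta_0 T^\alpha)^i$, so with $x:=\beta_0 T^\alpha$ and $c_\alpha\ge\Gamma(\alpha)^i/\Gamma(i\alpha)$ we have $e^{MT}\le\exp(c_\alpha x^i)$, which is precisely the exponential factor appearing in $K$.

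It remains to reduce every term to a multiple of $\int_0^t(t-s)^{\alpha-1}b(s)\,\mathrm{d}s$, converting all powers of $T$ into powers of $x$ by the two elementary bounds $(t-s)^{m\alpha-1}\le T^{(m-1)\alpha}(t-s)^{\alpha-1}$ for $m\ge 1$ and $1\le T^{1-\alpha}(t-s)^{\alpha-1}$. Applying these to $w=b+\sum_{n=1}^{i-1}\mathcal{T}^n b$ and to $\int_0^t w$ (after interchanging the order of integration, which introduces a factor $1/(n\alpha)$), the three groups of terms reproduce, respectively, $\frac{1-x^{i-1}}{1-x}$, $\exp(c_\alpha x^i)\,x^{i-1}$, and $\exp(c_\alpha x^i)\,\frac{x^i-x^{2i-1}}{\alpha(1-x)}$, i.e. exactly the three summands of $K(x)$; the finitely many coefficients $\Gamma(\alpha)^n/\Gamma(n\alpha)$ with $n<i$ are absorbed into a single $c_\alpha$. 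I expect the main obstacle to be purely this bookkeeping: one must check that after these conversions every surviving power of $T$ pairs with exactly one power of $\beta_0$, so that the complete $T$- and $\beta_0$-dependence is captured by $x=\beta_0T^\alpha$ inside $K$, leaving precisely one free power of $\beta_0$ outside and a constant $c_\alpha$ independent of both $T$ and $\beta_0$. The factors $1/(1-x)$ are merely a compact way of writing the polynomials $\sum_{n=1}^{i-1}x^{n-1}$, so the case $x=1$ follows by continuity, consistent with the convention $K(1)=\lim_{s\to1}K(s)$.
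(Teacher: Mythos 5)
Your proposal is correct and follows essentially the same route as the paper's proof: iterate the inequality $i=\lceil 1/\alpha\rceil$ times so that the residual kernel $(t-s)^{i\alpha-1}$ is bounded, close with the classical Gronwall inequality, and then convert every term back to a multiple of $\int_0^t(t-s)^{\alpha-1}b(s)\,\mathrm{d}s$ via the elementary kernel comparisons $(t-s)^{m\alpha-1}\le T^{(m-1)\alpha}(t-s)^{\alpha-1}$ and $\int_0^t(K_1*b)(s)\,\mathrm{d}s\le \tfrac{T}{\alpha}(K_1*b)(t)$. The only cosmetic difference is that you track the iterated kernels exactly through the Beta integral, whereas the paper only uses the upper bound $K_i(s)\le c(i,\alpha)\beta_0^i s^{i\alpha-1}$; the bookkeeping yielding the three summands of $K$ checks out.
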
 
\begin{proof}
Let $K_1(s)=\beta_0s^{\al-1}$ for $0<s<T$ and $(K_1*f)(t)=\int_0^tK_1(t-s)f(s)\ds$. With $K_i$ the kernel of the $i$ times  iterated convolution, we have $K_i(s)\le c(i,\al)\beta_0^is^{i\al-1}$,
and we can see that 
$$ (K_i*b)(t)\le c\beta_0^{i-1}T^{(i-1)\al}(K_1*b)(t)\quad \text{for}  ~~2\le i\le \lceil \frac{1}{\al} \rceil.$$
Hence, applying the convolution with kernel $K_1$ on the relation \eqref{eqn:gron}
$i$ times in succession, we deduce, assuming $\beta_0 T^{\alpha} \neq 1$, 
\begin{align*}
    y(t)\le b(t)+c\frac{1-(\beta_0T^{\al})^{i-1}}{1-\beta_0T^{\al}} (K_1*b)(t)+(K_i*y)(t).
\end{align*}
When $i=\lceil \frac{1}{\al} \rceil$,  we have  $i\al-1\ge0$ and 
 $  (K_i*y)(t)\le c\beta_0^iT^{i\al-1}\int_0^t y(s)\ds.$ Then we arrive at
\begin{align*}
    y(t)\le b(t)+c\frac{1-(\beta_0T^{\al})^{i-1}}{1-\beta_0T^{\al}} (K_1*b)(t)+c\beta_0^iT^{i\al-1}\int_0^t y(s)\ds.
\end{align*}
Using the standard Gronwall's inequality gives
\begin{align*}
    y(t)\le&   b(t)+c\frac{1-(\beta_0T^{\al})^{i-1}}{1-\beta_0T^{\al}} (K_1*b)(t)\\&+ c\beta_0^iT^{i\al-1}\exp(c\beta_0^i T^{i\al})\int_0^t[ b(s)+\frac{1-(\beta_0T^{\al})^{i-1}}{1-\beta_0T^{\al}} (K_1*b)(s)]\ds\\
    \le & b(t)+c \beta_0K(\beta_0 T^\alpha)\int_0^t(t-s)^{\al-1}b(s)\ds.
    % \le & b(t)+c\beta_0\left(\frac{1-(\beta_0T^{1-\al})^i}{1-\beta_0T^{1-\al}}+\exp(c\beta_0^i T^{i(1-\al)})(T^\al+T\frac{1-(\beta_0T^{1-\al})^i}{1-\beta_0T^{1-\al}})\right) .
\end{align*}
In the second inequality, we use the facts  
$$\int_0^tb(s)\ds \le \beta_0^{-1}T^{1-\al} (K_1*b)(t)\quad \text{and}\quad \int_0^t(K_1*b)(s)\ \ds\le \frac{T}{\al}(K_1*b)(t).$$
The estimate for the case that $\beta_0T^{\al}=1$ follows analogously. 
%Similarly, we can obtain the results for  $\beta_0T^{1-\al}=1$. 
\end{proof}

% The proof of the following theorem can be  found in
%  \cite{Karaa:2019} for $0<p\le2$, and for the case $p=0$, a similar proof can be found in \cite[Theorem 3.2]{LiMa:2022} or \cite[Theorem 2]{jin2020subdiffusion}.
We now state the well-posedness and regularity of the nonlinear time-fractional diffusion problem~\eqref{eqn:fde}.

\begin{lemma}\label{lem:Reg}
Let $u_0\in \dH{p}$ with $ p\in[0,2] $, and let $f(u)$ satisfy the Lipschitz assumption~\eqref{eqn:lipconstant}.
Then the problem \eqref{eqn:fde} has a unique mild solution 
$u\in C([0,T];L^2\II)\cap C((0,T];\dH{2})$, given by \eqref{eqn:sol-rep}, satisfying for all $t\in(0,T]$
\begin{align}\label{lem:reg-dt}
 \|\partial_tu(t)\|_{L^2\II}\le c_Tt^{p\alpha/2-1}\| u_0 \|_{\dH p} ~ \text{and}~
\|u(t)\|_{\dH2}\le c_T t^{-(1-\frac{p}2)\alpha} \| u_0 \|_{\dH p}.
\end{align}
Here the constant $c_T$ depends on $T$ and $L$. 
%Moreover, if $u_0 \in L^2\II$, then the problem \eqref{eqn:fde} admits a unique mild solution $u_0 \in C([0,T];L^2\II) \cap C((0,T];\dH{2-\epsilon})$ for any small $\epsilon>0$, and the estimate \eqref{lem:reg-dt} holds with $p=0$.
\end{lemma}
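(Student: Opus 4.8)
The plan is to obtain the mild solution by a fixed point argument and then to read off the two pointwise-in-time bounds directly from the Duhamel representation \eqref{eqn:sol-rep}, using the smoothing estimates of Lemma~\ref{lem:op}(i) together with the generalized Gronwall inequality of Lemma~\ref{lemma:Gronwall}. For existence and uniqueness I would study the map $\mathcal{T}v(t):=F(t)u_0+\int_0^t E(t-s)f(v(s))\,ds$ on $C([0,T];\L2Om)$. By Lemma~\ref{lem:op}(i) with $\nu=0$ and the Lipschitz condition \eqref{eqn:lipconstant}, one has $\|\mathcal{T}v_1(t)-\mathcal{T}v_2(t)\|_{\L2Om}\le cL\int_0^t(t-s)^{\alpha-1}\|v_1(s)-v_2(s)\|_{\L2Om}\,ds$, a weakly singular contraction; iterating it (equivalently, working in a Mittag--Leffler weighted norm) makes some power $\mathcal{T}^m$ a contraction and produces a unique fixed point. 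Feeding $y(t)=\|u(t)\|_{\L2Om}$ into Lemma~\ref{lemma:Gronwall} with $b\equiv c\|u_0\|_{\L2Om}$ then gives the uniform bound $\|u(t)\|_{\L2Om}\le c_T\|u_0\|_{\dH p}$, where the explicit $T$-dependence of the Gronwall constant controls $c_T$.

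For the derivative bound I would substitute $s\mapsto t-s$ and differentiate, obtaining $\partial_t u(t)=F'(t)u_0+E(t)f(u_0)+\int_0^t E(t-s)\,\partial_s f(u(s))\,ds$ with $F'(t)=-AE(t)$. Writing $F'(t)u_0=-A^{1-p/2}E(t)\,A^{p/2}u_0$ and applying Lemma~\ref{lem:op}(i) with $\nu=1-\tfrac p2$ yields $\|F'(t)u_0\|_{\L2Om}\le c_T t^{p\alpha/2-1}\|u_0\|_{\dH p}$, while $\|E(t)f(u_0)\|_{\L2Om}\le cLt^{\alpha-1}\|u_0\|_{\dH p}$ is dominated by the same power for $t\le T$. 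Since $\|\partial_s f(u(s))\|_{\L2Om}\le L\|\partial_s u(s)\|_{\L2Om}$ by \eqref{eqn:lipconstant}, the function $y(t)=\|\partial_t u(t)\|_{\L2Om}$ obeys $y(t)\le c_T t^{p\alpha/2-1}\|u_0\|_{\dH p}+cL\int_0^t(t-s)^{\alpha-1}y(s)\,ds$; Lemma~\ref{lemma:Gronwall} is applicable as soon as $p\alpha/2-1>-1$, and the residual Beta integral $\int_0^t(t-s)^{\alpha-1}s^{p\alpha/2-1}\,ds\le c_T t^{p\alpha/2-1}$ closes the first estimate.

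For the $\dH 2$ bound I would apply $A$ to \eqref{eqn:sol-rep}. The linear part is immediate from Lemma~\ref{lem:op}(i), $\|AF(t)u_0\|_{\L2Om}\le c_T t^{-(1-p/2)\alpha}\|u_0\|_{\dH p}$, but the Duhamel term cannot be estimated directly because $\|AE(\sigma)\|\sim\sigma^{-1}$ is not integrable. The essential device is the identity $\int_0^t AE(\sigma)\,d\sigma=I-F(t)$, which follows from $F'=-AE$ and $F(0)=I$; it lets me subtract the diagonal,
\begin{equation*}
A\!\int_0^t E(t-s)f(u(s))\,ds=(I-F(t))f(u(t))+\int_0^t AE(t-s)\big(f(u(s))-f(u(t))\big)\,ds.
\end{equation*}
The first summand is bounded by $c\|u(t)\|_{\L2Om}$; in the second, the Lipschitz bound together with the time-Hölder continuity $\|u(t)-u(s)\|_{\L2Om}\le c(t-s)^{p\alpha/2}\|u_0\|_{\dH p}$, obtained by integrating the derivative estimate just established, converts the kernel into the integrable $(t-s)^{p\alpha/2-1}$ and yields $\|u(t)\|_{\dH 2}\le c_T t^{-(1-p/2)\alpha}\|u_0\|_{\dH p}$. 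The continuity statements $u\in C([0,T];\L2Om)\cap C((0,T];\dH 2)$ follow along the way from the same representation and the continuity of $F(t)$ and $E(t)$.

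The hard part is the endpoint $p=0$, where both arguments become borderline: the forcing $b(t)\sim t^{-1}$ leaves $L^1(0,T)$, the Beta integral $\int_0^t(t-s)^{\alpha-1}s^{-1}\,ds$ diverges, and the time modulus of continuity degenerates from Hölder to logarithmic, $\|u(t)-u(s)\|_{\L2Om}\le c\log(t/s)\|u_0\|_{\L2Om}$. I expect to treat this case separately: the $\dH 2$ bound survives because the subtracted kernel still integrates, $\int_0^t(t-s)^{-1}\log(t/s)\,ds<\infty$, so $\|A\int_0^t E(t-s)f(u(s))\,ds\|_{\L2Om}$ stays bounded and only the linear term contributes the $t^{-\alpha}$ singularity; the derivative bound at $p=0$ I would recover not from a direct Gronwall but by a limiting/bootstrap argument from positive times, using that $u(\eta)\in\dH 2$ for every $\eta>0$ by the $\dH2$ estimate. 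Keeping every constant of the form $c_T$ finite—rather than blowing up with $T$—is precisely what the explicit dependence on $T$ and $\beta_0$ in Lemma~\ref{lemma:Gronwall} is designed to guarantee, and this is the feature I would track most carefully throughout.
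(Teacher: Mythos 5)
Your proposal is correct in substance, and for the two \textsl{a priori} bounds it reaches the same conclusions as the paper, but the route for the $\dH{2}$ estimate is genuinely different. The paper simply cites \cite{Karaa:2019} (for $0<p\le 2$) and \cite{LiMa:2022} (for $p=0$) for well-posedness and the derivative bound --- your fixed-point construction and Gronwall argument for $\|\partial_t u\|_{\L2Om}$ is essentially what those references do, so no disagreement there. For the second estimate the paper splits the Duhamel integral at $t_0=t/2$, keeps $\int_0^{t/2}AE(t-s)f(u(s))\,\ds$ as is (the kernel $(t-s)^{-1}$ is harmless away from the diagonal), and integrates by parts on $[t/2,t]$ via $AE=-F'$, producing the boundary terms $F(t-t_0)f(u(t_0))-f(u(t))$ plus $\int_{t/2}^tF(t-s)f'(u(s))u'(s)\,\ds$, the last of which is controlled by $\int_{t/2}^ts^{-1}\,\ds\le\log 2$ uniformly in $p\in[0,2]$. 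You instead subtract the diagonal value $f(u(t))$ using $\int_0^tAE(\sigma)\,\d\sigma=I-F(t)$ and pay with the temporal modulus of continuity of $u$. Both devices regularize the non-integrable kernel and both ultimately rest on the first estimate in \eqref{lem:reg-dt}; yours uses only the Lipschitz bound on $f$ (no $f'(u)u'$ appears), which is marginally cleaner for merely Lipschitz $f$, while the paper's split-at-$t/2$ trick treats $p=0$ on the same footing as $p>0$ and thereby avoids the separate logarithmic-modulus discussion you are forced into at the endpoint. Your handling of $p=0$ (bootstrap from positive times for the derivative bound, and the finite integral $\int_0^1(1-\sigma)^{-1}\log(1/\sigma)\,\d\sigma$ for the $\dH{2}$ bound) is sketched rather than carried out, but the sketch is sound and matches the strategy of \cite{LiMa:2022}; no step of the argument would fail.
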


\begin{proof}
The well-posedness of the problem is established in  
\cite[Theorem 3.1 and 3.2]{Karaa:2019}.
The proof of the first \textsl{a priori} estimate in \eqref{lem:reg-dt} can be found in \cite[Theorem 3.1 and 3.2]{Karaa:2019} for $0<p\le2$, and in \cite[Theorem 3.2]{LiMa:2022} for the case $p=0$. The second estimate is derived as follows. Using  solution representation  \eqref{eqn:sol-rep} and identity $AE(t) = - F'(t)$,  gives
\begin{align*}
    Au(t)
%     =&AF(t)u_0+\int_0^{t_0}AE(t-s)f(u(s)
% )\ \ds+\int_{t_0}^tAE(t-s)f(u(s)
% )\ \ds
% \\
=&AF(t)u_0+\int_0^{t_0}AE(t-s)f(u(s)
)\ \ds-\int_{t_0}^tF'(t-s)f(u(s)
)\ \ds\\=&AF(t)u_0+\int_0^{t_0}AE(t-s)f(u(s)
)\ \ds\\&+F(t-t_0)f(u(t_0)
)-f(u(t))+\int_{t_0}^t F(t-s)f'(u(s))u'(s)\ds=:\sum_{i=1}^5{\rm I_i}.
\end{align*}
% For $0< p\le  2$, setting $t_0=0$,  we obtain: 
% \begin{align*}
%     &\|{\rm I_1}\|_{L^2\II}\le c_T t^{-(1-\frac{p}2)\alpha} \| u_0 \|_{\dH p},  \quad  {\rm I_2}=0,\quad\|{\rm I_3+I_4}\|_{L^2\II}\le c \|u_0\|_{L^2\II},  \\& \|{\rm I_5}\|_{L^2\II}\le cL\int _0^t s^{p\al/2-1}\| u_0 \|_{\dH p}\ds\le c_T  \| u_0 \|_{\dH p}.
% \end{align*}
% For $p=0$,
 Using Lemma~\ref{lem:op} and the Lipschitz  condition \eqref{eqn:lipconstant} and  setting $t_0=\frac{t}{2}$, we obtain for $0\le p\le2$:
\begin{align*}
    &\|{\rm I_1}\|_{L^2\II}\le c_T t^{-(1-\frac{p}2)\alpha} \| u_0 \|_{\dH p},  \quad  \|{\rm I_2}\|_{L^2\II}\le cL\int_0^{\frac t2} (t-s)^{-1}\| u(s) \|_{L^2\II} \ds\le c_T \|u_0\|_{L^2\II},\\&\|{\rm I_3+I_4}\|_{L^2\II}\le c (\|u(t_0)\|_{L^2\II}+\|u(t)\|_{L^2\II})\le c\|u_0\|_{L^2\II},\\&
    \|{\rm I_5}\|_{L^2\II}\le cL\int _{\frac t2}^t\| u'(s) \|_{L^2\II}\ds\le cL\int _{\frac t2}^t s^{-1}\| u_0 \|_{L^2\II}\ds\le c_T\| u_0 \|_{L^2\II}.
\end{align*}
Combining these results leads to the desired conclusions.
\end{proof}

 % or \cite[Theorem 2]{jin2020subdiffusion}.  

The same argument as \cite[Theorem 3.1 and 3.2]{Karaa:2019} also leads to the well-posedness in the case of the very weak initial data, which is presented in the following corollary. The detailed proof of the estimates is presented in the Appendix.
\begin{corollary}\label{lem:reg-weak}
Let $u_0 \in \dH{-\mu}$ with $\mu\in(0,1]$ and let $f(u)$ satisfy the Lipschitz assumption \eqref{eqn:lipconstant}.
Then the problem \eqref{eqn:fde} has a unique mild solution \eqref{eqn:sol-rep} such that 
$u\in C([0,T];\dH{-\mu})\cap C((0,T];\dH{2-\mu})$. Moreover, we have the following estimates
%, given by \eqref{eqn:sol-rep}. 
\begin{align*}
    \|\partial_tu(t)\|_{L^2\II}\le c_Tt^{-\alpha\mu/2-1}\| u_0 \|_{\dH {-\mu}}, \quad \| A\int_0^tE(t-s)f(u(s))\ds\|_{L^2\II}\le  c_Tt^{-\al\mu/2} \|u_0\|_{\dH{-\mu}}.
\end{align*}
\end{corollary}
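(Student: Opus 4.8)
The plan is to replicate the fixed-point argument behind Lemma~\ref{lem:Reg} (following \cite{Karaa:2019}), but now measuring the solution in a time-weighted $L^2(\Omega)$ norm that compensates for the singularity created by the very weak datum. Concretely, I would set up the map $\Phi v(t)=F(t)u_0+\int_0^t E(t-s)f(v(s))\,\ds$ on the space equipped with $\|v\|_\ast:=\sup_{t\in(0,T]}t^{\alpha\mu/2}\|v(t)\|_{L^2(\Omega)}$. Writing $u_0=A^{\mu/2}w$ with $w=A^{-\mu/2}u_0\in L^2(\Omega)$ and $\|w\|_{L^2(\Omega)}=\|u_0\|_{\dH{-\mu}}$, Lemma~\ref{lem:op}(i) (with $\nu=\mu/2$, $p=-\mu$) gives $\|F(t)u_0\|_{L^2(\Omega)}\le ct^{-\alpha\mu/2}\|u_0\|_{\dH{-\mu}}$, so the linear part lies in the weighted space. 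For the Duhamel part I would use $\|E(t-s)g\|_{L^2(\Omega)}\le c(t-s)^{\alpha-1}\|g\|_{L^2(\Omega)}$ together with $|f(v)-f(w)|\le L|v-w|$ and $f(0)=0$ from \eqref{eqn:lipconstant}--\eqref{eqn:f0}, and the Beta-type bound $\int_0^t(t-s)^{\alpha-1}s^{-\alpha\mu/2}\,\ds\le cT^\alpha t^{-\alpha\mu/2}$ (finite since $\alpha\mu/2<1$) to show $\Phi$ is a contraction for small $T$; uniqueness and existence on an arbitrary interval follow by continuation, and the a priori bound $\|u(t)\|_{L^2(\Omega)}\le c_T t^{-\alpha\mu/2}\|u_0\|_{\dH{-\mu}}$ comes from applying the generalized Gronwall inequality (Lemma~\ref{lemma:Gronwall}) with $b(t)=ct^{-\alpha\mu/2}\|u_0\|_{\dH{-\mu}}$. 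Continuity $u\in C([0,T];\dH{-\mu})$ follows from the strong continuity of $F(t)$ and the vanishing of the Duhamel term as $t\to0^+$, while $u\in C((0,T];\dH{2-\mu})$ follows from Lemma~\ref{lem:op}(i) with $\nu=1$ and the second estimate below.

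For the two a priori estimates I would differentiate \eqref{eqn:sol-rep}. The homogeneous contribution is $F'(t)u_0=-AE(t)u_0$, and its $L^2(\Omega)$ bound is the delicate point: it amounts to $\|A^{1+\mu/2}E(t)w\|_{L^2(\Omega)}$, i.e. to smoothing of $E(t)$ of order $\nu=1+\mu/2\in(1,3/2]$, which lies outside the range $\nu\le1$ covered by Lemma~\ref{lem:op}(i). I would therefore first extend that smoothing estimate to $\nu\in[1,2)$ via the spectral representation $E(t)w=\sum_j t^{\alpha-1}E_{\alpha,\alpha}(-\lambda_jt^\alpha)(w,\fy_j)\fy_j$ and the sharp decay $E_{\alpha,\alpha}(-x)\le c(1+x)^{-2}$ valid for $\alpha\in(0,1)$ (one algebraic power better than the bound $E_{\alpha,\alpha}(-x)\le c(1+x)^{-1}$ recorded in the proof of Lemma~\ref{lem:op}), which yields $\|A^\nu E(t)\|_{L^2(\Omega)\to L^2(\Omega)}\le ct^{\alpha(1-\nu)-1}$ for $0\le\nu<2$ and hence $\|F'(t)u_0\|_{L^2(\Omega)}\le ct^{-\alpha\mu/2-1}\|u_0\|_{\dH{-\mu}}$. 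For the Duhamel term I would split $\int_0^t=\int_0^{t/2}+\int_{t/2}^t$ \emph{before} differentiating, so as to avoid the $t^{\alpha-1}$ singularity of $E$ at both endpoints: on $(0,t/2)$ the kernel $E(t-s)$ and its derivative are regular and bounded by $ct^{\alpha-2}$, controlled through the just-proven $L^2$ bound on $f(u(s))$, while on $(t/2,t)$ the substitution $s\mapsto t-s$ produces the self-referential term $\int_{t/2}^t E(t-s)f'(u(s))u'(s)\,\ds$. Collecting everything gives $\|\partial_t u(t)\|_{L^2(\Omega)}\le ct^{-\alpha\mu/2-1}\|u_0\|_{\dH{-\mu}}+cL\int_{t/2}^t(t-s)^{\alpha-1}\|\partial_t u(s)\|_{L^2(\Omega)}\,\ds$, and applying Lemma~\ref{lemma:Gronwall} to the bounded weighted quantity $t^{\alpha\mu/2+1}\|\partial_t u(t)\|_{L^2(\Omega)}$ (on $(t/2,t)$ one has $s^{-\alpha\mu/2-1}\le ct^{-\alpha\mu/2-1}$, so the singular weight is harmless) delivers the first estimate.

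For $\|A\int_0^t E(t-s)f(u(s))\,\ds\|_{L^2(\Omega)}$ I would mirror exactly the splitting used in the proof of Lemma~\ref{lem:Reg}: write the integral as $\int_0^{t/2}AE(t-s)f(u(s))\,\ds+\int_{t/2}^t AE(t-s)f(u(s))\,\ds$; bound the first using $\|AE(t-s)\|_{L^2(\Omega)\to L^2(\Omega)}\le c(t-s)^{-1}\le ct^{-1}$ and $\int_0^{t/2}\|f(u(s))\|_{L^2(\Omega)}\,\ds\le cLt^{1-\alpha\mu/2}\|u_0\|_{\dH{-\mu}}$; and in the second use $AE=-F'$ and integrate by parts in $s$, so that the boundary terms $f(u(t))-F(t/2)f(u(t/2))$ are controlled by the $L^2$ bound on $u$ and the remaining integral $\int_{t/2}^t F(t-s)f'(u(s))u'(s)\,\ds$ is controlled by the first estimate through $\int_{t/2}^t\|u'(s)\|_{L^2(\Omega)}\,\ds\le ct^{-\alpha\mu/2}\|u_0\|_{\dH{-\mu}}$. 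Summing yields the claimed bound $c_T t^{-\alpha\mu/2}\|u_0\|_{\dH{-\mu}}$. The main obstacle is precisely the homogeneous time-derivative estimate: a datum in $\dH{-\mu}$ forces smoothing of $E(t)$ of order strictly greater than one, which cannot be extracted from a resolvent-type bound (the operator $A^\nu(z^\alpha+A)^{-1}$ is unbounded for $\nu>1$) and instead rests essentially on the extra algebraic decay of $E_{\alpha,\alpha}$; a secondary, more technical, difficulty is the rigorous differentiation of the Duhamel integral, which the pre-splitting at $t/2$ is designed to legitimize.
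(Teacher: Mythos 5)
Your proposal is correct in substance and arrives at both estimates, but it handles the time-derivative bound by a genuinely different (though closely related) route than the paper. The paper does not differentiate $u$ directly: it differentiates the rescaled quantity $t\,u(t)$, writing $t\int_0^tE(t-s)f(u(s))\,\ds=\int_0^t(t-s)E(t-s)f(u(s))\,\ds+\int_0^tE(s)(t-s)f(u(t-s))\,\ds$ before differentiating, so that the homogeneous contribution becomes $F(t)u_0+tF'(t)u_0$ and every term in the resulting integral inequality for $\|\partial_t(tu(t))\|_{L^2\II}$ is bounded by the \emph{integrable} weight $t^{-\al\mu/2}$; Lemma~\ref{lemma:Gronwall} then applies directly on $(0,t)$, and the bound on $\|u'(t)\|_{L^2\II}$ is recovered from $\|u'(t)\|\le t^{-1}(\|\partial_t(tu(t))\|+\|u(t)\|)$. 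You instead differentiate $u$ itself, which forces the non-integrable weight $t^{-1-\al\mu/2}$ and obliges you to (a) restrict the convolution to $(t/2,t)$ and (b) run Gronwall on the weighted quantity $t^{1+\al\mu/2}\|u'(t)\|_{L^2\II}$, using that the weight is comparable to its endpoint value on $(t/2,t)$. Both routes work, and both ultimately rest on the same key fact, namely $\|F'(t)u_0\|_{L^2\II}\le ct^{-1-\al\mu/2}\|u_0\|_{\dH{-\mu}}$, i.e.\ smoothing of $AE(t)=-F'(t)$ of order $1+\mu/2>1$; you correctly identify that this is not a resolvent-type bound and derive it from the decay $E_{\al,\al}(-x)\le c(1+x)^{-2}$ (valid because $1/\Gamma(\al-\al)=0$ kills the $x^{-1}$ asymptotic term), whereas the paper simply cites \cite[Theorem 1.6]{JinZhou:2023book} for the same estimate. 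Two minor points you should tidy up: the moving splitting point $t/2$ generates boundary terms $\tfrac12E(t/2)f(u(t/2))$ when you differentiate, which are harmless ($\le c_Tt^{-1-\al\mu/2}\|u_0\|_{\dH{-\mu}}$) but should be recorded; and the application of Lemma~\ref{lemma:Gronwall} to $t^{1+\al\mu/2}\|u'(t)\|_{L^2\II}$ presupposes this quantity is a priori in $L^1(0,T)$, a bootstrap technicality the paper's $\partial_t(tu(t))$ formulation sidesteps more gracefully. Your treatment of the second estimate (split at $t/2$, use $AE=-F'$ and integrate by parts on the near field) is exactly the argument the paper invokes by referring back to the proof of Lemma~\ref{lem:Reg}.
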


% {\color{red}
% \begin{equation*}
% \|f(u) - f(v)\|_{\dH{-\mu}} \le L \|u - v\|_{\dH{-\mu}} 
% \end{equation*}}

\subsection{Well-posedness of the backward problem.}\label{sub2.2} 
Next, we aim to show the well-posedness of the backward nonlinear subdiffusion problem: for  a fixed parameter $\mu\in (0,1]$,
look for a initial data $u_0 = u(0) \in \dH{-\mu}$, 
such that $u\in C([0,T];\dH{-\mu})\cap C((0,T];\dH{2-\mu})$ satisfying
\begin{equation}\label{eq:back_non}
   \partial^\alpha_t u+ A u =f(u) \quad \text{for all}~ t\times(0,T] \quad \text{and}\quad u(T)=g(x).
\end{equation}
Using the solution representation \eqref{eqn:sol-rep} gives 
\begin{equation*}
    g(x)=F(T)u_0+\int_0^TE(T-s)f(u(s))\ds = F(T)u_0+\int_0^TE(T-s)f(S(s)u_0)\ds,
\end{equation*}
which leads to the relation
\begin{align}\label{eq:back_u0}
u_0=F(T)^{-1}\Big(g-\int_0^T E(T-s)f(u(s)) \ds \Big)
= F(T)^{-1}\Big(g-\int_0^T E(T-s)f(S(s)u_0) \ds \Big).
\end{align}

We will investigate the existence and uniqueness of $u_0$ satisfying \eqref{eq:back_u0}, which pertains to the well-posedness of the backward problem \eqref{eq:back_non}. 
Note that the relation \eqref{eq:back_u0} naturally provides a fixed point iteration where the initial value $u_0$ is the fixed point. Then the existence and uniqueness of $u_0$ follows from the contraction mapping theorem.
The following lemma serves as an important preliminary to the proof of the contraction mapping.
% We shall give the following lemma.  
\begin{lemma}\label{lam:phi12}
Let $S(t)$ be the solution operator defined in \eqref{eqn:sol-rep}, and let $L$ be the Lipschitz constant in the \eqref{eqn:lipconstant}. Then, for any $\phi_1, \phi_2 \in \dH{-\mu}$ with $\mu\in[0,1]$ the following inequality holds:
\begin{align*}
    \| (S(t)\phi_1-S(t)\phi_2)\|_{L^2\II}
     \le B_0(\al,T,L,\nu) t^{-\al\mu/2}\|\phi_1-\phi_2\|_{\dH{-\mu}}  \quad \text{for}\quad   t \in (0,T].
\end{align*}
%with a constant $B_3(\al,T,L,\gamma)$.
\end{lemma}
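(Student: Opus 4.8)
The plan is to start from the integral representation \eqref{eq:sol-rep-ope} of the nonlinear solution operator, take the difference of the two solutions, and bound it via a Gronwall-type argument using the smoothing estimates of Lemma~\ref{lem:op}(i). Writing $w(t) := S(t)\phi_1 - S(t)\phi_2$, subtracting the two copies of \eqref{eq:sol-rep-ope} gives
\begin{equation*}
w(t) = F(t)(\phi_1-\phi_2) + \int_0^t E(t-s)\big(f(S(s)\phi_1)-f(S(s)\phi_2)\big)\ds.
\end{equation*}
Taking the $L^2\II$ norm and using the triangle inequality splits the bound into a linear part involving $F(t)$ acting on $\phi_1-\phi_2 \in \dH{-\mu}$, and a nonlinear part involving $E(t-s)$.

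For the first term, I would apply Lemma~\ref{lem:op}(i) with $\nu=\mu/2$ and $p=-\mu$ (equivalently, write $F(t)(\phi_1-\phi_2) = A^{\mu/2}F(t)\,A^{-\mu/2}(\phi_1-\phi_2)$ so that $A^{-\mu/2}(\phi_1-\phi_2)\in L^2\II$), yielding $\|F(t)(\phi_1-\phi_2)\|_{L^2\II}\le c\,t^{-\alpha\mu/2}\|\phi_1-\phi_2\|_{\dH{-\mu}}$. For the nonlinear term, the Lipschitz condition \eqref{eqn:lipconstant} together with $f(0)=0$ gives $\|f(S(s)\phi_1)-f(S(s)\phi_2)\|_{L^2\II}\le L\|w(s)\|_{L^2\II}$, and Lemma~\ref{lem:op}(i) with $\nu=0$ gives $\|E(t-s)v\|_{L^2\II}\le c(t-s)^{\alpha-1}\|v\|_{L^2\II}$. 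Setting $y(t):=\|w(t)\|_{L^2\II}$, these estimates produce precisely the convolution inequality
\begin{equation*}
y(t)\le c\,t^{-\alpha\mu/2}\|\phi_1-\phi_2\|_{\dH{-\mu}} + cL\int_0^t (t-s)^{\alpha-1} y(s)\ds,
\end{equation*}
which is the hypothesis \eqref{eqn:gron} of the generalized Gronwall inequality (Lemma~\ref{lemma:Gronwall}) with $b(t)=c\,t^{-\alpha\mu/2}\|\phi_1-\phi_2\|_{\dH{-\mu}}$ and $\beta_0=cL$.

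Applying Lemma~\ref{lemma:Gronwall} then bounds $y(t)$ by $b(t)$ plus $c_\alpha\beta_0 K(\beta_0 T^\alpha)\int_0^t(t-s)^{\alpha-1}b(s)\ds$. The remaining computation is to evaluate the residual convolution $\int_0^t (t-s)^{\alpha-1} s^{-\alpha\mu/2}\ds$, which is a Beta integral equal to $c\,t^{\alpha-\alpha\mu/2}=c\,t^{\alpha(1-\mu/2)}$; since $t\le T$ this is dominated by $c_T t^{-\alpha\mu/2}$ (as $\alpha(1-\mu/2)\ge -\alpha\mu/2$ holds for all admissible $\mu$). Collecting both contributions gives $y(t)\le B_0(\alpha,T,L,\nu)\,t^{-\alpha\mu/2}\|\phi_1-\phi_2\|_{\dH{-\mu}}$, which is the claim. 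The main obstacle—and the reason Lemma~\ref{lemma:Gronwall} was stated with explicit constant dependence—is keeping careful track of how $B_0$ depends on $T$ and $L$ through the factor $K(\beta_0 T^\alpha)$; this explicit dependence is what will later let one identify the smallness-of-$T$ condition needed for the contraction mapping argument establishing well-posedness of the backward problem, so the bookkeeping of constants cannot be swept under a generic $c$.
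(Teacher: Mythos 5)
Your proposal is correct and follows essentially the same route as the paper's proof: the same splitting into the $F(t)$ term and the convolution term, the same application of Lemma~\ref{lem:op}(i) (with $\nu=\mu/2$, $p=-\mu$) together with the Lipschitz condition, and the same invocation of the generalized Gronwall inequality of Lemma~\ref{lemma:Gronwall} followed by the Beta-integral evaluation $\int_0^t(t-s)^{\alpha-1}s^{-\alpha\mu/2}\,\mathrm{d}s = c\,t^{\alpha-\alpha\mu/2}\le c_T\,t^{-\alpha\mu/2}$. Your remark about tracking the constant through $K(\beta_0 T^\alpha)$ is exactly what the paper does in defining $B_0(\alpha,T,L,\mu)$.
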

\begin{proof} 
From the relation \eqref{eq:sol-rep-ope} and Lemma \ref{lem:op} (i), we have 
\begin{align*}
     & \|S(t)\phi_1-S(t)\phi_2\|_{L^2\II}\\\le& \|F(t) (\phi_1-\phi_2)\|_{L^2\II}+\|\int_0^tE(t-s)[f(S(s)\phi_1)-f(S(s)\phi_2)]\ \ds\|_{L^2\II}\\
      \le &c_1 t^{-\al\mu/2}\|\phi_1-\phi_2\|_{\dH{-\mu}}+ c_1 L\int_0^t(t-s)^{\alpha-1} \|S(s)\phi_1-S(s)\phi_2\|_{L^2\II}\ds.
\end{align*}
Then the Gronwall's inequality in Lemma \ref{lemma:Gronwall} leads to
\begin{equation}\label{eqinter:l2}
    \begin{aligned}
    \|(S(t)\phi_1-S(t)\phi_2)\|_{L^2\II}&
    \le\left( c_1 t^{-\al\mu/2}+ c_{\al}c_1L K(c_1 LT^\alpha)\int_0^t(t-s)^{\alpha-1} s^{-\al\mu/2}\ds\right)\|\phi_1-\phi_2\|_{\dH{-\mu}}\\
    &=\left( c_1 t^{-\al\mu/2}+ c_{1,\al,\mu}L K(c_1 LT^\alpha)t^{\al-\al\mu/2}\right)\|\phi_1-\phi_2\|_{\dH{-\mu}}\\
     &=: B_0(\al,T,L,\mu) t^{-\al\mu/2}\|\phi_1-\phi_2\|_{\dH{-\mu}}.
\end{aligned} 
\end{equation}
This completes the proof of this lemma.
\end{proof}

The following theorem establishes the existence and uniqueness of the solution to the backward problem associated with the semilinear subdiffusion model. Additionally, the argument advances to provide a stability estimate comparable with those found in linear models.

To this end, for a given $g\in \dH{2-\mu}$, we define a mapping $M: \dH{-\mu} \rightarrow \dH{-\mu}$ by
\begin{equation}\label{eqn:M}
    M \phi =F(T)^{-1}\left(g(x)-\int_0^T E(T-s)f(S(s)\phi)\ \ds
    \right) ~~\text{for any}~\phi\in\dH{-\mu},
\end{equation}
where $S(t)$ is the solution operator defined in \eqref{eqn:sol-rep}.
Note that the backward problem \eqref{eq:back_non} is equivalent to finding a fixed point of the operator $M$. With the help of Lemmas 
\ref{lem:Reg}-\ref{lam:phi12}, we are ready to show that $M$ is a contraction mapping and hence possesses a unique fixed point.

\begin{theorem}\label{thm:stability} 
For a fixed parameter $\mu\in(0,1]$, there exists a threshold $T_*>0$ (depending on the parameter $\mu$, the fractional order $\alpha$, the Lipschitz constant $L$ in \eqref{eqn:lipconstant}) such that 
for any $T\in(0,T_*)$,
there holds the following stability estimate for $\phi_1,\phi_2\in \dH{-\mu}$:
\begin{equation}\label{eqn:stab} 
\|\phi_1-\phi_2\|_{\dH{-\mu}}\le c\|S(T)\phi_1-S(T)\phi_2\|_{\dH{2-\mu}},
\end{equation}
where $S(T)$ is the solution operator defined in \eqref{eqn:sol-rep}.
%Moreover, For any $T\in(0,T_*)$, the mapping $M$ defined in \eqref{eqn:M} is a contraction mapping, and hence possesses a unique fixed point. Therefore, the backward problem~\eqref{eq:back_non} admits a unique solution. 
\end{theorem}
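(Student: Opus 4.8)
The plan is to convert the two terminal representations into a single identity for $\phi_1-\phi_2$ and then to split it into a term controlled by the terminal data and a term that can be absorbed into the left-hand side once $T$ is small. Evaluating \eqref{eq:sol-rep-ope} at $t=T$ for $\phi_1$ and $\phi_2$ and subtracting gives
\[
S(T)\phi_1-S(T)\phi_2 = F(T)(\phi_1-\phi_2)+\int_0^T E(T-s)\big[f(S(s)\phi_1)-f(S(s)\phi_2)\big]\ds,
\]
so applying $F(T)^{-1}$ yields
\[
\phi_1-\phi_2 = F(T)^{-1}(S(T)\phi_1-S(T)\phi_2)-F(T)^{-1}\int_0^T E(T-s)\big[f(S(s)\phi_1)-f(S(s)\phi_2)\big]\ds .
\]
I would bound the two terms on the right separately in the $\dH{-\mu}$ norm.

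For the data term, I would use the spectral representation $F(T)^{-1}w=\sum_j E_{\alpha,1}(-\lambda_j T^\alpha)^{-1}(w,\fy_j)\fy_j$ together with the lower bound $E_{\alpha,1}(-\lambda_j T^\alpha)\ge(1+\Gamma(1+\alpha)^{-1}\lambda_j T^\alpha)^{-1}$ recorded in the proof of Lemma~\ref{lem:op}. Since this gives $E_{\alpha,1}(-\lambda_j T^\alpha)^{-1}\le c(1+\lambda_j T^\alpha)$, the multiplier $\lambda_j^{-\mu}E_{\alpha,1}(-\lambda_j T^\alpha)^{-2}$ is dominated by $c(1+T^{2\alpha})\lambda_j^{2-\mu}$, whence $\|F(T)^{-1}w\|_{\dH{-\mu}}\le c(1+T^\alpha)\|w\|_{\dH{2-\mu}}$. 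This is exactly the two-derivative loss of $F(T)^{-1}$ that turns the $\dH{2-\mu}$ norm of the terminal difference into the $\dH{-\mu}$ norm of the initial difference, producing the right-hand side of \eqref{eqn:stab}.

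For the nonlinear term, I would write $\|\cdot\|_{\dH{-\mu}}=\|A^{-\mu/2}\cdot\|_{\L2Om}$ and apply Lemma~\ref{lem:op}(iii) with $\nu=\mu/2$ to estimate $\|A^{-\mu/2}F(T)^{-1}E(T-s)v\|_{\L2Om}\le c_3\big((T-s)^{\alpha-1}+(T-s)^{\alpha\mu/2-1}T^\alpha\big)\|v\|_{\L2Om}$. Combining this with the Lipschitz condition \eqref{eqn:lipconstant} and the propagation bound $\|S(s)\phi_1-S(s)\phi_2\|_{\L2Om}\le B_0\,s^{-\alpha\mu/2}\|\phi_1-\phi_2\|_{\dH{-\mu}}$ from Lemma~\ref{lam:phi12} reduces everything to the two Beta-type integrals $\int_0^T(T-s)^{\alpha-1}s^{-\alpha\mu/2}\ds$ and $\int_0^T(T-s)^{\alpha\mu/2-1}s^{-\alpha\mu/2}\ds$, both finite since $\alpha\in(0,1)$ and $\mu\in(0,1]$ keep all exponents strictly above $-1$. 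Evaluating them gives a factor $C(T)=c_3 L B_0\big(T^{\alpha-\alpha\mu/2}B(\alpha,1-\tfrac{\alpha\mu}{2})+T^\alpha B(\tfrac{\alpha\mu}{2},1-\tfrac{\alpha\mu}{2})\big)$ multiplying $\|\phi_1-\phi_2\|_{\dH{-\mu}}$.

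Collecting the estimates yields $\|\phi_1-\phi_2\|_{\dH{-\mu}}\le c(1+T^\alpha)\|S(T)\phi_1-S(T)\phi_2\|_{\dH{2-\mu}}+C(T)\|\phi_1-\phi_2\|_{\dH{-\mu}}$. Because $B_0$ stays bounded as $T\to0^+$ (the kernel $K$ of Lemma~\ref{lemma:Gronwall} is continuous at $0$) and both powers $\alpha-\tfrac{\alpha\mu}{2}$ and $\alpha$ are strictly positive, one has $C(T)\to0$, so there is a threshold $T_*$ with $C(T)\le\tfrac12$ for $T\in(0,T_*)$; a standard kick-back argument then moves the nonlinear term to the left and gives \eqref{eqn:stab} with $c=c(1+T^\alpha)/(1-C(T))$. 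I expect the nonlinear term to be the main obstacle: one has to control simultaneously the temporal singularity $(T-s)^{\alpha\mu/2-1}$ coming from the negative-order smoothing in Lemma~\ref{lem:op}(iii) and the singularity $s^{-\alpha\mu/2}$ of the propagated initial-data difference, and then verify that their convolution leaves only positive powers of $T$ — which is precisely what makes the absorption work and explains the restriction $T<T_*$.
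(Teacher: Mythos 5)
Your proposal is correct and follows essentially the same route as the paper: the same identity $\phi_1-\phi_2=F(T)^{-1}(S(T)\phi_1-S(T)\phi_2)-F(T)^{-1}\int_0^T E(T-s)[f(S(s)\phi_1)-f(S(s)\phi_2)]\,\ds$, the same use of Lemma~\ref{lem:op}(iii) with $\nu=\mu/2$ together with Lemma~\ref{lam:phi12} to produce the factor the paper calls $B_\mu(T)$ in \eqref{eqn:contraction}, and the same kick-back once $B_\mu(T)<1$ for small $T$. The only difference is cosmetic: you make the two-derivative bound on $F(T)^{-1}$ explicit via the spectral representation and evaluate the Beta integrals, whereas the paper folds these into citations of Lemma~\ref{lem:op} and additionally records the contraction property of $M$ (which yields existence and uniqueness but is not needed for the stability inequality itself).
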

\begin{proof} 
First of all, we show that the operator $M$ is a contraction mapping in $\dH{-\mu}$. 
For a given $g\in \dot H^{2-\mu}(\Omega)$, based on Lemma \ref{lem:Reg}, we can conclude that $M\phi \in \dH{-\mu}$ for any $\phi\in\dH{-\mu}$,
and hence the operator is well-defined. Additionally, using Lemma \ref{lem:op} and the Lipschitz condition \eqref{eqn:lipconstant}, we conclude that
\begin{equation*}\label{eqn:contraction-00}
 \begin{aligned}
   &\| M (\phi_1-\phi_2)\|_{\dH{-\mu}} 
   \le \int_0^T \|A^{-\frac{\mu}{2}}F(T)^{-1} E(T-s)[f(S(s)\phi_1)-f(S(s)\phi_2)]\|_{L^2\II}\ \ds \\
   \le &\int_0^T  \|A^{-\frac\mu2}F(T)^{-1} E(T-s)\| \,\| f(S(s)\phi_1)-f(S(s)\phi_2)\|_{L^2\II} \ \ds\\
   \le & \, c_3  L\int_0^T [(T-s)^{\alpha-1}+(T-s)^{\alpha\mu/2-1}T^\alpha] \|S(s)\phi_1 - S(s)\phi_2 \|_{L^2\II} \ds.
\end{aligned}
\end{equation*}
Applying Lemma  \ref{lam:phi12}  gives
\begin{equation}\label{eqn:contraction-0}
 \begin{aligned}
  \| M (\phi_1-\phi_2)\|_{\dH{-\mu}}   \le  c_3  L B_0(\al,T,L,\mu) \int_0^T [(T-s)^{\alpha-1}+(T-s)^{\alpha\mu/2-1}T^\alpha] s^{-\alpha\mu/2}\ \ds \|\phi_1-\phi_2\|_{\dH{-\mu}} .
% \le &\,c L[T^{\alpha(1-\beta/2)}+T^\alpha+L K(LT^\alpha)(T^{\alpha(2-\beta/2)}+T^{2\alpha})]\|\phi_1-\phi_2\|_{L^2\II}.
% %=: B(T) \|\phi_1-\phi_2\|_{L^2\II}.
\end{aligned}
\end{equation}

% $$\|A^{-\frac\mu2} M (\phi_1-\phi_2)\|_{L^2\II}\le B_{\mu}(T)\|A^{-\frac\mu2} (\phi_1-\phi_2)\|_{L^2\II}$$
%  $$\|A_h^{-\frac\mu2} M_h (\phi_1^h-\phi_2^h)\|_{L^2\II}\le B_{\mu}(T)\|A_h^{-\frac\mu2} (\phi_1^h-\phi_2^h)\|_{L^2\II}$$

Now we define the function $  B_{\mu}(\cdot)$ as: 
\begin{equation}\label{eqn:contraction}
 \begin{aligned}
   B_{\mu}(T) =  c_3  L B_0(\al,T,L,\mu) \int_0^T [(T-s)^{\alpha-1}+(T-s)^{\alpha\mu/2-1}T^\alpha] s^{-\alpha\mu/2}\ \ds.
\end{aligned}
\end{equation}
Let $T_*$ be the constant such that $ B_{\mu}(T_*) <1.$
Note that $ B_{\mu}(T)$ is increasing with respect to $T$. Therefore, we conclude that, for any $T \in (0, T_*]$, the operator $M$ is a contraction, and hence admits a unique fixed point. As a result, the backward problem \eqref{eq:back_non} admits a unique solution in $\dH{-\mu}$.

Finally, we show the stability estimate. Let $g_i = S(T)\phi_i$ for  $i=1,2$. Then we observe
\begin{equation*}
 \begin{aligned}
 \phi_1-\phi_2 = F(T)^{-1} (g_1 - g_2) -  F(T)^{-1} \int_0^T E(T-s) \Big(f(S(s)\phi_1)-f(S(s)\phi_2)\Big)\,\ds.
\end{aligned}
\end{equation*}
Let $T_*$ be the constant such that $B_\mu(T_*)<1$ with the function $B_\mu(\cdot)$ defined in \eqref{eqn:contraction}.
Taking $\dot H^{-\mu}$ norm on both sides of the above relation, using Lemma \ref{lem:op} and the argument in the estimate \eqref{eqn:contraction-0}, we obtain for any $T\in(0,T_*)$
\begin{equation*} 
 \begin{aligned}
\| \phi_1-\phi_2\|_{\dH{-\mu}}
&\le c \| g_1 - g_2 \|_{\dH{2-\mu}}  +  B_\mu(T) \|\phi_1-\phi_2\|_{L^2\II}\\
&\le  c \| g_1 - g_2 \|_{\dH{2-\mu}} +  B_\mu(T_*) \|\phi_1-\phi_2\|_{\dH{-\mu}}.
\end{aligned}
\end{equation*}
Then the desired stability estimate follows immediately from the fact that $B_{\mu}(T_*) <1.$
\end{proof}

\begin{remark}\label{rem:-mu}
The stability estimate in Theorem \ref{thm:stability} implies that the backward problem of the semilinear subdiffusion model \eqref{eqn:fde} is mildly ill-posed. Note that Theorem \ref{thm:stability} requires $\mu > 0$. This requirement arises from the fact that 
$$\| F(T)^{-1} E(T-s) \| \le c \Big((T-s)^{\alpha-1}+(T-s)^{-1} T^\alpha\Big),$$
which is non-integrable.  Nevertheless, a similar argument can be applied to handle the case of $\mu = 0$. In particular, we can show that  
\begin{equation*} 
 \|\phi_1-\phi_2\|_{L^2\II}\le c\|S(T)\phi_1-S(T)\phi_2\|_{\dH{2}}
\end{equation*}
for sufficiently small $T$,  provided that the following Lipschitz condition holds: 
\begin{equation}\label{eqn:lipconstant-2}
\|f(u) - f(v)\|_{\dH \nu} \le L \|u - v\|_{\dH \nu} \quad \text{for all} \quad u, v \in \dH \nu~~ \text{and}~~\nu\in[0,\beta)
\end{equation}
with some $\beta\in(0,1)$. However, this Lipschitz condition is far more restrictive than the standard condition in \eqref{eqn:lipconstant}. It remains unclear how to establish stability for \(\mu = 0\) under the standard Lipschitz condition \eqref{eqn:lipconstant}, and this warrants further theoretical investigation.
\end{remark}

\section{Regularization and convergence analysis}\label{sec:3}
From the stability estimate \eqref{eqn:stab}, we observe that the backward problem exhibits mild ill-posedness; that is, it experiences a loss equivalent to a second-order derivative. Furthermore, the practical observational data, denoted by $g_\delta$, often contains noise, as indicated by \eqref{eqn:noisy data}, implying that the empirical observations fail to function in the $\dot{H}^{2-\mu}$  space, for fixed  $\mu\in(0,1]$. Consequently, regularization is necessary to solve the backward problem.

In this section, we investigate a straightforward regularization approach utilizing the quasi-boundary value method \cite{hao2019stability,yang2013solving}. 
Let $\tud(t)\in  C([0,T];\dH{-\mu})\cap C((0,T];\dH{2-\mu})$,  be the function satisfying
\begin{equation}\label{eqn:fde:reg:noisy} 
\begin{aligned}
   \partial^\alpha_t u_\gamma^\delta + A u_\gamma^\delta =f(u_\gamma^\delta) \quad\text{for all}\quad t \in (0,T]\quad \text{and}\quad 
 \gamma u_\gamma^\delta(0)+ u_\gamma^\delta(T)&=g_\delta.
\end{aligned}
\end{equation}
Here $\gamma$ denotes a positive regularization parameter. Then we aim to establish an error estimate for  $\tud(0) - u(0)$.
To this end, we introduce an auxiliary function $\tu(t) \in C([0,T];\dH{-\mu})\cap C((0,T];\dH{2-\mu}) $ satisfying
\begin{equation}\label{eqn:fde-back-reg}
\begin{aligned}
   \partial^\alpha_t u_\gamma-\Delta u_\gamma=f(u_\gamma) \quad\text{for all}\quad t \in (0,T]\quad \text{and}\quad  
   \gamma u_\gamma(0)+ u_\gamma(T)=g.
\end{aligned}
\end{equation}
Utilizing  the solution representation~\eqref{eqn:sol-rep}  gives
 \begin{align}
     u_\gamma(0)& = (\gamma I+F(T))^{-1}\Big(g-\int_0^T E(T-s)f(S(s)u_\gamma(0))\ \ds
    \Big),\label{ugamma}\\
     u_\gamma^\delta(0)& = (\gamma I+F(T))^{-1}\Big(g_\delta-\int_0^T E(T-s)f(S(s)u_\gamma^\delta(0)) \,\ds 
  \Big).\label{ugammadelta}
 \end{align}
The following lemma elucidates the smoothing properties of the solution operator $(\gamma I+F(T))^{-1}$. Since the proof is identical to that presented in \cite[Lemma 3.3]{zhang2022backward}, it is omitted here to avoid redundancy.
\begin{lemma}\label{lemma:gammainv}
 For $ p\le q\le p+2$, the following estimates hold for any $\gamma\in (0,1]$:
    \begin{align*}
        \|(\gamma I +F(T))^{-1}v\|_{\dot H^p(\Omega)}\le c\gamma^{-(1+\frac{p-q}{2})}\|v\|_{\dot H^q(\Omega)}
        \quad \text{and}\quad
         \|F(T)(\gamma I +F(T))^{-1}v\|_{L^2\II}\le \|  v \|_{L^2\II},
    \end{align*}
 where the constant $c$ is independent of $\gamma$, but may depend on $T$. 
\end{lemma}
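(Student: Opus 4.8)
The plan is to diagonalize the self-adjoint operator $F(T)$ in the eigenbasis $\{\varphi_j\}$ and reduce both inequalities to elementary bounds on the associated Fourier multipliers. Writing $\mu_j := E_{\alpha,1}(-\lambda_j T^\alpha)$, the spectral representation of $F(T)$ recorded in the proof of Lemma~\ref{lem:op} gives
\begin{equation*}
(\gamma I + F(T))^{-1}v = \sum_{j=1}^\infty \frac{(v,\varphi_j)}{\gamma+\mu_j}\,\varphi_j ,
\end{equation*}
so that, by the definition of the $\dH p$-norm,
\begin{equation*}
\|(\gamma I + F(T))^{-1}v\|_{\dH p}^2 = \sum_{j=1}^\infty \frac{\lambda_j^{p-q}}{(\gamma+\mu_j)^2}\,\lambda_j^{q}(v,\varphi_j)^2 .
\end{equation*}
It therefore suffices to bound the scalar factor $\lambda_j^{p-q}/(\gamma+\mu_j)^2$ by $c\,\gamma^{-(2+p-q)}$ uniformly in $j$, after which summation and a square root yield the first claim.

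The main ingredient is a uniform coercivity bound. The two-sided Mittag--Leffler estimate already quoted in the excerpt gives $\mu_j \ge (1+\Gamma(1-\alpha)\lambda_j T^\alpha)^{-1}$; since $\lambda \mapsto \lambda/(1+\Gamma(1-\alpha)\lambda T^\alpha)$ is increasing and $\lambda_1>0$, I obtain a constant $c_0=c_0(\alpha,T,\lambda_1)>0$ with $\lambda_j\mu_j \ge c_0$ for all $j$. Setting $r:=q-p\in[0,2]$ and splitting $(\gamma+\mu_j)^2 \ge \gamma^{2-r}\mu_j^{r}$ (using $\gamma+\mu_j\ge\gamma$ with exponent $2-r\ge0$ and $\gamma+\mu_j\ge\mu_j$ with exponent $r\ge0$), I would estimate
\begin{equation*}
\frac{\lambda_j^{p-q}}{(\gamma+\mu_j)^2} = \frac{\lambda_j^{-r}}{(\gamma+\mu_j)^2} \le \frac{\lambda_j^{-r}}{\gamma^{2-r}\mu_j^{r}} = \frac{1}{\gamma^{2-r}(\lambda_j\mu_j)^r} \le \frac{c_0^{-r}}{\gamma^{2-r}} \le \frac{c}{\gamma^{2+p-q}} ,
\end{equation*}
the last step using $c_0^{-r}\le\max(1,c_0^{-2})$ for $r\in[0,2]$ and $2-r=2+p-q$. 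This proves the first estimate. The second is immediate: since $\mu_j\ge0$ and $\gamma>0$, the multiplier $\mu_j/(\gamma+\mu_j)$ lies in $[0,1]$, whence $\|F(T)(\gamma I + F(T))^{-1}v\|_{L^2(\Omega)}^2 = \sum_j \big(\mu_j/(\gamma+\mu_j)\big)^2(v,\varphi_j)^2 \le \|v\|_{L^2(\Omega)}^2$.

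The one genuinely delicate point is the coercivity $\lambda_j\mu_j\ge c_0>0$: it is precisely what stops the low-frequency modes (where $\mu_j\approx1$ and $\lambda_j^{-r}$ is largest) from destroying the $\gamma$-scaling, and it is the source of the constant's dependence on $T$ and on the spectral gap $\lambda_1$. The restriction $\gamma\in(0,1]$ is not needed for the inequalities as derived above, but it is the natural regime for a regularization parameter; there the power $1+\tfrac{p-q}{2}$ lies in $[0,1]$ by $p\le q\le p+2$, so $\gamma^{-(1+(p-q)/2)}\ge1$ and the first bound genuinely records the amplification as $\gamma\to0^+$.
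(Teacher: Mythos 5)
Your proof is correct, and it is essentially the standard argument: the paper itself omits the proof and cites \cite[Lemma 3.3]{zhang2022backward}, which proceeds by the same spectral decomposition of $F(T)$, the two-sided Mittag--Leffler bounds giving $E_{\alpha,1}(-\lambda_j T^\alpha)\ge c_0\lambda_j^{-1}$, and the weighted splitting of $\gamma+\mu_j$ to trade $\gamma$-powers against $\lambda_j$-powers. Your uniform coercivity observation $\lambda_j\mu_j\ge c_0>0$ and the interpolation $(\gamma+\mu_j)^2\ge\gamma^{2-r}\mu_j^{r}$ are exactly the right mechanism, and the second estimate via the multiplier $\mu_j/(\gamma+\mu_j)\in[0,1]$ is likewise the intended one.
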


The next lemma provides an error bound  $u_\gamma(0)-u_0$.
\begin{lemma}\label{lemma:gammainequ} 
Suppose that $u$ is the exact solution to the backward problem \eqref{eq:back_non} with the terminal data $g$,
while $\tu$ is the solution to the regularized problem \eqref{eqn:fde-back-reg}. For a fixed parameter $\mu\in(0,1]$, 
let $T_*$ be the constant such that $B_\mu(T_*)<1$ with the function $B_\mu(\cdot)$ defined in \eqref{eqn:contraction}, and assume that $T<T_*$.
If $ u_0 \in \dH {-\mu+q}$ with $q\in (0,2]$,  there holds the estimate 
\begin{align}\label{eqn:err-eg}
 \|u_\gamma(0)-u_0\|_{\dH{-\mu}}\le c\gamma^{\frac{q}{2}}\|u_0\|_{\dot H^{-\mu+q}(\Omega)}.
\end{align}
Moreover,  in case that $u_0\in \dH{-\mu}$, there holds
\begin{equation}\label{eqn:err-eg2}
        \lim_{\gamma\rightarrow 0^+}  \|u_\gamma(0) - u_0\|_{\dH{-\mu}} =0.
\end{equation}
\end{lemma}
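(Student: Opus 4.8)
The plan is to extract a closed-form expression for the error $u_\gamma(0)-u_0$ directly from the two fixed-point representations, split it into a linear regularization term and a nonlinear term, and then absorb the nonlinear term using the contraction constant $B_\mu(T)$ that already governs well-posedness.

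First I would abbreviate $N(v):=\int_0^T E(T-s)f(S(s)v)\,\ds$, so that \eqref{eq:back_u0} reads $g=F(T)u_0+N(u_0)$. Substituting this $g$ into \eqref{ugamma} and invoking the algebraic identity $(\gamma I+F(T))^{-1}F(T)=I-\gamma(\gamma I+F(T))^{-1}$ (legitimate since $F(T)$, $E(t)$ and $A$ are all functions of $A$ and hence commute), I obtain
\[
u_\gamma(0)-u_0=-\gamma(\gamma I+F(T))^{-1}u_0+(\gamma I+F(T))^{-1}\big(N(u_0)-N(u_\gamma(0))\big).
\]
For the first term I apply Lemma~\ref{lemma:gammainv} with $p=-\mu$ and target regularity $-\mu+q$; the admissibility condition $p\le -\mu+q\le p+2$ is exactly $q\in(0,2]$, and it yields $\gamma\|(\gamma I+F(T))^{-1}u_0\|_{\dH{-\mu}}\le c\gamma^{q/2}\|u_0\|_{\dH{-\mu+q}}$, the claimed rate.

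The main obstacle is the nonlinear term, where I must recover the integral defining $B_\mu(T)$ despite having $(\gamma I+F(T))^{-1}$ in place of $F(T)^{-1}$. The device is the factorization $(\gamma I+F(T))^{-1}=F(T)(\gamma I+F(T))^{-1}\cdot F(T)^{-1}$: the uniform bound $\|F(T)(\gamma I+F(T))^{-1}z\|_{L^2\II}\le\|z\|_{L^2\II}$ from Lemma~\ref{lemma:gammainv} discards the $\gamma$-dependent factor, leaving the purely linear operator $A^{-\mu/2}F(T)^{-1}E(T-s)$, which I bound by Lemma~\ref{lem:op}(iii) with $\nu=\mu/2$. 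Combining this with the Lipschitz condition \eqref{eqn:lipconstant} and the Lipschitz-type estimate of Lemma~\ref{lam:phi12} for $\|S(s)u_0-S(s)u_\gamma(0)\|_{L^2\II}$, the emerging integral is precisely $B_\mu(T)$ of \eqref{eqn:contraction}, so the nonlinear term is dominated by $B_\mu(T)\,\|u_\gamma(0)-u_0\|_{\dH{-\mu}}$. Since $B_\mu$ is increasing and $T<T_*$ forces $B_\mu(T)<1$, I absorb this into the left-hand side and divide by $1-B_\mu(T)$, which proves \eqref{eqn:err-eg}.

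For the convergence statement \eqref{eqn:err-eg2}, the identical absorption reduces the problem to showing $\gamma\|(\gamma I+F(T))^{-1}u_0\|_{\dH{-\mu}}\to0$ for a general $u_0\in\dH{-\mu}$, where the $\gamma^{q/2}$ rate is no longer available. Expanding in the eigenbasis, $F(T)$ acts through the strictly positive eigenvalues $E_{\alpha,1}(-\lambda_jT^\alpha)$, so
\[
\gamma^2\|(\gamma I+F(T))^{-1}u_0\|_{\dH{-\mu}}^2=\sum_{j=1}^\infty\lambda_j^{-\mu}\frac{\gamma^2}{\big(\gamma+E_{\alpha,1}(-\lambda_jT^\alpha)\big)^2}(u_0,\fy_j)^2 .
\]
Each summand is bounded by $\lambda_j^{-\mu}(u_0,\fy_j)^2$, whose sum is the finite quantity $\|u_0\|_{\dH{-\mu}}^2$, and tends to $0$ as $\gamma\to0^+$ because every eigenvalue is positive; the dominated convergence theorem for series then gives the limit. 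I expect the only genuinely delicate point to be the factorization step, since it is what reconnects the regularized equation to the contraction constant $B_\mu(T)$ and thereby permits the absorption.
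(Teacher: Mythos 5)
Your proof of the rate estimate \eqref{eqn:err-eg} is correct and follows essentially the same path as the paper: the paper derives the identical error representation by noting that $e_\gamma=u_\gamma-u$ satisfies the regularized problem with terminal condition $\gamma e_\gamma(0)+e_\gamma(T)=-\gamma u_0$, applies Lemma~\ref{lemma:gammainv} to the $-\gamma u_0$ term, and passes from $(\gamma I+F(T))^{-1}$ to $F(T)^{-1}$ via the spectral inequality \eqref{eqn:gf-1} (your factorization through $F(T)(\gamma I+F(T))^{-1}$ is just another way of stating the same reduction), before absorbing with $B_\mu(T)<1$. Where you genuinely diverge is the convergence statement \eqref{eqn:err-eg2}: the paper proves it by a density argument, approximating $u_0$ by $\tilde u_0\in\dH{2-\mu}$, invoking the already-proved $O(\gamma)$ rate for the smooth approximant together with the stability estimate $\|\tilde u_\gamma(0)-u_\gamma(0)\|_{\dH{-\mu}}\le c\|u_0-\tilde u_0\|_{\dH{-\mu}}$, and a triangle inequality. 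You instead observe that the same absorption reduces everything to $\gamma\|(\gamma I+F(T))^{-1}u_0\|_{\dH{-\mu}}\to0$ and prove this directly by spectral decomposition and dominated convergence, using only the strict positivity of the eigenvalues $E_{\alpha,1}(-\lambda_jT^\alpha)$ of $F(T)$. Both arguments are valid; yours is more self-contained and avoids introducing the auxiliary trajectories $\tilde u$, $\tilde u_\gamma$, while the paper's density scheme has the advantage of recycling verbatim the stability machinery of Theorem~\ref{thm:stability} and generalizing to discrete settings where an explicit eigenbasis for the relevant operator is less convenient.
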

\begin{proof}
Let $ e_\gamma(t) =  u_\gamma(t)-u(t)$. Note that  the function $e_\gamma(t)$ satisfies
\begin{equation*}
\begin{aligned}
   \partial^\alpha_t e_\gamma + A e_\gamma=f(u_\gamma)-f(u) \quad\text{with}\quad 
 \gamma e_\gamma(0)+ e_\gamma(T)=-\gamma u_0.
\end{aligned}
\end{equation*}
Using the solution representation  \eqref{eqn:sol-rep} yields
\begin{equation*}
    e_\gamma(0)=(\gamma I+F(T))^{-1}\Big(-\gamma u_0-\int_0^T E(T-s)[f(S(s)u_\gamma(0))-f(S(s)u_0)]\ \ds  \Big).
\end{equation*}
From Lemma \ref{lemma:gammainv}  and the fact that 
\begin{equation}\label{eqn:gf-1}
\| (\gamma I + F(T))^{-1} v \|_{\dH{-\mu}}\le \| F(T)^{-1} v \|_{\dH{-\mu}}~~\text{for all} ~v\in \dH{2-\mu},
\end{equation}
we obtain 
\begin{equation*}
\begin{aligned}
    \|e_\gamma(0)\|_{\dH{-\mu}}
    &\le c\gamma^{\frac{q}{2}}\|u_0\|_{\dot H^{-\mu+q}(\Omega)}
    + \int_0^T \| A^{-\frac{\mu}{2}}F(T)^{-1} E(T-s)
    [f(S(s)u_\gamma(0))-f(S(s)u_0)]\|_{L^2\II}\ \ds.
    \end{aligned}
\end{equation*} 
Then the estimate \eqref{eqn:err-eg} is derived using the arguments presented in the proof of stability \eqref{eqn:stab}.

Next, we turn to the case that $u_0 = u(0)\in \dH{-\mu}$. For an arbitrary function 
$\Tilde{u}_0 \in \dot H^{2-\mu} (\Omega)$, let $\Tilde{u}(t)$ and $\Tilde{u}_\gamma(t)$ be the functions  respectively satisfying 
\begin{align*}
   \partial^\alpha_t  \Tilde u +A \Tilde u=f(\Tilde u)&\quad\text{for all}\quad t \in (0,T]\quad \text{with}\quad  
 \Tilde u(0) =\Tilde u_0,\\
  \partial^\alpha_t\Tilde  u_\gamma + A \Tilde u_\gamma=f(\Tilde u_\gamma)&\quad\text{for all}\quad t \in (0,T]\quad \text{with}\quad  
 \gamma \Tilde u_\gamma(0)+ \Tilde u_\gamma(T)=  \Tilde u(T).
\end{align*}
We have proved that $   \| \Tilde u_\gamma(0)-
    \Tilde u_0\|_{\dH{-\mu}} \le c\gamma \| \Tilde u_0\|_{\dot H^{2-\mu}(\Omega)}$.
Meanwhile, applying the argument in Theorem \ref{thm:stability} and Lemma \ref{lam:phi12} yields $ \| \Tilde u_\gamma(0)-
     u_\gamma(0)\|_{\dH{-\mu}} \le c \|u_0-\Tilde u_0\|_{\dH{-\mu}}.$ 
As a result, we apply triangle inequality to obtain 
\begin{align*}
     \|  u_\gamma(0)-
     u_0\|_{\dH{-\mu}}&\le \|  u_0-
    \Tilde u_0\|_{\dH{-\mu}}+\|  u_\gamma(0)-
    \Tilde u_\gamma(0)\|_{\dH{-\mu}}+\| \Tilde u_\gamma(0)-
    \Tilde u_0\|_{\dH{-\mu}}\\
    &\le c\|  u_0-
    \Tilde u_0\| _{\dH{-\mu}}+c\gamma\|\Tilde u_0\|_{\dH{2-\mu}}.
\end{align*}
Let $\varepsilon$ be an arbitrarily small number. Using the density of $\dH{2-\mu}$ in $\dH{-\mu}$, we choose $\Tilde u_0$ such that $c\| u_0- \Tilde u_0\|_{\dH{-\mu}}\le \frac{\varepsilon}{2}$.  Moreover, let $\gamma_0$ be the constant that $c\gamma_0\|\Tilde u_0\|_{\dot H^{2-\mu}(\Omega)}<\frac{\varepsilon}{2}$. Therefore, for all $\gamma\le \gamma_0$, we have $ \|  u_\gamma(0)-  u_0\|_{\dH{-\mu}}\le \varepsilon$. Then we obtain \eqref{eqn:err-eg2} and hence the proof is complete.
\end{proof}

\begin{theorem} \label{thm:err-reg}
Suppose that $u$ is the exact solution to the backward problem \eqref{eq:back_non} with the terminal data $g$,
while $\tud$ is the solution to the regularized problem \eqref{eqn:fde:reg:noisy}. %Suppose $u_0\in \dH q$ for $q\in (0,2]$.
For a fixed $\mu\in(0,1]$, 
let $T_*$ be the constant such that $B_\mu(T_*)<1$ with the function $B_\mu(\cdot)$ defined in \eqref{eqn:contraction}, and assume that $T<T_*$.
If $ u_0 \in \dH {-\mu+q}$ with $q\in (0,2]$, we have the estimate \begin{equation*}
         \|u_\gamma^\delta(0)-u_0\|_{\dH{-\mu}}\le c\left (\gamma^{-1}\delta +\gamma^\frac{q}{2}\right).
    \end{equation*}
Moreover, if $ u_0 \in \dH {-\mu}$, then there holds  
    \begin{equation*}
         \|u_\gamma^\delta(0)-u_0\|_{\dH{-\mu}}\rightarrow 0\quad \text{as} \quad\delta,\ \gamma\rightarrow 0\  \text{and } \ \frac{\delta}{\gamma}\rightarrow 0.
    \end{equation*}
\end{theorem}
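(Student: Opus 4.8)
The plan is to control the total reconstruction error by the triangle inequality, separating the \emph{propagation of the data noise} from the \emph{regularization error} already handled in Lemma \ref{lemma:gammainequ}:
\[
\|u_\gamma^\delta(0)-u_0\|_{\dH{-\mu}}\le \|u_\gamma^\delta(0)-u_\gamma(0)\|_{\dH{-\mu}}+\|u_\gamma(0)-u_0\|_{\dH{-\mu}}.
\]
The second term is exactly what Lemma \ref{lemma:gammainequ} delivers: it is bounded by $c\gamma^{q/2}\|u_0\|_{\dH{-\mu+q}}$ when $u_0\in\dH{-\mu+q}$ via \eqref{eqn:err-eg}, and it tends to $0$ as $\gamma\to0^+$ when merely $u_0\in\dH{-\mu}$ via \eqref{eqn:err-eg2}. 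Hence the entire proof reduces to establishing the noise-propagation bound $\|u_\gamma^\delta(0)-u_\gamma(0)\|_{\dH{-\mu}}\le c\gamma^{-1}\delta$, uniformly in $\gamma\in(0,1]$.

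To obtain this, I would subtract the two fixed-point identities \eqref{ugamma} and \eqref{ugammadelta} to get
\[
u_\gamma^\delta(0)-u_\gamma(0)=(\gamma I+F(T))^{-1}\Big((g_\delta-g)-\int_0^T E(T-s)\big[f(S(s)u_\gamma^\delta(0))-f(S(s)u_\gamma(0))\big]\,\ds\Big),
\]
and estimate the two contributions separately. For the data term, since $\dH{-\mu}$ is weaker than $L^2\II$ for $\mu>0$ and the operator norm $\|(\gamma I+F(T))^{-1}\|_{L^2\to L^2}\le c\gamma^{-1}$ (a consequence of $0<E_{\alpha,1}(-\lambda_jT^\alpha)\le1$), the contribution of $g_\delta-g$ is at most $c\gamma^{-1}\|g_\delta-g\|_{L^2\II}=c\gamma^{-1}\delta$. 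For the nonlinear integral term I would reproduce verbatim the chain of estimates from the proof of Theorem \ref{thm:stability}: apply the comparison \eqref{eqn:gf-1}, then Lemma \ref{lem:op}(iii) with $\nu=\mu/2$ together with the Lipschitz condition \eqref{eqn:lipconstant}, and finally Lemma \ref{lam:phi12}. This bounds the nonlinear contribution by
\[
c_3 L B_0(\al,T,L,\mu)\int_0^T\big[(T-s)^{\alpha-1}+(T-s)^{\alpha\mu/2-1}T^\alpha\big]s^{-\alpha\mu/2}\,\ds\;\|u_\gamma^\delta(0)-u_\gamma(0)\|_{\dH{-\mu}},
\]
which is precisely $B_\mu(T)\,\|u_\gamma^\delta(0)-u_\gamma(0)\|_{\dH{-\mu}}$ with $B_\mu(\cdot)$ as in \eqref{eqn:contraction}.

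Combining the two contributions produces the self-referential inequality
\[
\|u_\gamma^\delta(0)-u_\gamma(0)\|_{\dH{-\mu}}\le c\gamma^{-1}\delta+B_\mu(T)\,\|u_\gamma^\delta(0)-u_\gamma(0)\|_{\dH{-\mu}}.
\]
Because $T<T_*$ and $B_\mu$ is increasing, we have $B_\mu(T)\le B_\mu(T_*)<1$, so the last term can be absorbed into the left-hand side to yield $\|u_\gamma^\delta(0)-u_\gamma(0)\|_{\dH{-\mu}}\le c(1-B_\mu(T))^{-1}\gamma^{-1}\delta\le c\gamma^{-1}\delta$. Substituting this and the Lemma \ref{lemma:gammainequ} bound into the triangle inequality gives the first estimate of order $c(\gamma^{-1}\delta+\gamma^{q/2})$; for the convergence claim, the noise term equals $c\,\delta/\gamma\to0$ under the stated regime while the regularization term vanishes by \eqref{eqn:err-eg2}. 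I expect the main obstacle to be the nonlinear term: since $u_\gamma^\delta(0)$ enters its own equation implicitly through $f(S(s)\cdot)$, the estimate only closes because the \emph{same} contraction constant $B_\mu(T)<1$ reappears and can be absorbed, which is exactly why the smallness condition $T<T_*$ is indispensable and cannot be relaxed.
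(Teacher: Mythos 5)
Your proposal is correct and follows essentially the same route as the paper: the same splitting $u_\gamma^\delta(0)-u_0=(u_\gamma^\delta(0)-u_\gamma(0))+(u_\gamma(0)-u_0)$, the same subtraction of the fixed-point identities \eqref{ugamma}--\eqref{ugammadelta}, the bound $c\gamma^{-1}\delta$ for the data term via the smoothing of $(\gamma I+F(T))^{-1}$, and the reuse of the stability-proof chain (via \eqref{eqn:gf-1}, Lemma \ref{lem:op}(iii), and Lemma \ref{lam:phi12}) to produce the factor $B_\mu(T)<1$ that is absorbed into the left-hand side. The paper compresses the nonlinear-term estimate into the phrase ``using the argument in the proof of the stability estimate,'' and your write-up simply makes that step explicit.
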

\begin{proof}
We employ the splitting 
    \begin{equation*}
        u^\delta_\gamma(t)-u(t)=\left(u^\delta_\gamma(t)-u_\gamma(t)\right)+\left(u_\gamma(t)-u(t)\right)=e_\delta (t)+e_\gamma(t).
    \end{equation*}
Applying the solution representations \eqref{ugamma}--\eqref{ugammadelta}, Lemma \ref{lemma:gammainv}, the assumption \eqref{eqn:noisy data}, and the fact~\eqref{eqn:gf-1} leads to
\begin{equation*}
\begin{aligned}
    \|e_\delta(0)\|_{\dH{-\mu}} 
     \le & \|(\gamma I+F(T))^{-1}(g_\delta-g) \|_{\dH{-\mu}} \\
     &+  \int_0^T\|(\gamma I+F(T))^{-1} E(T-s)
     [f(S(s)u^\delta_\gamma(0))-f(S(s)u_\gamma(0))] \|_{\dH{-\mu}}\ \ds\\
   \le & c\gamma^{-1}\delta+\int_0^T \|F(T)^{-1} E(T-s)[f(S(s)u^\delta_\gamma(0))-f(S(s)u_\gamma(0))] \|_{\dH{-\mu}} \ \ds  .
    \end{aligned}
\end{equation*}
Then using the argument in the proof of the stability estimate \eqref{eqn:stab} yields  
$ \|e_\delta(0)\|_{\dH{-\mu}}\le  c\gamma^{-1}\delta$.
 Combining this estimate with Lemma \ref{lemma:gammainequ} leads to the desired result.
\end{proof}

At the end of this section, we present the following regularity of $u_\gamma(0)$, which is extensively used in the numerical analysis in Section \ref{sec:4}.
% In the end of this section we present the following regularity of $u_\gamma(0)$.
\begin{lemma}\label{lem:regugamma} Let $\tu$ be the solution to the regularized problem \eqref{eqn:fde-back-reg}.  For a fixed parameter $\mu\in(0,1]$,
let $T_*$ be the constant such that $B_{\mu}(T_*)<1$ with the function $B_{\mu}(\cdot)$ defined in \eqref{eqn:contraction}, and assume that $T<T_*$.
Then for $p\in [-\mu ,2-\mu]$, there holds
\begin{equation*}
    \|u_\gamma(0)\|_{\dot H^p(\Omega)}\le c_T\gamma^{-\frac{p+\mu}{2}} \|u_0\|_{\dH{-\mu}}.
\end{equation*}
\end{lemma}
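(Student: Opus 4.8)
The plan is to start from the fixed–point representation \eqref{ugamma}, written as
\[
u_\gamma(0) = (\gamma I+F(T))^{-1}g - (\gamma I+F(T))^{-1}w, \qquad w:=\int_0^T E(T-s)f(S(s)u_\gamma(0))\,\ds,
\]
and to estimate the two pieces separately using the smoothing bounds of $(\gamma I+F(T))^{-1}$ in Lemma \ref{lemma:gammainv}. The essential input is that both the terminal datum $g=S(T)u_0$ and the nonlinear remainder $w$ are as smooth as $\dH{2}$-type data; then the negative power of $\gamma$ lost in Lemma \ref{lemma:gammainv} when the target regularity is $\dH{p}$ with $p\le 2-\mu$ never exceeds $\gamma^{-(p+\mu)/2}$.

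Before that I would first establish the base estimate at $p=-\mu$, namely $\|u_\gamma(0)\|_{\dH{-\mu}}\le c_T\|u_0\|_{\dH{-\mu}}$, since this uniform bound feeds into the regularity of the nonlinear term. Using the identity $g=F(T)u_0+\int_0^T E(T-s)f(S(s)u_0)\,\ds$ coming from \eqref{eq:back_u0}, I rewrite
\[
u_\gamma(0)=F(T)(\gamma I+F(T))^{-1}u_0+(\gamma I+F(T))^{-1}\int_0^T E(T-s)\big[f(S(s)u_0)-f(S(s)u_\gamma(0))\big]\,\ds.
\]
The first term is controlled by $\|u_0\|_{\dH{-\mu}}$ because $F(T)(\gamma I+F(T))^{-1}$ is a spectral multiplier with symbol in $(0,1)$, hence of norm at most one on every $\dot H^s(\Omega)$. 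The second term is handled exactly as in the stability proof of Theorem \ref{thm:stability}: replacing $(\gamma I+F(T))^{-1}$ by $F(T)^{-1}$ via \eqref{eqn:gf-1}, then invoking Lemma \ref{lem:op}(iii) and Lemma \ref{lam:phi12}, it is bounded by $B_\mu(T)\|u_0-u_\gamma(0)\|_{\dH{-\mu}}$. Since $B_\mu(T)\le B_\mu(T_*)<1$, a triangle inequality and absorption yield $\|u_\gamma(0)\|_{\dH{-\mu}}\le \frac{1+B_\mu(T)}{1-B_\mu(T)}\|u_0\|_{\dH{-\mu}}$.

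With the base estimate in hand, I record the two regularity bounds. By the forward regularity in Corollary \ref{lem:reg-weak} (together with Lemma \ref{lem:op}(i) for the linear part) one has $g=S(T)u_0\in\dH{2-\mu}$ with $\|g\|_{\dH{2-\mu}}\le c_T\|u_0\|_{\dH{-\mu}}$; applying the same corollary to the forward solution issued from $u_\gamma(0)$ gives $\|w\|_{\dH{2}}\le c_T T^{-\alpha\mu/2}\|u_\gamma(0)\|_{\dH{-\mu}}\le c_T\|u_0\|_{\dH{-\mu}}$. Finally I apply Lemma \ref{lemma:gammainv}: for the $g$-term choose $q=2-\mu$, admissible precisely because $p\in[-\mu,2-\mu]$, producing the factor $\gamma^{-(p+\mu)/2}$; for the $w$-term choose $q=2$ when $p\ge0$ and $q=p+2$ when $p<0$, yielding the factor $\gamma^{-\max(p,0)/2}\le \gamma^{-(p+\mu)/2}$ by $\gamma\le1$ and $p\ge-\mu$. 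Summing the two contributions against $\|u_0\|_{\dH{-\mu}}$ gives the claim.

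I expect the main obstacle to be the base estimate at $p=-\mu$: one must recognize that the nonlinear contributions combine into the difference $f(S(s)u_0)-f(S(s)u_\gamma(0))$ that can be absorbed through the contraction constant $B_\mu(T)<1$, exactly mirroring the stability analysis, rather than being estimated crudely (which would not close). The remaining steps are bookkeeping with Lemma \ref{lemma:gammainv} and Corollary \ref{lem:reg-weak}, together with the elementary observation that $\gamma\le1$ lets one replace any power $\gamma^{-r}$ with $r\le(p+\mu)/2$ by the larger $\gamma^{-(p+\mu)/2}$.
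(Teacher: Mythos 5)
Your proof is correct, and it reaches the conclusion by a genuinely different route than the paper. The paper proves only the two endpoint estimates and then interpolates: the case $p=-\mu$ is obtained, as in your argument, by combining the fixed-point representation \eqref{ugamma} with the stability machinery and absorbing the contraction factor $B_\mu(T)<1$ (the paper writes the nonlinear term as $f(S(s)u_\gamma(0))-f(S(s)\,0)$ using $f(0)=0$, whereas you difference against $f(S(s)u_0)$ after substituting $g=F(T)u_0+\int_0^T E(T-s)f(S(s)u_0)\,\ds$; both absorptions close). For the other endpoint $p=2-\mu$ the paper uses a much shorter device: it divides the regularization identity $\gamma u_\gamma(0)+u_\gamma(T)=u(T)$ by $\gamma$ and invokes the forward regularity of $u(T)$ and $u_\gamma(T)$, giving $\|u_\gamma(0)\|_{\dH{2-\mu}}\le c\gamma^{-1}\|u_0\|_{\dH{-\mu}}$ in one line; intermediate $p$ then follow by complex interpolation. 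You instead treat all $p\in[-\mu,2-\mu]$ uniformly by splitting $u_\gamma(0)=(\gamma I+F(T))^{-1}(g-w)$ and tuning the source regularity $q$ in Lemma \ref{lemma:gammainv} ($q=2-\mu$ for $g$, $q=2$ or $q=p+2$ for $w$), which avoids interpolation at the cost of tracking the regularity of the nonlinear remainder $w$ via Corollary \ref{lem:reg-weak}; your exponent bookkeeping ($\gamma^{-(p+\mu)/2}$ for the $g$-term, $\gamma^{-\max(p,0)/2}\le\gamma^{-(p+\mu)/2}$ for the $w$-term since $\gamma\le1$ and $p\ge-\mu$) checks out. The one small point to make explicit is that Lemma \ref{lemma:gammainv} is stated with $L^2$-based norms while your first term in the base estimate uses $F(T)(\gamma I+F(T))^{-1}$ as a contraction on $\dH{-\mu}$; this is immediate from the spectral representation, as you note, but is a mild extension of what the paper records.
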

\begin{proof}
From the relation \eqref{ugamma}, and the estimate \eqref{eqn:gf-1}, we derive
 \begin{align*}
\| u_\gamma(0)\|_{\dH{-\mu}}
\le&   \|F(T)^{-1}g\|_{\dH{-\mu}}  +  \int_0^T \|F(T)^{-1} E(T-s)[f(u_\gamma(s))-f(0)] \|_{\dH{-\mu}}\  \ds.
%\le&  c \|g\|_{\dH2} + \left\| F(T)^{-1}\int_0^T E(T-s) [f(u_\gamma(s))-f(0)]\ \ds \right\|.
\end{align*} 
Applying Lemma \ref{lem:op}, Lemma \ref{lem:Reg} and  Lemma \ref{lemma:gammainv} gives
 \begin{align*}
 \|F(T)^{-1}g\|_{\dH{-\mu}}  \le  c\|  g \|_{\dH{2-\mu}}  \le c_T \| u_0 \|_{\dH{-\mu}} .
\end{align*} 
Then, provided that $T<T_*$, the argument in the proof of the stability estimate \eqref{eqn:stab} yields that 
 \begin{align} \label{eqn:ug0}
 \|u_\gamma(0)\|_{\dH{-\mu}} \le  c_T  \| u_0 \|_{\dH{-\mu}}  .
 \end{align} 
Meanwhile, using $\gamma u_\gamma(0)+ u_\gamma(T)=g = u(T)$ and the regularity estimate in Lemma \ref{lem:Reg} leads to 
\begin{align*}
\| u_\gamma(0)\|_{\dH{2-\mu}} 
&\le\gamma^{-1}( \| u(T)\|_{\dH{2-\mu}} + \| u_\gamma(T)\|_{\dH{2-\mu}} )\\
& \le c\gamma^{-1}(\|u_0\|_{\dH{-\mu}}+ \|u_\gamma(0)\|_{\dH{-\mu}})\le  c\gamma^{-1} \|u_0\|_{\dH{-\mu}},
 \end{align*} 
where for the last inequality we use the proved estimate \eqref{eqn:ug0}.
 Then the intermediate results with $p\in(-\mu ,2-\mu)$ followed by the complex interpolation.
\end{proof}

\section{Fully discretization scheme and error analysis}\label{sec:4} 
This section will focus on proposing and analyzing a fully discrete scheme for solving the backward problem~\eqref{eq:back_non}. Initially, we study the semidiscrete scheme using the finite element methods. 
The semidiscrete solution is crucial in the analysis of the fully discrete scheme.

\subsection{Semidiscrete scheme for solving the problem} 
We begin by studying the semidiscrete scheme using finite element methods. 
Let  ${\{\mathcal{T}_h\}}_{0<h<1}$ represent a  family
of shape-regular and quasi-uniform partitions of the domain $\Omega$ into $d$-simplexes, known as
finite elements, with $h$ representing the maximum diameter of the elements.
We consider the finite element space $X_h$ defined by
\begin{equation*}\label{eqn:fem-space}
  X_h =\left\{\chi\in C(\bar\Omega)\cap H_0^1: \ \chi|_{K}\in P_1(K),
 \,\,\,\,\forall K \in \mathcal{T}_h\right\},
\end{equation*}
where $P_1(K)$ denotes the space of linear polynomials on $K$.
We then define the $L^2(\Omega)$ projection $P_h:L^2(\Omega)\to X_h$ and
Ritz projection $R_h:\dH1\to X_h$, respectively, defined by (recall that $(\cdot, \cdot)$
denotes the $L^2(\Omega)$ inner product)
\begin{align*}%\label{eqn:proj}
    (P_h \psi,\chi) & =(\psi,\chi) \quad\forall~ \chi\in X_h,\psi\in L^2(\Omega),\\
    (\nabla R_h \psi,\nabla\chi) & =(\nabla \psi,\nabla\chi) \quad \forall ~\chi\in X_h, \psi\in \dot H^1(\Omega).
\end{align*}
The approximation properties of $R_h$ and $P_h$ are well known and can be found in \cite[Chapter 1]{thomee2007galerkin}:
\begin{equation*}
\begin{aligned}
\|P_h\psi-\psi\|_{L^2\II} +h\|\nabla(P_h\psi-\psi)\|_{L^2\II} & \leq ch^q\|\psi\|_{H^q(\Omega)}\quad \forall\psi\in \dH q, q=1,2,\\
\|R_h\psi-\psi\|_{L^2\II} +h\|\nabla(R_h\psi-\psi)\|_{L^2\II} & \leq ch^q\|\psi\|_{H^q(\Omega)}\quad \forall\psi\in \dH q, q=1,2.
\end{aligned}  
\end{equation*}
Moreover, we have the following negative norm estimate \cite[p. 69]{thomee2007galerkin}
\begin{equation}\label{eqn:Ph-neg}
\|P_h\psi-\psi\|_{\dH{-\nu}} \leq ch^2\|\psi\|_{\dH{2-\nu}} .
\end{equation}
The semidiscrete scheme for the direct  problem \eqref{eqn:fde}
is to find $ u_h (t)\in X_h$ such that
\begin{equation*}\label{fem}
( \Dal u_{h}(t),\chi)+ (\nabla u_h(t),\nabla \chi) = {(f(u_h(t)), \chi)},~~\forall \chi\in X_h,\ t\in (0,T]
 \quad \text{with}\quad u_h(0)=P_h u_0.
\end{equation*}
We now  introduce the negative discrete Laplacian $ A_h: X_h\to X_h$ such that
\begin{equation*}
  (A_h\psi,\chi)=(\nabla\psi,\nabla\chi)\quad\forall\psi,\,\chi\in X_h.
\end{equation*}
Then the spatially semidiscrete problem  \eqref{fem} could be written as
\begin{equation}\label{fem-operator}
   \Dal u_{h}(t)+A_h u_h(t) =P_hf(u_h(t)), \quad\forall t\in (0,T] \quad \mbox{with} \quad  u_h(0)=P_h u_0.
\end{equation}
Using the Laplace Transform, the semidiscrete solution  can be represented by 
\begin{equation}\label{eq:semisolutionrep}
    u_h(t)=F_h(t)u_h(0)+\int_0^t E_h(t-s)P_hf(u_h(s))\ \ds =: S_h(t) u_h(0),
\end{equation}
where
\begin{equation}\label{sol_dis_op}
\begin{aligned}
F_h(t) &= \frac{1}{2\pi i }\int_\contour e^{zt} z^{\alpha-1}(z^\alpha+A_h)^{-1}dz,\quad
E_h(t) &= \frac{1}{2\pi i}\int_\contour  e^{zt} (z^\alpha+A_h)^{-1} dz.
 \end{aligned}
\end{equation}

We recall the following inverse  inequality \cite[Lemma 2.2]{JinZhou:2023book} 
\begin{equation}\label{eqn:inverse}
\| \phi_h  \|_{L^2\II} \le c h^{-2\nu} \|  A_h^{-\nu} \phi_h \|_{L^2\II} \quad \text{for all}~~ \nu\ge0.
\end{equation}
Meanwhile, we note that the following norm equivalence \cite[Lemma 2.7]{JinZhou:2023book} 
\begin{equation}\label{eqn:equiv}
c \| \phi_h \|_{\dH{\nu}} \le \| A_h^{\frac{\nu}{2}} \phi_h \|_{L^2\II} \le C\| \phi_h \|_{\dH{\nu}},\quad \text{for all}~~ \nu\in[-1,1].
\end{equation}

The discrete operators $F_h(t)$ and $E_h(t)$ satisfy the following smoothing property,
whose proof is identical to that of Lemma~\ref{lem:op}.
\begin{lemma}\label{lem:op-d}
% Let $F_h(t)$ and $E_h(t)$ be the discrete solution operators defined in \eqref{sol_dis_op}.
Then they satisfy the following properties for all $t>0$ and $v_h \in X_h$
\begin{itemize}
\item[$\rm(i)$] $\|A_h^{\nu} F_h (t)v_h\|_{L^2\II} +   
t^{1-\alpha}  \| A_h^{\nu}E_h (t)v_h  \|_{L^2\II} \le c t^{-\nu\alpha} \|v_h\|_{L^2\II} $ with $0\le \nu\le 1$;
% \item[$\rm(ii)$] $\|F_h(t)v_h\|+ t^{1-\alpha}\|E_h(t)v_h\|\le c \min(1,  t^{-\alpha}) \|v_h\|$;
\item[$\rm(ii)$] $\|F_h(t)^{-1} v_h \|_{L^2\II} \le c (1+ t^\alpha) \| A_h v_h \|_{L^2\II} .$
% \item[$\rm(iii)$] $ \|A_h^{-\nu}F_h(T)^{-1}E_h(t)v_h\|_{L^2\II} \le c(t^{\alpha-1}+t^{\alpha\nu-1}T^\alpha) \|v_h\| _{L^2\II} $ with $0\le \nu\le 1$.
\end{itemize}
The constant $c$ is independent of  $t$.
\end{lemma}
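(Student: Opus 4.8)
The plan is to mimic almost verbatim the spectral proof of Lemma~\ref{lem:op}(iii), simply replacing the continuous operator $A$ by its discrete counterpart $A_h$. Since $A_h$ is self-adjoint and positive definite on the finite-dimensional space $X_h$, it admits eigenpairs $\{(\lambda_j^h,\varphi_j^h)\}$ with $\{\varphi_j^h\}$ an $L^2(\Omega)$-orthonormal basis of $X_h$, so that the resolvent $(z^\alpha+A_h)^{-1}$ and hence $F_h(t),E_h(t)$ all act diagonally in this basis. Exactly as in the continuous case I would first record the representations
\[
F_h(t)v_h=\sum_j E_{\alpha,1}(-\lambda_j^h t^\alpha)(v_h,\varphi_j^h)\varphi_j^h,\qquad
E_h(t)v_h=\sum_j t^{\alpha-1}E_{\alpha,\alpha}(-\lambda_j^h t^\alpha)(v_h,\varphi_j^h)\varphi_j^h.
\]
The one structural fact I would isolate at the outset, and which is the true crux of the whole argument, is that the discrete spectrum is bounded away from zero uniformly in $h$: because $X_h\subset H_0^1(\Omega)$, the min--max principle gives $\lambda_1^h=\min_{0\neq\chi\in X_h}\|\nabla\chi\|_{L^2\II}^2/\|\chi\|_{L^2\II}^2\ge \lambda_1>0$, where $\lambda_1$ is the first eigenvalue of $A$. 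This uniform Poincar\'e-type bound is what lets every constant below be chosen independent of $h$.

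For part~(i) I would argue termwise using the Mittag--Leffler bounds already quoted in the proof of Lemma~\ref{lem:op}, namely $0\le E_{\alpha,\alpha}(-x)\le c/(1+x)$ and $E_{\alpha,1}(-x)\le(1+\Gamma(1+\alpha)^{-1}x)^{-1}$ for $x\ge0$, together with the elementary inequality $\sup_{\lambda>0}\lambda^\nu/(1+\lambda t^\alpha)\le c\,t^{-\alpha\nu}$ for $\nu\in[0,1]$ (obtained by rescaling $\mu=\lambda t^\alpha$ and maximizing). These give the generic-coefficient bound $(\lambda_j^h)^\nu E_{\alpha,1}(-\lambda_j^h t^\alpha)\le c(\lambda_j^h)^\nu/(1+\lambda_j^h t^\alpha)\le c\,t^{-\alpha\nu}$, and Parseval then yields $\|A_h^\nu F_h(t)v_h\|_{L^2\II}\le c\,t^{-\alpha\nu}\|v_h\|_{L^2\II}$. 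The same computation with the extra factor $t^{\alpha-1}$ and the bound on $E_{\alpha,\alpha}$ produces $\|A_h^\nu E_h(t)v_h\|_{L^2\II}\le c\,t^{\alpha(1-\nu)-1}\|v_h\|_{L^2\II}$, whence $t^{1-\alpha}\|A_h^\nu E_h(t)v_h\|_{L^2\II}\le c\,t^{-\alpha\nu}\|v_h\|_{L^2\II}$; summing the two estimates proves~(i). As a parallel route tied directly to the contour definition~\eqref{sol_dis_op}, one may instead establish the resolvent estimate $\|A_h^\nu(z^\alpha+A_h)^{-1}\|\le c|z|^{\alpha(\nu-1)}$ on $\contour$ (valid since $z^\alpha$ lies in a sector disjoint from $-\sigma(A_h)$), insert it into the Dunford integrals, and deform the contour radius to $\sim t^{-1}$ to recover the same rates; I would mention this as the alternative but carry out the spectral version.

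For part~(ii) I would instead use the lower bound $E_{\alpha,1}(-x)\ge(1+\Gamma(1-\alpha)x)^{-1}$, so that the diagonal symbol of $F_h(t)^{-1}$ satisfies $1/E_{\alpha,1}(-\lambda_j^h t^\alpha)\le 1+\Gamma(1-\alpha)\lambda_j^h t^\alpha$. Here the uniform spectral gap is indispensable: since $\lambda_j^h\ge\lambda_1^h\ge\lambda_1>0$ one may absorb the constant term and write $1+\Gamma(1-\alpha)\lambda_j^h t^\alpha\le\lambda_j^h\big(\lambda_1^{-1}+\Gamma(1-\alpha)t^\alpha\big)\le c\,\lambda_j^h(1+t^\alpha)$. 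Squaring, summing against $(v_h,\varphi_j^h)^2$ and applying Parseval gives $\|F_h(t)^{-1}v_h\|_{L^2\II}^2\le c(1+t^\alpha)^2\sum_j(\lambda_j^h)^2(v_h,\varphi_j^h)^2=c(1+t^\alpha)^2\|A_h v_h\|_{L^2\II}^2$, which is~(ii). The only genuine obstacle, and the sole point at which the discrete argument departs from a verbatim transcription of Lemma~\ref{lem:op}, is precisely this need for the $h$-uniform lower bound $\lambda_1^h\ge\lambda_1$; everything else is a direct copy in which all constants depend only on $\alpha$ through the Mittag--Leffler bounds, and are therefore independent of $t$ and $h$ as claimed.
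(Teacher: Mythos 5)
Your proposal is correct and follows essentially the same route as the paper, which simply asserts that the proof is identical to that of Lemma~\ref{lem:op}: diagonalize $F_h(t)$ and $E_h(t)$ in the eigenbasis of $A_h$ and apply the two-sided Mittag--Leffler bounds termwise, exactly as in the continuous case. Your explicit observation that the min--max principle gives the $h$-uniform lower bound $\lambda_1^h\ge\lambda_1>0$, which is what makes part~(ii) and the $h$-independence of the constants go through, is the one point the paper leaves implicit, and you handle it correctly.
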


The following lemma is a discrete analogue to Lemma \ref{lemma:gammainv}, the proof follows from spectral decomposition as well as the asymptotic behavior of Mittag--Leffler functions, and hence omitted here.
\begin{lemma}\label{lem:op-reg-semi}
Let $F_h(t)$ be the discrete solution operator defined in \eqref{sol_dis_op}.
For $v_h\in X_h$, we have
\begin{equation*}
\|   (\gamma I+F_h(T))^{-1}v_h \|_{L^2\II}  \le c \gamma^{-1}\| v_h \|_{L^2\II}~~\text{and}
\quad \|F_h(T)(\gamma I+F_h(T))^{-1}v_h\|_{L^2\II}\le \|v_h\|_{L^2\II},
\end{equation*}
where the constant $c$ is independent of $\gamma$, $h$, $t$ and $T$.
\end{lemma}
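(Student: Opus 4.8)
The plan is to diagonalize everything against the eigenbasis of the discrete Laplacian, exactly mirroring the continuous computation in the proof of Lemma~\ref{lem:op} and the structure of Lemma~\ref{lemma:gammainv}. Since $A_h$ is symmetric and positive definite on the finite-dimensional space $X_h$ with respect to the $L^2\II$ inner product, it admits eigenpairs $\{(\lambda_j^h,\fy_j^h)\}_{j=1}^{N_h}$ with $\lambda_j^h>0$ and $\{\fy_j^h\}$ an $L^2\II$-orthonormal basis of $X_h$, where $N_h=\dim X_h$. Because $(z^\alpha+A_h)^{-1}$ commutes with $A_h$ and acts as multiplication by $(z^\alpha+\lambda_j^h)^{-1}$ on each $\fy_j^h$, contour deformation in \eqref{sol_dis_op} yields the diagonal representation $F_h(T)\fy_j^h=E_{\alpha,1}(-\lambda_j^h T^\alpha)\fy_j^h$. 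Writing $v_h=\sum_j (v_h,\fy_j^h)\fy_j^h$, both operators in the statement therefore act by scalar multipliers mode by mode, and the problem reduces to uniform scalar bounds.

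The only analytic input I would invoke is the positivity of the Mittag--Leffler function already recorded in the proof of Lemma~\ref{lem:op}, namely $E_{\alpha,1}(-s)\ge (1+\Gamma(1-\alpha)s)^{-1}>0$ for all $s\ge0$. Setting $m_j:=E_{\alpha,1}(-\lambda_j^h T^\alpha)\in(0,1]$, the two operators read, in this basis, $(\gamma I+F_h(T))^{-1}v_h=\sum_j (\gamma+m_j)^{-1}(v_h,\fy_j^h)\fy_j^h$ and $F_h(T)(\gamma I+F_h(T))^{-1}v_h=\sum_j m_j(\gamma+m_j)^{-1}(v_h,\fy_j^h)\fy_j^h$. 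For the first estimate, since $m_j>0$ and $\gamma>0$ one has $\gamma+m_j\ge\gamma$, hence $(\gamma+m_j)^{-1}\le\gamma^{-1}$; Parseval's identity in the orthonormal system $\{\fy_j^h\}$ then gives $\|(\gamma I+F_h(T))^{-1}v_h\|_{L^2\II}^2\le\gamma^{-2}\sum_j(v_h,\fy_j^h)^2=\gamma^{-2}\|v_h\|_{L^2\II}^2$, i.e.\ the claim with $c=1$. For the second, non-negativity of $m_j$ and $\gamma$ gives $m_j(\gamma+m_j)^{-1}\le1$, so Parseval again produces $\|F_h(T)(\gamma I+F_h(T))^{-1}v_h\|_{L^2\II}\le\|v_h\|_{L^2\II}$. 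Both scalar bounds hold uniformly over all modes with no $h$- or $T$-dependent factor, which delivers the asserted independence of $c$ from $\gamma$, $h$, $t$, and $T$.

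I do not expect any genuine obstacle here, and this is precisely why the statement can be asserted with the proof omitted. In contrast to its continuous counterpart Lemma~\ref{lemma:gammainv}, where one must track negative-norm smoothing and therefore pick up a $T$-dependent constant, the present $L^2\II$ bounds require only the positivity of the eigenvalues $m_j$ of $F_h(T)$, which follows at once from the complete monotonicity of $t\mapsto E_{\alpha,1}(-t)$; the sharper asymptotic two-sided bounds quoted after Lemma~\ref{lem:op} are more than enough but not strictly needed for these two inequalities. The single point deserving a line of care is confirming that $F_h(T)$ is genuinely diagonalized by the eigenbasis of $A_h$, which is immediate from the resolvent commutation noted above, and that the finite spectral sum poses no convergence issue since $N_h<\infty$.
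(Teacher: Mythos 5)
Your proposal is correct and follows exactly the route the paper indicates for this lemma (the paper omits the proof but states it "follows from spectral decomposition as well as the asymptotic behavior of Mittag--Leffler functions"): diagonalize $F_h(T)$ in the eigenbasis of $A_h$, note $E_{\alpha,1}(-\lambda_j^h T^\alpha)\in(0,1]$, and conclude via the elementary scalar bounds $(\gamma+m_j)^{-1}\le\gamma^{-1}$ and $m_j(\gamma+m_j)^{-1}\le 1$ together with Parseval. Your observation that only positivity of the Mittag--Leffler function (rather than the two-sided asymptotics) is needed for these two $L^2$ estimates is a correct and slightly sharper remark, but does not constitute a different approach.
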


Using the same argument in the proof of Lemma \ref{lem:Reg}, 
we have the following regularity results for the semidiscrete problem \eqref{fem-operator}.
\begin{lemma}\label{THM:Reg_semi}
Let $u_0\in \dH p$ with $ p\in[0,2]$ and $f(\cdot)$ satisfy the Lipschitz condition \eqref{eqn:lipconstant}.
Then semidiscrete problem \eqref{fem-operator} has a unique solution $u_h$ such that for $ t\in(0,T]$
\begin{align*}   
 \|\partial_tu_h(t)\|_{L^2\II} \le c_Tt^{p\alpha/2-1}.
 % \quad  \|A_h^{\frac{p}{2}} u_h(t)\|_{L^2 (\Omega)}\le c_T t^{-\frac{p\alpha}{2}}\|u_0\|_{L^2\II}.
\end{align*}
The constant $c$ above is independent of the mesh size $h$,
but may depend on $T$ and Lipschitz constant  $L$ in \eqref{eqn:lipconstant}.
% The constant $c_T$ above may depend on $T$ and $L$.
\end{lemma}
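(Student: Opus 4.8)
The plan is to follow the proof of Lemma~\ref{lem:Reg} closely, with the continuous operators $F(t),E(t)$ replaced by their discrete analogues $F_h(t),E_h(t)$ and the smoothing bounds of Lemma~\ref{lem:op} replaced by those of Lemma~\ref{lem:op-d}. First I would recover existence and uniqueness from the mild formulation \eqref{eq:semisolutionrep} by a fixed-point argument: defining $\Phi[v_h](t)=F_h(t)P_hu_0+\int_0^tE_h(t-s)P_hf(v_h(s))\,\ds$ on $C([0,T];X_h)$, a fixed point of $\Phi$ is precisely the semidiscrete solution. Using $\|E_h(t)v_h\|_{L^2\II}\le ct^{\al-1}\|v_h\|_{L^2\II}$ (Lemma~\ref{lem:op-d}(i) with $\nu=0$), the $L^2\II$-stability of $P_h$, the Lipschitz bound \eqref{eqn:lipconstant} and $f(0)=0$, one checks $\|\Phi[v_h](t)-\Phi[w_h](t)\|_{L^2\II}\le cL\int_0^t(t-s)^{\al-1}\|v_h(s)-w_h(s)\|_{L^2\II}\,\ds$; a contraction on a short interval followed by continuation, or the generalized Gronwall inequality of Lemma~\ref{lemma:Gronwall} applied to the difference of two solutions, yields a unique $u_h\in C([0,T];X_h)$.

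Next I would differentiate \eqref{eq:semisolutionrep}. Using the identity $F_h'(t)=-A_hE_h(t)$ and the substitution $s\mapsto t-s$ in the convolution, one obtains, with $g_h(s):=P_hf(u_h(s))$,
\begin{equation*}
\partial_tu_h(t)=-A_hE_h(t)P_hu_0+E_h(t)g_h(0)+\int_0^tE_h(t-s)g_h'(s)\,\ds,
\end{equation*}
where $\|g_h'(s)\|_{L^2\II}\le L\|\partial_su_h(s)\|_{L^2\II}$ by \eqref{eqn:lipconstant}. For the first term, splitting $A_h=A_h^{1-p/2}A_h^{p/2}$ and invoking Lemma~\ref{lem:op-d}(i) with $\nu=1-p/2\in[0,1]$ gives $\|A_hE_h(t)P_hu_0\|_{L^2\II}\le ct^{p\al/2-1}\|A_h^{p/2}P_hu_0\|_{L^2\II}\le ct^{p\al/2-1}\|u_0\|_{\dH p}$ via the norm equivalence \eqref{eqn:equiv} and the stability of $P_h$. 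The boundary term obeys $\|E_h(t)g_h(0)\|_{L^2\II}\le ct^{\al-1}L\|u_0\|_{L^2\II}\le c_Tt^{p\al/2-1}\|u_0\|_{\dH p}$ on $(0,T]$, since $\al-1\ge p\al/2-1$ for $p\le2$. Collecting these into the Volterra inequality
\begin{equation*}
\|\partial_tu_h(t)\|_{L^2\II}\le c_Tt^{p\al/2-1}\|u_0\|_{\dH p}+cL\int_0^t(t-s)^{\al-1}\|\partial_su_h(s)\|_{L^2\II}\,\ds
\end{equation*}
and applying Lemma~\ref{lemma:Gronwall} with $b(t)=c_Tt^{p\al/2-1}\|u_0\|_{\dH p}$ yields the claim for $p\in(0,2]$: a Beta-function computation gives $\int_0^t(t-s)^{\al-1}s^{p\al/2-1}\,\ds\le c_Tt^{p\al/2-1}$ on $(0,T]$, and the constant is independent of $h$ because only the uniform-in-$h$ smoothing bounds of $F_h,E_h$ are used.

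The main obstacle is the endpoint $p=0$. There the forcing $b(t)\sim t^{-1}$ fails to lie in $L^1(0,T)$ and the convolution $\int_0^t(t-s)^{\al-1}s^{-1}\,\ds$ diverges, so Lemma~\ref{lemma:Gronwall}, whose hypothesis is $y\in L^1(0,T)$, cannot be applied to the inequality above; this is precisely why the continuous estimate for $p=0$ is cited separately from \cite{LiMa:2022}. Note also that the finite dimensionality of $X_h$ makes $\partial_tu_h$ integrable for each fixed $h$, but the naive bound degenerates like $\|A_hP_hu_0\|\sim h^{-2}$, so the real issue is extracting an $h$-\emph{independent} rate. I expect to resolve this in the same spirit at the discrete level: instead of estimating $g_h'$ through $\partial_su_h$, I would exploit the uniform boundedness of the discrete solution to control $\|f(u_h(s))\|_{L^2\II}$ directly and reproduce the more delicate analysis of \cite{LiMa:2022}, again relying only on the $h$-independent smoothing of $E_h$. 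Verifying that every constant in this endpoint argument remains independent of $h$ is the most technical point of the proof.
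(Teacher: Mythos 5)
Your proposal is essentially the paper's proof: the paper disposes of this lemma with the single remark ``using the same argument as in the proof of Lemma~\ref{lem:Reg}'', and what you write out is exactly that argument transposed to the discrete level (differentiate the mild formulation, use $F_h'=-A_hE_h$, invoke the $h$-uniform smoothing of Lemma~\ref{lem:op-d}, and close with the generalized Gronwall inequality of Lemma~\ref{lemma:Gronwall}), which is correct for $p\in(0,2]$. Two small remarks: for $p=2$ the bound $\|A_hP_hu_0\|_{L^2\II}\le c\|u_0\|_{\dH2}$ is not covered by the norm equivalence \eqref{eqn:equiv} (stated only for $\nu\in[-1,1]$) but follows from $\|A_h^{s/2}P_hg\|_{L^2\II}\le c\|g\|_{\dH s}$, $0\le s\le 2$, which the paper cites as \cite[Theorem 4.2]{Karaa:2019}; and for the endpoint $p=0$, which you rightly flag as the crux, the concrete device is not a direct bound on $\|f(u_h(s))\|_{L^2\II}$ (reintegrating by parts produces the non-integrable kernel $\|\partial_tE_h(t)\|\sim t^{\alpha-2}$) but the trick used in the paper's own Appendix~A for Corollary~\ref{lem:reg-weak}: estimate $\partial_t\bigl(tu_h(t)\bigr)$, whose Volterra inequality has an integrable forcing, and then recover $\|\partial_tu_h(t)\|_{L^2\II}\le t^{-1}\bigl(\|\partial_t(tu_h(t))\|_{L^2\II}+\|u_h(t)\|_{L^2\II}\bigr)$; all constants there depend only on the $h$-uniform bounds of Lemma~\ref{lem:op-d}.
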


The semidiscrete scheme to the regularized  problems \eqref{eqn:fde-back-reg}  read as: 
find $u_{\gamma,h}(t)\in X_h$ such that 
\begin{equation}\label{eqn:fde:reg:semi}
\begin{aligned}
   \partial^\alpha_t u_{\gamma,h}+A_h u_{\gamma,h}=P_hf(u_{\gamma,h})\quad 
 \text{with} \quad \gamma u_{\gamma,h}(0)+ u_{\gamma,h}(T)=P_h g.
\end{aligned}
\end{equation}
For the problem \eqref{eqn:fde:reg:noisy}, the semidiscrete solution is to find $u_{\gamma,h}^\delta(t)\in X_h$ satisfying
\begin{equation}\label{eqn:fde:reg:noisysemi}
\begin{aligned}
   \partial^\alpha_t u^\delta_{\gamma,h}+A_h u^\delta_{\gamma,h}=P_hf(u^\delta_{\gamma,h})\quad
 \text{with} \quad 
 \gamma u^\delta_{\gamma,h}(0)+ u^\delta_{\gamma,h}(T)=P_h g_\delta.
\end{aligned}
\end{equation}
Employing the solution representation~\eqref{sol_dis_op}, we obtain
 \begin{align}\label{eqn:fde:reg:semisol}
    u_{\gamma,h}(0)&=(\gamma I+F_h(T))^{-1}\left(P_h g-\int_0^T E_h(T-s)P_hf(u_{\gamma,h}(s))\ \ds
    \right),\\
       u_{\gamma,h}^\delta(0)&=(\gamma I+F_h(T))^{-1}\left(P_h g_\delta-\int_0^T E_h(T-s)P_hf(u^\delta_{\gamma,h}(s))\ \ds
    \right).\label{eqn:fde:reg:noisysemisol}
    \end{align}
We shall prove that the existence and uniqueness of $ u_{\gamma,h}(0)$ and  $ u_{\gamma,h}^\delta(0)$ for $T\in (0,T_*]$ with $B(T_*)<1$ defined in \eqref{eqn:contraction}. To this end, for a given $\Tilde{g}\in  X_h$, we define a mapping $M_h:  X_h\rightarrow X_h$ by
\begin{equation}\label{eqn:Mh}
    M_h \phi_h =(\gamma I+F_h(T))^{-1}\left(\Tilde{g}-\int_0^T E_h(T-s)P_hf(S_h(s)\phi_h)\ \ds
    \right) ~~\text{for any}~\phi_h\in X_h,
\end{equation}
where $S_h(t)$ is the solution operator defined in \eqref{eq:semisolutionrep}.
Similar to Lemma \ref{lam:phi12}, it is easy to obtain for all $t \in (0,T]$    \begin{align}\label{eqn:phi12h}
     \| S_h(t)\phi_h^1-S_h(t)\phi_h^2\|_{L^2\II}
     \le c_T\|\phi_h^1-\phi_h^2\|_{L^2\II}, \quad \text{for all}~\phi_h^1, \phi_h^2 \in X_h,
\end{align}
where the constant $c_T$ depends on $T$, but it is independent of $t$ and $h$.
The following lemma provides a discrete analogue to Lemma \ref{lam:phi12} and serves as an important preliminary to the proof of the contraction mapping.

%The proof of the lemma is the same as the proof of Lemma~\ref{lam:phi12}. 
% We shall give the following lemma.  
% \begin{lemma}\label{lam:phi12h}
% Let $S_h(t)$ be the solution operator defined in \eqref{eq:semisolutionrep}, and let $\beta$ and $L$ be the constants defined in the Lipschitz condition \eqref{eqn:lipconstant}. Then, for any $\phi^1_h, \phi_h^2 \in X_h$, the following inequality holds:
%     \begin{align*}
%      \|A^{\gamma}(S(t)\phi_1-S(t)\phi_2)\|_{L^2\II}
%      &\le B_3(\al,T,L,t,\gamma)\|\phi_1-\phi_2\|_{L^2\II}  \quad \text{for}\quad   t \in (0,T].
% \end{align*}
% Here, $B_3(\al,T,L,t,\gamma)$  is the same  in  Lemma~\ref{lam:phi12}.
% \end{lemma}

\begin{lemma}\label{lam:phi12h}
% Let $S(t)$ be the solution operator defined in \eqref{eqn:sol-rep}, and let $L$ be the Lipschitz constant  in the \eqref{eqn:lipconstant}. Then, for any $\phi_1, \phi_2 \in \dH{-\mu}$ with $\mu\in[0,1)$ the following inequality holds:
Let $S_h(t)$ be the solution operator defined in \eqref{eq:semisolutionrep},  and let $L$ be the Lipschitz constant in \eqref{eqn:lipconstant}. Then, for any $\phi^1_h, \ \phi_h^2 \in X_h$  with $\mu\in(0,1]$ the following inequality holds:
\begin{align*}
     \|S_h(t)\phi^1_h-S_h(t)\phi^2_h\|_{L^2\II}
     &\le (B_0(\al,T,L,\mu) + c_T h^{2-\mu}|\log h|) t^{-\al\mu/2}\|\phi^1_h-\phi^2_h\|_{\dH{-\mu}}\quad \text{for}\quad   t \in (0,T],
\end{align*}
where the constant $B_0(\al,T,L,\mu)$ is  identical to the constant in Lemma~\ref{lam:phi12}.
%with a constant $B_3(\al,T,L,\gamma)$.
    \end{lemma}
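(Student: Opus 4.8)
The plan is to mirror the continuous argument of Lemma~\ref{lam:phi12} directly at the discrete level, bounding $\|v_h(t)\|_{\L2Om}$ for $v_h(t):=S_h(t)\phi_h^1-S_h(t)\phi_h^2$, and to let the discretization error enter only through a comparison of the discrete and continuous \emph{linear} solution operators. Writing $w_h:=\phi_h^1-\phi_h^2\in X_h$ and using the discrete solution representation \eqref{eq:semisolutionrep}, I would split
\begin{equation*}
v_h(t)=F_h(t)w_h+\int_0^t E_h(t-s)P_h\big[f(S_h(s)\phi_h^1)-f(S_h(s)\phi_h^2)\big]\ \ds,
\end{equation*}
so that the task reduces to (a) estimating the homogeneous term $F_h(t)w_h$ and (b) absorbing the nonlinear integral into a Gronwall argument.

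For (b), the Lipschitz condition \eqref{eqn:lipconstant} applied to the \emph{first} difference gives $\|f(S_h(s)\phi_h^1)-f(S_h(s)\phi_h^2)\|_{\L2Om}\le L\|v_h(s)\|_{\L2Om}$ (no ``second difference'' of $f$ is needed, which is why the direct route is preferable to comparing $v_h$ with the continuous difference), and Lemma~\ref{lem:op-d}(i) with $\nu=0$ together with $\|P_h\|\le1$ controls $E_h$. To keep the final constant \emph{exactly} $c_1$ rather than a generic discrete constant, I would bound $\|E_h(t-s)P_h\cdot\|_{\L2Om}$ by $\|E(t-s)\cdot\|_{\L2Om}$ plus the (higher-order) $E$-comparison error. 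For (a), I would write $\|F_h(t)w_h\|_{\L2Om}\le\|F(t)w_h\|_{\L2Om}+\|(F_h(t)P_h-F(t))w_h\|_{\L2Om}$ and bound the first term by Lemma~\ref{lem:op} exactly as in Lemma~\ref{lam:phi12}, i.e.\ by $c_1 t^{-\al\mu/2}\|w_h\|_{\dH{-\mu}}$ with $c_1$ the constant feeding into $B_0$. Collecting these, the integral inequality
\begin{equation*}
\|v_h(t)\|_{\L2Om}\le \big(c_1+c_T h^{2-\mu}|\log h|\big)t^{-\al\mu/2}\|w_h\|_{\dH{-\mu}}+c_1 L\int_0^t(t-s)^{\alpha-1}\|v_h(s)\|_{\L2Om}\ \ds
\end{equation*}
is exactly of the form treated in Lemma~\ref{lemma:Gronwall}; applying it with $\beta_0=c_1 L$ and $b(t)=(c_1+c_T h^{2-\mu}|\log h|)t^{-\al\mu/2}\|w_h\|_{\dH{-\mu}}$ reproduces $B_0(\al,T,L,\mu)$ from the $c_1$-part and leaves an additive remainder proportional to $c_T h^{2-\mu}|\log h|\,t^{-\al\mu/2}\|w_h\|_{\dH{-\mu}}$ after bounding $t^{\al}\le T^{\al}$. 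The norm equivalence \eqref{eqn:equiv} is used only to identify $\|A_h^{-\mu/2}w_h\|_{\L2Om}$ with $\|w_h\|_{\dH{-\mu}}$ where needed.

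The crux of the proof is the nonstandard linear finite element error estimate
\begin{equation*}
\|(F_h(t)P_h-F(t))v\|_{\L2Om}\le c\,h^{2-\mu}|\log h|\,t^{-\al\mu/2}\|v\|_{\dH{-\mu}},\qquad v\in\dH{-\mu},
\end{equation*}
which supplies both the spatial rate $h^{2-\mu}|\log h|$ and, crucially, the \emph{same} temporal weight $t^{-\al\mu/2}$ as the continuous smoothing bound. This is the delicate point: the naive nonsmooth-data estimate for negative-norm initial data carries a worse temporal singularity (roughly $t^{-\al(1+\mu/2)}$), for which the kernel $\int_0^t(t-s)^{\alpha-1}s^{-\al(1+\mu/2)}\ds$ is nonintegrable whenever $\alpha\ge 2/(2+\mu)$, breaking the Gronwall step. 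One must therefore trade spatial order for temporal regularity to recover the integrable weight $t^{-\al\mu/2}$ at the price of the reduced rate $h^{2-\mu}$ and a logarithmic (endpoint-interpolation) factor — precisely the kind of estimate that is optimal with respect to data regularity (cf.\ Lemma~\ref{lem:error-linear}--Lemma~\ref{lem:err-reg:H2}). Once this estimate and its $E_h$-analogue are in hand, the remaining manipulations are routine, and the constant $B_0$ is inherited verbatim from Lemma~\ref{lam:phi12}.
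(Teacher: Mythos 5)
Your overall architecture --- peel off $F_h(t)w_h$ and the Duhamel integral, compare $F_h,E_h$ with $F,E$, and close with Lemma~\ref{lemma:Gronwall} --- is exactly the paper's. The gap is in the estimate you yourself single out as the crux. The bound
\begin{equation*}
\|(F_h(t)P_h-F(t))v\|_{\L2Om}\le c\,h^{2-\mu}|\log h|\,t^{-\al\mu/2}\|v\|_{\dH{-\mu}}
\end{equation*}
is asserted, not proved, and it overstates what is available: interpolating between the two standard endpoints --- $h^{2}t^{-\al(1+\mu/2)}$ (full spatial order) and $h^{0}t^{-\al\mu/2}$ (pure smoothing, no convergence) --- the rate $h^{2-\mu}$ forces the temporal weight $t^{-\al}$, and the weight $t^{-\al\mu/2}$ is attainable only at rate $h^{0}$. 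The paper accordingly uses $\|(F(t)-F_h(t))(\phi_h^1-\phi_h^2)\|_{\L2Om}\le ch^{2-\mu}t^{-\al}\|\phi_h^1-\phi_h^2\|_{\dH{-\mu}}$, citing \cite{LiYangZhou:2024}. Your argument for \emph{needing} $t^{-\al\mu/2}$ --- that otherwise the Gronwall kernel is non-integrable --- compares against the wrong alternative: the non-integrable weight $t^{-\al(1+\mu/2)}$ belongs to the full-order estimate, whereas the reduced-order weight $t^{-\al}$ satisfies $\int_0^t(t-s)^{\al-1}s^{-\al}\,\ds=B(\al,1-\al)<\infty$ for every $\al\in(0,1)$, so Lemma~\ref{lemma:Gronwall} applies with $b(t)\sim\big(t^{-\al\mu/2}+h^{2-\mu}t^{-\al}+h^{2-\mu}|\log h|\big)\|w_h\|_{\dH{-\mu}}$ and no stronger linear estimate is required. (This is also why the fully discrete analogue, Lemma~\ref{lama:phi12-ful}, records the $h$-dependent remainder with weight $t_n^{-\al}$.)

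A second, related omission: the logarithm does not come from the $F$-comparison, which carries no log, but from the $E$-comparison inside the Duhamel term --- precisely the step you dismiss as routine. The paper bounds $\|(E(t)-E_h(t))\phi_h\|_{\L2Om}\le c\min(h^{2}t^{-1},t^{\al-1})\|\phi_h\|_{\L2Om}$, splits the time integral at $t-h^{2/\al}$ to extract $h^{2}|\log h|$ against the $L^2$ norm of the difference of the two trajectories (which requires the preliminary $L^2$-stability \eqref{eqn:phi12h}), and only then converts to the $\dH{-\mu}$ norm via the inverse inequality \eqref{eqn:inverse}, which is where $h^{2}$ degrades to $h^{2-\mu}$. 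Without these steps the source term $b(t)$ you feed into Gronwall is not justified, so as written the proposal does not establish the lemma.
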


\begin{proof}
Note that $\|P_hv\|_{L^2(\Omega)}\le \|v\|_{L^2(\Omega)}$ for any $v\in L^2(\Omega)$. Then 
from the relation \eqref{eq:semisolutionrep}, we have 
\begin{align*}
  & \|S_h(t)\phi_h^1-S_h(t)\phi_h^2\|_{L^2\II}\\
\le& \|F_h(t) (\phi_h^1-\phi_h^2)\|_{L^2\II}+\|\int_0^tE_h(t-s)P_h[f(S_h(s)\phi_h^1)-f(S_h(s)\phi_h^2)]\ \ds\|_{L^2\II}\\
\le& \|F(t) (\phi_h^1-\phi_h^2)\|_{L^2\II}+\int_0^t \|E(t-s)P_h[f(S_h(s)\phi_h^1)-f(S_h(s)\phi_h^2)]_{L^2\II}\| \ds\\
&+\| (F(t) - F_h(t)) (\phi_h^1-\phi_h^2)\|_{L^2\II} +\int_0^t  
\|[E(t-s)- E_h(t-s)]P_h[f(S_h(s)\phi_h^1)-f(S_h(s)\phi_h^2)]\|_{L^2\II}\ \ds .
\end{align*}
Then we use Lemma \ref{lem:op} (i) and   Lemma \ref{lem:op-d} (i) to obtain that
\begin{align*}
&\|F(t) (\phi_h^1-\phi_h^2)\|_{L^2\II} +  \|\int_0^tE(t-s)P_h[f(S_h(s)\phi_h^1)-f(S_h(s)\phi_h^2)]\ \ds\|_{L^2\II}\\
\le& c_1 t^{-\al\mu/2}\|\phi_h^1-\phi_h^2\|_{\dH{-\mu}}+ c_1 L\int_0^t(t-s)^{\alpha-1} \|S_h(s)\phi_h^1-S_h(s)\phi_h^2\|_{L^2\II}\ds.
\end{align*}
Moreover, applying the finite element approximation result \cite[Remark 2.1]{LiYangZhou:2024} gives
\begin{align*}
  \|(F(t) - F_h(t)) (\phi_h^1-\phi_h^2)\|_{L^2\II} \le c h^{2-\mu} t^{-\al}\|\phi_h^1-\phi_h^2\|_{\dH{-\mu}}.
\end{align*}
Meanwhile, we use the smoothing properties in Lemmas \ref{lem:op}(i) and \ref{lem:op-d}(i), and the error estimate that \cite[Theorem 2.5]{JinZhou:2023book}, to obtain
\begin{align}\label{eqn:Eh-approx}
  \|(E(t) - E_h(t)) \phi_h\|_{L^2\II} \le c \min(h^{2} t^{-1}, t^{\alpha-1})\|\phi_h\|_{L^2\II}\quad \text{for all}~~\phi_h\in X_h.
\end{align}
This together with the stability of $P_h$, Lipschitz continuity of $f$,
and the estimate \eqref{eqn:phi12h}
leads to
\begin{equation}\label{eqn:Eh-approx-1}
\begin{aligned}
& \int_0^{t-h^{\frac2\alpha}}  
\|[E(t-s)- E_h(t-s)]P_h[f(S_h(s)\phi_h^1)-f(S_h(s)\phi_h^2)]\|_{L^2\II}\ \ds \\
\le & c h^{2}\int_0^{t-h^{\frac2\alpha}}   (t-s)^{-1}
\| S_h(s)\phi_h^1- S_h(s)\phi_h^2\|_{L^2\II}\ \ds\\
\le& c_T h^{2} \|  \phi_h^1-  \phi_h^2\|_{L^2\II} \int_0^{t-h^{\frac2\alpha}} (t-s)^{-1}\ \ds  \le  c_T  h^2 |\log h| \|  \phi_h^1-  \phi_h^2\|_{L^2\II}.
\end{aligned}
\end{equation}
On the other hand, we derive
\begin{equation}\label{eqn:Eh-approx-2}
\begin{aligned}
& \int_{t-h^{\frac2\alpha}}^t  
\|[E(t-s)- E_h(t-s)]P_h[f(S_h(s)\phi_h^1)-f(S_h(s)\phi_h^2)]\|_{L^2\II}\ \ds \\
\le & c \int_{t-h^{\frac2\alpha}}^t   (t-s)^{\alpha-1}
\| S_h(s)\phi_h^1- S_h(s)\phi_h^2\|_{L^2\II}\ \ds \\
\le & c_T \| \phi_h^1- \phi_h^2\|_{L^2\II} \int_{t-h^{\frac2\alpha}}^t   (t-s)^{\alpha-1}\ \ds
\le c_T h^2 \| \phi_h^1- \phi_h^2\|_{L^2\II}.
\end{aligned}
\end{equation}
As a result, we use the inverse inequality \eqref{eqn:inverse}, the norm equivalence \eqref{eqn:equiv}, and arrive at
\begin{align*}
\int_0^t  
\|[E(t-s)- E_h(t-s)]P_h[f(S_h(s)\phi_h^1)-f(S_h(s)\phi_h^2)]\|_{L^2\II}\ \ds 
\le   c_T   h^{2-\mu} |\log h| \|  \phi_h^1-  \phi_h^2\|_{\dH{-\mu}}.
\end{align*}
Combining these estimates with the Gronwall's inequality in Lemma \ref{lemma:Gronwall} leads to the desired result.

\end{proof}

\begin{theorem}\label{lemma:stability_semi} 
For a fixed parameter $\mu \in (0,1]$, let $T_*$ be the constant such that $B_{\mu}(T_*) < 1$, where the function $B_{\mu}(\cdot)$ is defined in \eqref{eqn:contraction} and assume that $T < T_*$. Then, there exists a constant $h_0$ such that, for $\gamma^{-1 + \frac{\mu}{2}} h^{2-\mu} |\log h| \leq h_0$,
the mapping $M_h$ defined in \eqref{eqn:Mh} is a contraction.
\end{theorem}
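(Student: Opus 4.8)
The plan is to show that $M_h$ is a contraction on $(X_h,\|\cdot\|_{\dH{-\mu}})$ by mimicking the stability argument of Theorem~\ref{thm:stability} at the discrete level. Starting from the definition \eqref{eqn:Mh}, the data $\tilde g$ cancels in the difference, leaving
\[
M_h\phi_h^1 - M_h\phi_h^2 = -(\gamma I + F_h(T))^{-1}\int_0^T E_h(T-s)P_h\big[f(S_h(s)\phi_h^1) - f(S_h(s)\phi_h^2)\big]\,\ds .
\]
I would measure this in $\dH{-\mu}$ through the norm equivalence \eqref{eqn:equiv}, i.e. by estimating $\|A_h^{-\mu/2}(\gamma I + F_h(T))^{-1}\int_0^T E_h(T-s)P_h[\cdots]\,\ds\|_{\L2Om}$, and then use the Lipschitz bound \eqref{eqn:lipconstant} together with the $\L2Om$-stability of $P_h$ to replace the bracket by $L\|S_h(s)\phi_h^1 - S_h(s)\phi_h^2\|_{\L2Om}$.

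The heart of the argument is a bound for the composite operator $A_h^{-\mu/2}(\gamma I + F_h(T))^{-1}E_h(t)$ on $X_h$ that stays bounded as $\gamma\to0^+$. By spectral decomposition in the eigenbasis of $A_h$, exactly as in the proof of Lemma~\ref{lem:op}(iii), I would establish
\[
\|A_h^{-\mu/2}(\gamma I + F_h(T))^{-1}E_h(t)v_h\|_{\L2Om} \le c\big(t^{\alpha-1} + t^{\alpha\mu/2-1}T^\alpha\big)\|v_h\|_{\L2Om},
\]
uniformly in $\gamma\in(0,1]$ and $h$; the key point is that $\gamma$ may be dropped from the denominator because $E_{\alpha,1}(-\lambda_j^h T^\alpha)>0$, so this is the discrete, regularized analogue of combining Lemma~\ref{lem:op}(iii) with \eqref{eqn:gf-1} (equivalently, the borderline case $q=p+2$ of the discrete counterpart of Lemma~\ref{lemma:gammainv}). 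Feeding this bound and Lemma~\ref{lam:phi12h} into the integral yields
\[
\|M_h\phi_h^1 - M_h\phi_h^2\|_{\dH{-\mu}} \le c_3 L\big(B_0(\alpha,T,L,\mu) + c_T h^{2-\mu}|\log h|\big)\int_0^T \big[(T-s)^{\alpha-1} + (T-s)^{\alpha\mu/2-1}T^\alpha\big]s^{-\alpha\mu/2}\,\ds\,\|\phi_h^1-\phi_h^2\|_{\dH{-\mu}},
\]
and the Beta-function integral, read against the definition \eqref{eqn:contraction}, identifies the leading constant as $B_\mu(T)$. Thus the contraction factor takes the form $B_\mu(T) + c\cdot(\text{discrete error})$.

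The remaining step is to absorb the discrete error into the available gap. Since $T<T_*$ and $B_\mu(\cdot)$ is increasing, we have $B_\mu(T)\le B_\mu(T_*)<1$, so it suffices to make the correction term smaller than $1-B_\mu(T)$. Here one must keep the $B_\mu(T)$ contribution routed through the $\gamma$-independent estimate above, while the genuinely discrete consistency correction — obtained by comparing $E_h,F_h$ with their continuous counterparts and passing from $\L2Om$ to $\dH{-\mu}$ via the inverse inequality \eqref{eqn:inverse} together with the discrete resolvent smoothing (Lemma~\ref{lem:op-reg-semi}, or the discrete analogue of Lemma~\ref{lemma:gammainv} with $p=-\mu$, $q=0$, which supplies the factor $\gamma^{-1+\mu/2}$) — is bounded by $c\,\gamma^{-1+\mu/2}h^{2-\mu}|\log h|$. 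Choosing $h_0$ so that $c\,h_0<1-B_\mu(T)$ then gives a contraction factor strictly below $1$ under the stated condition $\gamma^{-1+\mu/2}h^{2-\mu}|\log h|\le h_0$, and the contraction mapping theorem delivers the unique discrete fixed point. I expect the main obstacle to be precisely this bookkeeping of the powers of $\gamma$: the $B_\mu(T)$ term has to be obtained from a $\gamma$-uniform estimate, and every factor $\gamma^{-1}$ must be confined to the discretization-error terms, since a naive application of the $\gamma^{-1}$ bound of Lemma~\ref{lem:op-reg-semi} to the whole integral would produce a constant blowing up like $\gamma^{-1}$ and would destroy the contraction for small $\gamma$.
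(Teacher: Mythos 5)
Your route is genuinely different from the paper's. The paper splits
\[
M_h(\phi_h^1-\phi_h^2)=\bigl[(\gamma I+F_h(T))^{-1}-(\gamma I+F(T))^{-1}\bigr]\mathcal{G}_h+(\gamma I+F(T))^{-1}\mathcal{G}_h ,
\]
keeps the \emph{continuous} resolvent on the main term (bounding it by $\|F(T)^{-1}\mathcal{G}_h\|_{\dH{-\mu}}$ and rerunning the continuous argument \eqref{eqn:contraction-0}--\eqref{eqn:contraction} with Lemma~\ref{lam:phi12h}, which yields exactly $B_\mu(T)+c_Th^{2-\mu}|\log h|$), and confines all discrete artifacts to the resolvent difference; that difference is where the factor $\gamma^{-1+\frac{\mu}{2}}h^{2-\mu}$ genuinely arises, via the FEM error for $F_h(T)-F(T)$, Lemma~\ref{lem:op-reg-semi} and the inverse inequality \eqref{eqn:inverse}. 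You instead stay entirely discrete and prove a $\gamma$-uniform spectral bound for $A_h^{-\mu/2}(\gamma I+F_h(T))^{-1}E_h(t)$. That bound is correct (the computation in Lemma~\ref{lem:op}(iii) transfers verbatim to the eigenpairs of $A_h$, and dropping $\gamma\ge 0$ from the denominator only decreases the quantity), and your insistence on keeping every $\gamma^{-1}$ out of the leading term is the right instinct. Note, however, an internal inconsistency: in a purely discrete treatment the only discretization error left is the $c_Th^{2-\mu}|\log h|$ coming from Lemma~\ref{lam:phi12h}, so the $\gamma^{-1+\mu/2}$ you attribute to your correction term has no source in your own scheme --- it belongs to the paper's resolvent-difference term.

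The genuine gap is the identification of the leading constant as $B_\mu(T)$. Your spectral estimate naturally controls $\|A_h^{-\mu/2}(M_h\phi_h^1-M_h\phi_h^2)\|_{L^2(\Omega)}$, whereas the contraction is asserted --- and Lemma~\ref{lam:phi12h} takes its input --- in $\|\cdot\|_{\dH{-\mu}}$. Converting the output via the norm equivalence \eqref{eqn:equiv} costs a constant $c^{-1}\ge 1$ that is generically strictly larger than $1$ for $\mu\neq 0$, and this constant multiplies the \emph{leading} term: your contraction factor is $c^{-1}B_\mu(T)+O(h^{2-\mu}|\log h|)$, not $B_\mu(T)+O(h^{2-\mu}|\log h|)$. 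Since $c^{-1}$ does not shrink with $h$, it cannot be absorbed into the choice of $h_0$; for $T$ near $T_*$, where $B_\mu(T)$ is close to $1$, the bound no longer yields a contraction, so your argument proves the theorem only under a strictly stronger smallness condition on $T$ than the stated one (which uses the same threshold $T_*$ as the continuous Theorem~\ref{thm:stability}). This is precisely what the paper's splitting is designed to avoid: measuring the main term with the continuous $A^{-\mu/2}$ makes the $\dH{-\mu}$ norm exact and lets $B_\mu(T)$ survive unchanged, with all mesh-dependent constants pushed into terms that vanish as $h\to 0$. To repair your version you would either have to run the whole fixed-point argument in the mesh-dependent norm $\|A_h^{-\mu/2}\cdot\|_{L^2(\Omega)}$ (which still gives existence and uniqueness of the fixed point but changes the statement and propagates into Lemma~\ref{lem:esitugh}), or reinstate a comparison with the continuous resolvent for the leading contribution --- which is the paper's proof.
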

 
\begin{proof}
We aim to show that $M_h$ is a contraction with the norm $\dH{-\mu}$. For $\phi^1_h,\phi^2_h \in X_h$, we consider the splitting 
    \begin{align*}
       M_h(\phi^1_h-\phi^2_h)
        =[(\gamma I+F_h(T))^{-1}-(\gamma I+F(T))^{-1}]\mathcal{G}_h+(\gamma I+F(T))^{-1}\mathcal{G}_h,
    \end{align*}
where  $\mathcal{G}_h$ is defined by $\mathcal{G}_h=\int_0^T E_h(T-s)P_h[f(S_h(s)\phi_h^2)-f(S_h(s)\phi_h^1)]\ \ds$. 
Using the error estimate for the direct problem \cite[Theorem 2.4]{JinZhou:2023book} gives
% \begin{equation*}
\begin{align*}
 &\|[(\gamma I+F_h(T))^{-1}-(\gamma I+F(T))^{-1}]\mathcal{G}_h\|_{\dH{-\mu}}\\
=& \|(\gamma I+F(T))^{-1}(F_h(T) - F(T))(\gamma I+F_h(T))^{-1}\mathcal{G}_h\|_{\dH{-\mu}}\\
=& c_T\gamma^{-1+\frac\mu2}\|(F_h(T) - F(T))(\gamma I+F_h(T))^{-1}\mathcal{G}_h\|_{L^2\II}
\le  c_T\gamma^{-1+\frac\mu2}h^2\|(\gamma I+F_h(T))^{-1}\mathcal{G}_h\|_{L^2\II}\\
\le & c_T\gamma^{-1+\frac\mu2}h^2\| F_h(T)^{-1}\mathcal{G}_h\|_{L^2\II}
\le  c_T\gamma^{-1+\frac\mu2}h^{2-\mu}\|A_h^{-\frac\mu2}F_h(T)^{-1}\mathcal{G}_h\|_{L^2\II}.
\end{align*}  
% \end{equation*}
where in the last inequality, we use the inverse inequality \eqref{eqn:inverse} with $s = \mu/2$. 
Next, applying the smoothing properties in Lemma \ref{lem:op-d} (i) and (iii) yields
\begin{align*}
&\|[(\gamma I+F_h(T))^{-1}-(\gamma I+F(T))^{-1}]\mathcal{G}_h\|_{\dH{-\mu}} \le  c_T\gamma^{-1+\frac\mu2}h^{2-\mu}\|A_h^{-\frac\mu2}F_h(T)^{-1}\mathcal{G}_h\|_{L^2\II} \\
\le & c_T\gamma^{-1+\frac\mu2}h^{2-\mu}  \int_0^T \| A_h^{1-\frac\mu2} E_h(T-s)  P_h[f(S_h(s)\phi_h^2)-f(S_h(s)\phi_h^1)]\|_{L^2\II}\ \ds \\
\le& c_T\gamma^{-1+\frac\mu2}h^{2-\mu}\int_0^T(T-s)^{\al\mu/2-1}\|P_h[f(S_h(s)\phi_h^2)-f(S_h(s)\phi_h^1)]\|_{L^2\II}\ \ds.%\le c_T\gamma^{-1}h^2|\log h|\|\phi_h^2-\phi_h^1\|_{L^2\II}.
\end{align*}  
% Note that $\mathcal{G}_h$ and $\mathcal{G}$ solve the 
Then applying the stability of  $P_h$, the Lipchitz continuity of $f$ and Lemma \ref{lam:phi12h}, we derive
\begin{equation*}
\begin{aligned}
&\|[(\gamma I+F_h(T))^{-1}-(\gamma I+F(T))^{-1}]\mathcal{G}_h\|_{\dH{-\mu}}  \\
\le& c_T\gamma^{-1+\frac\mu2}h^{2-\mu}\int_0^T(T-s)^{\al\mu/2-1}\| S_h(s)\phi_h^2-S_h(s)\phi_h^1\|_{L^2\II}\ \ds.\\
%\le& c_T\gamma^{-1}h^{2-2\epsilon}\int_0^T(T-s)^{\al\epsilon-1}\| [f(S_h(s)\phi_h^2)-f(S_h(s)\phi_h^1)]\|_{L^2\II}\ \ds. \\
\le& c_T\gamma^{-1+\frac\mu2}h^{2-\mu}\int_0^T(T-s)^{\al\mu/2-1}s^{-\alpha\mu/2}\ \ds \|  \phi_h^2- \phi_h^1\|_{\dH{-\mu}}\\
\le& c_T\gamma^{-1+\frac\mu2}h^{2-\mu}\|  \phi_h^2- \phi_h^1\|_{\dH{-\mu}}.
\end{aligned}  
\end{equation*}
% To establish a bound for the term $(\gamma I+F(T))^{-1}[\mathcal{G}_h-\mathcal{G}]$,
% we introduce two auxiliary functions
% \begin{align}
%      \Dal \varphi(t)+A\varphi(t) &=f(S_h(t)\phi_h^2)-f(S_h(t)\phi_h^1) \quad\forall t\in (0,T] \quad \mbox{with} \quad  \varphi(0)=0,\, \label{eqn:varphi}\\
%    \Dal \varphi_{h}(t)+A_h \varphi_h(t) &=P_h[f(S_h(t)\phi_h^2)-f(S_h(t)\phi_h^1)]\quad\forall t\in (0,T] \quad \mbox{with} \quad  \varphi_h(0)=0. \label{eqn:varphih}
% \end{align}
% Note that $\varphi_h$ is the semidiscrete approximation to $\varphi$, that is the solution to a linear subdiffusion model with source $f(S_h(t)\phi_h^2)-f(S_h(t)\phi_h^1)$.
% Moreover, we observe $\mathcal{G}_h=\varphi_{h}(T)$ and $\mathcal{G}=\varphi(T)$. Therefore, from the established error estimate in \cite[Theorem 2.5]{JinZhou:2023book}, the estimate \eqref{eqn:phi12h}, and the inverse inequality \eqref{eqn:inverse}, we obtain
% \begin{align*}
% \|(\gamma I+F(T))^{-1}[\mathcal{G}_h-\mathcal{G}]\|_{\dH{-\mu}} &\le c_T\gamma^{-1+\frac\mu2}\|\mathcal{G}_h-\mathcal{G}\|_{L^2\II} \\&\le c_T\gamma^{-1+\frac\mu2}h^2|\log h|\|f(S_h(s)\phi_h^2)-f(S_h(s)\phi_h^1)\|_{L^{\infty}(0,T; L^2(\Omega))}\\
% &\le c_T\gamma^{-1+\frac\mu2}h^2|\log h|\|\phi_h^2-\phi_h^1\|_{L^2\II}\\
% &\le c_T\gamma^{-1+\frac\mu2}h^{2-\mu}|\log h|\|\phi_h^2-\phi_h^1\|_{\dH{-\mu}}.
% \end{align*}
% Note that, in the last inequality, we use the following equivalence of norms in \eqref{eqn:equiv}.
Additionally, using Lemma~\ref{lam:phi12h},  and applying the same argument in \eqref{eqn:contraction-0}-\eqref{eqn:contraction} together with the stability of $P_h$, we have
    \begin{equation*}
        \|(\gamma I+F(T))^{-1}\mathcal{G}_h\|_{\dH{-\mu}}\le \|F(T)^{-1}\mathcal{G}_h\|_{\dH{-\mu}}\le (B_\mu(T)+c_T h^{2-\mu}|\log h|)\|\phi_h^1-\phi_h^2\|_{\dH{-\mu}}.
    \end{equation*}
    Hence, we arrive at the estimate 
    \begin{equation*}
        \| M_h(\phi^1_h-\phi^2_h)\|_{\dH{-\mu}}\le (c_T\gamma^{-1+\frac\mu2}h^{2-\mu}|\log h|+B_\mu(T))\|\phi^1_h-\phi^2_h\|_{\dH{-\mu}}.
    \end{equation*}
Since $B_\mu(T)<1$ for any $T \in (0, T_*]$, then we deduce that there exists a constant $h_0$ such that $c_Th_0+B_\mu(T)<1$.
Then for any $h$ satisfying $\gamma^{-1+\frac\mu2}h^{2-\mu}|\log h|<h_0$, the operator $M_h$ is a contraction in $\dH{-\mu}$ and hence admits a unique fixed point.
\end{proof}

We now derive the error between $u_{\gamma,h}(0)$ and $u_\gamma(0)$. 
\begin{lemma} \label{lem:esitugh}  
Let $\mu \in (0,1]$ be a fixed parameter, and let $u_0 \in \dH{-\mu}$. Define $T_*$ as the constant such that $B_\mu(T_*) < 1$, where the function $B_\mu(\cdot)$ is given in \eqref{eqn:contraction}. Assume that $T < T_*$ and  $\gamma^{-1 + \frac{\mu}{2}} h^{2-\mu} |\log h| \leq h_0$ with $h_0$ being given in Theorem \ref{lemma:stability_semi}.
Let $u_\gamma(t)$ and $u_{\gamma, h}(t)$ denote the solutions to the regularized problem \eqref{eqn:fde-back-reg} and the semi-discrete problem \eqref{eqn:fde:reg:semi}, respectively. Then, the following estimate holds:
\[
\| u_{\gamma, h}(0) - u_\gamma(0) \|_{\dH{-\mu}} \leq c \gamma^{-1} h^2 |\log h|,
\]
where $c$ is a constant independent of $\gamma$ and $h$.
\end{lemma}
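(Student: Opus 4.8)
The plan is to combine the contraction property of the discrete map $M_h$ established in Theorem~\ref{lemma:stability_semi} with a consistency estimate between $M_h$ and its continuous counterpart. First I would insert the projection $P_h u_\gamma(0)$ and use the triangle inequality,
\[
\| u_{\gamma,h}(0) - u_\gamma(0) \|_{\dH{-\mu}} \le \| u_{\gamma,h}(0) - P_h u_\gamma(0) \|_{\dH{-\mu}} + \| (P_h - I) u_\gamma(0) \|_{\dH{-\mu}}.
\]
The second term is a pure projection error: by the negative-norm estimate \eqref{eqn:Ph-neg} it is bounded by $c h^2 \| u_\gamma(0) \|_{\dH{2-\mu}}$, and the regularity bound $\| u_\gamma(0) \|_{\dH{2-\mu}} \le c\gamma^{-1}\|u_0\|_{\dH{-\mu}}$ from Lemma~\ref{lem:regugamma} turns this into $O(\gamma^{-1}h^2)$, which already sits within the target. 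It then remains to bound $\theta_h := u_{\gamma,h}(0) - P_h u_\gamma(0)$.

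Since $u_{\gamma,h}(0)$ is the fixed point of the contraction $M_h$ (with contraction constant $\rho<1$ under the standing assumption $\gamma^{-1+\frac\mu2}h^{2-\mu}|\log h|\le h_0$), I would write $\theta_h = [M_h u_{\gamma,h}(0) - M_h(P_h u_\gamma(0))] + [M_h(P_h u_\gamma(0)) - P_h u_\gamma(0)]$. The first bracket is bounded by $\rho\|\theta_h\|_{\dH{-\mu}}$ and absorbed into the left-hand side, reducing the claim to the consistency estimate $\|M_h(P_h u_\gamma(0)) - P_h u_\gamma(0)\|_{\dH{-\mu}} \le c\gamma^{-1}h^2|\log h|$. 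Because $u_\gamma(0)$ is the fixed point of the continuous regularized map underlying \eqref{ugamma}, we have $P_h u_\gamma(0) = P_h (\gamma I + F(T))^{-1}\bigl(g - \int_0^T E(T-s) f(S(s)u_\gamma(0))\,\ds\bigr)$, so the consistency term splits into a data part $D = (\gamma I + F_h(T))^{-1}P_h g - P_h(\gamma I + F(T))^{-1}g$ and a nonlinear part $N$ assembled from the integral terms.

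For $D$ I would avoid the two-resolvent identity (which would cost a spurious factor $\gamma^{-1}$) and instead use the commutator form $D = (\gamma I + F_h(T))^{-1}[P_h F(T) - F_h(T)P_h](\gamma I + F(T))^{-1}g$, obtained by inserting $(\gamma I + F_h(T))(\gamma I + F_h(T))^{-1}$ and rearranging. Estimating $(\gamma I + F_h(T))^{-1}$ from $L^2\II$ into $\dH{-\mu}$ by $c\gamma^{-1+\frac\mu2}$ (the discrete counterpart of Lemma~\ref{lemma:gammainv}, cf. Lemma~\ref{lem:op-reg-semi}), bounding $[P_h F(T) - F_h(T)P_h]$ by the $O(h^2|\log h|)$ homogeneous-problem FEM error, and using the regularity $\|(\gamma I + F(T))^{-1}g\|_{L^2\II} \le c\gamma^{-\frac\mu2}\|g\|_{\dH{2-\mu}}$ together with $\|g\|_{\dH{2-\mu}}\le c\|u_0\|_{\dH{-\mu}}$ (Lemma~\ref{lem:Reg}), the three factors $\gamma^{-1+\frac\mu2}$, $h^2|\log h|$, and $\gamma^{-\frac\mu2}$ multiply to exactly $\gamma^{-1}h^2|\log h|$. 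The nonlinear part $N$ is handled by the same commutator manipulation and splits into (i) a frozen-argument kernel error $\int_0^T (E_h(T-s)P_h - E(T-s)) f(S(s)u_\gamma(0))\,\ds$, which I would bound by splitting the integral at $s = T - h^{2/\alpha}$ and invoking the approximation estimates \eqref{eqn:Eh-approx}--\eqref{eqn:Eh-approx-2} followed by the inverse inequality \eqref{eqn:inverse} to pass into $\dH{-\mu}$, and (ii) the semilinear direct-problem error $\|S_h(s)P_h u_\gamma(0) - S(s)u_\gamma(0)\|_{L^2\II}$ controlled through the Lipschitz condition \eqref{eqn:lipconstant}; both contributions again total $O(\gamma^{-1}h^2|\log h|)$ once the regularized resolvent is accounted for.

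The main obstacle is securing the sharp regularization scaling $\gamma^{-1}$ rather than $\gamma^{-2}$. This hinges on (a) systematically using the commutator/single-resolvent representation in place of the resolvent difference, so that only one factor $(\gamma I + F_h(T))^{-1}$ ever appears, and (b) balancing its $\dH{-\mu}$-smoothing exponent $\gamma^{-1+\frac\mu2}$ against the $\gamma^{-\frac\mu2}$ growth of the intermediate data norm. The negative-norm bookkeeping, which repeatedly converts $L^2\II$ FEM rates into $\dH{-\mu}$ rates through \eqref{eqn:inverse} and the norm equivalence \eqref{eqn:equiv}, is where the logarithmic factor $|\log h|$ is generated. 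Finally, the smallness hypothesis $\gamma^{-1+\frac\mu2}h^{2-\mu}|\log h|\le h_0$ is precisely what guarantees simultaneously that $M_h$ contracts (so the first bracket can be absorbed) and that all resolvent-difference remainders remain controlled.
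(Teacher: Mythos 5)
Your proposal is correct and reaches the stated bound, but it organizes the key consistency estimate differently from the paper. Both proofs share the same top-level skeleton: split at $P_h u_\gamma(0)$, bound the projection error by \eqref{eqn:Ph-neg} together with Lemma \ref{lem:regugamma}, and absorb the contribution $M_h u_{\gamma,h}(0)-M_h(P_h u_\gamma(0))$ via the contraction of Theorem \ref{lemma:stability_semi}. Where you diverge is in the treatment of the remaining defect. The paper never forms the data discrepancy $D$ at all: it writes the error equation for $\zeta_h=u_{\gamma,h}-P_h u_\gamma$ in strong form, where the terminal conditions cancel exactly to give $\gamma\zeta_h(0)+\zeta_h(T)=0$, at the price of the extra source $\Delta_h(R_h-P_h)u_\gamma$ coming from the identity $P_h A=A_h R_h$; that source is then killed with the $A_h^{1-\epsilon}$-smoothing and the choice $\epsilon=1/|\log h|$. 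It also routes the nonlinear defect through the auxiliary semidiscrete solution $u_\gamma^h(s)=S_h(s)P_hu_\gamma(0)$ and invokes the known nonsmooth-data FEM error estimate for the semilinear direct problem (\cite[Theorem 4.4]{Karaa:2019}) to get $\|u_\gamma^h(s)-u_\gamma(s)\|_{L^2\II}\le ch^2|\log h|\,s^{-\alpha}\|u_\gamma(0)\|_{L^2\II}$. Your commutator representation of $D$ and the balancing $\gamma^{-1+\frac\mu2}\cdot h^2\cdot\gamma^{-\frac\mu2}$ is algebraically the same two-resolvent identity the paper uses in Theorem \ref{lemma:stability_semi}, just with the right-hand resolvent estimated through the $\dH{2-\mu}$-smoothness of $g$ instead of through $F_h(T)^{-1}$ acting on the range of $E_h$; both choices avoid the spurious $\gamma^{-2}$. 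Two points in your write-up deserve tightening: (i) the step ``$\|S_h(s)P_hu_\gamma(0)-S(s)u_\gamma(0)\|_{L^2\II}$ controlled through the Lipschitz condition'' is not self-contained --- the Lipschitz condition alone only closes a Gronwall loop, and you still need the linear nonsmooth-data estimates (Lemma \ref{lem:error-linear}-type bounds or the cited result of Karaa) to seed it; and (ii) the mapping bound $\|(\gamma I+F_h(T))^{-1}\|_{L^2\II\to\dH{-\mu}}\le c\gamma^{-1+\frac\mu2}$ is not literally in Lemma \ref{lem:op-reg-semi}, which only records the $L^2$-to-$L^2$ case, though it does follow from the same spectral argument combined with \eqref{eqn:equiv}. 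Neither issue is a gap in substance.
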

% \begin{proof}
%     From equation \eqref{ugamma} and  \eqref{eqn:fde:reg:semisol}, we can  arrive at  
%     \begin{align*}
%         u_{\gamma,h}(0)- u_{\gamma}(0)=&(\gamma I+F_h(T))^{-1}[\mathcal{F}_h- \Tilde{\mathcal{F}}_h]+(\gamma I+F_h(T))^{-1}[ \Tilde{\mathcal{F}}_h-\hat{\mathcal{F}}_h]+\\&[(\gamma I+F_h(T))^{-1}-(\gamma I+F(T))^{-1}][\hat{\mathcal{F}}_h-\mathcal{F}],
%     \end{align*}
%     where 
%     \begin{align*}
%         \mathcal{F}_h&=P_h g-\int_0^T E_h(T-s)P_hf(S_h(s)u_{\gamma,h}(0))\ \ds\\
%        \Tilde{\mathcal{F}}_h&=P_h g-\int_0^T E_h(T-s)P_hf(S_h(s)u_{\gamma}(0))\ \ds\\
%         \hat {\mathcal{F}}_h&= g-\int_0^T E(T-s)f(S_h(s)u_{\gamma}(0))\ \ds\\
%        \mathcal{F}&=g-\int_0^T E(T-s)f(S(s)u_{\gamma}(0))\ \ds.
%     \end{align*}
%     Using the same argument as the proof in Theorem~\ref{lemma:stability_semi}, we can derive that 
%     \begin{equation}
%         \|(\gamma I+F_h(T))^{-1}[\mathcal{F}_h- \Tilde{\mathcal{F}}_h]\|\le 
%     \end{equation}
%     \begin{equation}
%          \|\mathcal{F}_h-  \Tilde{\mathcal{F}}_h\|\le c_Th^2|\log h|(
%     \end{equation}
% \end{proof}
\begin{proof} 
We  shall use the splitting
\begin{equation*}
    u_{\gamma,h}(0)- u_{\gamma}(0)=(u_{\gamma,h}(0)-P_h u_{\gamma}(0))+(P_hu_{\gamma}(0)- u_{\gamma}(0))\coloneqq \zeta_h(0)+\varrho(0).
\end{equation*}
From Lemma \ref{lem:regugamma} and the approximation property of $P_h$ in \eqref{eqn:Ph-neg}, we have 
\begin{equation*}
\|\varrho(0)\|_{\dH{-\mu}}\le ch^{2}\|u_\gamma(0)\|_{\dH{2-\mu}}\le ch^{2}\gamma^{-1}.
\end{equation*} 
    
Now we turn to the bound of $\zeta_h(0)$. Using the fact $P_h A v=A_h R_hv$ leads to 
\begin{align*}
   \partial^\alpha_t\zeta_h(t)- \Delta_h\zeta_h(t)= P_h(f(u_{\gamma,h})-f(u_{\gamma}))+\Delta_h(R_h-P_h)u_\gamma(t),\quad
 \gamma \zeta_h(0)+ \zeta_h(T)= 0.
\end{align*}
Applying the solution representation~\eqref{sol_dis_op} yields 
% \begin{align*}
%     \zeta_h(t)=&F_h(t)\zeta_h(0)+\int_0^t E_h(t-s)P_h(f(u_{\gamma,h})-f(u^h_{\gamma}))\ds\\&+\int_0^t E_h(t-s)[P_h(f(u^h_{\gamma})-f(P_hu_{\gamma}))+P_h(f(P_hu_{\gamma})-f(u_{\gamma}))]\ \ds\\&+\int_0^t E_h(t-s)\Delta_h(R_h-P_h)u_\gamma(s)\ \ds.
% \end{align*}
\begin{align*}
\zeta_h(t)=&F_h(t)\zeta_h(0)+\int_0^t E_h(t-s)[P_h(f(u_{\gamma,h})-f(u_{\gamma})+\Delta_h(R_h-P_h)u_\gamma(s)]\ \ds.
\end{align*}
Using $ \gamma \zeta_h(0)+ \zeta_h(T)= 0$ gives
\begin{align*}
    \zeta_h(0)=& -(\gamma I+F_h(T))^{-1}\int_0^T E_h(T-s)[P_h(f(u_{\gamma,h})-f(u_{\gamma})+\Delta_h(R_h-P_h)u_\gamma(s)]\ \ds\\
   = &-(\gamma I+F_h(T))^{-1}\bigg(\int_0^T E_h(T-s)[P_h(f(u_{\gamma,h})-f(u^h_{\gamma}))]\ds\\&+\int_0^TE_h(T-s)[P_h(f(u^h_{\gamma})-f(u_{\gamma})+\Delta_h(R_h-P_h)u_\gamma(s)]\ \ds\bigg),
\end{align*}
where $u_{\gamma}^h(t)$ solves the semidiscrete problem \eqref{fem-operator} with $u_{\gamma}^h(0)=P_hu_\gamma(0)$.
From \cite[Theorem 4.4]{Karaa:2019},  Lipschitz condition~\eqref{eqn:lipconstant}, Lemma \ref{lem:op-d} (iii) and Lemma \ref{lem:regugamma}, we arrive at 
\begin{align*}
& \|(\gamma I+F_h(T))^{-1}\int_0^T E_h(T-s)P_h(f(u_{\gamma}^h)-f(u_{\gamma}))\ \ds\|_{\dH{-\mu}}\\
\le& c \| A_h^{-\frac\mu2}(\gamma I+F_h(T))^{-1}\int_0^T E_h(T-s)P_h(f(u_{\gamma}^h)-f(u_{\gamma}))\ \ds\|_{L^2\II}\\
\le& c \gamma^{-1+\frac\mu2}\int_0^T (T-s)^{\al-1}\|u_{\gamma}^h(s)-u_{\gamma}(s)\|_{L^2\II}\ds\\
\le& c \gamma^{-1+\frac\mu2}h^2 |\log h| \| u_{\gamma}(0) \|_{L^2\II} \int_0^T (T-s)^{\al-1}s^{-\al} \ \ds
\le  c\gamma^{-1} h^2 |\log h|.
\end{align*}

Then, using Lemma \ref{lem:op-d} (iii), Lemma \ref{lem:regugamma}, Lemma \ref{lem:Reg}, and choosing $\epsilon = 1/|\log h|$, we deduce that
\begin{align*}
& \|(\gamma I+F_h(T))^{-1}\int_0^T E_h(T-s)\Delta_h(R_h-P_h)u_\gamma(s)\ \ds\|_{\dH{-\mu}} \\
\le&  c\gamma^{-1+\frac\mu2}h^{2-2\epsilon}\int_0^T (T-s)^{\alpha\epsilon-1}\|u_\gamma(s)\|_{\dot H^2(\Omega)}\ds \\
\le&c\gamma^{-1+\frac{\mu}{2}}h^{2-2\epsilon} \|u_{\gamma}(0)  \|_{L^2\II}
\int_0^T (T-s)^{\alpha\epsilon-1}s^{-\al}\ds
   \le c\gamma^{-1}h^{2-2\epsilon}\frac{1}{\epsilon} \le c\gamma^{-1}h^2|\log h|.
\end{align*}

% \begin{align*}
% & \|(\gamma I+F_h(T))^{-1}\int_0^T E_h(T-s)\Delta_h(R_h-P_h)u_\gamma(s)\ \ds\|_{\dH{-\mu}} \\
% \le&  c\gamma^{-1+\frac\mu2}h^{2-2\epsilon}\int_0^T (T-s)^{\alpha\epsilon-1}\|u_\gamma(s)\|_{\dot H^2(\Omega)}\ds \\
% \le&c\gamma^{-1}h^{2-2\epsilon}\int_0^T (T-s)^{\alpha\epsilon-1}s^{-\al}\ds
%    \le c\gamma^{-1}h^{2-2\epsilon}\frac{1}{\epsilon} \le c\gamma^{-1}h^2|\log h|.
% \end{align*}

The desired results follow from Theorem~\ref{lemma:stability_semi}.
% From Lemma~\ref{lam:phi12}, we have for $t \in (0,T]$
%     \begin{align*}
%      \|A_h^\frac\beta2 (u_{\gamma,h}-u^h_{\gamma})(t)\|
%      &\le c\left(t^{-\alpha\beta/2}+L K(LT^\alpha)t^{\alpha(1-\beta/2)}\right)\| u_{\gamma,h}(0)-P_hu_{\gamma}(0)\|.
% \end{align*}
% Using Lemma~\ref{lem:op-reg-semi}, Lemma~\ref{lem:op-d} and applying the argument in Theorem~\ref{thm:stability} lead to   the desire result.
\end{proof}

 Following the argument in Theorem~\ref{thm:err-reg}, we obtain the following error estimate.
 \begin{theorem} \label{thm:err-reg-semi}
Let $\mu\in(0,1]$ be a fixed parameter and $u_0 \in \dH{-\mu+q}$ with $q\in (0,2]$. Define $T_*$ as the constant such that $B_\mu(T_*) < 1$, where the function $B_\mu(\cdot)$ is given in \eqref{eqn:contraction}. Assume that $T < T_*$ and  $\gamma^{-1 + \frac{\mu}{2}} h^{2-\mu} |\log h| \leq h_0$ with $h_0$ being given in Theorem \ref{lemma:stability_semi}. Let $u$ and $u_{\gamma,h}^\delta$ be the solutions to the backward problem \eqref{eq:back_non} 
 and regularized problem \eqref{eqn:fde:reg:noisysemi}, respectively. Then
\begin{align*}
 \|u_{\gamma,h}^\delta(0)-u_0\|_{\dH{-\mu}}\le c(\gamma^{-1} \delta+\gamma^{-1}h^2|\log h|+ \gamma^{\frac{q}{2}}).
\end{align*}
Moreover, for $u_0\in \dH{-\mu}$, there holds 
     \begin{align*}
           \|u_{\gamma,h}^\delta(0)-u_0\|_{\dH{-\mu}}\rightarrow 0\quad \text{as} \quad\delta,\ \gamma,\ h\rightarrow 0^+,\   \ \frac{\delta}{\gamma}\rightarrow 0^+\ \text{and } \ 
           \frac{h^2|\log h|} {\gamma}  \rightarrow 0^+.
     \end{align*}
 \end{theorem}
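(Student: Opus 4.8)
The plan is to collapse the three error mechanisms—regularization, spatial discretization, and measurement noise—into a single triangle inequality and then quote the lemmas already established. I would write
\[
u_{\gamma,h}^\delta(0)-u_0 = \big(u_{\gamma,h}^\delta(0)-u_{\gamma,h}(0)\big) + \big(u_{\gamma,h}(0)-u_\gamma(0)\big) + \big(u_\gamma(0)-u_0\big),
\]
and bound each summand in the $\dH{-\mu}$ norm. The last two terms are immediate: Lemma~\ref{lemma:gammainequ} yields $\|u_\gamma(0)-u_0\|_{\dH{-\mu}}\le c\gamma^{q/2}\|u_0\|_{\dH{-\mu+q}}$, which accounts for the $\gamma^{q/2}$ contribution, and Lemma~\ref{lem:esitugh} yields $\|u_{\gamma,h}(0)-u_\gamma(0)\|_{\dH{-\mu}}\le c\gamma^{-1}h^2|\log h|$, the spatial term. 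So the only genuinely new estimate is the noise term $e_\delta(0):=u_{\gamma,h}^\delta(0)-u_{\gamma,h}(0)$.

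For $e_\delta(0)$ I would reproduce the argument of Theorem~\ref{thm:err-reg}, but entirely in the discrete setting. Subtracting the solution representations \eqref{eqn:fde:reg:semisol} and \eqref{eqn:fde:reg:noisysemisol},
\[
e_\delta(0) = (\gamma I+F_h(T))^{-1}\Big(P_h(g_\delta-g) - \int_0^T E_h(T-s)P_h\big[f(S_h(s)u^\delta_{\gamma,h}(0))-f(S_h(s)u_{\gamma,h}(0))\big]\ds\Big).
\]
For the data term I would pass from $\dH{-\mu}$ to the dominating $L^2\II$ norm, apply the discrete resolvent bound of Lemma~\ref{lem:op-reg-semi} together with $\|P_h\|\le 1$ and the noise bound \eqref{eqn:noisy data}, obtaining $c\gamma^{-1}\delta$. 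For the nonlinear term I would recognize it as precisely $M_h u_{\gamma,h}(0)-M_h u^\delta_{\gamma,h}(0)$, i.e.\ the contraction map $M_h$ of \eqref{eqn:Mh} evaluated at the two fixed points, so that the estimate already carried out in Theorem~\ref{lemma:stability_semi}—using the discrete analogue of \eqref{eqn:gf-1}, the kernel bound of Lemma~\ref{lem:op-d}(iii), Lipschitz continuity of $f$, and Lemma~\ref{lam:phi12h}—applies verbatim.

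The main obstacle, and the reason the hypotheses $T<T_*$ and $\gamma^{-1+\mu/2}h^{2-\mu}|\log h|\le h_0$ enter, is that the nonlinear estimate is not a closed bound: it returns a factor $\|e_\delta(0)\|_{\dH{-\mu}}$ on the right-hand side, with coefficient equal to the contraction constant $c_T\gamma^{-1+\mu/2}h^{2-\mu}|\log h|+B_\mu(T)$ of $M_h$ from Theorem~\ref{lemma:stability_semi}. Since that constant is strictly less than one under the stated assumptions, I would absorb the term into the left-hand side and conclude $\|e_\delta(0)\|_{\dH{-\mu}}\le c\gamma^{-1}\delta$; summing the three bounds gives the claimed estimate. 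For the convergence assertion with $u_0\in\dH{-\mu}$, the noise and discretization contributions vanish under $\delta/\gamma\to0$ and $h^2|\log h|/\gamma\to0$, while the regularization term tends to zero by the qualitative limit \eqref{eqn:err-eg2} of Lemma~\ref{lemma:gammainequ}, which requires no source condition on $u_0$.
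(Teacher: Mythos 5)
Your proposal is correct and follows essentially the same route the paper intends: the paper gives no explicit proof beyond ``following the argument in Theorem~\ref{thm:err-reg},'' and that argument is exactly your three-term splitting, with Lemma~\ref{lemma:gammainequ} and Lemma~\ref{lem:esitugh} handling the regularization and spatial terms and the discrete contraction/absorption argument (via Lemma~\ref{lem:op-reg-semi} and the estimates of Theorem~\ref{lemma:stability_semi}) closing the bound $\|e_\delta(0)\|_{\dH{-\mu}}\le c\gamma^{-1}\delta$. Your explicit remark that the nonlinear term must be absorbed into the left-hand side using that the contraction constant is strictly less than one is precisely the step the paper leaves implicit.
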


\subsection{Fully discretization and error analysis}
% To begin with, we introduce the fully discrete scheme for the direct problem.
% We divide the time interval $[0,T]$ into a uniform grid, with $ t_n=n\tau$, $n=0,\ldots,N$, and $\tau=T/N$ being
% the time step size. Then we approximate the fractional derivative by using 
% the backward Euler convolution quadrature (with $\varphi^j=\varphi(t_j)$) \cite{lubich1988convolution, jin2017correction}:
In this section, we propose an inversion algorithm with space-time discretization and establish an error bound for the numerical reconstruction. 
Firstly, we describe the fully discrete scheme for the direct problem.
We partition the time interval $[0,T]$ into a uniform grid, with $ t_n=n\tau$, $n=0,\ldots,N$, and $\tau=T/N$ representing
the time step size. We then approximate the fractional derivative using 
the backward Euler convolution quadrature (with $\varphi^j=\varphi(t_j)$) as referenced in~\cite{lubich1988convolution, JinZhou:2023book}:
\begin{align*}
\bar\partial_\tau^\alpha \varphi^n = \sum_{j=0}^n \omega_{n-j}^{(\alpha)} (\varphi^{j} - \varphi^0)
\quad\mbox{ with }  ~ \omega_j^{(\alpha)}=  (-1)^j\frac{\Gamma(\alpha+1)}{\Gamma(\alpha-j+1)\Gamma(j+1)}.
\end{align*}
Consider the  linearized  fully discrete scheme for problem \eqref{eqn:fde}: find ${U_h^n}\in X_h$ such that for $1\le n\le N$
\begin{align}\label{eqn:fully}
\bDal U_h^n +A_hU_h^n= P_h f(U_h^{n-1})\quad \text{with}~~U_h^0 = P_hu_0.
\end{align}  
By means of Laplace transform 
with $1\le n \le N$, 
the solution representation of fully discrete solution $U_h^n$ can be written as \cite{zhang2022identification, zhang2023stability}
\begin{align}\label{eqn:back-fully-non}
    U_h^n=F_{h,\tau}^nU_h^0+\tau \sum_{k=1}^nE_{h,\tau}^{n-k}P_hf(U_h^{k-1}) :=S_{h,\tau}^nU_h^{0},
\end{align}
where
    \begin{align}\label{eq:op-fully}
        F_{h,\tau}^n=\frac{1}{2\pi i}\int_{\Gamma_{\theta,\sigma}^\tau} e^{zt_{n-1}}\delta_{\tau}(e^{-z\tau})^{\alpha-1}G_h(z)\ \dz, \ \   E_{h,\tau}^n=\frac{1}{2\pi i}\int_{\Gamma_{\theta,\sigma}^\tau} e^{zt_n}
        G_h(z)\ \dz 
    \end{align}
with $G_h(z)=(\delta_\tau(e^{-z\tau})^{\alpha}+A_h)^{-1},\ \delta_\tau(\xi)=(1-\xi)/\tau$ and the contour
$\Gamma_{\theta,\sigma}^\tau :=\{ z\in \Gamma_{\theta,\sigma}:|\Im(z)|\le {\pi}/{\tau} \}$, Oriented with an increasing imaginary part, where $\theta\in(\pi/2,\pi)$ is close to $\pi/2$. Here, we employ $S_{h,\tau}^n$ to denote the fully discrete scheme solution operator. Then we can rewrite \eqref{eqn:back-fully-non} as 
\begin{equation}\label{eqn:back-fully-non-new}
    U_h^n=S_{h,\tau}^nU_h^{0}=F_{h,\tau}^nU_h^0+\tau \sum_{k=1}^nE_{h,\tau}^{n-k}P_hf(S_{h,\tau}^{k-1}U_h^0).
\end{equation}
% The next lemma provides some approximation properties of solution operators 
% $F_{h,\tau}^n$ and $E_{h,\tau}^n$.
% See \cite[Lemma 4.2]{zhang2022identification} and \cite[Theorem 3.5]{jin2016two} for the proof of the first estimate,
% and  \cite[Lemma 4.2]{jin2019subdiffusion} for the second estimate.
% The next lemma provides the  approximation property of solution operator
% $F_{h,\tau}^n$, which will be useful for the convergence analysis.
% The following lemma presents the approximation property of the solution operator $F_{h,\tau}^n$, which proves beneficial for the convergence analysis.
% % and $E_{h,\tau}^n$.
% See \cite[Lemma 4.2]{zhang2022identification} and \cite[Theorem 3.5]{jin2016two} for the proof of  first estimate.
% % of the first estimate,
% % and  \cite[Lemma 4.2]{x} for the second estimate.
% \begin{lemma}
% \label{lem:op-err:fully}
% For the operator $F_{h,\tau}^n$ and $E_{h,\tau}^n$ defined in \eqref{eq:op-fully} and  ${\tilde\beta}\in[0,1]$, we have 
% \begin{align*}
% &\| A_h^{\tilde\beta} (F_{h,\tau}^n-  F_{h}(t_n))\|\le c\tau t_n^{-1-{\tilde\beta}\alpha}.
% % \\
% % &\big\|\tau A_h^{\tilde\beta} E_{h,\tau}^{{n}-k}-\int_{t_{k-1}}^{t_k} A_h^{\tilde\beta} E_h(t_{n}-s)ds\big\|\le c\tau^2(t_{n}-t_k+\tau)^{-(2-(1-{\tilde\beta})\alpha)}.
% \end{align*}
% \end{lemma}

 Observe that the solution operators $F_{h,\tau}^n$ and $E_{h,\tau}^n$ satisfy the following smoothing properties. The proof of these properties is identical to the one provided in Lemma \ref{lem:op}.
 % In order to establish (iv), we will utilize the inequalities $\frac{c_0}{1+\lambda_j^ht_n^\al}\le F_{h,\tau}^n(\lambda_j^h)\le\frac{c_1}{1+\lambda_j^ht_n^\al}$ as stated in \cite[Corollary 4.1]{zhang2020numerical}, where $\lambda_j^h$ represents the eigenvalue of $-\Delta_h$. Additionally, we require $|E_{h,\tau}^n(\lambda_j^h)|\le\frac{c t_n^{\alpha-1}}{1+\lambda_j^ht_n^\al}$, which can be derived in a similar manner to \cite[Corollary 4.1]{zhang2020numerical}.

\begin{lemma}\label{lem:op:fully}
Let $F_{h,\tau}^n$ and $E_{h,\tau}^n$ be the operators in \eqref{eq:op-fully}.
Then they satisfy the following properties for any $n\ge 1$ and $v_h\in X_h$,
\begin{itemize}
\item[$\rm(i)$] $\|
A_h^\nu F_{h,\tau}^n v_h\|_{L^2\II}  + t_{n+1}^{1-\alpha}  \| A_h^\nu  E_{h,\tau}^n v_h  \|_{L^2\II}  \le c  t_{n+1}^{-\nu\alpha} \|v_h\|_{L^2\II}$ with $0\le \nu\le 1$;
% \item[$\rm(ii)$] $\|F_{h,\tau}^n v_h\| + t_n^{1-\alpha}\|E_{h,\tau}^n v_h\|  \le c \min(1,  t_n^{-\alpha}) \|v_h\|$;
\item[$\rm(ii)$] $\|( F_{h,\tau}^n )^{-1} v_h \|_{L^2\II}\le c (1+ t_n^\alpha) \| A_h v _h\|_{L^2\II}$.
 % \item[$\rm(iv)$] $\|A_h^{-\nu}(F_{h,\tau}^N)^{-1}E_{h,\tau}^{k}v _h\|\le c(t_{k+1}^{\alpha-1}+t_{k+1}^{\alpha\nu-1}T^\alpha)\|v _h\| $  with $0\le \nu\le 1$.
\end{itemize}
The constant $c$ is independent of  $n$.
 \end{lemma}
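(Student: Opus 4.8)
The plan is to read both estimates directly off the contour representations \eqref{eq:op-fully}, mirroring the proof of Lemma~\ref{lem:op} but with the continuous symbol $z$ replaced by the backward Euler generating symbol $\delta_\tau(e^{-z\tau})$. The two ingredients I would isolate first are the standard properties of this symbol on the truncated sector (see \cite{JinZhou:2023book, lubich1988convolution}): there exist $\theta'\in(\pi/2,\pi)$ and constants $c_0,c_1>0$ such that, for all $z\in\Gamma_{\theta,\sigma}^\tau$, one has $\delta_\tau(e^{-z\tau})\in\Sigma_{\theta'}:=\{w\in\mathbb{C}\setminus\{0\}:|\arg w|\le\theta'\}$ together with the two-sided bound $c_0|z|\le|\delta_\tau(e^{-z\tau})|\le c_1|z|$; and the resolvent estimate for the positive self-adjoint operator $A_h$, namely $\|A_h^\nu(w+A_h)^{-1}\|\le c|w|^{\nu-1}$ for $w\in\Sigma_{\alpha\theta'}$ and $0\le\nu\le1$ (which follows from spectral calculus since $\alpha\theta'<\pi$). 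Composing these gives the key uniform bound $\|A_h^\nu G_h(z)\|\le c|z|^{\alpha(\nu-1)}$ on the truncated contour, the discrete counterpart of the resolvent bound used for $E(t)$.

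For part (i) I would estimate the $E_{h,\tau}^n$ integral first. Moving the $A_h^\nu$ factor inside and applying the bound above gives
\[
\|A_h^\nu E_{h,\tau}^n v_h\|_{L^2\II}\le c\|v_h\|_{L^2\II}\int_{\Gamma_{\theta,\sigma}^\tau}e^{\Re(z)t_n}|z|^{\alpha(\nu-1)}\,|\dz|.
\]
By analyticity within the strip $|\Im z|\le\pi/\tau$ the contour radius may be taken as $\sigma\sim t_{n+1}^{-1}$ (compatible with the truncation since $t_{n+1}>\tau$). Splitting into the two rays $z=\rho e^{\pm i\theta}$, where $\Re(z)=\rho\cos\theta<0$ because $\theta>\pi/2$, and the circular arc $|z|=\sigma$, the substitution $s=\rho t_{n+1}$ turns the ray contribution into $c\,t_{n+1}^{\alpha(1-\nu)-1}\int_0^\infty e^{-s|\cos\theta|}s^{\alpha(\nu-1)}\,\ds$, convergent for $0\le\nu\le1$, while the arc contributes the same order. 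Multiplying by $t_{n+1}^{1-\alpha}$ yields exactly $t_{n+1}^{-\nu\alpha}$. The $F_{h,\tau}^n$ integral is handled identically after absorbing the extra factor $|\delta_\tau(e^{-z\tau})^{\alpha-1}|\le c|z|^{\alpha-1}$ (changing the exponent to $|z|^{\alpha\nu-1}$, which integrates to order $t^{-\nu\alpha}$) and noting $e^{zt_{n-1}}$ differs from $e^{zt_n}$ only by a harmless bounded factor; the two bounds combine into (i).

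For part (ii) I would instead use the spectral decomposition of $A_h$. On each eigenspace the operator $F_{h,\tau}^n$ acts as multiplication by a scalar $f_{h,\tau}^n(\lambda_j^h)$, the backward Euler convolution quadrature approximation of the Mittag--Leffler value $E_{\alpha,1}(-\lambda_j^h t_n^\alpha)$, so that $\|(F_{h,\tau}^n)^{-1}v_h\|_{L^2\II}^2=\sum_j|f_{h,\tau}^n(\lambda_j^h)|^{-2}(v_h,\varphi_j^h)^2$. Just as the continuous bound rests on $E_{\alpha,1}(-\lambda t_n^\alpha)\ge c(1+\lambda t_n^\alpha)^{-1}$ from \cite{Jin:book2021}, the claim reduces to the uniform lower bound $|f_{h,\tau}^n(\lambda)|\ge c(1+\lambda t_n^\alpha)^{-1}$; combined with $1+\lambda t_n^\alpha\le c(1+t_n^\alpha)\lambda$ for $\lambda\ge\lambda_1^h\ge\lambda_1>0$, this yields $|f_{h,\tau}^n(\lambda_j^h)|^{-1}\le c(1+t_n^\alpha)\lambda_j^h$ and hence $\|(F_{h,\tau}^n)^{-1}v_h\|_{L^2\II}\le c(1+t_n^\alpha)\|A_h v_h\|_{L^2\II}$.

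The main obstacle is precisely this lower bound on the discrete symbol $f_{h,\tau}^n(\lambda)$. Unlike the continuous Mittag--Leffler function, which is positive with the clean decay bound cited above, the backward Euler CQ approximation is defined only implicitly through the contour integral and need not be sign-definite, so its invertibility and the quantitative lower bound demand a separate contour argument rather than a direct appeal to positivity. I would establish it by comparing $f_{h,\tau}^n(\lambda)$ with $E_{\alpha,1}(-\lambda t_n^\alpha)$ via the known CQ error estimate and absorbing the discretization defect into the constant for $\tau$ small; this is the step that is \emph{identical in spirit} to, yet technically more delicate than, the continuous proof of Lemma~\ref{lem:op}.
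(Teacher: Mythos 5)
Your treatment of part (i) is correct and is exactly the argument the paper intends when it declares the proof ``identical to that of Lemma~\ref{lem:op}'': the sectoriality and two-sided bound $c_0|z|\le|\delta_\tau(e^{-z\tau})|\le c_1|z|$ on $\Gamma_{\theta,\sigma}^\tau$, the resolvent estimate $\|A_h^\nu(w+A_h)^{-1}\|\le c|w|^{\nu-1}$, the choice $\sigma\sim t_{n+1}^{-1}$, and the ray/arc splitting are the standard ingredients, and your bookkeeping delivers the stated rates for both $E_{h,\tau}^n$ and $F_{h,\tau}^n$.

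Part (ii), however, has a genuine gap at precisely the step you flag as delicate. You propose to obtain the lower bound $|f_{h,\tau}^n(\lambda)|\ge c(1+\lambda t_n^\alpha)^{-1}$ by comparing the discrete symbol with $E_{\alpha,1}(-\lambda t_n^\alpha)$ via the known CQ error estimate and ``absorbing the discretization defect into the constant for $\tau$ small.'' The available error bound (cf.\ Lemma~\ref{lem:op-err:fully} with $\nu=0$) is $|f_{h,\tau}^n(\lambda)-E_{\alpha,1}(-\lambda t_n^\alpha)|\le c\,\tau t_n^{-1}=c/n$, uniformly in $\lambda$, whereas the quantity you must protect, $E_{\alpha,1}(-\lambda t_n^\alpha)\sim(1+\lambda t_n^\alpha)^{-1}$, tends to zero as $\lambda\to\infty$ for fixed $n$. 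Since the discrete eigenvalues reach $\lambda_j^h\sim h^{-2}$, the perturbation $c/n$ dominates the target lower bound unless $n$ is coupled to $h^{-2}$, a restriction that is neither assumed nor acceptable; so the defect cannot be absorbed, and the argument fails exactly in the large-$\lambda_j^h$ regime that the factor $\|A_hv_h\|_{L^2(\Omega)}$ in the statement is there to control. The lower bound must instead be proved directly on the discrete symbol: on each eigenspace $f_{h,\tau}^n(\lambda)=y^n$ is the backward Euler CQ solution of the scalar problem $\partial_t^\alpha y+\lambda y=0$, $y(0)=1$, and one shows $y^n>0$ together with $y^n\ge c(1+\lambda t_n^\alpha)^{-1}$ with $c$ independent of $\lambda$, $\tau$ and $n$, either through the sign structure of the CQ weights ($\omega_0^{(\alpha)}=1$, $\omega_j^{(\alpha)}<0$ for $j\ge1$) or through a contour estimate of the generating function; this is the route taken in the sources behind this lemma, e.g.\ \cite{zhang2020numerical} and \cite{JinZhou:2023book}. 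Once that bound is in hand, your closing chain $1+\lambda t_n^\alpha\le c(1+t_n^\alpha)\lambda$ for $\lambda\ge\lambda_1>0$ does yield (ii).
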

We now present the fully discrete scheme for solving the backward problem~\eqref{eqn:fde:reg:noisy}: find 
$U_{h,\gamma}^{n,\delta} \in X_h$  such that: for $1\le n\le N$ 
\begin{equation}\label{eqn:back-fully}
\Bar{\partial}^\alpha_\tau U_{h,\gamma}^{n,\delta} +A_hU_{h,\gamma}^{n,\delta} = P_hf(U_{h,\gamma}^{n-1,\delta} ) 
\quad \text{with}~~ 
\gamma U_{h,\gamma}^{0,\delta} + U_{h,\gamma}^{N,\delta} = P_h g_\delta.
\end{equation}
% Then $U_{h,\gamma}^{n,\delta}$ can be written as 
Using the solution representation~\eqref{eqn:back-fully-non-new} gives
\begin{equation}\label{eqn:back-fully-gdelta}
\begin{aligned}
U_{h,\gamma}^{0,\delta} = (\gamma I+F_{h,\tau}^N)^{-1}\Big[P_hg_\delta-\tau \sum_{k=1}^N E_{h,\tau}^{N-k}P_hf(U_{h,\gamma}^{k-1,\delta})\Big].
\end{aligned}
\end{equation} 
The next lemma provides some approximation properties of solution operators 
$F_{h,\tau}^n$ and $E_{h,\tau}^n$.
See \cite[Lemma 4.2]{zhang2022identification} for the proof of the first estimate,
and
\cite[Lemma 9.5]{JinZhou:2023book}
for the second estimate.
\begin{lemma}
\label{lem:op-err:fully}
For the operator $F_{h,\tau}^n$ and $E_{h,\tau}^n$ defined in \eqref{eq:op-fully},  for $\nu \in [0,1]$, we have 
\begin{align*}
&\| A_h^\nu (F_{h,\tau}^n-  F_{h}(t_n)\|\le c\tau t_n^{-1-\nu\al},\\
&\Big\|\tau A_h^\nu E_{h,\tau}^{n-k}-\int_{t_{k-1}}^{t_k} A_h^\nu E_h(t_n-s)\ds\Big\|\le c\tau^2(t_{n}-t_k+\tau)^{-(2-(1-\nu)\al)}.
\end{align*}
\end{lemma}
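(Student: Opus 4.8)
The plan is to realize both the fully discrete operators and their semidiscrete counterparts as Dunford--Taylor contour integrals and to compare the integrands pointwise on the common truncated contour $\Gamma_{\theta,\sigma}^\tau$. Writing $w=w(z):=\delta_\tau(e^{-z\tau})$ for brevity, the foundation of the argument is three elementary facts from Lubich's convolution quadrature theory, valid for all $z\in\Gamma_{\theta,\sigma}^\tau$ (so that $|\Im(z)|\le\pi/\tau$): the comparability $|w|\sim|z|$, the consistency bound $|w-z|\le c\tau|z|^2$, and the sectorial property that $w$ stays in a sector $\Sigma_{\theta'}$ with $\theta'<\pi$. The last fact transfers the resolvent estimates to the discrete symbol, yielding $\|A_h^\nu(w^\alpha+A_h)^{-1}\|\le c|z|^{-\alpha(1-\nu)}$ as well as $\|A_h^\nu(z^\alpha+A_h)^{-1}\|\le c|z|^{-\alpha(1-\nu)}$, uniformly in the spectrum of $A_h$. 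These bounds, together with $|w^\alpha-z^\alpha|\le c|z|^{\alpha-1}|w-z|\le c\tau|z|^{\alpha+1}$, are the only structural inputs needed.

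For the first estimate I would split the error into three parts. (i) \emph{Contour truncation:} the tail $\Gamma_{\theta,\sigma}\setminus\Gamma_{\theta,\sigma}^\tau$ of the semidiscrete integral has $|z|\gtrsim\tau^{-1}$, and using $\|A_h^\nu z^{\alpha-1}(z^\alpha+A_h)^{-1}\|\le c|z|^{\nu\alpha-1}$ with $|e^{zt_n}|\le e^{-c|z|t_n}$ and $t_n\ge\tau$, the tail integrates to a term bounded by $c\tau t_n^{-1-\nu\alpha}$. (ii) \emph{Symbol difference} on $\Gamma_{\theta,\sigma}^\tau$: via the decomposition and the second resolvent identity,
\begin{equation*}
A_h^\nu\big[w^{\alpha-1}(w^\alpha+A_h)^{-1}-z^{\alpha-1}(z^\alpha+A_h)^{-1}\big]
\end{equation*}
splits into a piece from $w^{\alpha-1}-z^{\alpha-1}$ and a piece from $(w^\alpha+A_h)^{-1}-(z^\alpha+A_h)^{-1}=(z^\alpha-w^\alpha)(w^\alpha+A_h)^{-1}(z^\alpha+A_h)^{-1}$; distributing $A_h^\nu$ onto one resolvent, both are bounded by $c\tau|z|^{\nu\alpha}$. (iii) \emph{Exponential shift:} $|e^{zt_{n-1}}-e^{zt_n}|\le c\tau|z|e^{-c|z|t_n}$ contributes an equivalent term. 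Collecting (ii)--(iii), the integrand over $\Gamma_{\theta,\sigma}^\tau$ is $\lesssim\tau|z|^{\nu\alpha}e^{-c|z|t_n}$, and parametrizing by $|z|=r$ gives $\int_0^\infty\tau r^{\nu\alpha}e^{-crt_n}\,\dr\sim\tau\,t_n^{-1-\nu\alpha}$, which is the claimed bound.

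The second estimate follows from the same machinery applied one time step at a time. Representing $\int_{t_{k-1}}^{t_k}A_h^\nu E_h(t_n-s)\,\ds$ as a contour integral, the inner time integral evaluates to $e^{z(t_n-t_k)}\tfrac{e^{z\tau}-1}{z}$, so after factoring out the common exponential $e^{zt_{n-k}}$ the comparison reduces to $\tau A_h^\nu(w^\alpha+A_h)^{-1}$ versus $\tfrac{e^{z\tau}-1}{z}A_h^\nu(z^\alpha+A_h)^{-1}$. Writing $\tfrac{e^{z\tau}-1}{z}=\tau+O(\tau^2|z|)$ and using $|w^\alpha-z^\alpha|\le c\tau|z|^{\alpha+1}$ shows this difference is $\le c\tau^2|z|^{1-(1-\nu)\alpha}$, the gain of a second power of $\tau$ reflecting the $O(\tau^2)$ local consistency of the quadrature. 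Integrating $\tau^2|z|^{1-(1-\nu)\alpha}e^{-c|z|(t_n-t_k)}$ over $\Gamma_{\theta,\sigma}^\tau$, now cut off at $|z|\sim\tau^{-1}$, yields $c\tau^2(t_n-t_k+\tau)^{-(2-(1-\nu)\alpha)}$, where the $+\tau$ is exactly the regularization produced by the contour truncation in the limiting case $t_n=t_k$.

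The hard part is the sharp symbol-difference estimate in step (ii): one must extract precisely one factor of $\tau|z|$ while retaining the exact smoothing exponent $|z|^{\nu\alpha}$ uniformly over all eigenvalues of $A_h$, which forces one to distribute $A_h^\nu$ carefully across the two resolvents rather than use a crude product bound that would lose the sharp power of $|z|$ and hence the optimal $t_n^{-1-\nu\alpha}$ rate. For the second estimate the analogous delicacy is bookkeeping the exponent $2-(1-\nu)\alpha$ through the single-step $O(\tau^2)$ consistency and confirming that the $+\tau$ shift genuinely arises from the truncation at $|z|\sim\tau^{-1}$ rather than from the decay $e^{-c|z|(t_n-t_k)}$.
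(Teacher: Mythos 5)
The paper itself gives no proof of this lemma, deferring to \cite[Lemma 4.2]{zhang2022identification} for the first bound and \cite[Lemma 9.5]{JinZhou:2023book} for the second, and your argument is precisely the contour-integral comparison used there: pointwise estimates for $\delta_\tau(e^{-z\tau})-z$ and the associated resolvent differences on $\Gamma_{\theta,\sigma}^\tau$, plus the truncation tail and the $e^{zt_{n-1}}$ versus $e^{zt_n}$ shift. Your sketch is correct, including the careful distribution of $A_h^\nu$ onto a single resolvent to retain the sharp exponents $t_n^{-1-\nu\alpha}$ and $(t_n-t_k+\tau)^{-(2-(1-\nu)\alpha)}$.
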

The following lemma provides a useful estimate of the discrete operator $(\gamma I+F_{h,\tau}^N)^{-1}$; see a detailed proof in \cite[Lemma 4.4]{zhang2022backward}.
\begin{lemma}\label{lem:op-reg-fully}
Let $F_{h,\tau}^n$ and $E_{h,\tau}^n$ be the operators defined in \eqref{eq:op-fully}.
Then there holds 
\begin{equation*}
\|(\gamma I+F_{h,\tau}^N)^{-1}v_h\|_{L^2\II}\le c\gamma^{-1}\|v_h\|_{L^2\II}~~ \text{and} \ ~~
\|F_{h,\tau}^N(\gamma I+F_{h,\tau}^N)^{-1}v_h \|_{L^2\II}\le \|v_h\|_{L^2\II},
\end{equation*}
where $c$ is uniform in $T$, $h$, $\tau$ and $\gamma$.
\end{lemma}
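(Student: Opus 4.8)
The plan is to reduce both bounds to a single pointwise estimate on the eigenvalues of $F_{h,\tau}^N$, exploiting that this operator is a scalar function of the discrete Laplacian. Since $A_h$ is self-adjoint and positive definite on $X_h$ with respect to the $L^2(\Omega)$ inner product, it admits an $L^2$-orthonormal eigenbasis $\{\varphi_j^h\}$ with eigenvalues $\lambda_j^h>0$. The contour representation \eqref{eq:op-fully} shows that $F_{h,\tau}^N$ commutes with $A_h$ and acts diagonally, $F_{h,\tau}^N\varphi_j^h=f_N(\lambda_j^h)\varphi_j^h$, where $f_N(\lambda)$ is obtained by replacing the operator $A_h$ with the scalar $\lambda$ in \eqref{eq:op-fully}. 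Equivalently, $f_N(\lambda_j^h)$ equals $w^N$, the $N$-th term of the scalar sequence generated by the CQ--BE scheme for the homogeneous equation, i.e.\ $\bar\partial_\tau^\alpha w^n+\lambda_j^h w^n=0$ for $1\le n\le N$ with $w^0=1$. In particular $F_{h,\tau}^N$ is self-adjoint with real eigenvalues $f_N(\lambda_j^h)$.

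The crux is the uniform two-sided bound $0\le f_N(\lambda_j^h)\le 1$. I would prove this by a discrete maximum-principle induction on the scalar recurrence, using the sign structure of the backward Euler weights: $\omega_0^{(\alpha)}>0$, $\omega_k^{(\alpha)}<0$ for $k\ge1$, and $\sum_{k\ge0}\omega_k^{(\alpha)}=0$ (the last from evaluating the generating function $(1-\xi)^\alpha$ at $\xi=1$). Isolating $w^n$ and using $w^0=1$ yields
\[
\big(\omega_0^{(\alpha)}+\tau^\alpha\lambda_j^h\big)\,w^n
=\omega_0^{(\alpha)}+\sum_{k=1}^{n}\omega_k^{(\alpha)}\big(1-w^{n-k}\big).
\]
Assuming inductively that $0\le w^{n-k}\le1$, each summand satisfies $\omega_k^{(\alpha)}(1-w^{n-k})\le0$, which forces $w^n\le1$; on the other hand $\sum_{k=1}^{n}\omega_k^{(\alpha)}(1-w^{n-k})\ge\sum_{k=1}^{n}\omega_k^{(\alpha)}\ge\sum_{k=1}^{\infty}\omega_k^{(\alpha)}=-\omega_0^{(\alpha)}$, so the right-hand side is $\ge0$ and hence $w^n\ge0$. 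Since the base case $w^0=1$ lies in $[0,1]$, the induction closes. I expect this positivity step to be the main obstacle, as it is precisely where the special sign pattern of the scheme is needed; the alternative of estimating the contour integral in \eqref{eq:op-fully} directly is more delicate, whereas the recurrence argument is manifestly uniform in $\tau$, $h$ and $N$.

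With $0\le f_N(\lambda_j^h)\le1$ in hand, both estimates follow from Parseval's identity. For the first, $\gamma+f_N(\lambda_j^h)\ge\gamma$ gives
\[
\|(\gamma I+F_{h,\tau}^N)^{-1}v_h\|_{L^2\II}^2
=\sum_j\big(\gamma+f_N(\lambda_j^h)\big)^{-2}(v_h,\varphi_j^h)^2
\le\gamma^{-2}\|v_h\|_{L^2\II}^2.
\]
For the second, $0\le f_N(\lambda_j^h)\le\gamma+f_N(\lambda_j^h)$ gives
\[
\|F_{h,\tau}^N(\gamma I+F_{h,\tau}^N)^{-1}v_h\|_{L^2\II}^2
=\sum_j\Big(\frac{f_N(\lambda_j^h)}{\gamma+f_N(\lambda_j^h)}\Big)^2(v_h,\varphi_j^h)^2
\le\|v_h\|_{L^2\II}^2.
\]
Both constants equal $1$ and hence are trivially uniform in $T$, $h$, $\tau$ and $\gamma$, as claimed; the argument never invokes a parameter-dependent smoothing estimate, only the pointwise spectral bound.
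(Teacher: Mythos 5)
Your proof is correct. Note first that the paper does not actually prove this lemma: it simply cites \cite[Lemma 4.4]{zhang2022backward}, so there is no internal argument to compare against line by line. Your proposal supplies a complete, self-contained proof, and it rests on the right two ingredients: (a) the identification of $F_{h,\tau}^N$ as a scalar function of $A_h$, diagonal in the $L^2$-orthonormal eigenbasis of $A_h$ with eigenvalue $w^N$ solving the homogeneous scalar CQ--BE recurrence with $w^0=1$ (this follows from the representation \eqref{eqn:back-fully-non} with $f\equiv 0$); and (b) the uniform bound $0\le w^N\le 1$. Your induction for (b) is sound: with $\omega_0^{(\alpha)}=1$, $\omega_k^{(\alpha)}<0$ for $k\ge 1$ (valid precisely because $\alpha\in(0,1)$), and $\sum_{k\ge 0}\omega_k^{(\alpha)}=(1-1)^\alpha=0$, the rearranged recurrence $\bigl(\omega_0^{(\alpha)}+\tau^\alpha\lambda_j^h\bigr)w^n=\omega_0^{(\alpha)}+\sum_{k=1}^n\omega_k^{(\alpha)}(1-w^{n-k})$ yields $w^n\le 1$ from the nonpositivity of each summand and $w^n\ge 0$ from $\sum_{k=1}^n\omega_k^{(\alpha)}\ge\sum_{k=1}^\infty\omega_k^{(\alpha)}=-\omega_0^{(\alpha)}$ (the partial sums decrease, so truncation only helps). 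The Parseval step then gives both estimates with constant $1$, and in passing establishes invertibility of $\gamma I+F_{h,\tau}^N$ since its eigenvalues are bounded below by $\gamma$. This is essentially the standard positivity argument for the subdiffusion case and is, if anything, cleaner than going through contour-integral resolvent estimates; the only caveat worth recording is that the whole mechanism hinges on the sign pattern of the BE weights for $\alpha\in(0,1)$ and would not transfer to the diffusion-wave regime $\alpha\in(1,2)$ treated in the cited reference.
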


% We shall prove that the existence and uniqueness of $ u_{\gamma,h}(0)$ and  $ u_{\gamma,h}^\delta(0)$ for $T\in (0,T_*]$ with $B(T_*)<1$ defined in \eqref{eqn:contraction}.

We proceed to examine the existence and uniqueness of $U_{h,\gamma}^{0,\delta}$ in \eqref{eqn:back-fully-gdelta} provided that $T\in (0,T_*]$ with $B_\mu(T_*)<1$, where the function $B_\mu(\cdot)$ is defined in \eqref{eqn:contraction}.  
To this end, for a given $\hat{g}\in X_h$, we define a mapping $M_{h,\tau}:  X_h\rightarrow X_h$ by
\begin{equation}\label{eqn:Mhtaut}
   M_{h,\tau} \phi_h =(\gamma I+F_{h,\tau}^N)^{-1}\left( \hat{g}-\tau \sum_{k=1}^NE_{h,\tau}^{N-k}P_hf(S_{h,\tau}^{k-1}\phi_h)\right) ~~\text{for any}~\phi_h\in X_h,
\end{equation}
where 
 $S_{h,\tau}^k$ is the fully discrete scheme solution operator defined in \eqref{eqn:back-fully-non}.
\begin{lemma}\label{lama:phi12-ful}
Let $S_{h,\tau}^n$ be the solution operator defined in \eqref{eqn:back-fully-non},  and let $L$ be the Lipschitz constant in \eqref{eqn:lipconstant}. Then, for any $\phi^1_h, \ \phi_h^2 \in X_h$  with $\mu\in(0,1]$ the following inequality holds:
\begin{align*}
 \|S_{h,\tau}^n\phi_h^1-S_{h,\tau}^n\phi_h^2\|_{L^2\II}
\le\Big(B_0(\al,T,L,\mu) t_n^{-\alpha\mu/2}+a(t_n) \Big)\|\phi_h^1-\phi_h^2\|_{\dH{-\mu}},
\end{align*}
where the constant $B_0(\al,T,L,\mu)$  is  given in  Lemma \ref{lam:phi12} and 
$$a(t_n) = c_T (\tau^{\alpha} h^{-\mu}(t_n^{-\al}+1) +h^{2-\mu}|\log h|t_n^{-\al} )$$
with a generic constant $c_T$ only depending on $T$.
\end{lemma}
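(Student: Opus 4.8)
The plan is to mirror the semidiscrete argument of Lemma~\ref{lam:phi12h}, now also accounting for the convolution-quadrature (CQ) error in time. Writing $w^n := S_{h,\tau}^n\phi_h^1 - S_{h,\tau}^n\phi_h^2$ and $g^k := f(S_{h,\tau}^{k-1}\phi_h^1) - f(S_{h,\tau}^{k-1}\phi_h^2)$, the representation \eqref{eqn:back-fully-non-new} and the Lipschitz bound \eqref{eqn:lipconstant} give $\|g^k\|_{L^2\II}\le L\|w^{k-1}\|_{L^2\II}$ together with
$$w^n = F_{h,\tau}^n(\phi_h^1-\phi_h^2) + \tau\sum_{k=1}^n E_{h,\tau}^{n-k}P_h g^k.$$
As a preliminary I would record the fully discrete analogue of \eqref{eqn:phi12h}, namely $\|w^k\|_{L^2\II}\le c_T\|\phi_h^1-\phi_h^2\|_{L^2\II}$, which follows from Lemma~\ref{lem:op:fully}(i) and a discrete Gronwall argument with $L^2\II$ data; combined with the inverse inequality \eqref{eqn:inverse} and the norm equivalence \eqref{eqn:equiv} it yields the crude bound $\|w^k\|_{L^2\II}\le c_T h^{-\mu}\|\phi_h^1-\phi_h^2\|_{\dH{-\mu}}$, which serves to turn all discretization errors into source terms that are decoupled from the unknowns $y_{k-1}$.

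For the leading term I would split $F_{h,\tau}^n = F(t_n) + (F_h(t_n)-F(t_n)) + (F_{h,\tau}^n - F_h(t_n))$. Lemma~\ref{lem:op}(i) (with $\nu=\mu/2$, $p=-\mu$) controls the continuous part by $c_1 t_n^{-\alpha\mu/2}\|\phi_h^1-\phi_h^2\|_{\dH{-\mu}}$, reproducing the $B_0$-term; the finite element error \cite[Remark 2.1]{LiYangZhou:2024} contributes $c h^{2-\mu}t_n^{-\alpha}\|\phi_h^1-\phi_h^2\|_{\dH{-\mu}}$; and the CQ error is handled by Lemma~\ref{lem:op-err:fully} combined with the elementary bound $\tau t_n^{-1}\le \tau^\alpha t_n^{-\alpha}$ (valid since $\tau\le t_n$ and $1-\alpha>0$) and the inverse inequality, giving $c\tau^\alpha h^{-\mu}t_n^{-\alpha}\|\phi_h^1-\phi_h^2\|_{\dH{-\mu}}$. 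These three pieces account for the $B_0 t_n^{-\alpha\mu/2}$ term and all $t_n^{-\alpha}$-parts of $a(t_n)$.

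For the nonlinear sum I would introduce the piecewise-constant extension $\bar g(s):=g^k$ for $s\in(t_{k-1},t_k]$ and write
$$\tau\sum_{k=1}^n E_{h,\tau}^{n-k}P_h g^k = \int_0^{t_n}E(t_n-s)P_h\bar g(s)\,\ds + \sum_{k=1}^n\Big(\tau E_{h,\tau}^{n-k}-\int_{t_{k-1}}^{t_k}E(t_n-s)\,\ds\Big)P_h g^k.$$
The main integral is estimated exactly as in Lemma~\ref{lam:phi12} by $c_1 L\int_0^{t_n}(t_n-s)^{\alpha-1}\bar y(s)\,\ds$, where $\bar y(s):=\|w^{k-1}\|_{L^2\II}$ on $(t_{k-1},t_k]$; this is the continuous convolution with the unapproximated kernel $E$ needed so that the generalized Gronwall inequality (Lemma~\ref{lemma:Gronwall}) recovers exactly the constant $B_0(\alpha,T,L,\mu)$. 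The error operator in the second sum splits into the CQ part $\tau E_{h,\tau}^{n-k}-\int_{t_{k-1}}^{t_k}E_h(t_n-s)\,\ds$, bounded by Lemma~\ref{lem:op-err:fully}, and the FE part $\int_{t_{k-1}}^{t_k}(E_h-E)(t_n-s)\,\ds$, bounded by \eqref{eqn:Eh-approx}; applying the uniform bound $\|g^k\|_{L^2\II}\le c_T h^{-\mu}\|\phi_h^1-\phi_h^2\|_{\dH{-\mu}}$ and summing as in \eqref{eqn:Eh-approx-1}--\eqref{eqn:Eh-approx-2} (splitting near the singularity $s=t_n$) produces the remaining $\tau^\alpha h^{-\mu}$ term, since $\sum_k \tau^2(t_n-t_k+\tau)^{-(2-\alpha)}\le c\tau^\alpha$ (the "$+1$" contribution), and the $h^{2-\mu}|\log h|$ term. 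Collecting everything yields $y_n := \|w^n\|_{L^2\II}\le b_n + c_1 L\int_0^{t_n}(t_n-s)^{\alpha-1}\bar y(s)\,\ds$ with $b_n=(c_1 t_n^{-\alpha\mu/2}+a(t_n))\|\phi_h^1-\phi_h^2\|_{\dH{-\mu}}$, and Lemma~\ref{lemma:Gronwall} closes the estimate.

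The main obstacle is the treatment of the CQ sum: one must simultaneously extract the continuous convolution with the exact kernel $E$ (so that Gronwall reproduces precisely $B_0$) and control the CQ and FE error operators uniformly in $n$ despite the singularity of $E(t_n-s)$ as $s\uparrow t_n$, which forces the splitting of the sum near $s=t_n$ and the use of the crude $L^2\II$-to-$\dH{-\mu}$ stability bound to decouple the errors from the unknown $y_{k-1}$. The delicate quantitative point is verifying that the summed CQ error collapses to the $t_n$-independent order $\tau^\alpha h^{-\mu}$, rather than to a worse negative power of $t_n$.
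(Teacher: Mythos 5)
Your proposal follows essentially the same route as the paper's proof: the same three-way splitting of $F_{h,\tau}^n$ into the continuous operator plus FE and CQ errors, the same decomposition of the discrete convolution into the exact convolution with $E$ plus CQ and FE error sums (each decoupled from the unknowns via the crude $L^2$-to-$\dH{-\mu}$ stability bound and the inverse inequality), and the same closing via the generalized Gronwall inequality of Lemma~\ref{lemma:Gronwall}. The only differences are bookkeeping: the paper treats the $k=1$ term separately and writes out the $(t+\tau-s)^{\alpha-1}$ shift that converts $\sum_{k}\int_{t_{k-1}}^{t_k}(t_n-s)^{\alpha-1}\,\ds\,\|w^{k-1}\|_{L^2\II}$ into a genuine convolution against the piecewise extension $\|\eta(s)\|_{L^2\II}$ before invoking Gronwall, a step you correctly flag as the delicate point but leave implicit.
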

\begin{proof}
Define $\eta(t) = S_{h,\tau}^n \phi_h^1 - S_{h,\tau}^n \phi_h^2$, for $t \in (t_{n-1}, t_n]$. 
First, by applying Gronwall's inequality, it follows directly that  
\begin{equation}\label{eqn:bl2}
\| \eta(t) \|_{L^2(\Omega)} \leq c \| \phi_h^1 - \phi_h^2 \|_{L^2(\Omega)}.
\end{equation}

Next, we address the more challenging case: bounding $\| \eta(t) \|_{L^2(\Omega)}$ in terms of $\| \phi_h^1 - \phi_h^2 \|_{\dot{H}^{-\mu}(\Omega)}$.
For $t\in (t_{n-1},t_n]$, $n\ge2$,
applying the representation \eqref{eqn:back-fully-non} gives
\begin{equation}\label{eqn:split-eta}
\begin{aligned}
\eta(t)=&[F_{h,\tau}-F_h(t_n)](\phi_h^1-\phi_h^2)+F_h(t_n)(\phi_h^1-\phi_h^2)+\tau E_{h,\tau}^{n-1}P_h[f(\phi_h^1)- f(\phi_h^2)]\\&+  \sum_{k=2}^n[\tau E_{h,\tau}^{n-k}-\int_{t_{k-1}}^{t_k}E_h(t_n-s)\ds] P_h[f(S_{h,\tau}^{k-1}\phi_h^2)-f(S_{h,\tau}^{k-1}\phi_h^1)]\\
& +  \sum_{k=2}^n\int_{t_{k-1}}^{t_k}(E_h(t_n-s)-E(t_n-s))\ds P_h[f(S_{h,\tau}^{k-1}\phi_h^2)-f(S_{h,\tau}^{k-1}\phi_h^1)]\\ & +  \sum_{k=2}^n\int_{t_{k-1}}^{t_k}E(t_n-s)\ds P_h[f(S_{h,\tau}^{k-1}\phi_h^2)-f(S_{h,\tau}^{k-1}\phi_h^1)].
\end{aligned} 
\end{equation}
By applying Lemma \ref{lem:op-err:fully} and the argument in the proof of Lemma \ref{lam:phi12h}, we derive  for $t\in (t_{n-1},t_{n}]$
\begin{align*}
& \|[F_{h,\tau}-F_h(t_n)](\phi_h^1-\phi_h^2)+F_h(t_n)(\phi_h^1-\phi_h^2)\|_{L^2\II} \\%+\tau E_{h,\tau}^{n-1}P_h[f(\phi_h^1)- f(\phi_h^2)]\|_{L^2\II}\\
\le & \|[F_{h,\tau}-F_h(t_n)](\phi_h^1-\phi_h^2)+(F_h(t_n)-F(t_n))(\phi_h^1-\phi_h^2)  
+F(t_n)(\phi_h^1-\phi_h^2) \|_{L^2\II} \\%+\tau E_{h,\tau}^{n-1}P_h[f(\phi_h^1)- f(\phi_h^2)]\|_{L^2\II}\\
\le& \Big(c\tau^\al h^{-\mu}t_n^{-\al} +ch^{2-\mu}t_n^{-\al}+ c_1 t_n^{-\al\mu/2} \Big)  \| \phi_h^1 - \phi_h^2 \|_{\dH{-\mu}}.
\end{align*}
Moreover, using Lemma \ref{lem:op:fully} (i) and the inverse inequality \eqref{eqn:inverse}, we obtain
\begin{align*}
 \| \tau E_{h,\tau}^{n-1}P_h[f(\phi_h^1)- f(\phi_h^2)]\|_{L^2\II}
\le c\tau h^{-\mu}t_n^{\al-1} \| \phi_h^1 - \phi_h^2 \|_{\dH{-\mu}} 
 \le   c\tau^\alpha h^{-\mu}  \| \phi_h^1 - \phi_h^2 \|_{\dH{-\mu}}.
\end{align*}
Similarly, using Lemma \ref{lem:op-err:fully} and the estimate \eqref{eqn:bl2} also leads to
\begin{align*}
& \sum_{k=2}^n \|[\tau E_{h,\tau}^{n-k}-\int_{t_{k-1}}^{t_k}E_h(t_n-s)\ds] P_h[f(S_{h,\tau}^{k-1}\phi_h^2)-f(S_{h,\tau}^{k-1}\phi_h^1)] \|_{L^2\II}\\
\le & c\tau^2\sum_{k=2}^n(t_n-t_{k-1})^{\al-2}\|\eta(t_{k-1})\|_{L^2\II} \le  c\tau^2\sum_{k=2}^n(t_n-t_{k-1})^{\al-2}\| \phi_h^1 - \phi_h^2  \|_{L^2\II}\\ 
\le&  c \tau^\alpha \| \phi_h^1 - \phi_h^2 
 \|_{L^2\II} \le c \tau^\alpha h^{-\mu}\| \phi_h^1 - \phi_h^2 
 \|_{\dH{-\mu}}.
\end{align*} 
Next, we apply the estimate \eqref{eqn:Eh-approx} and similar argument in \eqref{eqn:Eh-approx-1} and \eqref{eqn:Eh-approx-2} to obtain
\begin{align*}
& \sum_{k=2}^n\int_{t_{k-1}}^{t_k} \| (E_h(t_n-s)-E(t_n-s))\|  \, \ds \| P_h[f(S_{h,\tau}^{k-1}\phi_h^2)-f(S_{h,\tau}^{k-1}\phi_h^1)]\|_{L^2\II} \\
\le &c\sum_{k=2}^n\int_{t_{k-1}}^{t_k} \min\Big(h^2 (t_n-s)^{-1}, (t_n-s)^{\alpha-1}\Big) \, \ds \, \| \phi_h^1 - \phi_h^2  \|_{L^2\II}\\
\le & c h^2 |\log h| \| \phi_h^1 - \phi_h^2  \|_{L^2\II} \le c h^{2-\mu} |\log h| \| \phi_h^1 - \phi_h^2  \|_{\dH{-\mu}}.
\end{align*} 
For the last term in \eqref{eqn:split-eta}, we apply Lemma \ref{lem:op} (i) to derive
\begin{align*}
&\sum_{k=2}^n\int_{t_{k-1}}^{t_k} \| E(t_n-s)\|\, \ds \,\|P_h[f(S_{h,\tau}^{k-1}\phi_h^2)-f(S_{h,\tau}^{k-1}\phi_h^1)] \|_{L^2\II}\\
\le &c_1L\sum_{k=2}^n\int_{t_{k-1}}^{t_k}(t_n-s)^{\al-1}\ds \|\eta(t_{k-1})\|_{L^2\II}  \\
\le &c_1L\sum_{k=2}^n\int_{t_{k-1}}^{t_k}(t+\tau-s)^{\al-1}\ds \|\eta(t_{k-1})\|_{L^2\II}  + c\tau^2\sum_{k=2}^n(t_n-t_{k-1})^{\al-2}\|\eta(t_{k-1})\|_{L^2\II} \\
\le & c_1L \int_{0}^{t_{n-1}}(t-s)^{\al-1}\|\eta(s )\|_{L^2\II}\ds+ c\tau^2\sum_{k=2}^n(t_n-t_{k-1})^{\al-2}\| \phi_h^1 - \phi_h^2  \|_{L^2\II}
\\ \le  & c_1L \int_{0}^{t}(t-s)^{\al-1}\|\eta(s )\|_{L^2\II}\ds+  c \tau^\alpha h^{-\mu}
\| \phi_h^1 - \phi_h^2  \|_{\dH{-\mu}}.
\end{align*}
In summary, we arrive at 
\begin{align*}
 \| \eta(t) \|_{L^2\II} \le &\left(c_1 t^{-\al\mu/2} + c_T \Big(\tau^\al h^{-\mu}(t^{-\al}+1)+h^{2-\mu}|\log h|t^{-\al}  \Big)\right)\| \phi_h^1 - \phi_h^2  \|_{\dH{-\mu}} \\
 &\qquad + c_1L \int_{0}^{t}(t-s)^{\al-1}\|\eta(s )\|_{L^2\II}\ds,\quad t\in (t_{n-1}, t_n],\quad n\ge 2.
\end{align*}
For $t\in (0,\tau]$,   $\| \eta(t) \|_{L^2\II}= \|S_{h,\tau}^1 \phi_h^1 - S_{h,\tau}^1 \phi_h^2\|_{L^2\II}$, it is straightforward to derive
\begin{align*}
     \| \eta(t) \|_{L^2\II}\le&\left( c_1 t_1^{-\al\mu/2} + c_T \Big(\tau^\al h^{-\mu}(t_1^{-\al}+1)+h^{2-\mu}|\log h|t_1^{-\al}  \Big)\right)\| \phi_h^1 - \phi_h^2  \|_{\dH{-\mu}}\\ \le &\left(c_1 t^{-\al\mu/2} + c_T \Big(\tau^\al h^{-\mu}(t^{-\al}+1)+h^{2-\mu}|\log h|t^{-\al}  \Big)\right)\| \phi_h^1 - \phi_h^2  \|_{\dH{-\mu}} \\
 &\qquad + c_1L \int_{0}^{t}(t-s)^{\al-1}\|\eta(s )\|_{L^2\II}\ds.
\end{align*}
Then the desired result follows from the Gronwall's inequality in Lemma \ref{lemma:Gronwall}.
\end{proof}

\begin{theorem}\label{lemma:stability_fully} 
For a fixed parameter $\mu \in (0,1]$, let $T_*$ be the constant such that $B_{\mu}(T_*) < 1$, where the function $B_{\mu}(\cdot)$ is defined in \eqref{eqn:contraction} and assume that $T < T_*$.
There exists a constant $c_*$ such that, if $\gamma$, $h$, and $\tau$ satisfy the condition  
$\gamma^{-1 + \frac{\mu}{2}} h^{2-\mu} |\log h| + \tau^{\alpha \mu / 2} + \tau^\alpha h^{-\mu} \leq c_*$,
then the mapping $M_{h, \tau}$ defined in \eqref{eqn:Mhtaut} is a contraction.
\end{theorem}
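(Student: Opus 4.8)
The plan is to follow the structure of the semidiscrete contraction result, Theorem~\ref{lemma:stability_semi}, replacing the semidiscrete operators by their fully discrete counterparts and using Lemma~\ref{lama:phi12-ful} in place of Lemma~\ref{lam:phi12h}. First I would note that in the difference $M_{h,\tau}\phi_h^1 - M_{h,\tau}\phi_h^2$ the data term $\hat g$ cancels, so that
\[
M_{h,\tau}\phi_h^1 - M_{h,\tau}\phi_h^2 = (\gamma I+F_{h,\tau}^N)^{-1}\mathcal{G}_{h,\tau}, \qquad \mathcal{G}_{h,\tau} := \tau\sum_{k=1}^N E_{h,\tau}^{N-k}P_h\big[f(S_{h,\tau}^{k-1}\phi_h^2)-f(S_{h,\tau}^{k-1}\phi_h^1)\big].
\]
I would then split the resolvent exactly as in Theorem~\ref{lemma:stability_semi}, writing $(\gamma I+F_{h,\tau}^N)^{-1}\mathcal{G}_{h,\tau} = \mathrm{(I)}+\mathrm{(II)}$ with $\mathrm{(I)} = [(\gamma I+F_{h,\tau}^N)^{-1}-(\gamma I+F(T))^{-1}]\mathcal{G}_{h,\tau}$ and $\mathrm{(II)}=(\gamma I+F(T))^{-1}\mathcal{G}_{h,\tau}$, and bound the two pieces so that $\mathrm{(I)}$ is a small perturbation while $\mathrm{(II)}$ contributes the contraction factor $B_\mu(T)<1$ plus a small perturbation.

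For the perturbation term $\mathrm{(I)}$, I would use the resolvent identity $(\gamma I+F_{h,\tau}^N)^{-1}-(\gamma I+F(T))^{-1} = (\gamma I+F(T))^{-1}\big(F(T)-F_{h,\tau}^N\big)(\gamma I+F_{h,\tau}^N)^{-1}$ and decompose $F(T)-F_{h,\tau}^N = (F(T)-F_h(T)) + (F_h(T)-F_{h,\tau}^N)$, controlling the spatial factor by the finite element estimate used in Theorem~\ref{lemma:stability_semi} and the temporal factor by Lemma~\ref{lem:op-err:fully} with $\nu=0$ (of order $\tau T^{-1}$). Applying Lemma~\ref{lemma:gammainv} to the outer continuous resolvent produces the factor $\gamma^{-1+\mu/2}$ into $\dH{-\mu}$, Lemma~\ref{lem:op-reg-fully} lets me replace $(\gamma I+F_{h,\tau}^N)^{-1}$ by $(F_{h,\tau}^N)^{-1}$, and the inverse inequality \eqref{eqn:inverse} with $\nu=\mu/2$ converts an $L^2$ bound into a $\dH{-\mu}$ bound at the cost of $h^{-\mu}$. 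The remaining factor $A_h^{1-\mu/2}E_{h,\tau}^{N-k}$ is handled by the discrete smoothing Lemma~\ref{lem:op:fully}(i), and the nonlinear differences by the Lipschitz condition \eqref{eqn:lipconstant} together with Lemma~\ref{lama:phi12-ful}. Summing over $k$ (with the time sum approximating $\int_0^T(T-s)^{\alpha\mu/2-1}s^{-\alpha\mu/2}\ds$) yields a bound of the form $c_T\gamma^{-1+\mu/2}\big(h^{2-\mu}|\log h| + \tau^{\alpha}h^{-\mu}+\tau^{\alpha\mu/2}\big)\|\phi_h^1-\phi_h^2\|_{\dH{-\mu}}$, which is small under the stated condition.

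For the main term $\mathrm{(II)}$, I would first bound it by $\|F(T)^{-1}\mathcal{G}_{h,\tau}\|_{\dH{-\mu}}$ using \eqref{eqn:gf-1}, and then reduce the fully discrete quadrature $\mathcal{G}_{h,\tau}$ to the continuous operator $\int_0^T E(T-s)P_h\hat f(s)\,\ds$, where $\hat f$ is the piecewise constant function equal to $f(S_{h,\tau}^{k-1}\phi_h^2)-f(S_{h,\tau}^{k-1}\phi_h^1)$ on $(t_{k-1},t_k]$. The difference between these two is controlled by the quadrature error $\tau E_{h,\tau}^{N-k}-\int_{t_{k-1}}^{t_k}E_h(T-s)\ds$ from Lemma~\ref{lem:op-err:fully} and by the spatial error $E_h-E$ from \eqref{eqn:Eh-approx}, the latter split near the diagonal $s\to T$ exactly as in \eqref{eqn:Eh-approx-1}--\eqref{eqn:Eh-approx-2}; both produce only small $h$- and $\tau$-contributions. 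For the remaining continuous integral I would repeat the argument of Theorem~\ref{thm:stability}: apply Lemma~\ref{lem:op}(iii) to $A^{-\mu/2}F(T)^{-1}E(T-s)$ and bound the nonlinear differences by the Lipschitz condition and Lemma~\ref{lama:phi12-ful}, obtaining $\|\mathrm{(II)}\|_{\dH{-\mu}} \le \big(B_\mu(T)+c_T(h^{2-\mu}|\log h|+\tau^{\alpha\mu/2}+\tau^{\alpha}h^{-\mu})\big)\|\phi_h^1-\phi_h^2\|_{\dH{-\mu}}$.

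Combining $\mathrm{(I)}$ and $\mathrm{(II)}$ gives a contraction constant $\kappa \le B_\mu(T) + c_T\big(\gamma^{-1+\mu/2}h^{2-\mu}|\log h| + \tau^{\alpha\mu/2}+\tau^{\alpha}h^{-\mu}\big)$. Since $B_\mu(T)<1$ for every $T<T_*$, I would choose $c_*$ small enough that $c_T c_* < 1-B_\mu(T)$; then whenever $\gamma^{-1+\mu/2}h^{2-\mu}|\log h| + \tau^{\alpha\mu/2}+\tau^\alpha h^{-\mu}\le c_*$ the map $M_{h,\tau}$ is a contraction on $X_h$ equipped with the $\dH{-\mu}$ norm. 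I expect the main obstacle to be the reduction in term $\mathrm{(II)}$: because $F(T)^{-1}E_{h,\tau}^{N-k}$ is a mixed continuous–discrete composition, passing to the continuous kernel $E(T-s)$ requires simultaneously controlling the convolution-quadrature error, the finite element error, and the singular behaviour of the kernels as $s\to T$, all while keeping the resulting powers of $\tau$ and $h$ compatible with the precise smallness quantity $\tau^{\alpha\mu/2}+\tau^\alpha h^{-\mu}$ appearing in the hypothesis; the delicate bookkeeping of these near-diagonal terms, rather than any single estimate, is the crux.
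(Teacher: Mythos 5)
Your overall architecture mirrors the paper's: isolate the contraction factor $B_\mu(T)$ coming from the continuous stability argument, and show that all remaining pieces are perturbations controlled by the smallness hypothesis, with Lemma~\ref{lama:phi12-ful} supplying the stability of $S_{h,\tau}^n$ and Lemma~\ref{lem:op-err:fully} together with \eqref{eqn:Eh-approx} handling the quadrature and finite element errors. The paper, however, uses a three-way splitting with the \emph{semidiscrete} resolvent $(\gamma I+F_h(T))^{-1}$ as the intermediate reference, and that choice is not cosmetic: it is precisely what keeps the time-discretization error free of negative powers of $\gamma$.

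The gap is in your term $\mathrm{(I)}$. After the resolvent identity you apply Lemma~\ref{lemma:gammainv} to the outer factor $(\gamma I+F(T))^{-1}$, which costs $\gamma^{-1+\mu/2}$, and this prefactor then multiplies \emph{both} the spatial error $F(T)-F_h(T)$ and the temporal error $F_h(T)-F_{h,\tau}^N$. Your claimed bound $c_T\gamma^{-1+\mu/2}\bigl(h^{2-\mu}|\log h|+\tau^{\alpha}h^{-\mu}+\tau^{\alpha\mu/2}\bigr)$ therefore contains the terms $\gamma^{-1+\mu/2}\tau^{\alpha\mu/2}$ and $\gamma^{-1+\mu/2}\tau^{\alpha}h^{-\mu}$, which are \emph{not} controlled by the hypothesis $\gamma^{-1+\mu/2}h^{2-\mu}|\log h|+\tau^{\alpha\mu/2}+\tau^{\alpha}h^{-\mu}\le c_*$: since $-1+\mu/2<0$ for $\mu\in(0,1]$, the factor $\gamma^{-1+\mu/2}$ blows up as $\gamma\to0^+$, so your argument only yields contraction under a strictly stronger smallness condition than the one stated. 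The missing ingredient is a $\gamma$-uniform estimate for the difference of the regularized resolvents in the time variable: the paper isolates $[(\gamma I+F_{h,\tau}^N)^{-1}-(\gamma I+F_h(T))^{-1}]\mathcal{G}_{h,\tau}$ and invokes \cite[Lemma 15.8]{JinZhou:2023book}, which bounds it by $c_T\tau\|A_h^{1-\mu/2}\mathcal{G}_{h,\tau}\|_{L^2(\Omega)}$ --- both $\gamma^{-1}$'s are traded for a power of $A_h$, which is then absorbed by the discrete smoothing $\|A_h^{1-\mu/2}E_{h,\tau}^{N-k}\|\le c(T-t_{k-1})^{\alpha\mu/2-1}$ of Lemma~\ref{lem:op:fully}. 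Only the spatial resolvent error (handled inside the reused semidiscrete argument of Theorem~\ref{lemma:stability_semi}) is permitted to carry the $\gamma^{-1+\mu/2}$, because the hypothesis pairs that factor exclusively with $h^{2-\mu}|\log h|$. To repair your proof you must separate the temporal resolvent perturbation from the spatial one and bound it $\gamma$-uniformly.
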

\begin{proof}
We consider the splitting
  \begin{align*}
      M_{h,\tau}(\phi^1_h-\phi^2_h)
        =&[(\gamma I+F_{h,\tau}^N)^{-1}-(\gamma I+F_h(T))^{-1}]\mathcal{G}_{h,\tau} +(\gamma I+F_h(T))^{-1}[\mathcal{G}_{h,\tau} -\Tilde{\mathcal{G}}_{h,\tau}]\\&+(\gamma I+F_h(T))^{-1}\Tilde{\mathcal{G}}_{h,\tau},
    \end{align*}
where $\mathcal{G}_{h,\tau}$ and $\tilde{ \mathcal{G}}_{h,\tau}$ are respectively defined by
\begin{align*}
\mathcal{G}_{h,\tau}=&\tau \sum_{k=1}^NE_{h,\tau}^{N-k}P_h[f(S_{h,\tau}^{k-1}\phi_h^2)-f(S_{h,\tau}^{k-1}\phi_h^1)],\\
\tilde{ \mathcal{G}}_{h,\tau}=& \sum_{k=1}^N\int_{t_{k-1}}^{t_k}E_h(T-s)\ \ds \ P_h[f(S_{h,\tau}^{k-1}\phi_h^2)-f(S_{h,\tau}^{k-1}\phi_h^1)].
\end{align*}
From \cite[Lemma 15.8]{JinZhou:2023book} and Lemma \ref{lem:op-reg-fully}, we obtain
\begin{align*}
\|[(\gamma I+F_{h,\tau}^N)^{-1}-(\gamma I+F_h(T))^{-1}]\mathcal{G}_{h,\tau} \|_{\dH{-\mu}}&\le c_T\tau \|A_h^{1-\frac \mu 2}\mathcal{G}_{h,\tau}\|_{L^2\II} ,\\
\|(\gamma I+F_h(T))^{-1}[\mathcal{G}_{h,\tau} -\Tilde{\mathcal{G}}_{h,\tau}]\|_{\dH{-\mu}}&\le c_T\|A_h^{1-\frac \mu 2}(\mathcal{G}_{h,\tau} -\Tilde{\mathcal{G}}_{h,\tau})\|_{L^2\II}.
\end{align*} 
Using Lemma \ref{lem:op:fully}, the Lipschitz condition~\eqref{eqn:lipconstant}, the estimate in \eqref{eqn:bl2} and the inverse inequality~\eqref{eqn:inverse} yields
\begin{align*} 
\|A_h^{1-\frac \mu 2}\mathcal{G}_{h,\tau}\|_{L^2\II}
\le &\|\tau \sum_{k=1}^NA_h^{1-\frac{\mu}{2}}E_h^{N-k}P_h[f(S_{h,\tau}^{k-1}\phi_h^2)-f(S_{h,\tau}^{k-1}\phi_h^1)]\|_{L^2\II}\\
\le &c\tau \sum_{k=1}^N(T-t_{k-1})^{\al\mu/2-1}\|\phi_h^2-\phi_h^1\|_{L^2\II}
       \le c_T h^{-\mu}\|\phi_h^2-\phi_h^1\|_{\dH{-\mu}}.
\end{align*} 
Additionally, applying Lemmas \ref{lem:op-err:fully} and \ref{lama:phi12-ful}, along with the inverse inequality \eqref{eqn:inverse}, we derive the following estimate
\begin{align*}
        &\|A_h^{1-\frac \mu 2}(\mathcal{G}_{h,\tau} -\Tilde{\mathcal{G}}_{h,\tau})\|_{L^2\II}\\\le& \sum _{k=2}^N\|\tau A_h^{1-\frac{\mu}{2}} E_{h,\tau}^{N-k}-\int_{t_{k-1}}^{t_k}A_h^{1-\frac{\mu}{2}} E_h(T-s)\ds\|\|P_hf(S_{h,\tau}^{k-1}\phi_h^2)-f(S_{h,\tau}^{k-1}\phi_h^1)]\|_{L^2\II}\\&+
        \|\tau A_h^{1-\frac \mu 2}E_{h,\tau}^{N-1}-\int_{0}^{t_1}A_h^{1-\frac \mu 2} E_h(T-s)\ds\|\|P_hf(\phi_h^2)-f(\phi_h^1)]\|_{L^2\II}\\
        \le &c\tau^2 \sum_{k=2}^N(T-t_{k-1})^{\al\mu/2-2} \Big(B_0(\al,T,L,\mu) t_{k-1}^{-\alpha\mu/2}+a(t_{k-1}) \Big)\|\phi_h^2-\phi_h^1\|_{\dH{-\mu}}+c\tau^2T^{\al\mu/2-2}\|\phi_h^2-\phi_h^1\|_{L^2\II}\\
        \le&  c_T\tau^{\alpha\mu/2}(1+\tau^\al h^{-\mu}+h^{2-\mu}|\log h|)\|\phi_h^2-\phi_h^1\|_{\dH{-\mu}}.
\end{align*}
In the last inequality, we use the fact that
  $\tau^2 \sum_{k=2}^N(T-t_{k-1})^{\al\mu/2}t_{k-1}^{-\beta}\le c\tau^{\al\mu/2}$ for $0\le \beta <1$,as shown in \cite[Lemma 3.11]{JinZhou:2023book}.
  
Based on  Lemma \ref{lama:phi12-ful}, applying the  arguments in the proof of Theorem \ref{lemma:stability_semi} and Lemma \ref{lama:phi12-ful} gives
\begin{equation*}
\|(\gamma I+F_h(T))^{-1}\Tilde{\mathcal{G}}_{h,\tau}\|_{\dH{-\mu}}\le  (c_T\gamma^{-1+\frac\mu2}h^{2-\mu}|\log h| +\tau^{\al}h^{-\mu}+B_\mu(T))\|\phi^1_h-\phi^2_h\|_{\dH{-\mu}}.
\end{equation*}
Therefore, we arrive at the estimate 
\begin{equation*}
        \|  M_{h,\tau}(\phi^1_h-\phi^2_h)\|_{\dH{-\mu}}\le (c_T(\gamma^{-1+\frac\mu2}h^{2-\mu}|\log h|+\tau^{\al\mu/2}+\tau^{\al}h^{-\mu})+B_\mu(T))\|\phi^1_h-\phi^2_h\|_{\dH{-\mu}}.
\end{equation*}
Since $B_\mu(T)<1$ for any $T \in (0, T_*]$, we conclude that there exists a  constant $c_*>0$, such that $c_T c_* + B_\mu(T) <1$. Then for algorithmic parameters $\gamma,h,\tau$ satisfying
$$\gamma^{-1}h^2|\log h|+\tau^{\al\mu/2}+\tau^{\al}h^{-\mu}<c_*,$$
 the operator $M_{h,\tau}$ is a contraction in $\dH{-\mu}$, and hence admits a unique fixed point.
\end{proof}

\begin{remark}\label{rem:iter}
The contraction property of $M_{h,\tau}$, established in Theorem \ref{lemma:stability_fully}, naturally motivates the development of an iterative algorithm for solving $U_{h,\gamma}^{0,\delta}$ in the scheme \eqref{eqn:back-fully-gdelta}. In each iteration, one needs to solve a linear backward problem, which can be efficiently addressed using the conjugate gradient method \cite{zhang2020numerical,zhang2023stability}. The details of the algorithm are summarized in Algorithm \ref{alg}.  The contraction property proved in Theorem \ref{lemma:stability_fully} ensures linear convergence of the iterative process in the $\dot{H}^{-\mu}$ norm for a fixed $\mu > 0$.  

In practice, for ease of implementation, we replace the $\dot{H}^{-\mu}$ norm with the $L^2$ norm. Numerical experiments demonstrate stable convergence and accurate reconstruction in this setting. However, from a theoretical perspective, proving convergence in the $L^2$ norm requires the restrictive condition \eqref{eqn:lipconstant-2}. Removing this restriction remains an open problem and warrants further theoretical investigation.
\end{remark}

\begin{algorithm}
\caption{An iterative algorithm for solving scheme \eqref{eqn:back-fully-gdelta} to find $U_{h,\gamma}^{0,\delta}$.}\label{alg}
\begin{algorithmic}[h!]
 \STATE{\textbf{Input:} Order $\alpha$ terminal time $T$, noisy observation $g_\delta$, discretization parameters $h$ and $\tau$.}
 \STATE{\textbf{Output:} Approximate initial data $U_{h,\gamma}^{0,\delta}$.}
\STATE{Initialize $U_{0,0}$ randomly, set $e^0=1$, $j=0$. Using scheme \eqref{eqn:back-fully-non-new}, compute 
$$\tau\sum_{k=1}^N E_{h,\tau}^{N-k}P_hf(S_{h,\tau}^{k-1} U_{0,j})=S_{h,\tau}^N U_{0,j}-F_{h,\tau}^N U_{0,j}.$$ }
\STATE{\textbf{while} $ e^j\ge tol=10^{-10} $ \textbf{do}}
\STATE{Update $U_{0,j+1}$ using the conjugate gradient method:
$$  (\gamma I +F_{h,\tau}^N )U_{0,j+1}= P_hg_\delta-\tau\sum_{k=1}^N E_{h,\tau}^{N-k}P_hf(S_{h,\tau}^{k-1} U_{0,j}). $$}
\STATE{Compute error  $e^j = \|U_{0,j+1} - U_{0,j}\|_{L^2\II}$ and set $j=j+1$.} 
\STATE{\textbf{end while}}
\STATE{\textbf{Output:} $U_{h,\gamma}^{0,\delta}\approx U_{0,j}$.}
\end{algorithmic}
\end{algorithm}

To show the error between the numerical reconstruction $U_{h,\gamma}^{0,\delta}$ and the exact initial data $u_0$, we introduce an
auxiliary function  $\bar U_{h,\gamma}^n  \in X_h $ such that 
\begin{equation}\label{eqn:fde:fulbar}
\Bar{\partial}^\alpha_\tau \bar U_{h,\gamma}^n +A_h \bar U_{h,\gamma}^n = P_hf(\bar U_{h,\gamma}^{n-1})\quad  \text{for}~~ ~1\le n\le N,
\quad \text{with}~~~ \bar U_{h,\gamma}^0 = u_{\gamma,h}(0).
\end{equation}

In the following, we derive novel error estimates for the direct problem. To achieve this, we first establish preliminary estimates for the linear problem. Consider the semidiscrete scheme for the linear problem: given $v_h^0 = P_h v_0$, find $v_h(t)  \in X_h$ such that
\begin{equation}\label{eqn:linear}
(\partial_t^\alpha v_h(t), \varphi_h) + (\nabla v_h(t), \nabla \varphi_h) = (f(t),\varphi_h), \quad \forall \varphi_h\in X_h, \forall t\in(0,T],
\end{equation}
and its fully discrete scheme: given $v_h^0 = P_h v_0$, find $v_h^n\in X_h$ such that: for $1\le n\le N$ 
\begin{align}\label{eqn:linear-fully}
   (\bar \partial_\tau^\alpha v_h^n , \varphi_h) + (\nabla v_h^n , \nabla \varphi_h)  = (f(t_n),\varphi_h) , \quad \forall \varphi_h\in X_h.
\end{align}

Next, we provide a nonstandard error estimate in stronger norms for the direct problem. The detailed proof is lengthy and is therefore presented in the Appendix.
\begin{lemma}\label{lem:error-linear}
Let $v_h$ and $v_h^n$ solve problems \eqref{eqn:linear} and \eqref{eqn:linear-fully}, respectively, with $v_0\in L^2(\Omega)$.
% and $f \in W^{1,1}(0,T;L^2(\Omega))$. 
Then  the following error estimate holds for any $0\le p\le 1$
\begin{align*}
   \|A_h^{p}(v_h(t_n)- v_h^n)\|_{L^2(\Omega)} \leq c\bigg(&\tau t_n^{-1-p\al} \|v_0\|_{L^2\II}+\tau t_n^{(1-p)\alpha-1} \|f(s)\|_{L^{\infty}(0,\tau; L^2(\Omega))}\\&+\tau\int_{\tau}^{t_n}(t_{n+1}-s)^{(1-p)\alpha-1}\|f'(s)\|_{L^2\II}\ds\bigg),  
  \end{align*}
where the constant $c$ is independent on $t_n$, $T$, $h$ and $\tau$.
\end{lemma}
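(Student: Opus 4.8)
The plan is to base the argument on the explicit solution representations rather than on a truncation-error/Gronwall approach; since the problem is linear the error is fully explicit and no discrete Gronwall inequality is needed. Writing $v_h(t_n)=F_h(t_n)P_hv_0+\int_0^{t_n}E_h(t_n-s)P_hf(s)\,\ds$ from \eqref{eq:semisolutionrep} and the analogous representation $v_h^n=F_{h,\tau}^nP_hv_0+\tau\sum_{k=1}^nE_{h,\tau}^{n-k}P_hf(t_k)$ from \eqref{eqn:back-fully-non}, I would split the error into a homogeneous part $[F_{h,\tau}^n-F_h(t_n)]P_hv_0$ and an inhomogeneous part. Applying $A_h^p$ to the homogeneous part and invoking the first estimate of Lemma \ref{lem:op-err:fully} with $\nu=p$ immediately produces the term $c\tau t_n^{-1-p\al}\|v_0\|_{L^2(\Omega)}$, using $\|P_hv_0\|_{L^2(\Omega)}\le\|v_0\|_{L^2(\Omega)}$.

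For the inhomogeneous part I would write the integral as $\sum_{k=1}^n\int_{t_{k-1}}^{t_k}$ and isolate the first subinterval. On $[0,t_1]$ I estimate the continuous and discrete contributions separately: by the smoothing bounds $\|A_h^pE_h(t)\|\le ct^{(1-p)\al-1}$ (Lemma \ref{lem:op-d}(i)) and $\|A_h^pE_{h,\tau}^{n-1}\|\le ct_n^{(1-p)\al-1}$ (Lemma \ref{lem:op:fully}(i)), each is bounded by $c\tau t_n^{(1-p)\al-1}\|f\|_{L^{\infty}(0,\tau;L^2(\Omega))}$, which yields the second term. On the remaining interval $[t_1,t_n]$ (i.e.\ $k\ge2$) I would generate the factor $f'$ by replacing $f$ with its nodal values, writing $f(s)-f(t_k)=-\int_s^{t_k}f'(r)\,\dr$, and splitting the per-interval error into a genuine quadrature error of the integrand (controlled by $\|A_h^pE_h\|$ against $\int\|f'\|$) and an operator-quadrature error $[\int_{t_{k-1}}^{t_k}A_h^pE_h(t_n-s)\,\ds-\tau A_h^pE_{h,\tau}^{n-k}]f(t_k)$ controlled by the second estimate of Lemma \ref{lem:op-err:fully}.

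To assemble the clean weight $\tau\int_\tau^{t_n}(t_{n+1}-s)^{(1-p)\al-1}\|f'(s)\|\,\ds$ I would, crucially, perform a summation by parts (Abel summation) on the discrete convolution with base point $t_1$. This transfers the difference onto the increments $f(t_j)-f(t_{j-1})=\int_{t_{j-1}}^{t_j}f'$, while the residual \emph{constant-in-time} part $f(t_1)$ is acted on by the primitive-level homogeneous operator error $A_h^{p-1}[F_{h,\tau}^{m}-F_h(t_m)]$, which by the first estimate of Lemma \ref{lem:op-err:fully} composed with the bounded operator $A_h^{p-1}$ carries the better weight $\tau t_m^{-1}\le\tau t_n^{(1-p)\al-1}$ and is absorbed into the second term. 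The continuous counterpart is the integration-by-parts identity $\int_r^{t_n}E_h(t_n-s)\,\ds=A_h^{-1}[I-F_h(t_n-r)]$, against which the rearranged discrete sums must be matched.

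The main obstacle is twofold. First, the near-diagonal terms $k\to n$, where the kernel $E_h(t_n-s)$ is singular: I would handle these via the $t_{n+1}$-shift together with separate continuous/discrete estimates in the spirit of \eqref{eqn:Eh-approx-1}--\eqref{eqn:Eh-approx-2}, so that the exponent $(1-p)\al-1$ remains integrable. Second, and more delicate, is the summation-by-parts bookkeeping: a naive term-by-term bound of the operator-quadrature error against $\|f(t_k)\|$ accumulates to $\tau^{(1-p)\al}\|f\|_{L^{\infty}(0,T;L^2(\Omega))}$, which overshoots the target since $\tau^{(1-p)\al}\not\le c\,\tau t_n^{(1-p)\al-1}$ when $t_n\gg\tau$. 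The Abel summation is precisely what converts the bulk of the sum into the sharp $f'$-weighted integral and relegates the only $\|f\|_{L^\infty}$ dependence to the first subinterval; verifying that the leftover discrete/continuous mismatch is again of homogeneous-operator-error type, and hence harmless, is the technical heart of the argument.
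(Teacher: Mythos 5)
Your proposal follows essentially the same route as the paper's proof: the paper isolates the homogeneous error $(F_h(t_n)-F_{h,\tau}^n)P_hv_0$ exactly as you do, treats the subinterval $[0,\tau]$ by the same two smoothing bounds, and realizes your Abel summation in continuous form by writing $f_h(s)=f_h(\tau)+\int_\tau^s f_h'(y)\,\mathrm{d}y$ on $[\tau,t_n]$ and applying Fubini, so that the increments of $f$ are paired with the time-integrated kernel error $\int_0^{t_n-y}(E_h-E_{h,\tau})(s)\,\mathrm{d}s$ --- precisely the object your summation by parts produces --- and the singular near-diagonal is tamed by the primitive identity $\int_0^tA_hE_h=I-F_h(t)$ plus interpolation, yielding the $(t_{n+1}-y)$-shifted weight.

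One justification step in your sketch is, however, wrong as written. For the residual constant-in-time part you bound $\|A_h^{p-1}[F_{h,\tau}^{m}-F_h(t_m)]\|$ by composing the bounded operator $A_h^{p-1}$ with the $\nu=0$ case of Lemma \ref{lem:op-err:fully}, obtaining $c\tau t_m^{-1}$, and then assert $\tau t_m^{-1}\le\tau t_n^{(1-p)\alpha-1}$. Since $t_m\simeq t_n$, this requires $t_n^{-(1-p)\alpha}\le c$ uniformly, which fails for $p<1$ when $t_n$ is small (e.g.\ $t_n\sim\tau$ loses a factor $\tau^{-(1-p)\alpha}$, degrading the claimed first-order rate near $t=0$; recall the constant must be independent of $t_n$). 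The needed bound $\|A_h^{p}\int_0^{t}(E_h-E_{h,\tau})(s)\,\mathrm{d}s\|\le c\tau(t+\tau)^{(1-p)\alpha-1}$ cannot be produced by pulling out $A_h^{p-1}$ as a bounded operator; it must be obtained by interpolating between the two endpoint estimates $\|A_h\int_0^t(E_h-E_{h,\tau})\|\le c\tau(t+\tau)^{-1}$ (your primitive identity) and $\|\int_0^t(E_h-E_{h,\tau})\|\le c\tau(t+\tau)^{\alpha-1}$, which is exactly how the paper closes its estimate of the term $\mathrm{I}_{2,3}$. With that repair your argument coincides with the paper's.
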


Building on this error estimate, we derive the following error estimate for the nonlinear problem. The proof is provided in the Appendix.
 
\begin{lemma}\label{lem:err-reg:H2} 
Let $\tuh(t)$ and $\bar U_{h,\gamma}^{n}$ be the solutions to \eqref{eqn:fde:reg:semi} and \eqref{eqn:fde:fulbar} respectively. 
Then there holds for $0\le p\le1$ 
\begin{equation*}
    \| A_h^p(\tuh(t_n)-\bar  U_{h,\gamma}^{n}) \|_{L^2\II} \le c_T\tau|\log \tau |^2  t_n^{-1-p\al}\|\tuh(0)\|_{L^2\II},
\end{equation*}
where the constant $c_T$ depends on $T$, but it is independent on $\gamma$, $t_n$, $h$ and $\tau$.
\end{lemma}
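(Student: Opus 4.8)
The plan is to read this as a \emph{temporal discretization error estimate} for the forward semilinear problem. Since both $\tuh$ and $\bar U_{h,\gamma}^{n}$ solve the same spatially semidiscrete problem with the \emph{identical} initial value $\tuh(0)$---the former continuous in time, the latter the CQ--BE scheme \eqref{eqn:fde:fulbar} with the nonlinearity lagged---the error $e^n:=\tuh(t_n)-\bar U_{h,\gamma}^{n}$ is driven purely by the time discretization, and in particular $e^0=0$. Writing $w^0:=\tuh(0)\in X_h$ and using the solution representations \eqref{eq:semisolutionrep} and \eqref{eqn:back-fully-non}, I would insert $\pm\,\tau\sum_k A_h^pE_{h,\tau}^{n-k}P_hf(\tuh(t_{k-1}))$ to split
\begin{align*}
A_h^p e^n ={}& A_h^p\big[F_h(t_n)-F_{h,\tau}^n\big]w^0 \\
&+\Big[\int_0^{t_n}A_h^pE_h(t_n-s)P_hf(\tuh(s))\,\ds-\tau\sum_{k=1}^nA_h^pE_{h,\tau}^{n-k}P_hf(\tuh(t_{k-1}))\Big]\\
&+\tau\sum_{k=1}^nA_h^pE_{h,\tau}^{n-k}P_h\big[f(\tuh(t_{k-1}))-f(\bar U_{h,\gamma}^{k-1})\big]
={}\ \mathrm{I}+\mathrm{II}+\mathrm{III}.
\end{align*}
Here $\mathrm{I}+\mathrm{II}$ is essentially the \emph{linear} CQ error for the forward problem with frozen source $\phi(\cdot):=f(\tuh(\cdot))$ and initial value $w^0$, while $\mathrm{III}$ is the nonlinear feedback to be closed by a discrete Gronwall argument.

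For $\mathrm{I}+\mathrm{II}$ I would apply Lemma~\ref{lem:error-linear} with $v_0=w^0$ and source $\phi$. Using $f(0)=0$, the Lipschitz bound \eqref{eqn:lipconstant}, the stability \eqref{eqn:phi12h}, and the regularity Lemma~\ref{THM:Reg_semi} with $p=0$, one gets $\|\phi\|_{L^\infty(0,\tau;L^2\II)}\le c L\|w^0\|_{L^2\II}$ and $\|\phi'(s)\|_{L^2\II}\le cL\|\partial_s\tuh(s)\|_{L^2\II}\le c_TL\,s^{-1}\|w^0\|_{L^2\II}$. Feeding these into the three terms of Lemma~\ref{lem:error-linear}: the first is already $\tau t_n^{-1-p\al}\|w^0\|$; the second is $\tau t_n^{(1-p)\al-1}\cdot L\|w^0\|\le c_T\tau t_n^{-1-p\al}\|w^0\|$ since $t_n\le T$; and the third, $\tau\int_\tau^{t_n}(t_{n+1}-s)^{(1-p)\al-1}s^{-1}\ds\cdot L\|w^0\|$, is bounded by splitting the integral at $t_n/2$, the factor $s^{-1}$ near $s=\tau$ producing a single $|\log\tau|$, so the third term is $\le c_T\tau|\log\tau|\,t_n^{-1-p\al}\|w^0\|$. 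Because the scheme samples $f$ at $t_{k-1}$ whereas Lemma~\ref{lem:error-linear} samples at $t_k$, $\mathrm{II}$ differs from the lemma's quadrature error by the time-lag term $\tau\sum_kA_h^pE_{h,\tau}^{n-k}P_h[f(\tuh(t_{k-1}))-f(\tuh(t_k))]$, which by Lipschitz continuity and $\|\tuh(t_k)-\tuh(t_{k-1})\|_{L^2\II}\le c_T\tau t_{k-1}^{-1}\|w^0\|_{L^2\II}$ is of the same order. Altogether $\|\mathrm{I}+\mathrm{II}\|_{L^2\II}\le c_T\tau|\log\tau|\,t_n^{-1-p\al}\|w^0\|_{L^2\II}$.

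For $\mathrm{III}$, the Lipschitz condition gives $\|f(\tuh(t_{k-1}))-f(\bar U_{h,\gamma}^{k-1})\|_{L^2\II}\le L\|e^{k-1}\|_{L^2\II}$, and Lemma~\ref{lem:op:fully}(i) gives $\|A_h^pE_{h,\tau}^{n-k}\|\le c\,t_{n-k+1}^{(1-p)\al-1}$, so $\|\mathrm{III}\|_{L^2\II}\le cL\tau\sum_{k=1}^n t_{n-k+1}^{(1-p)\al-1}\|e^{k-1}\|_{L^2\II}$. I would first treat $p=0$: with $e^0=0$ this yields $\|e^n\|_{L^2\II}\le c_T\tau|\log\tau|\,t_n^{-1}\|w^0\|+cL\tau\sum_k t_{n-k+1}^{\al-1}\|e^{k-1}\|_{L^2\II}$, and a discrete fractional Gronwall inequality applies; convolving the weakly singular kernel $t_{n-k+1}^{\al-1}$ against the borderline weight $t_k^{-1}$ contributes one further $|\log\tau|$, giving $\|e^n\|_{L^2\II}\le c_T\tau|\log\tau|^2 t_n^{-1}\|w^0\|_{L^2\II}$. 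I would then bootstrap to $p\in(0,1]$ by inserting this $L^2$ bound into $\mathrm{III}$: the sum $\tau\sum_k t_{n-k+1}^{(1-p)\al-1}t_{k-1}^{-1}\le c_T|\log\tau|\,t_n^{-1-p\al}$ (same splitting), so $\mathrm{III}$ is of order $\tau^2|\log\tau|^3 t_n^{-1-p\al}$ and is dominated by the linear part. Combining with $\|\mathrm{I}+\mathrm{II}\|$ yields the claim $\|A_h^p e^n\|_{L^2\II}\le c_T\tau|\log\tau|^2 t_n^{-1-p\al}\|w^0\|_{L^2\II}$.

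The main obstacle will be the careful bookkeeping of the weakly singular discrete convolution sums against the borderline non-integrable regularity weight $s^{-1}$, which arises precisely because the data $w^0=\tuh(0)$ lies only in $L^2\II$; it is this interaction that generates the logarithmic factors and must be tracked exactly to land on $|\log\tau|^2 t_n^{-1-p\al}$ rather than a larger power of the logarithm. A secondary, but essential, technical point is reconciling the lagged source sampling of the scheme with the current-time sampling in Lemma~\ref{lem:error-linear}, i.e.\ showing that the time-lag consistency term does not deteriorate the rate, which again relies on the sharp temporal regularity $\|\partial_s\tuh(s)\|_{L^2\II}\le c_T s^{-1}\|w^0\|_{L^2\II}$.
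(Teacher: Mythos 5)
Your strategy coincides with the paper's own proof: the same four-way splitting (CQ error for $F_h$, quadrature error for the $E_h$-convolution handled by Lemma~\ref{lem:error-linear}, the time-lag term from sampling $f$ at $t_{k-1}$, and the nonlinear feedback), followed by a $p=0$ discrete Gronwall step and a bootstrap to $p\in(0,1]$. However, as written the argument does not land on the stated power of the logarithm, for two reasons. First, in the $p=0$ Gronwall step you lump the inhomogeneity into $c\tau|\log\tau|\,t_n^{-1}$ before iterating; convolving $t^{\alpha-1}$ against $t^{-1}$ then costs a further $|\log\tau|$, so you obtain $\|e^n\|_{L^2\II}\le c\tau|\log\tau|^2t_n^{-1}$. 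The paper avoids this by keeping the inhomogeneity split as $a_1t_n^{-1}+a_2t_n^{\alpha-1}$ with $a_1=c\tau$ (no log, coming from the $F_h-F_{h,\tau}^n$ term) and $a_2=c\tau|\log\tau|$: only the $t_n^{-1}$ part gains a log under the Gronwall convolution, while the integrable $t_n^{\alpha-1}$ part does not, which is exactly the content of Lemma~\ref{disGr} and yields the sharper $\|e^n\|_{L^2\II}\le c\tau|\log\tau|\,t_n^{-1}$ with a \emph{single} logarithm. Your premature replacement of $t_n^{\alpha-1}$ by $T^{\alpha}t_n^{-1}$ discards precisely this structure.

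Second, the bootstrapped bound for $\mathrm{III}$ is miscounted. With your $L^2$ bound $\|e^{k-1}\|\le c\tau|\log\tau|^2t_{k-1}^{-1}$ one gets
\begin{equation*}
\|\mathrm{III}\|_{L^2\II}\le cL\,\tau|\log\tau|^2\Big(\tau\sum_{k=2}^{n}t_{n-k+1}^{(1-p)\alpha-1}t_{k-1}^{-1}\Big)\le c\,\tau|\log\tau|^{3}\,t_n^{(1-p)\alpha-1},
\end{equation*}
i.e.\ order $\tau|\log\tau|^3$, not $\tau^2|\log\tau|^3$: the factor $\tau$ in $\|e^{k-1}\|$ is the only one that survives, since the quadrature weight $\tau$ is absorbed into the (order-one, up to a log) discrete convolution. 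Hence $\mathrm{III}$ is \emph{not} dominated by the linear part $O(\tau|\log\tau|\,t_n^{-1-p\alpha})$; it is larger by $|\log\tau|^2$, and your final estimate comes out as $c\tau|\log\tau|^3t_n^{-1-p\alpha}$, one logarithm worse than claimed. Both defects are repaired simultaneously by using the refined $p=0$ bound $c\tau|\log\tau|\,t_n^{-1}$ from the split Gronwall inequality, after which the bootstrap gives $\mathrm{III}=O(\tau|\log\tau|^2t_n^{-1-p\alpha})$ and the lemma follows. Apart from this bookkeeping, your handling of $\mathrm{I}+\mathrm{II}$, of the lagged sampling via $\|\partial_s\tuh(s)\|_{L^2\II}\le c_Ts^{-1}\|\tuh(0)\|_{L^2\II}$, and of the source regularity fed into Lemma~\ref{lem:error-linear} is exactly what the paper does.
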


 We also introduce another
auxiliary function  $ U_{h,\gamma}^n  \in X_h $ such that: for $1\le n\le N$
\begin{equation}\label{eqn:fde:ful}
\Bar{\partial}^\alpha_\tau  U_{h,\gamma}^n +A_h  U_{h,\gamma}^n = P_hf( U_{h,\gamma}^{n-1})
\quad \text{with}~~ \gamma U_{h,\gamma}^0 + U_{h,\gamma}^N= P_hg.
\end{equation}

The next lemma provides an estimate for $U_{h,\gamma}^{0,\delta}-U_{h,\gamma}^{0}$.
\begin{lemma}
\label{lem:err-ful-noise}
For a fixed parameter $\mu \in (0,1]$, let $T_*$ be the constant such that $B_{\mu}(T_*) < 1$, where the function $B_{\mu}(\cdot)$ is defined in \eqref{eqn:contraction}. Assume that $T < T_*$ and  
$$\gamma^{-1 + \frac{\mu}{2}} h^{2-\mu} |\log h| + \tau^{\alpha\mu/2} + \tau^{\alpha} h^{-\mu} \leq c_*,$$
where $c_*$ is the constant given in Theorem \ref{lemma:stability_fully}.  
Let $U_{h,\gamma}^{n,\delta}$ and $U_{h,\gamma}^{n}$ be the solutions to problems \eqref{eqn:back-fully} and \eqref{eqn:fde:ful}, respectively. Then, the following estimate holds  $$\|U_{h,\gamma}^{0,\delta} - U_{h,\gamma}^{0}\|_{\dH{-\mu}} \leq c \gamma^{-1} \delta,$$ 
where the constant $c$ is independent of $\gamma$, $h$, and $\tau$.
\end{lemma}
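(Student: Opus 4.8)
The plan is to exploit the fixed-point structure already established for the fully discrete scheme rather than unwinding the solution representations term by term. By the solution representation \eqref{eqn:back-fully-gdelta}, and the analogous identity obtained from \eqref{eqn:fde:ful} together with \eqref{eqn:back-fully-non-new}, both $U_{h,\gamma}^{0,\delta}$ and $U_{h,\gamma}^{0}$ are the (unique) fixed points of the mapping $M_{h,\tau}$ in \eqref{eqn:Mhtaut}, with source term $\hat g = P_h g_\delta$ and $\hat g = P_h g$ respectively. Writing $M_{h,\tau}[\hat g]$ to display this data dependence, we have $U_{h,\gamma}^{0,\delta} = M_{h,\tau}[P_h g_\delta](U_{h,\gamma}^{0,\delta})$ and $U_{h,\gamma}^{0} = M_{h,\tau}[P_h g](U_{h,\gamma}^{0})$. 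Since the two mappings differ only by the constant $(\gamma I + F_{h,\tau}^N)^{-1}P_h(g_\delta - g)$, subtracting the two fixed-point identities gives
\[
U_{h,\gamma}^{0,\delta} - U_{h,\gamma}^{0} = (\gamma I + F_{h,\tau}^N)^{-1}P_h(g_\delta - g) + \big(M_{h,\tau}[P_h g](U_{h,\gamma}^{0,\delta}) - M_{h,\tau}[P_h g](U_{h,\gamma}^{0})\big).
\]

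I would then estimate the two terms separately in the $\dH{-\mu}$ norm. For the data term, using $\|\cdot\|_{\dH{-\mu}} \le c\|\cdot\|_{L^2\II}$ for $\mu\ge 0$, the stability bound for $(\gamma I + F_{h,\tau}^N)^{-1}$ in Lemma~\ref{lem:op-reg-fully}, the $L^2$-stability $\|P_h v\|_{L^2\II}\le\|v\|_{L^2\II}$, and the noise level \eqref{eqn:noisy data}, I obtain
\[
\|(\gamma I + F_{h,\tau}^N)^{-1}P_h(g_\delta - g)\|_{\dH{-\mu}} \le c\gamma^{-1}\|P_h(g_\delta - g)\|_{L^2\II} \le c\gamma^{-1}\delta.
\]
For the second term, the parameter restriction $\gamma^{-1+\frac\mu2}h^{2-\mu}|\log h| + \tau^{\alpha\mu/2} + \tau^{\alpha} h^{-\mu}\le c_*$ is precisely the hypothesis under which Theorem~\ref{lemma:stability_fully} guarantees that $M_{h,\tau}[P_h g]$ is a contraction on $(X_h,\|\cdot\|_{\dH{-\mu}})$ with a contraction constant $\rho<1$ independent of $\gamma$, $h$ and $\tau$. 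Crucially, the difference $M_{h,\tau}\phi_h^1-M_{h,\tau}\phi_h^2$ does not involve $\hat g$, so $\rho$ is independent of the source term and applies verbatim here, yielding
\[
\|M_{h,\tau}[P_h g](U_{h,\gamma}^{0,\delta}) - M_{h,\tau}[P_h g](U_{h,\gamma}^{0})\|_{\dH{-\mu}} \le \rho\,\|U_{h,\gamma}^{0,\delta} - U_{h,\gamma}^{0}\|_{\dH{-\mu}}.
\]

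Combining the two bounds gives $\|U_{h,\gamma}^{0,\delta} - U_{h,\gamma}^{0}\|_{\dH{-\mu}} \le c\gamma^{-1}\delta + \rho\,\|U_{h,\gamma}^{0,\delta} - U_{h,\gamma}^{0}\|_{\dH{-\mu}}$, and since $\rho<1$ the last term can be absorbed into the left-hand side to conclude $\|U_{h,\gamma}^{0,\delta} - U_{h,\gamma}^{0}\|_{\dH{-\mu}} \le c(1-\rho)^{-1}\gamma^{-1}\delta$, which is the asserted estimate. The heavy lifting---controlling the nonlinear part of $M_{h,\tau}$ and establishing the uniform contraction through Lemma~\ref{lama:phi12-ful} and Lemma~\ref{lem:op-err:fully}---has already been done in Theorem~\ref{lemma:stability_fully}, so the only genuinely new ingredient is the elementary bound on the data-perturbation term. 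The point requiring the most care is therefore bookkeeping: verifying that the contraction constant furnished by Theorem~\ref{lemma:stability_fully} is independent of the right-hand side, so that the same $\rho$ governs the Lipschitz-in-data difference, and checking that the negative-norm inequality $\|\cdot\|_{\dH{-\mu}}\le c\|\cdot\|_{L^2\II}$ is applied correctly on the discrete space.
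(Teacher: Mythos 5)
Your proposal is correct and is essentially the paper's own argument in different clothing: the paper forms the error equation for $e_n^{\delta}=U_{h,\gamma}^{n,\delta}-U_{h,\gamma}^{n}$ and its discrete solution representation, which yields exactly your identity (the nonlinear remainder $-(\gamma I+F_{h,\tau}^N)^{-1}\tau\sum_{k}E_{h,\tau}^{N-k}P_h[f(U_{h,\gamma}^{k-1,\delta})-f(U_{h,\gamma}^{k-1})]$ is precisely $M_{h,\tau}[P_hg](U_{h,\gamma}^{0,\delta})-M_{h,\tau}[P_hg](U_{h,\gamma}^{0})$ since $U_{h,\gamma}^{k-1,\delta}=S_{h,\tau}^{k-1}U_{h,\gamma}^{0,\delta}$), then bounds the data term by $c\gamma^{-1}\delta$ via Lemma~\ref{lem:op-reg-fully} and invokes the contraction argument of Theorem~\ref{lemma:stability_fully} to absorb the rest. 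Your observations that the contraction constant is independent of $\hat g$ and that $\|\cdot\|_{\dH{-\mu}}\le c\|\cdot\|_{L^2(\Omega)}$ are exactly the (implicit) bookkeeping the paper relies on, so no gap remains.
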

\begin{proof}
Let $e_n^{\delta} = U_{h,\gamma}^{n,\delta} - U_{h,\gamma}^{n}$.
Then $e_n^{\delta}$ satisfies the relation that: for $1\le n\le N$
\begin{equation}\label{eqn:en-1}
\Bar{\partial}^\alpha_\tau e_n^{\delta} +A_h e_n^{\delta}  =P_h[f(U_{h,\gamma}^{n-1,\delta})-f(U_{h,\gamma}^{n-1})]
\quad \text{with}~~ 
 \gamma  e_0^{\delta}+ e_N^{\delta} = P_h(g_\delta -g).
\end{equation}
Using the solution representation~\eqref{eqn:back-fully-non-new} yields
\begin{equation*}
\begin{aligned}
 e_0^{\delta}&=(\gamma I+F_{h,\tau}^{N})^{-1} \Big[P_h(g_\delta-g) - \tau \sum_{k=1}^N E_{h,\tau}^{N-k}P_h[f(U_{h,\gamma}^{k-1,\delta})-f(U_{h,\gamma}^{k-1})]\Big].
\end{aligned}
\end{equation*}
Now we apply Lemma \ref{lem:op-reg-fully} to obtain
\begin{equation*}
\begin{aligned}
&\| e_0^{\delta}\|_{\dH{-\mu}}\le c\gamma^{-1} \delta + \left\|(\gamma I+F_{h,\tau}^{N})^{-1}\tau\sum_{k=1}^NE_{h,\tau}^{N-k}P_h[f(U_{h,\gamma}^{k-1,\delta})-f(U_{h,\gamma}^{k-1})]\right\|_{\dH{-\mu}}.
\end{aligned}
\end{equation*}
Applying the argument in Theorem~\ref{lemma:stability_fully} leads to the desired result.
\end{proof}

Time discretization would give the following fully error estimate.
\begin{lemma}\label{lem:err-reg:ful} 
Let $\tuh(t)$ and $U_{h,\gamma}^{n}$ be the solutions to \eqref{eqn:fde:reg:semi} and \eqref{eqn:fde:ful} respectively.
For a fixed parameter $\mu\in(0,1]$, 
let $T_*$ be the constant such that $B_{\mu}(T_*)<1$ with the function $B_{\mu}(\cdot)$ defined in \eqref{eqn:contraction}.
% and assume that $T<T_*$. 
% There exist a constant    $h_\tau$, such that $\gamma^{-1+\frac\mu2}h^{2-\mu}|\log h|+\tau^{\al\mu/2}+\tau^{\al}h^{-\mu}\le h_\tau$,    the mapping $M_{h,\tau}$  defined in \eqref{eqn:Mhtaut} is a contraction mapping. 
 Assume that $T < T_*$ and  
$$\gamma^{-1 + \frac{\mu}{2}} h^{2-\mu} |\log h| + \tau^{\alpha\mu/2} + \tau^{\alpha} h^{-\mu} \leq c_*,$$
where $c_*$ is the constant given in Theorem \ref{lemma:stability_fully}.  
Under these conditions, for $\nu \leq \mu$, the following estimate holds:
\[
\|\tuh(0) - U_{h,\gamma}^{0} \| _{\dH{-\mu}} \leq c \tau |\log \tau|^2 
\big(h^{\min\{-\nu,0\}} + \gamma^{-1}h^2|\log h| \big) 
\| u_0 \|_{\dH{\min\{-\nu,0\}}} ,
\]
where the constant $c$ is independent of $\gamma$, $h$, and $\tau$.
% Let $\tuh(t)$ and $U_{h,\gamma}^{n}$ denote the solutions to \eqref{eqn:fde:reg:semi} and \eqref{eqn:fde:ful}, respectively. For a fixed parameter $\mu \in (0,1]$, let $T_*$ be a constant such that $B_{\mu}(T_*) < 1$, where the function $B_{\mu}(\cdot)$ is defined in \eqref{eqn:contraction}, and assume $T < T_*$. 
% There exists a constant $h_\tau$ such that 
% $\gamma^{-1+\frac{\mu}{2}}h^{2-\mu}|\log h| + \tau^{\alpha\mu/2} + \tau^{\alpha}h^{-\mu} \leq h_\tau,
% $
% then the mapping $M_{h,\tau}$, defined in \eqref{eqn:Mhtaut}, is a contraction mapping. 
% Under these conditions, for $\nu \leq \mu$, the following estimate holds:
% \[
% \|\tuh(0) - U_{h,\gamma}^{0} \| _{\dH{-\mu}} \leq c \tau |\log \tau|^2 
% \big(h^{\min\{-\nu,0\}} + \gamma^{-1}h^2 \big) 
% \| u_0 \|_{\dH{\min\{-\nu,0\}}} ,
% \]
% where the constant $c$ is independent of $\gamma$, $h$, and $\tau$.
\end{lemma}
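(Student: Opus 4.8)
The plan is to compare $U_{h,\gamma}^{0}$ with $u_{\gamma,h}(0)$ through the auxiliary sequence $\bar U_{h,\gamma}^n$ defined in \eqref{eqn:fde:fulbar}, which is the fully discrete direct evolution of the semidiscrete initial value $u_{\gamma,h}(0)$; thus $w^n:=U_{h,\gamma}^n-\bar U_{h,\gamma}^n$ isolates the time discretization. First I would note that $w^n$ solves the fully discrete scheme $\bar\partial_\tau^\alpha w^n+A_h w^n=P_h[f(U_{h,\gamma}^{n-1})-f(\bar U_{h,\gamma}^{n-1})]$. Since $\bar U_{h,\gamma}^0=u_{\gamma,h}(0)$ and both $u_{\gamma,h}(0)$ (in the continuous-time sense $\gamma u_{\gamma,h}(0)+u_{\gamma,h}(T)=P_hg$) and $U_{h,\gamma}^0$ share the terminal constraint with right-hand side $P_hg$, the data cancel to give $\gamma w^0+w^N=u_{\gamma,h}(t_N)-\bar U_{h,\gamma}^N=:R$. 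The quantity $R$ is precisely the temporal discretization error at $t_N=T$ of the direct problem started from $u_{\gamma,h}(0)$, which is controlled by Lemma~\ref{lem:err-reg:H2}.

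Applying the fully discrete solution operator representation \eqref{eqn:back-fully-non-new} to $w^n$ and imposing $\gamma w^0+w^N=R$ yields
$$ w^0=(\gamma I+F_{h,\tau}^N)^{-1}\Big[R-\tau\sum_{k=1}^N E_{h,\tau}^{N-k}P_h\big(f(U_{h,\gamma}^{k-1})-f(\bar U_{h,\gamma}^{k-1})\big)\Big]. $$
Writing $U_{h,\gamma}^{k-1}=S_{h,\tau}^{k-1}U_{h,\gamma}^0$ and $\bar U_{h,\gamma}^{k-1}=S_{h,\tau}^{k-1}u_{\gamma,h}(0)$, the nonlinear sum is exactly the object estimated in the contraction argument of Theorem~\ref{lemma:stability_fully}. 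Using the Lipschitz bound \eqref{eqn:lipconstant} and Lemma~\ref{lama:phi12-ful}, together with the parameter restriction $\gamma^{-1+\mu/2}h^{2-\mu}|\log h|+\tau^{\alpha\mu/2}+\tau^\alpha h^{-\mu}\le c_*$, I would bound this term by $\kappa\|w^0\|_{\dH{-\mu}}$ with a contraction constant $\kappa<1$ and absorb it into the left-hand side. This reduces the proof to the linear estimate $\|w^0\|_{\dH{-\mu}}\le c\,\|(\gamma I+F_{h,\tau}^N)^{-1}R\|_{\dH{-\mu}}$.

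It then remains to bound this linear term. I would use the discrete smoothing of $(\gamma I+F_{h,\tau}^N)^{-1}$: combining Lemma~\ref{lem:op-reg-fully} with the behaviour $(F_{h,\tau}^N)^{-1}\sim A_h$ from Lemma~\ref{lem:op:fully}(ii) gives the negative-norm analogue of \eqref{eqn:gf-1}, namely $\|(\gamma I+F_{h,\tau}^N)^{-1}v\|_{\dH{-\mu}}\le c\|v\|_{\dH{2-\mu}}$, so that $\|(\gamma I+F_{h,\tau}^N)^{-1}R\|_{\dH{-\mu}}\le c\|R\|_{\dH{2-\mu}}$. Applying Lemma~\ref{lem:err-reg:H2} with $p=1-\mu/2$ and $t_N=T$ bounds $\|R\|_{\dH{2-\mu}}\le c\,\tau|\log\tau|^2\|u_{\gamma,h}(0)\|_{L^2\II}$. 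The final step is to convert $\|u_{\gamma,h}(0)\|_{L^2\II}$ into $\|u_0\|_{\dH{\min\{-\nu,0\}}}$ with the correct $h$ and $\gamma$ powers (with $\nu=\mu-q$). For this I would split $u_{\gamma,h}(0)=P_hu_\gamma(0)+\zeta_h(0)$, where $\zeta_h(0)=u_{\gamma,h}(0)-P_hu_\gamma(0)$: the data part is handled by the inverse inequality \eqref{eqn:inverse} and the $\dH{\min\{-\nu,0\}}$-regularity/stability of the regularized backward solve (Lemma~\ref{lem:regugamma}), producing the term $h^{\min\{-\nu,0\}}\|u_0\|_{\dH{\min\{-\nu,0\}}}$, while the finite-element part $\zeta_h(0)$ is controlled, as in Lemma~\ref{lem:esitugh}, by $\gamma^{-1}h^2|\log h|$.

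The main obstacle I anticipate is exactly this last conversion. The coefficient in Lemma~\ref{lem:err-reg:H2} is measured in $L^2\II$, whereas the sharp bound requires keeping the spatial-discretization contribution $\zeta_h(0)$ in its natural negative norm (through Lemma~\ref{lem:esitugh}) and the data contribution $P_hu_\gamma(0)$ in $\dH{\min\{-\nu,0\}}$ (through Lemma~\ref{lem:regugamma}); a naive application of the inverse inequality to the whole of $u_{\gamma,h}(0)$ would lose powers of $h$ and degrade $\gamma^{-1}h^2|\log h|$ to $\gamma^{-1}h^{2-\mu}|\log h|$. Carefully separating the regularization/data regularity from the finite-element error is what produces the two-term factor $h^{\min\{-\nu,0\}}+\gamma^{-1}h^2|\log h|$ without extraneous loss, and is the crux of the argument.
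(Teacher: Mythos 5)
Your reduction is, up to the last step, exactly the paper's argument: the auxiliary sequence $\bar U_{h,\gamma}^n$ started from $u_{\gamma,h}(0)$, the cancellation of $P_hg$ in the terminal constraint giving $\gamma w^0+w^N=R$, the discrete representation of $w^0$, the absorption of the nonlinear sum via the contraction machinery of Theorem~\ref{lemma:stability_fully}, and the application of Lemma~\ref{lem:err-reg:H2} with $p=1-\mu/2$ all match the paper and correctly reduce the problem to bounding $\tau|\log\tau|^2\,\|u_{\gamma,h}(0)\|_{L^2\II}$.

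The gap is in the final conversion of $\|u_{\gamma,h}(0)\|_{L^2\II}$. Your split $u_{\gamma,h}(0)=P_hu_\gamma(0)+\zeta_h(0)$ places the \emph{rough} object $u_\gamma(0)$ in the data slot, and Lemma~\ref{lem:regugamma} cannot deliver $\|P_hu_\gamma(0)\|_{L^2\II}\le ch^{\min\{-\nu,0\}}\|u_0\|_{\dH{\min\{-\nu,0\}}}$: applied with $p=-\nu$ it gives $\|u_\gamma(0)\|_{\dH{-\nu}}\le c\gamma^{-(\mu-\nu)/2}\|u_0\|_{\dH{-\mu}}$, i.e., a spurious negative power of $\gamma$ and the weaker norm of $u_0$; a $\gamma$-uniform bound $\|u_\gamma(0)\|_{\dH{-\nu}}\le c\|u_0\|_{\dH{-\nu}}$ would require rerunning the stability argument in $\dH{-\nu}$ (i.e., $B_\nu(T)<1$, not assumed, and for $\nu\le0$ it is the $L^2$ stability that Remark~\ref{rem:-mu} leaves open). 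Moreover $\zeta_h(0)$ is controlled by Lemma~\ref{lem:esitugh} only in $\dH{-\mu}$, so passing to $L^2\II$ costs $h^{-\mu}$ and degrades that piece to $\gamma^{-1}h^{2-\mu}|\log h|$ --- exactly the loss you warn against, which your own decomposition does not avoid. The paper instead returns to the representation $u_{\gamma,h}(0)=(\gamma I+F_h(T))^{-1}\big(P_hg-\int_0^TE_h(T-s)P_hf(u_{\gamma,h}(s))\,\ds\big)$ and uses $\|(\gamma I+F_h(T))^{-1}v_h\|_{L^2\II}\le c\|A_hv_h\|_{L^2\II}$, so that the data term is carried by the \emph{terminal observation} $g=u(T)$, which is genuinely smooth by the forward smoothing of the direct problem ($\|g\|_{\dH{\min\{2-\nu,2\}}}\le c\|u_0\|_{\dH{\min\{-\nu,0\}}}$ by Lemma~\ref{lem:Reg} and Corollary~\ref{lem:reg-weak}), combined with the inverse inequality and $\|A_h^{s/2}P_hg\|_{L^2\II}\le c\|g\|_{\dH{s}}$; the convolution term is bounded by $c_T\|u_{\gamma,h}(0)\|_{\dH{-\mu}}\le c_T(1+\gamma^{-1}h^2|\log h|)\|u_0\|_{\dH{-\mu}}$ via Lemmas~\ref{lem:esitugh} and~\ref{lemma:gammainequ}, where $\dH{-\mu}$ --- not $L^2\II$ --- is the natural norm for the finite-element error. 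Your last step needs to be reworked along these lines to obtain the stated two-term factor.
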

\begin{proof}
    Let $\bar U_{h,\gamma}^n$ be the solution to \eqref{eqn:fde:fulbar} 
%Lemma \ref{lem:err:ful:forward}. 
and $e_n = \bar U_{h,\gamma}^n- U_{h,\gamma}^n$, which satisfies the following equation: for $1\le n\le N$
\begin{equation*}
\label{eqn:ful:err}
\bDal e_n  +A_he_n =P_h(f(\bar U_{h,\gamma}^{n-1})-f(U_{h,\gamma}^{n-1})) 
\quad \text{with}~~ 
\gamma e_0 + e_N =  \bar U_{h,\gamma}^N -  \tuh(T).
\end{equation*}
Then we apply the  representation of the fully discrete scheme to derive
\begin{equation*}
\label{eqn:repre:err:ful}
e_0 = (\gamma I+F_{h,\tau}^N)^{-1}\Big[(\bar U_{h,\gamma}^N -  \tuh(T))-\sum_{k=1}^N \tau E_{h,\tau}^{N-k}P_h(f(\bar U_{h,\gamma}^{k-1})-f(U_{h,\gamma}^{k-1})) \Big].
\end{equation*}
Thus we have 
\begin{equation*}
   \| e_0\|_{\dH{-\mu}}\le c\|A_h^{1-\frac\mu 2}(\bar U_{h,\gamma}^N -  \tuh(T))\|_{L^2\II}+\left\|(\gamma I+F_{h,\tau}^N)^{-1}\sum_{k=1}^N \tau E_{h,\tau}^{N-k}P_h(f(\bar U_{h,\gamma}^{k-1})-f(U_{h,\gamma}^{k-1}))\right\|_{\dH{-\mu}}.
\end{equation*}
Using Lemma~\ref{lem:err-reg:H2} and applying the argument in Theorem~\ref{lemma:stability_fully} give 
\begin{align*}
    \|\tuh(0)- U_{h,\gamma}^{0} \| _{\dH{-\mu}}\le c \tau |\log \tau|^2 \|u_{\gamma,h}(0)\| _{L^2\II}.
\end{align*}
We note that the equation \eqref{eqn:fde:reg:semisol} implies
\begin{align*}
    \|u_{\gamma,h}(0)\| _{L^2\II}\le c \|A_hP_hg\|_{L^2\II}+c\|A_h\int_0^TE_h(T-s)P_hf(u_{\gamma,h}(s))\ds\|_{L^2\II}.
\end{align*}
Applying the same argument in Corollary \ref{lem:reg-weak} and using Lemmas  \ref{lem:esitugh}, \ref{lemma:gammainequ} lead to 
\begin{align*}
    \|A_h\int_0^TE_h(T-s)P_hf(u_{\gamma,h}(s))\ds\|_{L^2\II}\le c_T \|u_{\gamma,h}(0)\|_{\dH{-\mu}}\le c_T(1+\gamma^{-1}h^2|\log h| ) \|u_0\|_{\dH{-\mu}}.
\end{align*}
% Using the inverse inequality~\eqref{eqn:inverse} and the  bounded of $\|A_h^{\frac s2} P_hg\|\le c\|g\|_{\dH{s}}$ for $0\le s\le2$ (\cite[Theorem 4.2]{Karaa:2019}) with the regularity results in lemma \ref{lem:Reg} and Corollary \ref{lem:reg-weak} leads to
By applying the inverse inequality in equation~\eqref{eqn:inverse} and utilizing the bound $\|A_h^{\frac{s}{2}} P_h g\|_{L^2\II} \leq c \|g\|_{\dH{s}}$ for $0 \leq s \leq 2$ (\cite[Theorem 4.2]{Karaa:2019}), along with the regularity results from Lemma~\ref{lem:Reg} and Corollary~\ref{lem:reg-weak}, we obtain
    \begin{align*}
        \|A_hP_hg\|_{L^2\II}\le ch^{\min\{-\nu,0\}}\|g\|_{\dH{\min\{2-\nu,2\}}}\le ch^{\min\{-\nu,0\}}\|u_0\|_{\dH{\min\{-\nu,0\}}}.
    \end{align*}
Therefore, we arrive at 
\begin{align*}
    \|u_{\gamma,h}(0)\| _{L^2\II}&\le ch^{\min\{-\nu,0\}}\|u_0\|_{\dH{\min\{-\nu,0\}}}+c(1+\gamma^{-1}h^2|\log h|) \|u_0\|_{\dH{-\mu}}\\&\le c(h^{\min\{-\nu,0\}}+\gamma^{-1}h^2|\log h| )\|
    u_0\|_{\dH{\min\{-\nu,0\}}}. 
\end{align*}
This completes the proof of the lemma.  
\end{proof}

Now we are ready to state the main theorem which shows the error of the numerical reconstruction from noisy data. The proof is a direct result of Lemma~\ref{lemma:gammainequ}, Lemma~\ref{lem:esitugh}, Lemma~\ref{lem:err-ful-noise}, and Lemma~\ref{lem:err-reg:ful}.

% \begin{theorem}\label{thm:fully-err}
% Let $U_{h,\gamma}^{0,\delta}$ be the numerical reconstructed initial data using the fully discrete scheme \eqref{eqn:back-fully}, and let $u_0$ be the exact initial data. For a fixed parameter $\mu\in(0,1]$, 
% let $T_*$ be the constant such that $B_{\mu}(T_*)<1$ with the function $B_{\mu}(\cdot)$ defined in \eqref{eqn:contraction}.  Assume that $T < T_*$ and  
% $$\gamma^{-1 + \frac{\mu}{2}} h^{2-\mu} |\log h| + \tau^{\alpha\mu/2} + \tau^{\alpha} h^{-\mu} \leq c_*,$$
% where $c_*$ is the constant given in Theorem \ref{lemma:stability_fully}.  Then, if $\| u_0 \|_{\dot{H}^{-\mu+q}(\Omega)}\le c$ with $q\in [\mu,2]$, there holds
% \begin{equation*}
% \|U_{h,\gamma}^{0,\delta}-u_0\|_{\dot{H}^{-\mu}(\Omega)}\le c\left(\gamma^\frac q2+ \gamma^{-1} \delta +
% \gamma^{-1}h^2|\log h|+\tau {|\log \tau|^2} (\gamma^{-1}h^2|\log h|+1)  \right).
% \end{equation*}
% % Moreover, for $u_0\in \dot{H}^{-\mu}(\Omega)$, we have
% % \begin{equation*}
% % \| U_{h,\gamma}^{0,\delta} -  u_0\| _{\dot{H}^{-\mu}(\Omega)}\rightarrow 0\quad \text{as}~~\delta,\gamma,h,\tau\rightarrow0,
% % ~~\frac\delta\gamma\rightarrow0,~~ \frac{\tau^2h^{-\mu}}{\gamma} \rightarrow0~~\text{and}~~ \frac{h^2|\log h|}{\gamma} \rightarrow0.
% % \end{equation*}
% \end{theorem}

\begin{theorem}\label{thm:fully-err}
For a fixed parameter $\mu\in(0,1]$, 
let $T_*$ be the constant such that $B_{\mu}(T_*)<1$ with the function $B_{\mu}(\cdot)$ defined in \eqref{eqn:contraction}. 
 Assume that $T < T_*$ and  
$$\gamma^{-1 + \frac{\mu}{2}} h^{2-\mu} |\log h| + \tau^{\alpha\mu/2} + \tau^{\alpha} h^{-\mu} \leq c_*,$$
where $c_*$ is the constant given in Theorem \ref{lemma:stability_fully}.  
Let $U_{h,\gamma}^{0,\delta}$ be the numerically reconstructed initial data using the fully discrete scheme \eqref{eqn:back-fully}, and let $u_0$ be the exact initial data. Then, if $\|u_0\|_{\dot{H}^{-\mu+q}(\Omega)} \leq c$ with $q \in (0,2]$, the following estimate holds
\begin{equation*}
\|U_{h,\gamma}^{0,\delta}-u_0\|_{\dot{H}^{-\mu}(\Omega)}\le c\left(\gamma^\frac q2+ \gamma^{-1} \delta +
\gamma^{-1}h^2|\log h|+\tau {|\log \tau|^2} \left(\gamma^{-1}h^2|\log h|+h^{\min\{-\mu+q,0\}} \right)  \right).
\end{equation*}
Moreover, if $u_0 \in \dot{H}^{-\mu}(\Omega)$, then there holds
\begin{equation*}
\| U_{h,\gamma}^{0,\delta} -  u_0\|_{\dH {-\mu}}\rightarrow 0\quad \text{as}~~\delta,\gamma,h\rightarrow0^+,
~~\frac\delta\gamma\rightarrow0^+,~~\frac{\tau{|\log \tau|^2}}{h^{\mu}}\rightarrow0^+ ~~\text{and}~~ \frac{h^2|\log h|}{\gamma} \rightarrow0^+.
\end{equation*}

\end{theorem}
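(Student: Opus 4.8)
The plan is to control $U_{h,\gamma}^{0,\delta}-u_0$ through a four-term telescoping decomposition that isolates the regularization error, the spatial discretization error, the temporal discretization error, and the data-noise error. Concretely, I would insert the three intermediate quantities $u_\gamma(0)$ (the initial value of the continuous regularized problem \eqref{eqn:fde-back-reg}), $u_{\gamma,h}(0)$ (the initial value of the semidiscrete regularized problem \eqref{eqn:fde:reg:semi}), and $U_{h,\gamma}^{0}$ (the fully discrete reconstruction from the \emph{exact} data $g$, defined in \eqref{eqn:fde:ful}), and write
\begin{equation*}
U_{h,\gamma}^{0,\delta}-u_0 = \big(U_{h,\gamma}^{0,\delta}-U_{h,\gamma}^{0}\big)+\big(U_{h,\gamma}^{0}-u_{\gamma,h}(0)\big)+\big(u_{\gamma,h}(0)-u_\gamma(0)\big)+\big(u_\gamma(0)-u_0\big).
\end{equation*}
Taking the $\dH{-\mu}$ norm and using the triangle inequality, each summand is then bounded by exactly one of the four preparatory results, none of which needs to be re-proved.

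The four bounds are assembled as follows. The last term is the regularization error, controlled by \eqref{eqn:err-eg} of Lemma~\ref{lemma:gammainequ}, giving $\|u_\gamma(0)-u_0\|_{\dH{-\mu}}\le c\gamma^{q/2}\|u_0\|_{\dH{-\mu+q}}$. The third term is the semidiscrete (spatial) error, handled by Lemma~\ref{lem:esitugh}, giving $c\gamma^{-1}h^2|\log h|$. The first term is the noise contribution, bounded by Lemma~\ref{lem:err-ful-noise} as $c\gamma^{-1}\delta$; this lemma applies precisely because the theorem's standing hypothesis $\gamma^{-1+\mu/2}h^{2-\mu}|\log h|+\tau^{\alpha\mu/2}+\tau^\alpha h^{-\mu}\le c_*$ coincides with the contraction condition of Theorem~\ref{lemma:stability_fully}. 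The second term is the temporal error from Lemma~\ref{lem:err-reg:ful}; here I would choose the free parameter $\nu=\mu-q$ (admissible since $\nu\le\mu$), so that $\min\{-\nu,0\}=\min\{-\mu+q,0\}$ and the bound reads $c\tau|\log\tau|^2\big(h^{\min\{-\mu+q,0\}}+\gamma^{-1}h^2|\log h|\big)\|u_0\|_{\dH{\min\{-\mu+q,0\}}}$. Since $\min\{-\mu+q,0\}\le -\mu+q$, the embedding $\dH{-\mu+q}\hookrightarrow\dH{\min\{-\mu+q,0\}}$ gives $\|u_0\|_{\dH{\min\{-\mu+q,0\}}}\le c\|u_0\|_{\dH{-\mu+q}}\le c$. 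Absorbing the bounded norm $\|u_0\|_{\dH{-\mu+q}}$ into the generic constant and summing the four estimates produces exactly the claimed \textsl{a priori} bound.

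For the convergence statement under $u_0\in\dH{-\mu}$, I would reuse the same decomposition but replace the quantitative regularization bound by the qualitative convergence \eqref{eqn:err-eg2} of Lemma~\ref{lemma:gammainequ}, which gives $\|u_\gamma(0)-u_0\|_{\dH{-\mu}}\to 0$ as $\gamma\to 0^+$. For the time term one now takes $\nu=\mu$, so that $\min\{-\nu,0\}=-\mu$ and the contribution is $c\tau|\log\tau|^2\big(h^{-\mu}+\gamma^{-1}h^2|\log h|\big)\|u_0\|_{\dH{-\mu}}$; this tends to zero because $\tfrac{\tau|\log\tau|^2}{h^{\mu}}\to 0^+$ forces $\tau|\log\tau|^2\to 0^+$, while $\tfrac{h^2|\log h|}{\gamma}\to 0^+$ controls the second factor. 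The remaining two terms vanish under $\tfrac{\delta}{\gamma}\to 0^+$ and $\tfrac{h^2|\log h|}{\gamma}\to 0^+$. The one point requiring care — and the main, if modest, obstacle — is the bookkeeping of hypotheses: one must verify that the single standing condition in the theorem simultaneously subsumes the well-posedness and contraction assumptions needed by Lemmas~\ref{lem:esitugh}, \ref{lem:err-ful-noise} and \ref{lem:err-reg:ful} (in particular $\gamma^{-1+\mu/2}h^{2-\mu}|\log h|\le h_0$ from Theorem~\ref{lemma:stability_semi}), which is arranged by shrinking $c_*$ so that $c_*\le h_0$, and that the choice of $\nu$ is consistent with the data regularity in both regimes $q\ge\mu$ and $q<\mu$. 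Everything else is a direct application of the four lemmas.
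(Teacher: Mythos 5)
Your proposal is correct and is essentially identical to the paper's proof, which states the result as a direct consequence of Lemma~\ref{lemma:gammainequ}, Lemma~\ref{lem:esitugh}, Lemma~\ref{lem:err-ful-noise}, and Lemma~\ref{lem:err-reg:ful} via exactly the four-term telescoping decomposition you describe. Your additional care with the choice $\nu=\mu-q$ and the hypothesis bookkeeping fills in details the paper leaves implicit, but introduces nothing different in substance.
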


\begin{remark}
The \textsl{a priori} error estimate in Theorem \ref{thm:fully-err} provides a useful guideline for choosing the regularization parameter $\gamma$, as well as the discretization parameters $h$ and $\tau$, based on the noise level $\delta$. In particular, if $u_0 \in \dot{H}^ {-\mu+q}(\Omega)$, with $\mu>0, \ q\in [0,2]$ by choosing
\begin{equation*}
\gamma \sim \delta^{\frac{2}{q+2}},~~h^2|\log h|\sim \delta,~\text{and}~~\tau |\log \tau|^2 h^{\min\{-\mu+q,0\} }\sim \delta^{\frac{q}{q+2}},
\end{equation*}
we obtain the optimal approximation error
\begin{equation*}
\|U_{h,\gamma}^{0,\delta} - u_0\|_{\dot{H}^{-\mu}(\Omega)}\le c\delta^{\frac{q}{q+2}}.
\end{equation*}
Our theory requires $\mu > 0$, and the generic constant in the estimate may diverge as $\mu \rightarrow 0$. The result can be extended to the case $\mu = 0$ under the strong condition \eqref{eqn:lipconstant-2}, as discussed in Remarks \ref{rem:-mu} and \ref{rem:iter}. However, avoiding the use of condition \eqref{eqn:lipconstant-2} in general remains an open problem and warrants further investigation. 
\end{remark}

\section{Numerical examples}\label{sec:5}
In this section, we test several two-dimensional examples to illustrate our theoretical results and to examine the necessity of our assumptions. We consider the two-dimensional subdiffusion model \eqref{eqn:fde} in the domain $\Omega = (0,1)^2$. For spatial discretization, we employ the standard Galerkin piecewise linear Finite Element Method with a uniform mesh size of $h $. For temporal discretization, we use the backward Euler convolution quadrature method with a uniform time step size of $\tau$.

To obtain the exact solution $u(T)$ as the observational data, we solve the direct problem using fine meshes, specifically setting $h=1/256$ and $\tau = T/1000$. Subsequently, we compute the noisy observational data as follows:
\begin{equation*}
g_\delta = u(T) + \epsilon\delta\sup_{x\in\Omega} u(x,T),
\end{equation*}
where $\epsilon$ is generated from a standard Gaussian distribution, and $\delta$ represents the associated noise level. We will compute the numerical reconstruction of the initial data based on Algorithm \ref{alg}.
All the computations are carried out on a personal desktop using MATLAB 2022.

% \begin{algorithm}
% \caption{An iterative algorithm solving the scheme \eqref{eqn:back-fully-gdelta} to find $U_{h,\gamma}^{0,\delta}$. }\label{alg}
% \begin{algorithmic}[h!]
%  \STATE{$\mathbf{Data:}$ Order $\alpha$ terminal time $T$,  noisy observation $g_\delta$,  discretization parameter $h$ and $\tau$.}
%  \STATE{$\mathbf{Result:}$ Approximate initial  date  $U_{h,\gamma}^{0,\delta}$.}
% \STATE{Given random $U_{0,0}$,  $e^0=1$, $j=0$. From the  scheme \eqref{eqn:back-fully-non-new}, compute}
% \STATE{$\tau\sum_{k=1}^N E_{h,\tau}^{N-k}P_hf(S_{h,\tau}^{k-1} U_{0,j})=S_{h,\tau}^n U_{0,j}-F_{h,\tau}^N U_{0,j}$ .}
% \STATE{\textbf{wihle} $ e^j\ge tol=10^{-10} $ \textbf{do}}
% \STATE{ Update $U_{0,j+1}$ by conjugate gradient method from}
% \STATE{ $\quad  (\gamma I +F_{h,\tau}^N )U_{0,j+1}= P_hg_\delta-\tau\sum_{k=1}^N E_{h,\tau}^{N-k}P_hf(S_{h,\tau}^{k-1} U_{0,j}) $; }
% \STATE{ Compute  error }
% \STATE{ $\quad e^{j+1}  =\|U_{0,j+1}-U_{0,j}\|$; }
% \STATE{$\quad j=j+1.$}
% \STATE{\textbf{end}}
% \STATE{\textbf{Output:}  The approximate initial  date  $U_{h,\gamma}^{0,\delta}\leftarrow U_{0,j}$.}
% \end{algorithmic}
% \end{algorithm}

We apply two nonlinear functions $f(u)$:
\begin{equation*}
  f(u)=\sqrt{1+u^2}\quad \text{and}\quad f(u)=1-u^3,
\end{equation*}
%which are the same as in \cite{JinLiZhou:SINUM2018} and  
and test the following two types of initial data:  
\begin{itemize}
\item[(1)] \textbf{Example 1.}  Smooth initial data:
\begin{equation*}
    u_0=\sin(2\pi x)\sin(2\pi y) \in \dot H^2(\Omega).
\end{equation*} 
\item[(2)] \textbf{Example 2.} Nonsmooth initial data:
\begin{equation}\label{eqn:nonsm}
u_0 = \begin{cases}
1,~~\text{if}~(x,y)\in [0,0.5]^2\cup [0.5,1]^2,\\
0,~~\text{otherwise.}
\end{cases}\in \dH{\frac12-\epsilon}\quad \forall ~\epsilon\in(0,\frac12).
\end{equation}
\end{itemize}

For a given noise level $\delta$, we select the discretization parameters $\gamma, h,\tau$ based on Theorem \ref{thm:fully-err}.  For ease of implementation, we test the case $\mu=0$ that is beyond Theorem \ref{thm:fully-err}.
We evaluate the relative error in $L^2$ norm, defined as
\begin{equation}\label{eqn:equ}
e_u =  \|U_{h,\gamma}^{0,\delta} - u_0\|_{L^2\II}/\|u_0\|_{L^2\II},
\end{equation}
where $u_0$ is the exact initial data and $U_{h,\gamma}^{0,\delta} $ is the numerical reconstruction obtained by using Algorithm \ref{alg}.

For \textbf{Example 1} with smooth initial data, we compute $U_{h,\gamma}^{0,\delta}$ with $\gamma, \tau, h \sim \sqrt{\delta}$ and expect a convergence of order $O(\sqrt{\delta})$ according to Theorem \ref{thm:fully-err}. In our numerical experiments, we set $T=1$, $\delta = 1/K$, $\gamma = \sqrt{\delta}/75$, $\tau = \sqrt{\delta}/5$, and $h =  5 \sqrt{\delta}/{8}$, with $K=80$, $160$, $320$, $480$ and $640$. The errors in reconstruction are presented in Tables \ref{fig:fully:smooth:err1}--\ref{fig:fully:smooth:err2}. The numerical results fully support our expectations. Furthermore, our numerical results indicate that the recovery is stable for all $\alpha \in (0,1)$.

When the initial data is nonsmooth, then the convergence rate deteriorates. For \textbf{Example 2} (nonsmooth data), the initial data 
$u_0 \in \dot{H}^{\frac{1}{2}-\varepsilon}$ for any $\varepsilon\in (0,\frac{1}{2})$. According to Theorem \ref{thm:fully-err} (with $\mu=0$), we expect an optimal rate  $O(\delta^{\frac15})$ provided that $\gamma \sim \delta^{\frac45}$, $h \sim \sqrt{\delta}$, and $\tau \sim \delta^{\frac15}$. This is fully supported by the numerical results presented in Tables \ref{fig:fully:err:case3}--\ref{fig:fully:err:case4},
where we set $T=1$, $\delta =1/K$, $\gamma=\delta^{\frac45}/10$, $\tau =\delta^{\frac15}/10$, $h=5\sqrt{\delta}/6$ with $K=400$, $800$, $1200$, $1600$ and $2000$.

\begin{table}[h!]
\renewcommand\arraystretch{0.9}
\begin{center}
\def\temptablewidth{0.85\textwidth}
\caption{Reconstruction error: Example 1 with $f=\sqrt{1+u^2}$ and $\delta =1/K$.}\label{fig:fully:smooth:err1}\vspace{-0.2in}
{\rule{\temptablewidth}{1pt}}
\begin{tabular*}{\temptablewidth}{@{\extracolsep{\fill}}cccccc}
 &$K=80$&$K=160 $&  $K=320 $ & $K=480  $&$K=640$\\ \hline
$\alpha=0.1$ &  3.551e-1   &  2.532e-1   &  1.808e-1 &    1.472e-1 &    1.270e-1 \\
  order& -&     0.4879    &0.4858    &0.5066    &0.5125\\
  \hline
$\alpha=0.3$ &    3.991e-1  &   2.749e-1   &  2.006e-1  &   1.607e-1   &  1.381e-1 \\
  order& -&     0.5378   & 0.4546 &   0.5470   & 0.5270\\
  \hline
$\alpha=0.5$ &  4.642e-1  &   3.291e-1     &2.349e-1 &    1.825e-1  &   1.593e-1 \\
  order& -&  0.4965   & 0.4863  &  0.6221  &  0.4742
\\
  \hline
    $\alpha=0.7$ &   6.018e-1   &  4.281e-1    & 3.045e-1   &  2.334e-1   &  1.977e-1 \\
  order& -&       0.4912  &  0.4917   & 0.6551 &   0.5779
\\
\end{tabular*}
{\rule{\temptablewidth}{1pt}}
\end{center}
\end{table}

\begin{table}[h!]
\renewcommand\arraystretch{0.9}
\begin{center}
\def\temptablewidth{0.85\textwidth}
\caption{Reconstruction error: Example 1 with $f=1 - u^3$ and $\delta =1/K$.  }\label{fig:fully:smooth:err2}\vspace{-0.2in}
{\rule{\temptablewidth}{1pt}}
\begin{tabular*}{\temptablewidth}{@{\extracolsep{\fill}}cccccc}
 &$K=80$&$K=160 $&  $K=320 $ & $K=480  $&$K=640$\\ \hline
$\alpha=0.1$ &  3.630e-1   &  2.561e-1   &  1.824e-1     &1.477e-1     &1.261e-1 \\
  order& -&     0.5029  &  0.4902   & 0.5200   & 0.5507\\
  \hline
$\alpha=0.3$ &   3.909e-1    & 2.804e-1    & 1.947e-1    & 1.591e-1   &  1.373e-1 \\
  order& -& 0.4796  &  0.5264  &  0.4974   & 0.5110\\
  \hline
$\alpha=0.5$ &    4.629e-1   &  3.232e-1    & 2.304e-1     &1.836e-1    & 1.581e-1 \\
  order& -& 0.5184   & 0.4882    &0.5602    &0.5192
\\
  \hline
    $\alpha=0.7$ &   6.080e-1   &  4.354e-1 &    3.080e-1  &   2.344e-1   &  2.017e-1 \\
  order& -&       0.4818  &  0.4993&    0.6736  &  0.5219
\\
\end{tabular*}
{\rule{\temptablewidth}{1pt}}
\end{center}
\end{table}

\begin{table}[h!]
\renewcommand\arraystretch{0.9}
\begin{center}
\def\temptablewidth{0.85\textwidth}
\caption{Reconstruction error: Example 2 with $f=\sqrt{1+u^2}$ and $\delta =1/K$.}\label{fig:fully:err:case3}
\vspace{-0.2in}
{\rule{\temptablewidth}{1pt}}
\begin{tabular*}{\temptablewidth}{@{\extracolsep{\fill}}cccccc}
&  $K=400 $ & $K=800  $&$K=1200 $&$K=1600$&$K=2000 $\\ \hline
$\alpha=0.2$ &     3.017e-1   & 2.583e-1   & 2.365e-1  &  2.225e-1 &   2.131e-1\\
  order& -&  
 0.2243 &   0.2166   & 0.2135   & 0.1919\\
  \hline
$\alpha=0.4$ &    3.095e-1  &  2.661e-1  &  2.440e-1&    2.306e-1 &   2.194e-1\\
  order& -&    0.2179   & 0.2146   & 0.1954 &   0.2239\\
  \hline
$\alpha=0.6$ &   3.275e-1   & 2.810e-1&    2.587e-1 &   2.456e-1  &  2.346e-1\\
  order& -&   0.2208   & 0.2042   & 0.1806 &   0.2058
\\
  \hline
    $\alpha=0.8$ &  3.667e-1  &  3.150e-1    &2.923e-1    &2.788e-1 &   2.649e-1 \\
  order& -&       0.2191   & 0.1845   & 0.1643   & 0.2301
\end{tabular*}
{\rule{\temptablewidth}{1pt}}
\end{center}
\end{table}

\begin{table}[h!]
\renewcommand\arraystretch{0.9}
\begin{center}
\def\temptablewidth{0.85\textwidth}
\caption{Reconstruction error: Example 2 with $f=1 - u^3$ and $\delta =1/K$. }\label{fig:fully:err:case4}\vspace{-0.2in}
{\rule{\temptablewidth}{1pt}}
\begin{tabular*}{\temptablewidth}{@{\extracolsep{\fill}}cccccc}
&  $K=400 $ & $K=800  $&$K=1200 $&$K=1600$&$K=2000 $\\ \hline
$\alpha=0.2$ & 3.014e-1  &  2.583e-1  &   2.364e-1 &    2.228e-1  &   2.120e-1  \\
  order& -&  
 0.2225 &   0.2182  &  0.2069   & 0.2216\\
  \hline
$\alpha=0.4$ & 3.102e-1   &  2.663e-1   &  2.442e-1  &   2.291e-1  &   2.199e-1  \\
  order& -&   0.2198 &   0.2144  &  0.2210 &   0.1835\\
  \hline
$\alpha=0.6$ & 3.278e-1    & 2.805e-1   &  2.585e-1  &   2.457e-1 &    2.348e-1   \\
  order& -&      0.2251   & 0.2008  &  0.1777    &0.2032
\\
  \hline
    $\alpha=0.8$ &3.647e-1    & 3.167e-1  &   2.925e-1  &   2.773e-1  &   2.661e-1  \\
  order& -&       0.2037   & 0.1959 &   0.1855   & 0.1845
\end{tabular*}
{\rule{\temptablewidth}{1pt}}
\end{center}
\end{table}
% Next, we test the convergence of the iteration in Algorithm~\ref{alg}, with different $\alpha$ and $T$, and Lipschitz constant $L$. Here the nonlinear function is choosen by $f(u)=L\sqrt{1+u^2}$.
% In the experiments, we use the smooth initial data.
% Meanwhile, we fix $\delta = 10^{-4}$, $h = 10^{-2}$, $\tau = T/100$.
% We let $U_{0,k}$ be the numerical solution obtained by $k$-th  iteration in Algorithm~\ref{alg}, and compute the error at each iteration:
% $$  e_k = \|U_{0,k} - u_0\|\qquad \text{for all}~~k\ge 0.$$
% In Figure~\ref{fig:1} , we report the convergence histories for $T=1$ and $T=5$ for $L=1,5$ with different $\alpha$.

% We clearly observe that the iteration converges linearly, and the convergence factor decreases as $L$ or $T$ becomes small.
% Besides, the convergence appears increase little as the fractional orders $\alpha$ increase. 
% The errors stop about  $10^{-2}\approx \delta^{1/2}$, which agrees well with our theory.

% Moreover, in Figure \ref{fig:2}, we test the convergence behavior for some large $L$ and $T$ for $\al=0.5$.

% Our experiments show that for fixed $T=2$, $\al=0.5$, when $L$ increase   the iteration is not convergent (L=15). For fixed $L=15$, $\al=0.5$ when $T$ decrease  the iteration reconvergence. Which is consistent with the assumptiop  $L(T^{\alpha(1-\beta/2)}+T^\alpha+ L[T^{\alpha(1-\beta/2)}+T^\alpha+L K(LT^\alpha)(T^{\alpha(2-\beta/2)}+T^{2\alpha})]T^{\alpha(2-\beta/2)}+ L[T^{\alpha(1-\beta/2)}+T^\alpha+L K(LT^\alpha)(T^{\alpha(2-\beta/2)}+T^{2\alpha})]T^{2\alpha})$ small enough in Theorem~\ref{thm:fully-err}.
Next, we examine the convergence of the iteration in Algorithm \ref{alg} with different $\alpha$, $T$, and the Lipschitz constant $L$. For this test, we select the nonlinear function as 
$$f(u)=L\sqrt{1+u^2},$$ 
and use smooth initial data. Additionally, we fix the values of $\delta = 10^{-4}$, $h = 10^{-2}$, and $\tau = T/100$.
Let $U_{0,j}$ denote the numerical reconstruction obtained after the $j$-th iteration of Algorithm \ref{alg}, and calculate the error at each iteration as follows:
$$
e_j = \|U_{0,j} - u_0\|_{L^2\II}/\|u_0\|_{L^2\II} \quad \text{for all}~j\ge 0.
$$

Figures \ref{fig:10} and \ref{fig:20} present the convergence histories with different values of $T$, $L$, and $\alpha$. The numerical results clearly show that when $L$ is small, the iteration converges linearly even with a relatively large $T$, thus achieving a reasonable reconstruction of the initial data. Moreover, we observe that the convergence rate increases as either $L$, $T$, or $\alpha$ decreases.
Conversely, when $L$ is large, we observe that if $T$ is not small enough, the iteration might diverge, as shown in Figure \ref{fig:20}. These phenomena indicate the necessity of the assumption on $T$ in the stability estimate (Theorem \ref{thm:stability}) and error estimates (Theorem \ref{thm:fully-err}). %, such that $B(T)$, defined in \eqref{eqn:contraction}, must be sufficiently small.
\begin{figure}[h!]
		\centering
	\includegraphics[trim={0.8in 0.2in 1in 0.2in},clip,width=0.3\textwidth]{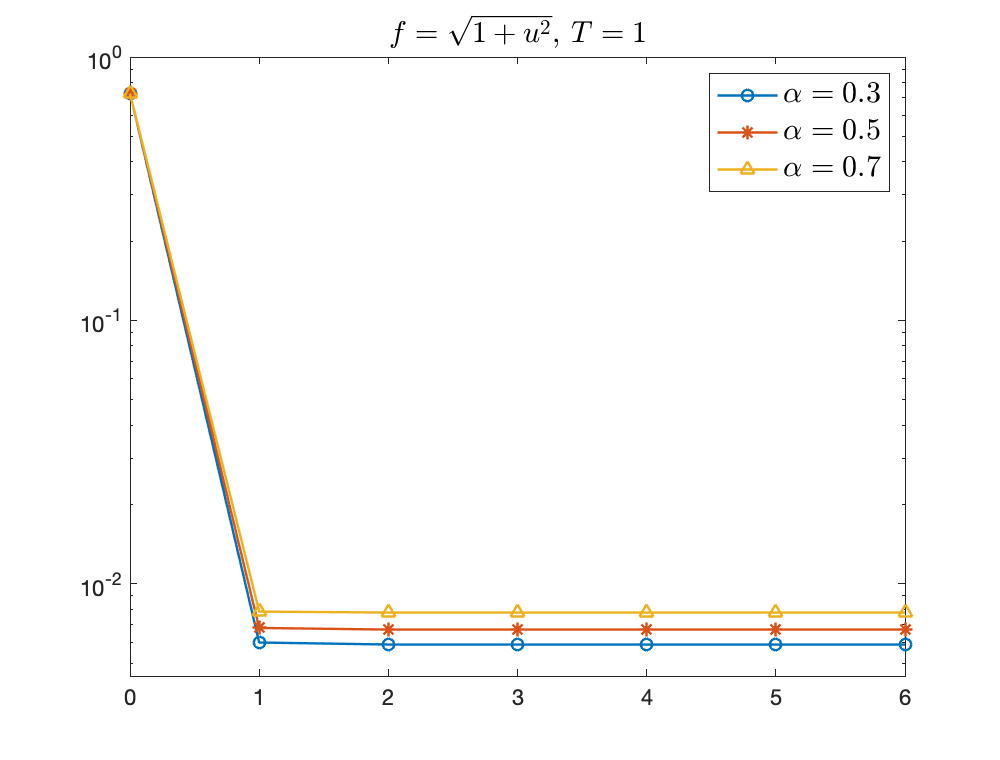}
	\includegraphics[trim={0.8in 0.2in 1in 0.2in},clip,width=0.3\textwidth]{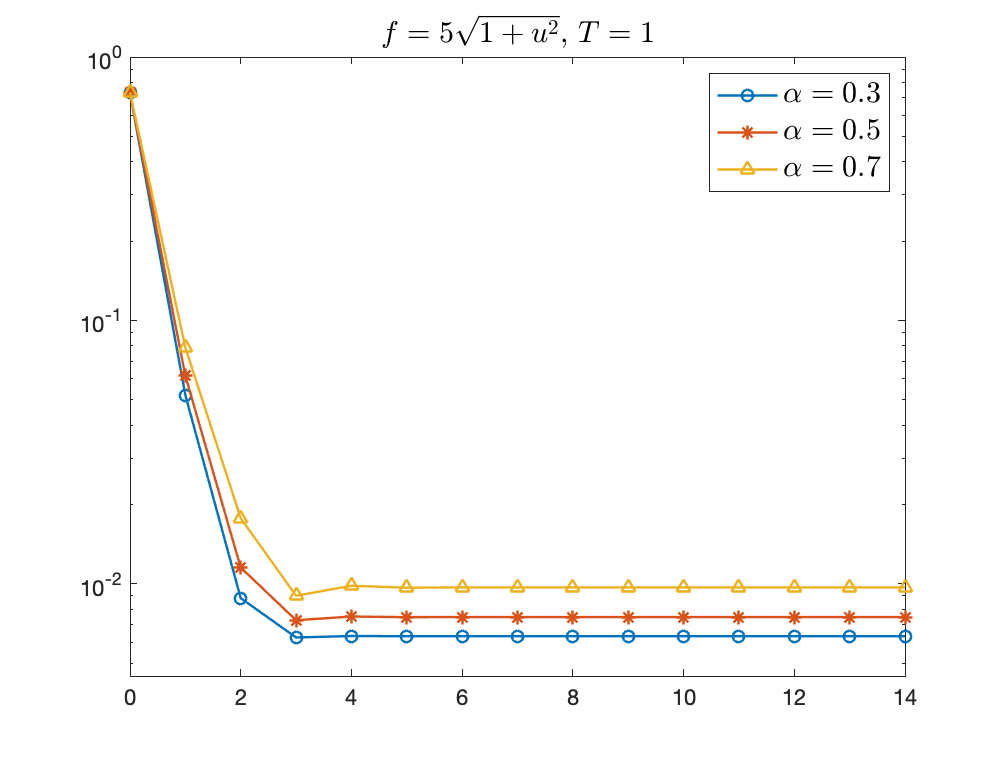}
 \includegraphics[trim={0.8in 0.2in 1in 0.2in},clip,width=0.3\textwidth]{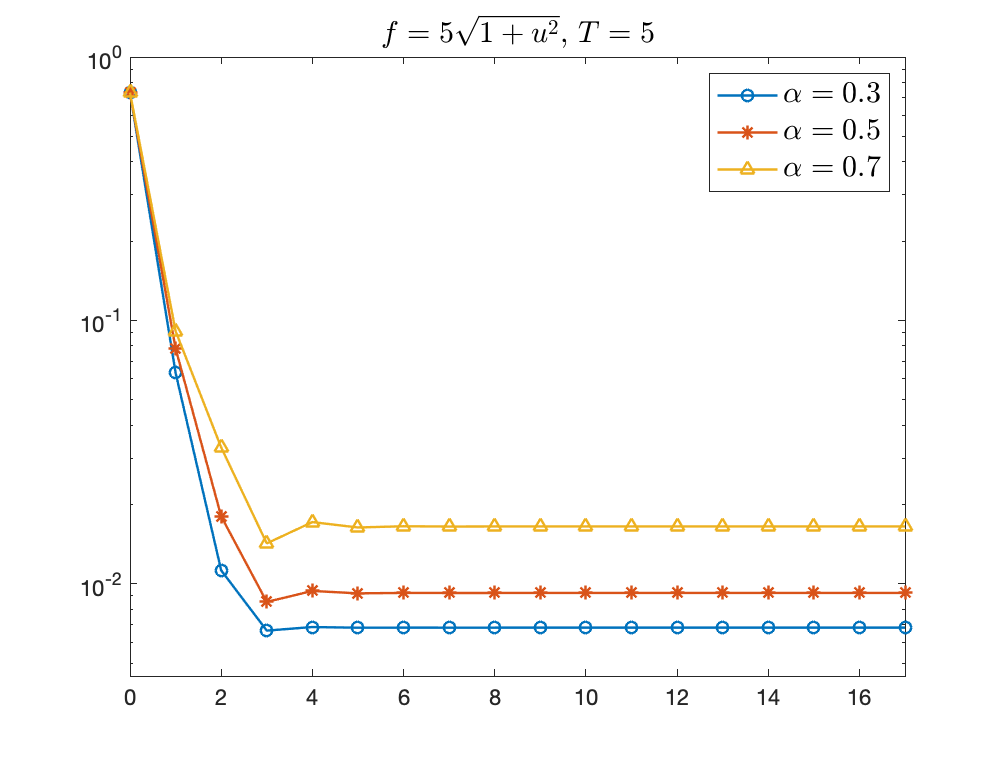}
	\caption{Convergence histories of Algorithm~\ref{alg} with different $T$, $\alpha$ and $L$. }\label{fig:10}
\end{figure}
\begin{figure}[h!]
% \label{fig:err_tfode}
		\centering
	\includegraphics[trim={0.8in 0.2in 1in 0.2in},clip,width=0.32\textwidth]{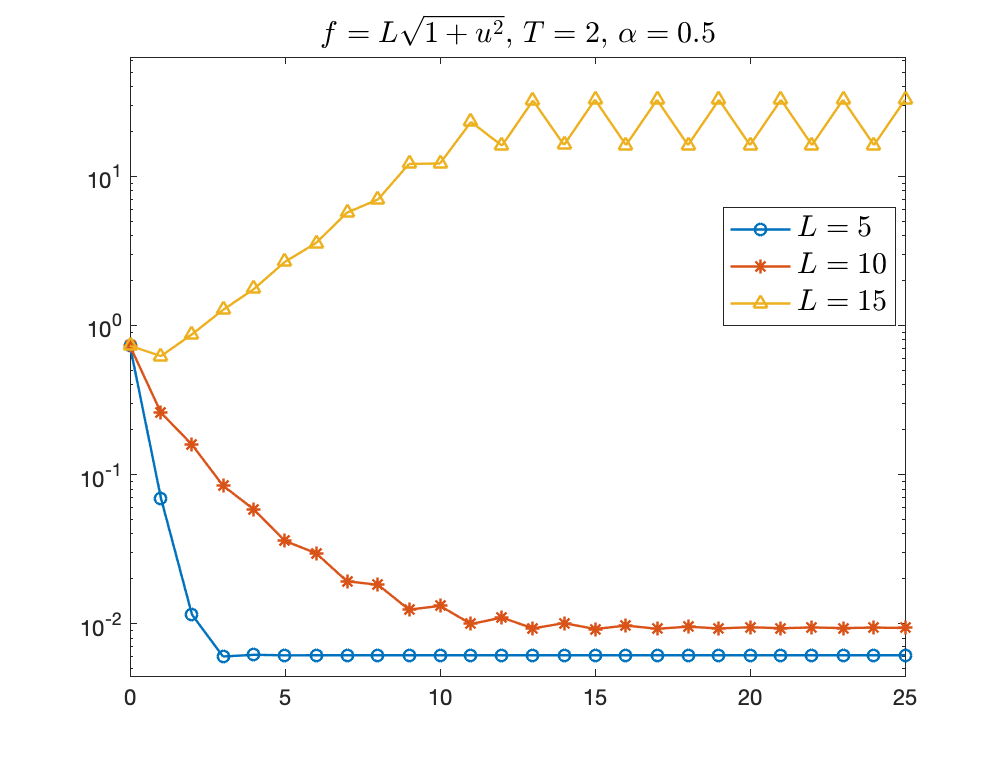}
 \includegraphics[trim={0.8in 0.2in 1in 0.2in},clip,width=0.32\textwidth]{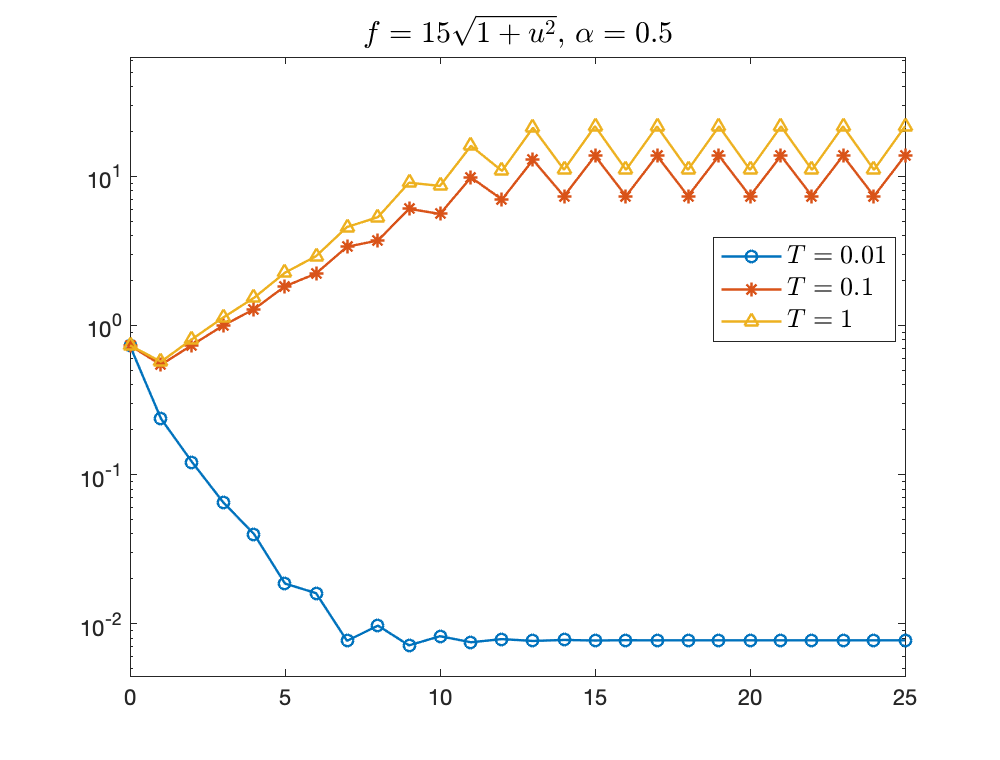}
	\caption{Convergence histories of Algorithm~\ref{alg} with different $T$, and $L$.}\label{fig:20}
\end{figure}

Finally, to illustrate the significant difference between the classical diffusion and the subdiffusion, we test several numerical experiments with the nonlinear term $$f(u) = u - u^3$$
and the piecewise constant initial data \eqref{eqn:nonsm}. First, we fix the terminal time $T = 1$ and examine the influence of the fractional order $\alpha$ on the reconstruction of the initial data.  In Figure \ref{fig:1}, we test the reconstruction of the initial data $ U_{h,\gamma}^{0,\delta}$ for $\alpha = 0.9, 0.99,$ and $ \delta = 10^{-3}, 5\times10^{-4}, 2\times10^{-4}$. As expected, recovering the initial data becomes increasingly difficult as $ \alpha $ approaches 1.

\begin{figure}[htp!]
 \centering
	\begin{tabular}{ccc}
  \includegraphics[trim={0.5in 0.5in 0.5in 0in},clip,width=0.275\textwidth]{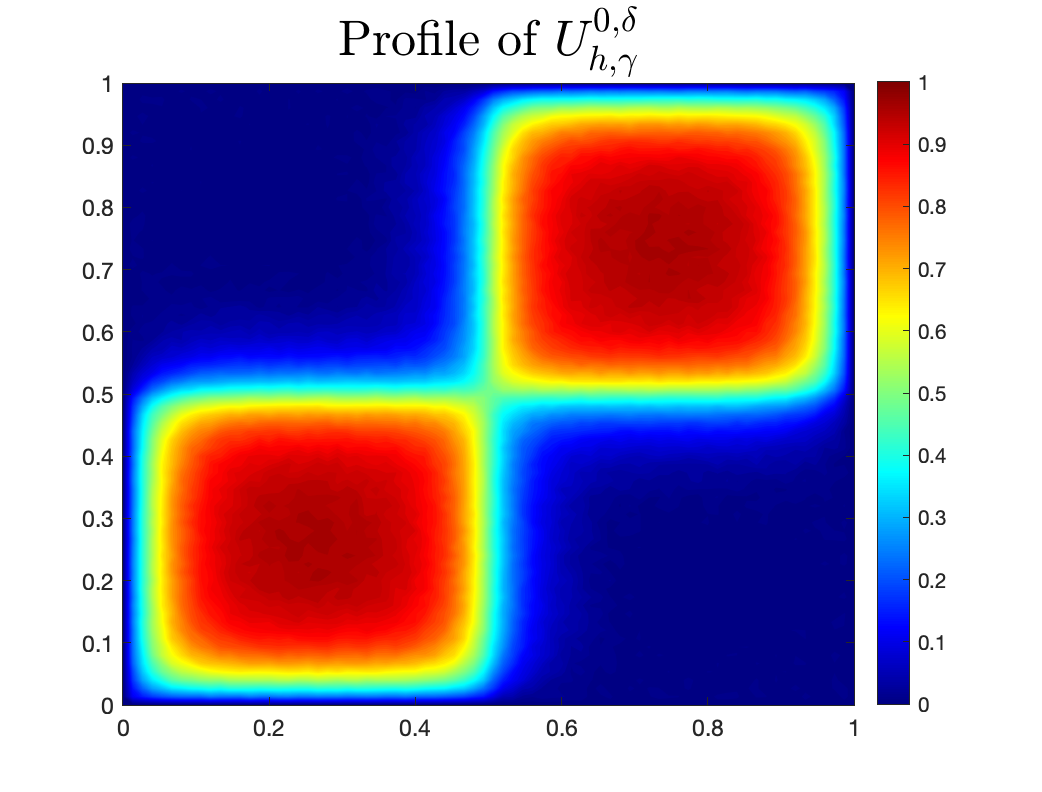}&
	  \includegraphics[trim={0.5in 0.5in 0.5in 0in},clip,width=0.275\textwidth]{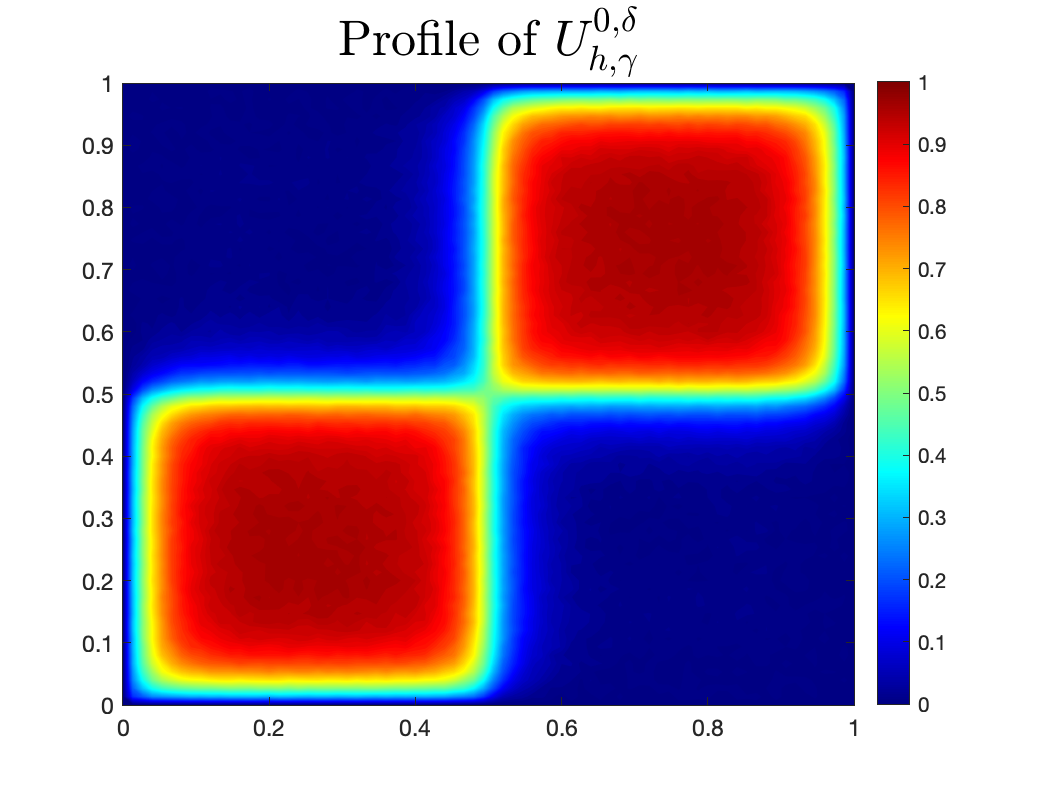}&
   \includegraphics[trim={0.5in 0.5in 0.5in 0in},clip,width=0.275\textwidth]{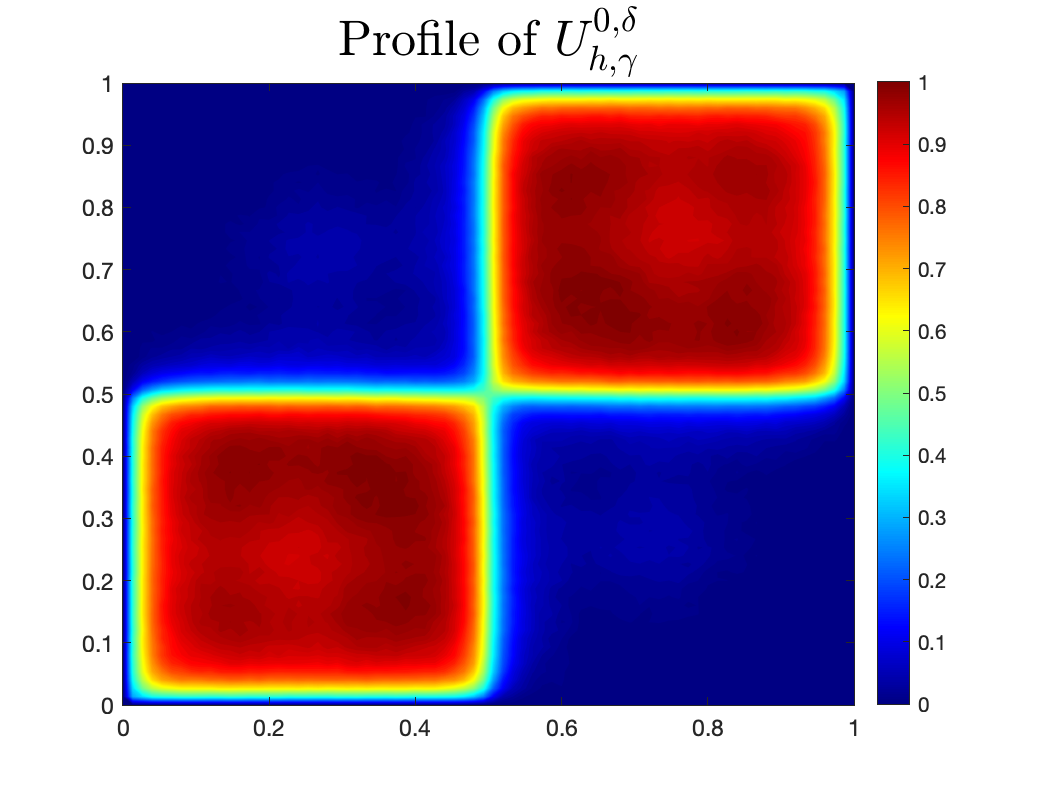}\\
   (a) $\alpha=0.9,\delta=10^{-3}$&  (b)   $\alpha=0.9,\delta=5\times10^{-4}$ &  (c) $\alpha=0.9,\delta=2\times10^{-4}$ \\
    \includegraphics[trim={0.5in 0.5in 0.5in 0in},clip,width=0.275\textwidth]{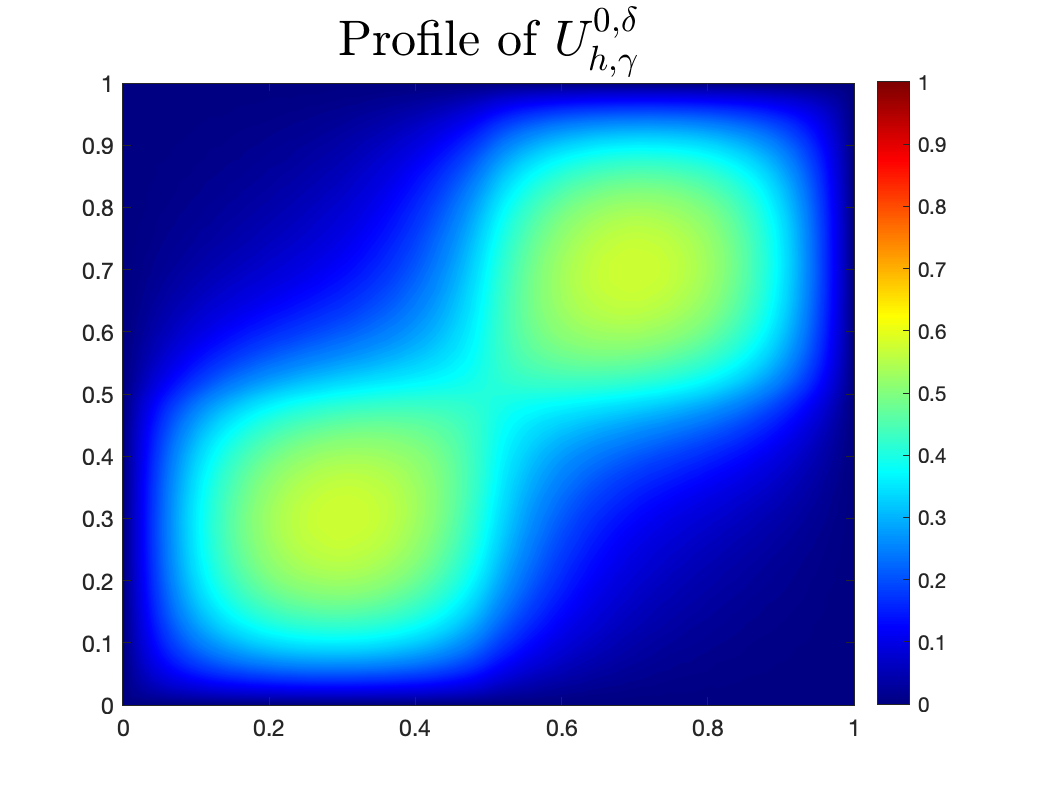}&
	  \includegraphics[trim={0.5in 0.5in 0.5in 0in},clip,width=0.275\textwidth]{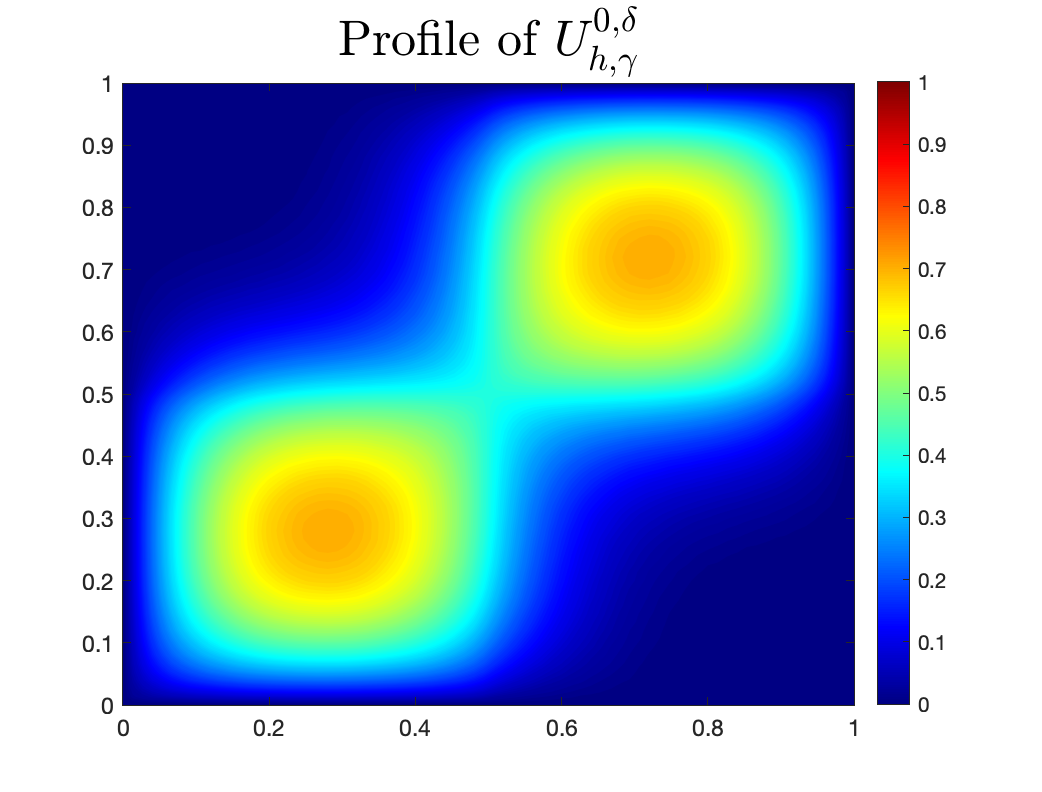}&
   \includegraphics[trim={0.5in 0.5in 0.5in 0in},clip,width=0.275\textwidth]{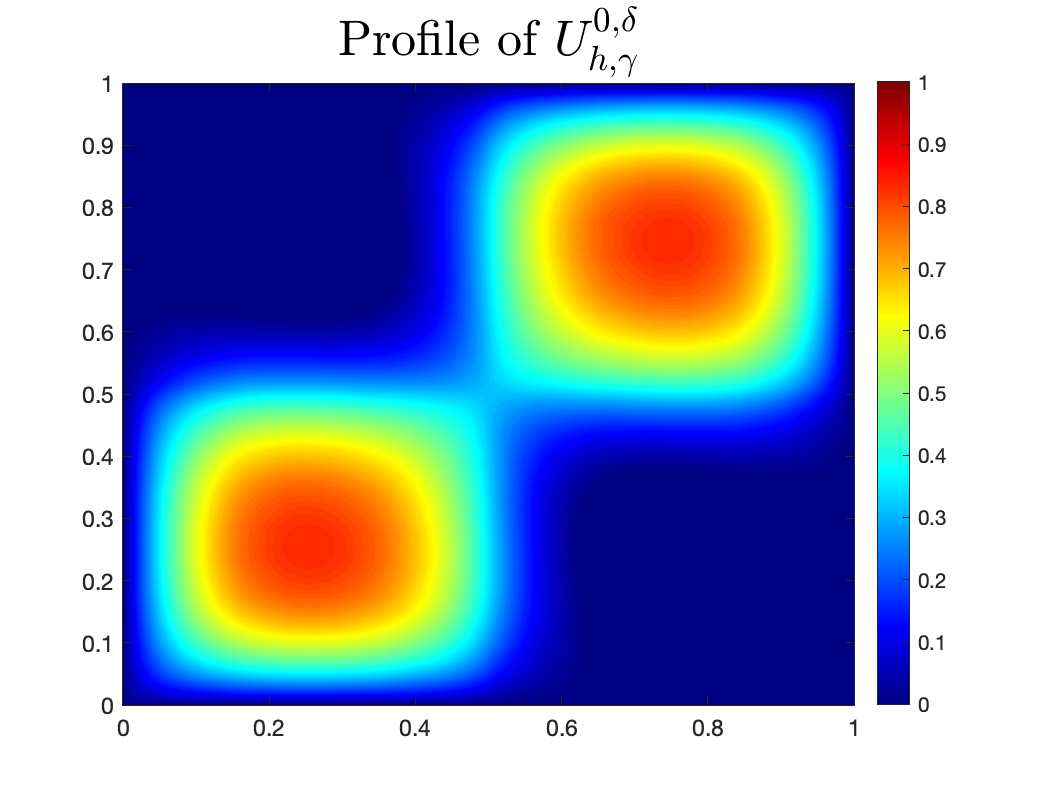}\\
		 (d) $\alpha=0.99,\delta=10^{-3}$&  (e)   $\alpha=0.99,\delta=5\times10^{-4}$ &  (f) $\alpha=0.99,\delta=2\times10^{-4}$ 
	\end{tabular}
\caption{The numerical reconstruction $U_{h,\gamma}^{0,\delta}$  for  $T=1$ with different $\alpha$ and $\delta$.}\label{fig:1}
\end{figure}

We also examine the more interesting case of a relatively large terminal time, e.g. $T = 10$, in our computation. As shown in Figure \ref{fig:2}, for $\alpha = 0.9$, we still observe a reasonable reconstruction; however, it is less accurate compared to the reconstruction for a shorter terminal time $T = 1$ (cf. Figure \ref{fig:1}). Moreover, as $\alpha$ approaches one, the numerical recovery of the initial condition becomes increasingly challenging; for example, see case $\alpha = 0.99$ in Figure \ref{fig:2}.  
In particular, for $\alpha = 1$, even with a very small noise level and a small terminal time $T$, accurately capturing the correct profile of the initial data becomes extremely difficult due to the severe ill-posedness of the parabolic backward problem, as illustrated in Figure \ref{fig:3}. This highlights the fundamentally different ill-posed nature of the subdiffusion model compared to the classical diffusion model.

\begin{figure}[htp!]
 \centering
	\begin{tabular}{ccc}
  \includegraphics[trim={0.5in 0.5in 0.5in 0in},clip,width=0.275\textwidth]{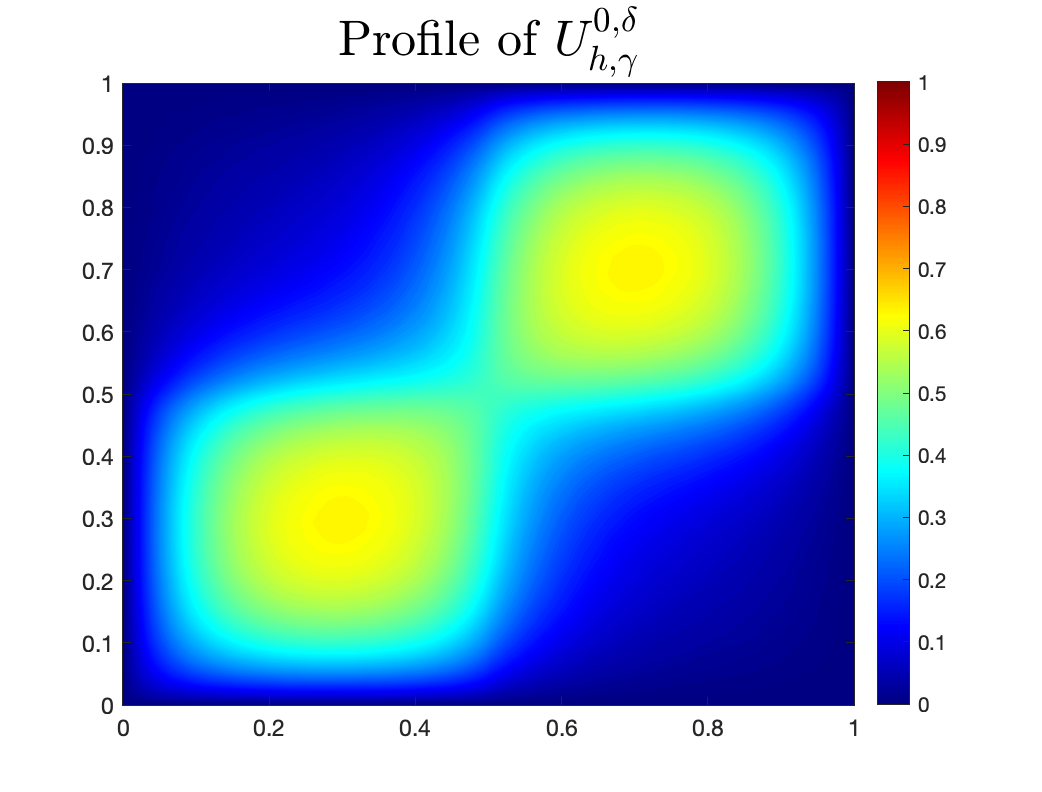}&
	  \includegraphics[trim={0.5in 0.5in 0.5in 0in},clip,width=0.275\textwidth]{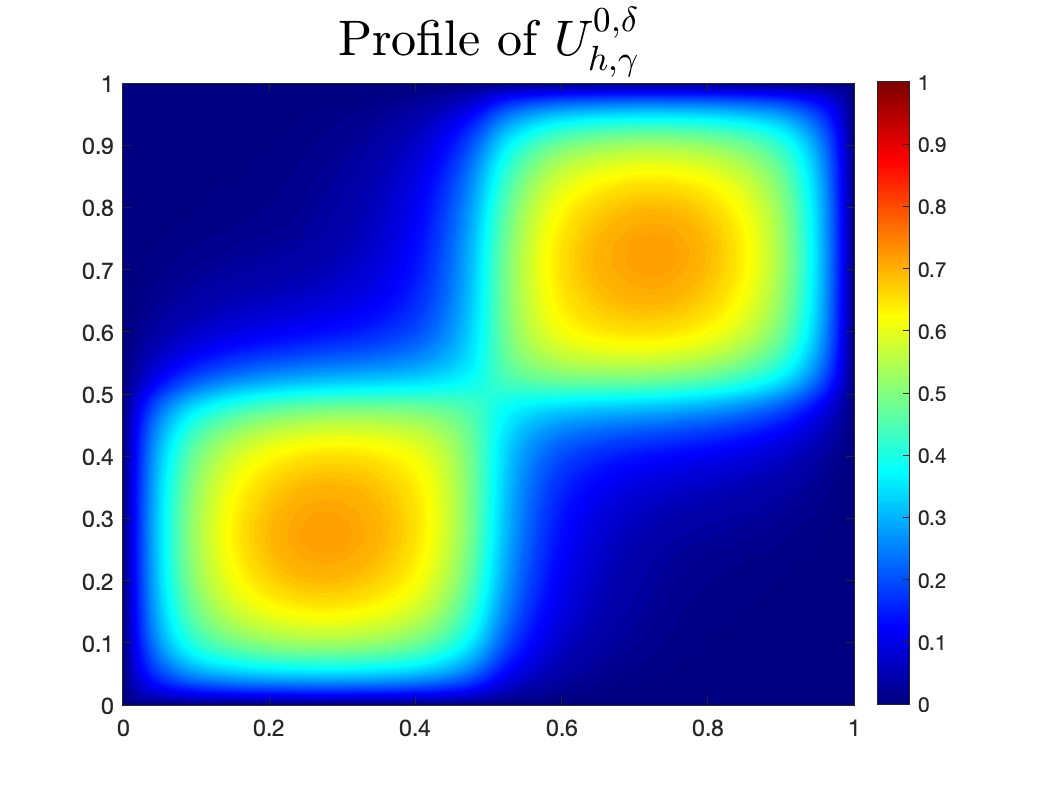}&
   \includegraphics[trim={0.5in 0.5in 0.5in 0in},clip,width=0.275\textwidth]{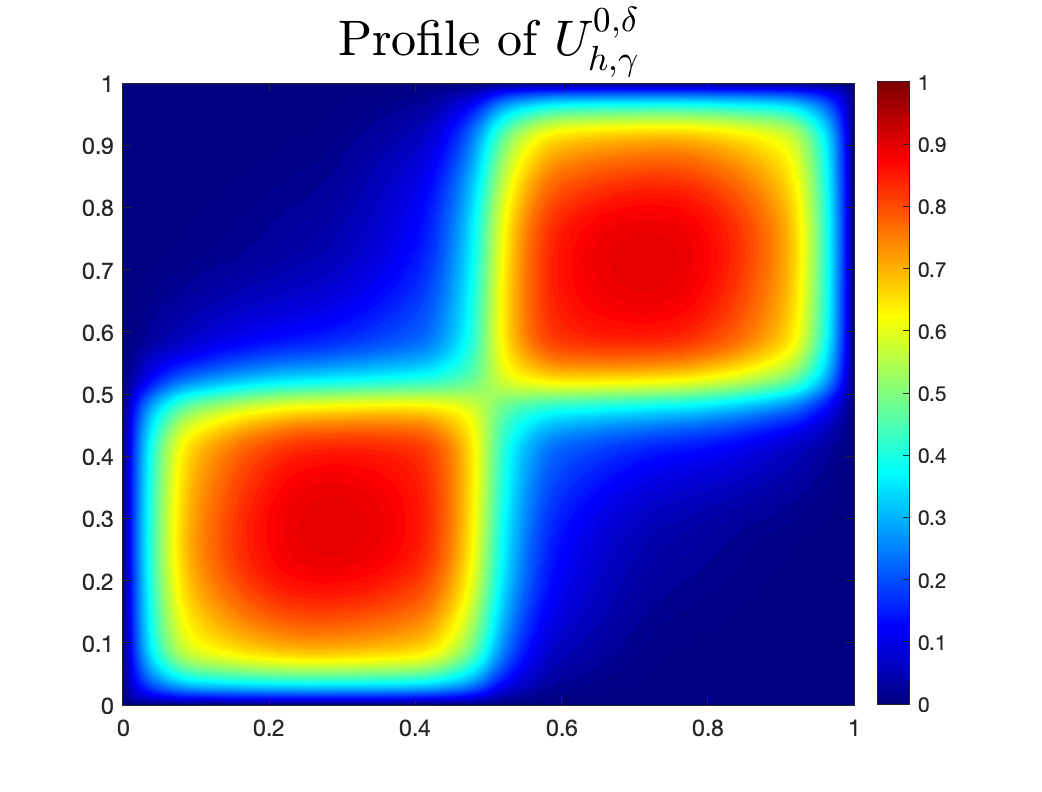}\\
   (a) $\alpha=0.9,\delta=10^{-3}$&  (b)   $\alpha=0.9,\delta=5\times10^{-4}$ &  (c) $\alpha=0.9,\delta=2\times10^{-4}$ \\
    \includegraphics[trim={0.5in 0.5in 0.5in 0in},clip,width=0.275\textwidth]{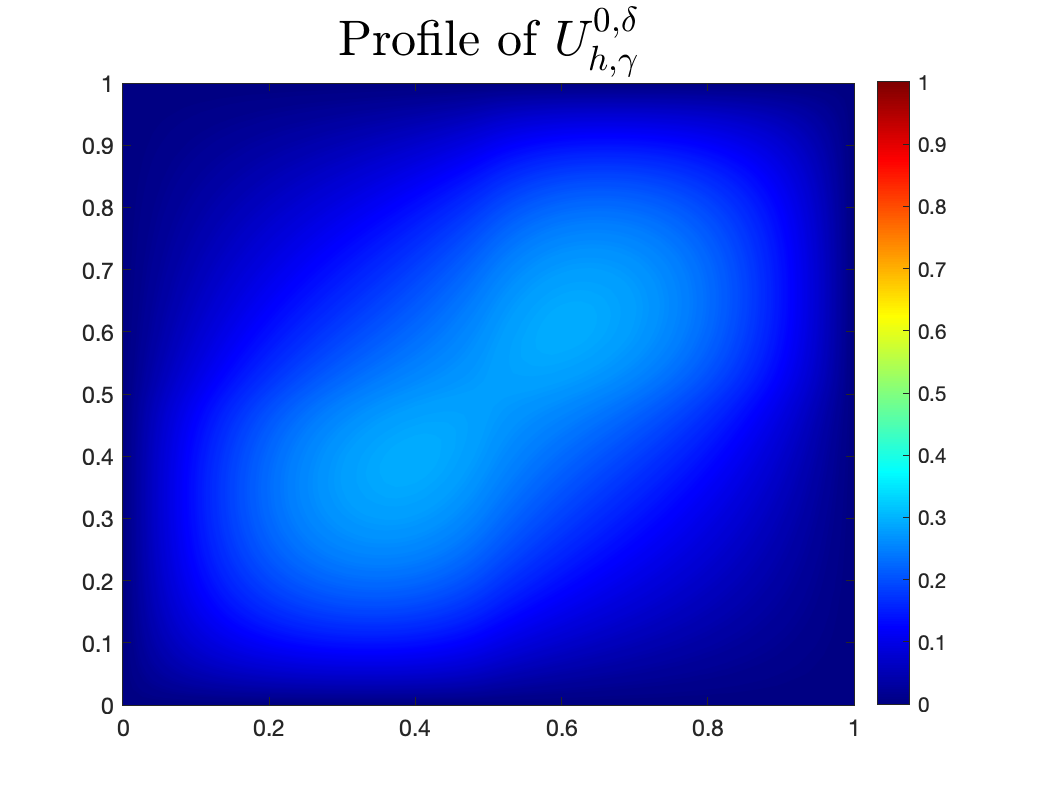}&
	  \includegraphics[trim={0.5in 0.5in 0.5in 0in},clip,width=0.275\textwidth]{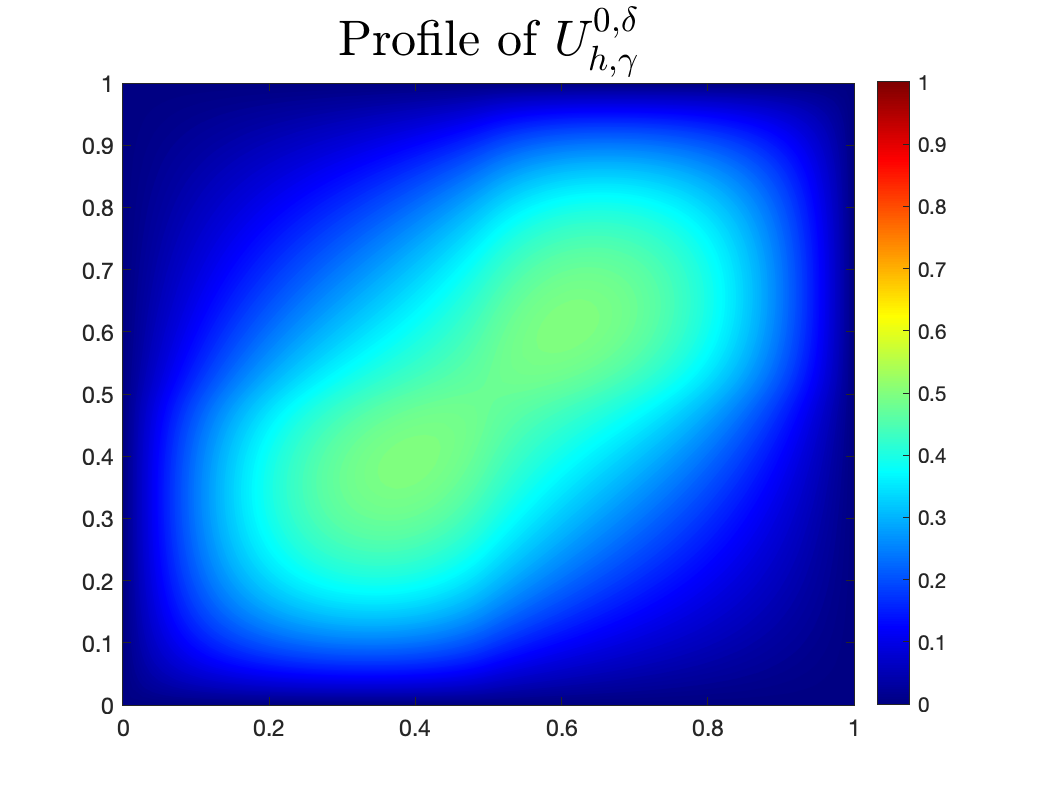}&
   \includegraphics[trim={0.5in 0.5in 0.5in 0in},clip,width=0.275\textwidth]{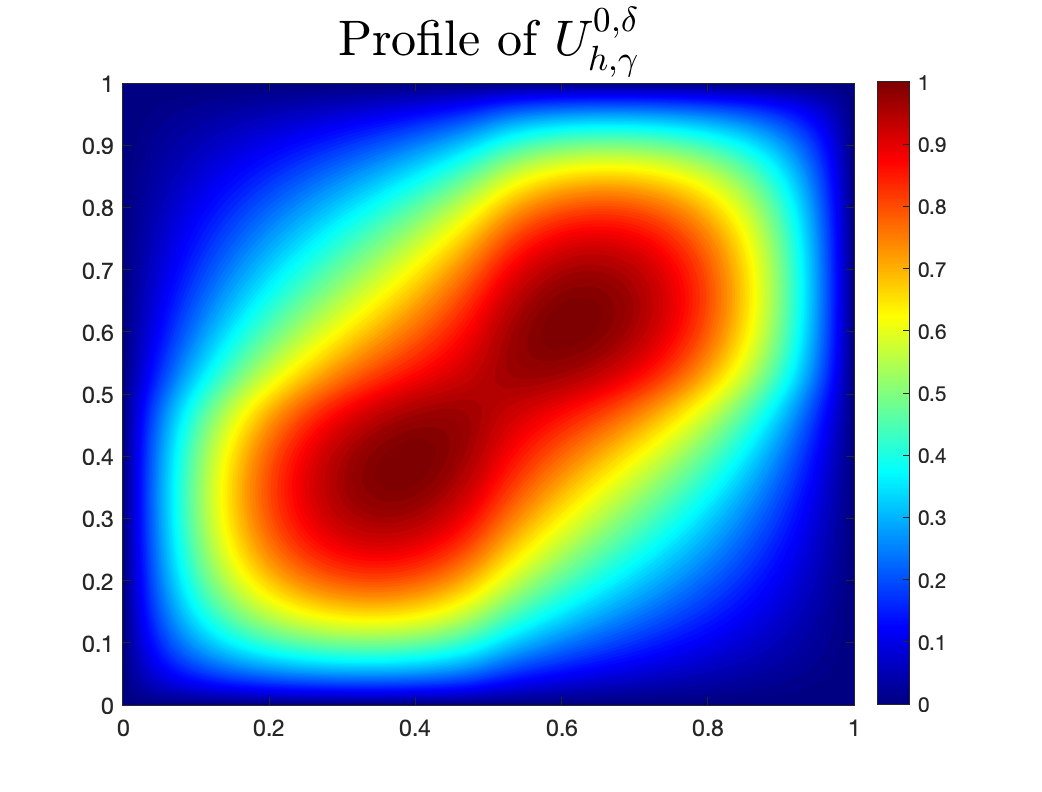}\\
		 (d) $\alpha=0.99,\delta=10^{-3}$&  (e)   $\alpha=0.99,\delta=5\times10^{-4}$ &  (f) $\alpha=0.99,\delta=2\times10^{-4}$ 
	\end{tabular}
\caption{The numerical reconstruction $U_{h,\gamma}^{0,\delta}$ for large $T=10$ with different $\alpha$ and $\delta$.}\label{fig:2}
\end{figure}

\begin{figure}[htp!]
 \centering
	\begin{tabular}{ccc}
  \includegraphics[trim={0.5in 0.5in 0.5in 0in},clip,width=0.275\textwidth]{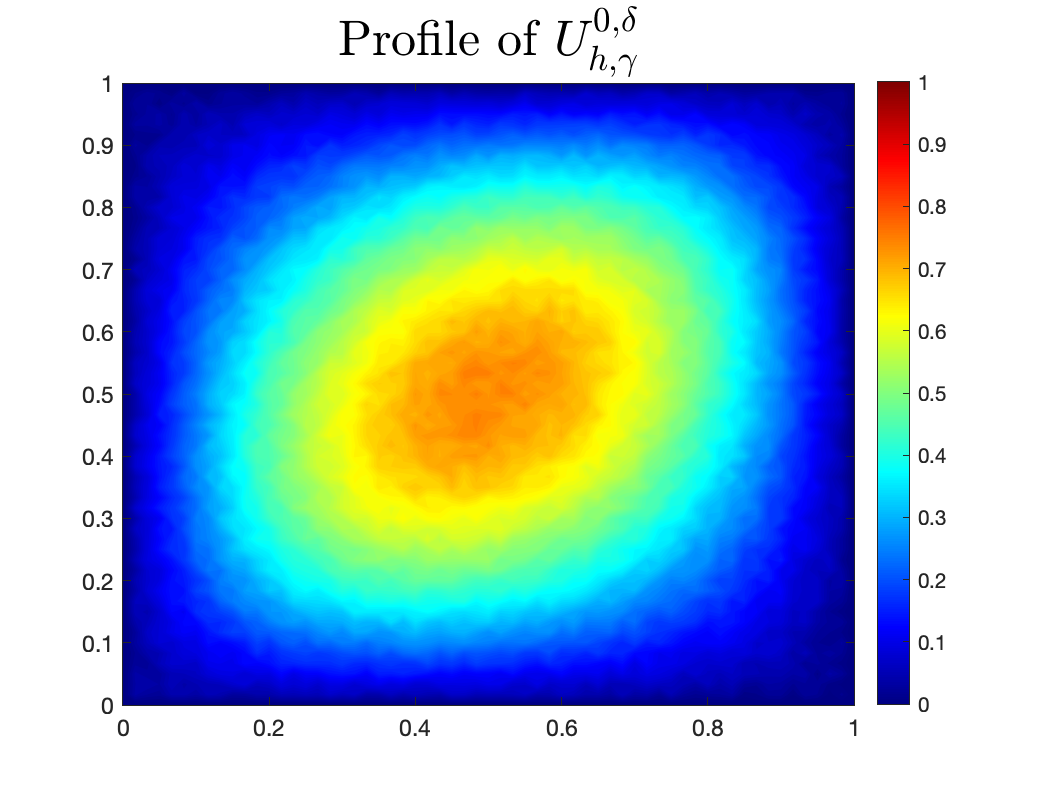}&
	  \includegraphics[trim={0.5in 0.5in 0.5in 0in},clip,width=0.275\textwidth]{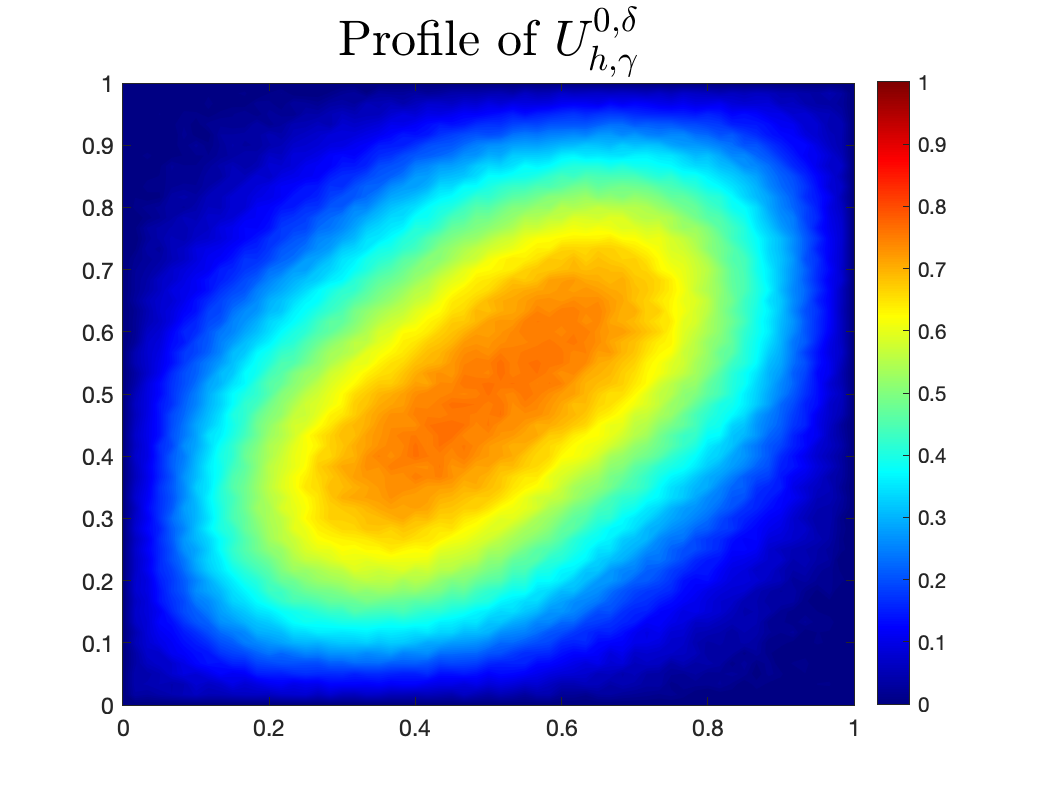}&
   \includegraphics[trim={0.5in 0.5in 0.5in 0in},clip,width=0.275\textwidth]{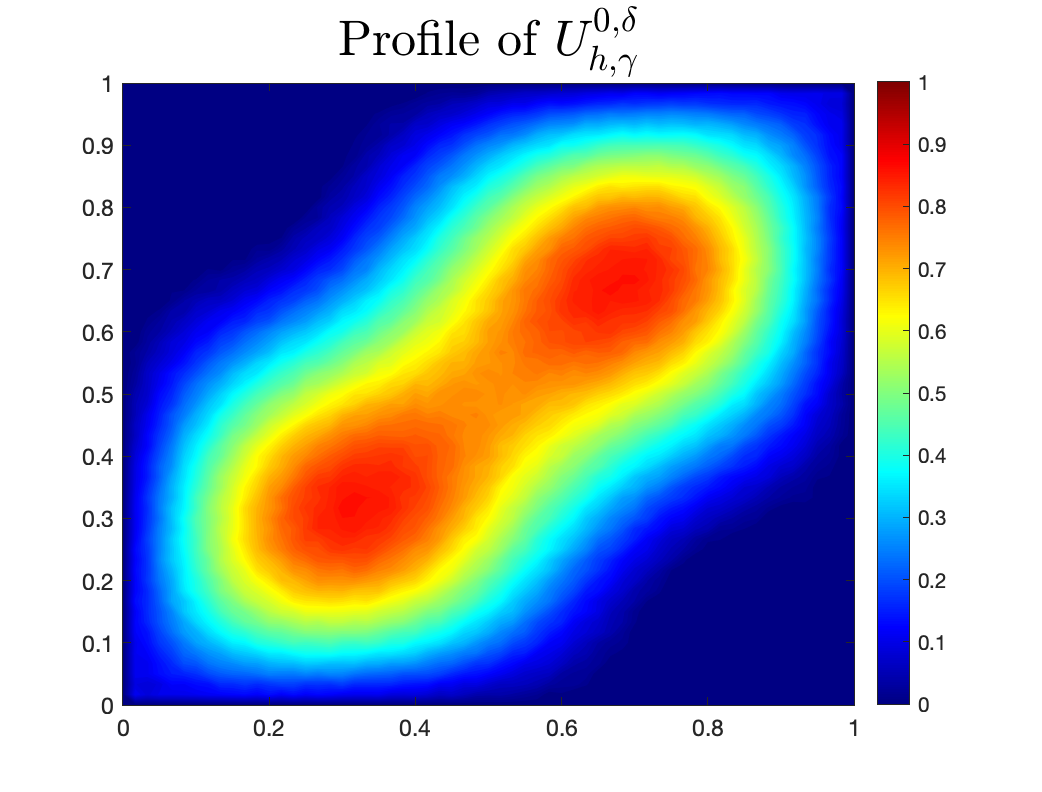}\\
   (a) $\delta=2\times10^{-3}$&  (b)   $\delta=4\times10^{-4}$ &  (c) $\delta=8\times10^{-5}$ 
	\end{tabular}
\caption{The numerical reconstruction $U_{h,\gamma}^{0,\delta}$  for  $T=0.1$ with $\alpha=1$ and different $\delta$.}\label{fig:3}
\end{figure}

\section{Concluding remarks}\label{sec:conclusion}
In this work, we study the backward problem of nonlinear subdiffusion equations. From the terminal observation $u(T)$, we reconstruct the initial data $u_0$.
Under some mild conditions on $T$, the existence, uniqueness, and conditional stability of the solution to the inverse problem are theoretically established by applying the smoothing and asymptotic properties of solution operators and constructing a fixed-point iteration.  Furthermore, in case of noisy observations, we utilize the quasi-boundary value method to regularize the ”mildly” ill-posed problem and demonstrate the convergence of the regularized solution. Moreover, in order to numerically solve the regularized problem, we proposed a fully
discrete scheme by using finite element method in space and convolution quadrature in time. Sharp error
bounds of the fully discrete scheme are established in both cases of smooth and non-smooth data. Additionally, we propose an easy-to-implement iterative algorithm for solving the fully discrete scheme and prove its linear convergence. Numerical examples are provided to illustrate the theoretical estimates and demonstrate the necessity of the assumption required in the analysis.
 
Several interesting questions remain open. First, our theory imposes a restriction on the terminal time $T$, which cannot be arbitrarily large, even though the solution to the direct problem exists for any $T>0$. 
Numerical experiments demonstrate the necessity of this restriction.
This presents a significant difference from its linear counterpart \cite{sakamoto2011initial, zhang2020numerical} where the reconstruction is always feasible for any $T>0$. It would be interesting to explore the identification of initial data from terminal observation at large $T$. One potential strategy could involve utilizing multiple observations, such as $u(T_1)$ and $u(T_2)$, at two different times $T_1$ and $T_2$. However, the analysis of this approach remains unclear. Moreover, we are interested in the simultaneous recovery of the nonlinear reaction function $f(\cdot)$ and the initial data $u_0$ from two terminal observations. Note that this problem is much more challenging, due to the different types of ill-posedness associated with the recovery of these two parameters \cite{KaltenbacherRundell:2019a,KaltenbacherRundell:2020}.

\section*{Acknowledgements}
The work of J. Yang is supported by the National Science Foundation of China (No.12271240, 12426312), the fund of the Guangdong Provincial Key Laboratory of Computational Science and Material Design, China (No.2019B030301001), and the Shenzhen Natural Science Fund (RCJC20210609103819018).
The work of Z. Zhou is supported by by National Natural Science Foundation of China (Project 12422117), Hong Kong Research Grants Council (15303122) and an internal grant of Hong Kong Polytechnic University (Project ID: P0038888, Work Programme: ZVX3). The work of Z. Zhou and X. Wu is also supported by the CAS AMSS-PolyU Joint Laboratory of Applied Mathematics.

\section*{Appendix}\label{apped}

\noindent\textbf{A. Proof of Corollary \ref{lem:reg-weak}} 
\begin{proof}
To begin, we note that the standard argument in \cite[Theorem 3.1 and 3.2]{Karaa:2019} directly yields the estimate
\begin{equation}\label{eqn:u-mu}
\|u(t)\|_{L^2\II}\le ct^{-\al\mu/2}\| u_0 \|_{\dH {-\mu}}.
\end{equation}
Next, using the solution representation \eqref{eqn:sol-rep}, we consider the splitting:  
\begin{align*}
  \partial_t   (t u(t))=  &\partial_t \left(tF(t)u_0+\int_0^t(t-s)E(t-s)f(u(s))\ds+\int_0^tE(s)(t-s)f(u(t-s))\ds \right)\\
 =&(F(t)+tF'(t))u_0+\int_0^t[(t-s)E'(t-s)+E(t-s)]f(u(s))\ds\\&+\int_0^tE(t-s)[f(u(s))+f'(u(s))su'(s))]\ds \\=&(F(t)+tF'(t))u_0+\int_0^t[(t-s)E'(t-s)+E(t-s)]f(u(s))\ds\\&+\int_0^tE(t-s)[f(u(s))-f'(u(s))u(s)+f'(u(s))\partial_s [su(s)]]\ds .
\end{align*}
Using  the smoothing properties in \cite[Theorem 1.6 (ii) and (iii)]{JinZhou:2023book}  and the Lipschitz condition \eqref{eqn:lipconstant} and the estimate \eqref{eqn:u-mu}, we obtain
\begin{equation*}
    \begin{aligned}
     \|\partial_t   (t u(t))\|_{L^2\II}&\le ct^{-\al\mu/2}\|u_0\|_{\dH{-\mu}}+c\int_0^t(t-s)^{\al-1}\|u(s)\|_{L^2\II} \ds\\&+c\int_0^t(t-s)^{\al-1}\|\partial_s   (s u(s))\|_{L^2\II}\ds \\&\le ct^{-\al\mu/2}\|u_0\|_{\dH{-\mu}}+\int_0^t(t-s)^{\al-1}\|\partial_s   (s u(s))\|_{L^2\II}\ds.
\end{aligned}
\end{equation*}
Applying Grönwall’s inequality in 
 Lemma~\ref{lemma:Gronwall}, we have 
\begin{align*}
      \|\partial_t   (t u(t))\|_{L^2\II}\le ct^{-\al\mu/2} \|u_0\|_{\dH{-\mu}}.
\end{align*}
Using the triangle inequality, we derive that for any $t > 0$, 
\begin{align*}
    \|u'(t)\|_{L^2\II}\le t^{-1}( \|\partial_t   (t u(t))\|_{L^2\II}+\|u(t)\|_{L^2\II})\le ct^{-\al\mu/2-1} \|u_0\|_{\dH{-\mu}}.
\end{align*}
Finally, by applying the same arguments as in Lemma \ref{lem:Reg}, we can derive the second estimate.
\end{proof}

\noindent\textbf{B. Proof of Lemma \ref{lem:error-linear}} 
\begin{proof} 
The proof for the case $ n = 1 $ is straightforward. Let us now consider the case $ n \geq 2 $.
Using the solution representation, we can obtain that 
% , the solution $v_h(t)$ and $v_h^n$ satisfy

% \begin{align*}
%     v_h(t)& = F_h(t)P_hv_0 + \int_0^tE_h(t-s)P_hf(s)\ \ds,\\
% v_h^n&=F_{h,\tau}^nP_hv_0+\tau \sum_{k=1}^nE_{h,\tau}^{n-k}P_kf(t_k).
% \end{align*}
% Thus we have 
    \begin{align*}
   v_h(t_n) -v_h^n&=(F_h(t_n)-F_{h,\tau}^n)P_hv_0+ \int_0^{t_n}E_h(t_n-s)P_hf(s)\ \ds-\tau \sum_{k=1}^nE_{h,\tau}^{n-k}P_hf(t_k)\\
   &=(F_h(t_n)-F_{h,\tau}^n)P_hv_0+\int_0^{t_n}(E_h-E_{h,\tau})(t_n-s)f_h(s)\ \ds
   :=\mathrm{I}_1+\mathrm{I}_2,
\end{align*}
where  $f_h(s)=P_hf(s)$ and  $E_{h,\tau}(t)=\tau \sum_{n=0}^\infty E_{h,\tau}^n\delta_{t_n}(t)$.

From  \cite[Lemma 4.2]{zhang2022identification}, for $0\le p\le 1$, it follows that
\begin{equation}\label{I1eq}
    \|A_h^{p} \mathrm{I}_1\|_{L^2\II}=\|A_h^{p}(F_h(t_n)-F_{h,\tau}^n)P_hv_0\|_{L^2\II}\le c\tau t_n^{-1-p\al}\|v_0\|_{L^2\II}.
\end{equation}
For the term $\mathrm{I}_2$, we can derive that
% \begin{align*}
%    & I_2\\&=\int_0^\tau (E_h(t_n-s)-E_{h,\tau}(t_n-s))f_h(s)\ \ds+\int_\tau ^{t_n}(E_h-E_{h,\tau})(t_n-s)f_h(s)\ \ds\\
%     &=\int_0^\tau (E_h-E_{h,\tau}(t_n-s))f_h(s)\ \ds
% \end{align*}
\begin{equation}\label{I2eq}
    \begin{aligned}
   \mathrm{I}_2=&\int_0^\tau (E_h-E_{h,\tau})(t_n-s)f_h(s)\ \ds+\int_\tau ^{t_n}(E_h-E_{h,\tau})(t_n-s)f_h(s)\ \ds\\
    =&\int_0^\tau (E_h-E_{h,\tau})(t_n-s)f_h(s)\ \ds+\int_\tau^{t_n}(E_h-E_{h,\tau})(t_n-s)\ds f_h(\tau)\\&+\int_\tau^{t_n}(E_h-E_{h,\tau})(t_n-s)\int_\tau^sf'_h(y)\dy\ds\\
    :=&\mathrm{I}_{2,1}+\mathrm{I}_{2,2}+\mathrm{I}_{2,3}.
\end{aligned}
\end{equation}
% From Lemma~\ref{lem:op-err:fully}, we obtain that
% From Lemma~\ref{lem:op-err:fully}, we have 
% \begin{equation}\label{I21}
% \begin{aligned}
%    \| I_{2,1}\|_{\dot H^p(\Omega)}&\le\|A_h^{p/2}\int_0^\tau (E_h-E_{h,\tau})(t_n-s)\ds\|\|f_h(s)\|_{L^{\infty}\{[0,\tau]; L^2(\Omega)\}}\\&=\|A_h^{p/2}\int_0^\tau E_h(t_n-s)\ds-\tau A_h^{p/2}E_{h,\tau}^{n-1}\|\|f_h(s)\|_{L^{\infty}\{[0,\tau]; L^2(\Omega)\}}\\&\le c\tau^2t_{n}^{-(2-(1-p/2)\alpha)}\|f_h(s)\|_{L^{\infty}\{[0,\tau]; L^2(\Omega)\}}.
% \end{aligned}    
% \end{equation}
For the term $\mathrm{I}_{2,1}$,  it is evident that
\begin{equation}\label{I21}
\begin{aligned}
   \| A_h^{p} \mathrm{I}_{2,1}\|_{L^2\II}&\le(\int_0^\tau \|A_h^{p}E_h(t_n-s)\|\ds+\|\tau A_h^{p}E_{h,\tau}^{n-1}\|)\|f_h(s)\|_{L^{\infty}(0,\tau; L^2(\Omega))}\\&\le c\tau t_{n}^{(1-p)\alpha-1}\|f_h(s)\|_{L^{\infty}(0,\tau; L^2(\Omega))}.
\end{aligned}    
\end{equation}
% Similar as the argument in \cite[Theorem 3.5]{jin2016two}, for $n\ge 2$, we can derive
Employing a similar argument as \cite[Theorem 3.4]{JinZhou:2023book} gives
\begin{equation}\label{I22}
\begin{aligned}
     \| A_h^{p}\mathrm{I}_{2,2}\|_{L^2\II}&\le \|A_h^{p}\int_\tau^{t_n}(E_h-E_{h,\tau})(t_n-s)\ds\|\|f_h(\tau)\|_{L^2\II}\\
&\le \|A_h^{p}\int_0^{t_{n-1}}(E_h-E_{h,\tau})(t_{n-1}-s)\ds\|\|f_h(\tau)\|_{L^2\II}\\
&\le c\tau t_{n-1}^{(1-p)\alpha-1}\|f_h(\tau)\|_{L^2\II}\le c\tau t_{n}^{(1-p)\alpha-1}\|f_h(\tau)\|_{L^2\II}.
     \end{aligned} 
\end{equation}
For the term $\mathrm{I}_{2,3}$, we have 
\begin{equation*}
    \mathrm{I}_{2,3}=\int_\tau^{t_n}\int_y^{t_n}(E_h-E_{h,\tau})(t_n-s)\ds f'_h(y)\dy=\int_\tau^{t_n}\int_0^{t_n-y}(E_h-E_{h,\tau})(s)\ds f'_h(y)\dy.
\end{equation*}
This leads to 
\begin{equation*}
    \| A_h^{p} \mathrm{I}_{2,3}\|_{L^2\II}\le \int_\tau^{t_n}\|A_h^{p}\int_0^{t_n-y}(E_h-E_{h,\tau})(s)\ds\| \|f'_h(y)\|_{L^2\II}\dy.
\end{equation*}
For $t_n-y\ge \tau$, we can use the same argument as 
\cite[Theorem 3.4]{JinZhou:2023book}
to derive that
\begin{equation*}
    \|A_h^{p}\int_0^{t_n-y}(E_h-E_{h,\tau})(s)\ds\|\le c\tau(t_n-y)^{(1-p)\alpha-1}\le  c\tau(t_{n+1}-y)^{(1-p)\alpha-1}.
\end{equation*}
For $0<t_n-y<\tau$,
 there are
\begin{align*}
     &\|A_h\int_0^{t_n-y}(E_h-E_{h,\tau})(s)\ds\|=   \|A_h\int_0^{t_n-y}E_h(s)\ds\|=\|\int_0^{t_n-y}F_h'(s)\ds\|
     \\= &  \|(F_h(t_n-y)-F_h(0))\|\le c\le c\tau(t_{n+1}-y)^{-1},
\end{align*}
and 
\begin{align*}
     &\|\int_0^{t_n-y}(E_h-E_{h,\tau})(s)\ds\|=   \|\int_0^{t_n-y}E_h(s)\ds\|\le c\int_0^{t_n-y} s^{\alpha-1}\ds\le
     % c (t_n-y)^{\alpha}\\\le& 
     c\tau(t_{n+1}-y)^{\alpha-1}.
\end{align*}
Using Sobolev interpolation leads to 
   \begin{align*}
     \|A_h^{p}\int_0^{t_n-y}(E_h-E_{h,\tau})(s)\ds\|\le c\tau(t_{n+1}-y)^{(1-p)\al-1}, \quad 0\le p\le 1.
\end{align*} 
Consequently, we arrive at
\begin{equation}\label{I23}
      \| A_h^{p} \mathrm{I}_{2,3}\|_{L^2\II}\le c\tau\int_\tau^{t_n}(t_{n+1}-y)^{(1-p)\alpha-1}\|f'_h(y)\|_{L^2\II} \ \dy .
\end{equation}
Combining equations~\eqref{I1eq}--\eqref{I23} yields the desired result.
\end{proof}\vskip5pt

% {Proof of Lemma}
% \begin{lemma} 
%    \begin{equation*}
%         \|u'(t)\|\le ct^{-1}
%    \end{equation*}
% \end{lemma}
% \begin{proof}
%     \begin{align*}
%        tu(t)=tF(t)u_0+\int_0^t(t-s)E(t-s) f(u(s))\ds +\int_0^tE(s)(t-s) f(u(t-s))\ds
%     \end{align*}

%     \begin{equation*}
%         (tu(t))'=(tF(t))'u_0+\int_0^t[(t-s)E(t-s))]'f(u(s))\ds+\int_0^tE(s)[(t-s) f(u(t-s))]'\ds
%     \end{equation*}
% \end{proof}
\noindent\textbf{C. Proof of Lemma \ref{lem:err-reg:H2}}  
\begin{proof}
% \begin{equation}\label{eqn:fde:fulbar}
% \Bar{\partial}^\alpha_\tau \bar U_{h,\gamma}^n +A_h \bar U_{h,\gamma}^n = P_hf(\bar U_{h,\gamma}^{n-1})
% \quad \text{with}~~ \bar U_{h,\gamma}^0 = u_{\gamma,h}(0),
% \end{equation}
Let $\bar e^n= \tuh(t_n)-\bar  U_{h,\gamma}^{n}$. Using the solution representations \eqref{eq:semisolutionrep} and \eqref{eqn:back-fully-non} gives
\begin{equation*}
\begin{aligned}
       \bar e^n =&(F_h(t_n)-F_{h,\tau}^n)\tuh(0)+\left(\int_0^{t_n}E_h(t_n-s)P_hf(\tuh(s))\ds-\tau \sum_{k=1}^nE_{h,\tau}^{n-k}P_hf(\tuh(t_k)) \right)\\&+\tau \sum_{k=1}^nE_{h,\tau}^{n-k}P_h[f(\tuh(t_k))-f(\tuh(t_{k-1}))]+\tau \sum_{k=2}^nE_{h,\tau}^{n-k}P_h[f(\tuh(t_{k-1}))-f(\bar  U_{h,\gamma}^{k-1})]\\
      :=& \mathrm{I}_1+\mathrm{I}_2+\mathrm{I}_3+\mathrm{I}_4.
\end{aligned}
\end{equation*}
From the Lipschitz condition \eqref{eqn:lipconstant} and the regularity estimate in Lemma \ref{THM:Reg_semi}, we have 
$$\|f(\tuh(s))\|_{L^{\infty}(0,\tau; L^2(\Omega))}\le c\|\tuh(0)\|_{L^2\II},\quad \|\tuh'(s)\|_{L^2\II}\le cs^{-1}\|\tuh(0)\|_{L^2\II}.$$ 
 Consequently, from Lemma~\ref{lem:error-linear}, we arrive at  for $p\in [0,1]$
 \begin{align*}
        \|A_h^p (\mathrm{I}_1+\mathrm{I}_2)\|_{L^2\II}\le &c\bigg(\tau t_n^{-1-p\al}\|\tuh(0)\|_{L^2\II}+\tau t_n^{(1-p)\alpha-1} \|f(\tuh(s))\|_{L^{\infty}(0,\tau; L^2(\Omega))}\\&+\tau\int_{\tau}^{t_n}(t_{n+1}-s)^{(1-p)\alpha-1}\|f'(\tuh(s))\tuh'(s)\|_{L^2\II}\ds  \bigg)\\
        % \le &c\bigg(\tau t_n^{-1-\beta\al/2}+\tau t_n^{(1-\beta/2)\alpha-1} +\tau\int_{\tau}^{t_n}(t_{n+1}-s)^{(1-\beta/2)\alpha-1}s^{-1}\ds  \bigg)\\
      \le &c(\tau |\log \tau| t_n^{(1-p)\alpha-1}+\tau t_n^{-1-p\al})\|\tuh(0)\|_{L^2\II},
 \end{align*}
 and
  \begin{align*}
  \|A_h^p(\mathrm{I}_3+\mathrm{I}_4)\|_{L^2\II}\le &c\tau \sum_{k=2}^n(t_{n+1}-t_k)^{(1-p)\alpha-1}\|\tuh(t_k)-\tuh(t_{k-1})\|_{L^2\II}\\&+c\tau t_n^{(1-p)\alpha-1}\|\tuh(t_1)-\tuh(0)\|_{L^2\II}+c\tau\sum_{k=2}^n(t_{n+1}-t_k)^{(1-p)\al-1}\| \bar e_{k-1}\|_{L^2\II}\\
       \le &c\tau |\log \tau| t_n^{(1-p)\alpha-1}\|\tuh(0)\|_{L^2\II}+c\tau\sum_{k=2}^n(t_{n+1}-t_k)^{(1-p)\al-1}\| \bar e_{k-1}\|_{L^2\II},
        \end{align*}
where the last inequality follows from 
% \begin{equation}\label{eqappdixb}
 \begin{align*}
    &\sum_{k=2}^n(t_{n+1}-t_k)^{(1-p)\alpha-1}\|\tuh(t_k)-\tuh(t_{k-1})\|_{L^2\II}
    % \\=&\sum_{k=2}^n(t_{n+1}-t_k)^{(1-\beta/2)\alpha-1}\|\int_{t_{k-1}}^{t_k}\tuh'(s)\ \ds\|
    \\\le& c\sum_{k=2}^n(t_{n+1}-t_k)^{(1-p)\alpha-1}\int_{t_{k-1}}^{t_k}\|\tuh'(s)\|_{L^2\II}\  \ds
    \\\le& c\sum_{k=2}^n(t_{n+1}-t_k)^{(1-p)\alpha-1}\int_{t_{k-1}}^{t_k}s^{-1}\  \ds\|\tuh(0)\| _{L^2\II}
    % \\\le& c\sum_{k=2}^n\int_{t_{k-1}}^{t_k}(t_{n}-s)^{(1-\beta/2)\alpha-1}s^{-1}\  \ds\|\tuh(0)\|
    \\
\le&c\int_{\tau}^{t_n}(t_{n+1}-s)^{(1-p)\alpha-1}s^{-1}\  \ds \|\tuh(0)\|_{L^2\II}\le c|\log \tau|t_{n}^{(1-p)\alpha-1}\|\tuh(0)\|_{L^2\II}.
\end{align*}   
% \end{equation}
Then  we arrive at the following estimate for $0\le p\le 1$
\begin{align*}
    \|A_h^p\bar e_{n}\|_{L^2\II}\le c(\tau |\log \tau| t_n^{(1-p)\alpha-1}+\tau t_n^{-1-p\al})\|\tuh(0)\|_{L^2\II}+c\tau\sum_{k=2}^n(t_{n+1}-t_k)^{(1-p)\al-1}\| \bar e_{k-1}\|_{L^2\II}.
\end{align*}
Setting  $p=0$ and 
applying the discrete Gronwall's inequality in Lemma~\ref{disGr}  gives 
\begin{equation*}\label{apbare}
    \|\bar e_{n}\|_{L^2\II}\le  c\tau  |\log \tau| t_n^{-1}\|\tuh(0)\|_{L^2\II}.
\end{equation*}
Then we can derive that for $0<p\le 1$
\begin{align*}
      \|A_h^p\bar e_{n}\|_{L^2\II}&\le c((\tau |\log \tau| t_n^{(1-p)\alpha-1}+\tau t_n^{-1-p\al}+\tau^2|\log \tau| \sum_{k=2}^n(t_{n+1}-t_k)^{(1-p)\al-1}t_n^{-1})\|\tuh(0)\|_{L^2\II}\\&\le c(\tau|\log \tau |^2 t_n^{(1-p)\al-1}+\tau t_n^{-1-p\al})\|\tuh(0)\|_{L^2\II}\le c_T\tau|\log \tau |^2  t_n^{-1-p\al}\|\tuh(0)\|_{L^2\II}.
\end{align*}
\end{proof}
Below we have given a useful Gronwall’s inequality, which generalizes the standard variants in \cite[Lemma 9.9]{JinZhou:2023book}.
\begin{lemma}\label{disGr}
Let $ 0\le \varphi^n \leq R$ for $0 \leq t_n \leq T$. If
$$
\varphi^n \leq a_1 t_n^{-1}+a_2t_n^{\beta_1-1}+b \tau \sum_{j=2}^n t_{n-j+1}^{\beta_2-1}\varphi^{j-1}, \quad 0<t_n \leq T,
$$
for some $a, b \geq 0$, $\beta_1, \ \beta_2\in(0,1)$ and $p>0$, then there is $c=c(b, \beta_2, T, R)$ such that
$$
\varphi^n \leq  c(a_1 t_n^{-1}|\log \tau|+a_2t_n^{\beta_1-1}), \quad 0<t_n \leq T.
$$
\end{lemma}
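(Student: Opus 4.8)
The plan is to read the recursion as a discrete weakly singular Gronwall inequality and to reproduce, at the discrete level, the resolvent argument already used for the continuous version in Lemma~\ref{lemma:Gronwall}. Set $g^n := a_1 t_n^{-1} + a_2 t_n^{\beta_1-1}$ and let $\mathcal{K}$ denote the convolution operator $(\mathcal{K}w)^n := b\tau\sum_{j=2}^n t_{n-j+1}^{\beta_2-1}w^{j-1}$, so that the hypothesis reads $\varphi^n \le g^n + (\mathcal{K}\varphi)^n$ and, crucially, involves $\varphi^{j-1}$ only (no implicit $\varphi^n$). The first thing I would establish is the discrete resolvent bound
\[
\varphi^n \le g^n + c\,\tau\sum_{j=2}^n t_{n-j+1}^{\beta_2-1}\,g^{j-1},
\]
that is, $\varphi$ is controlled by a single convolution of the data against the \emph{original} kernel, exactly mirroring the conclusion of Lemma~\ref{lemma:Gronwall}.

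To prove this bound I would iterate $\varphi\le g+\mathcal{K}\varphi$ exactly $i=\lceil 1/\beta_2\rceil$ times to obtain $\varphi^n\le\sum_{m=0}^{i-1}(\mathcal{K}^m g)^n+(\mathcal{K}^i\varphi)^n$. Using the discrete Beta estimate $\tau\sum_{j} t_{n-j+1}^{\beta_2-1}t_{j-1}^{\gamma-1}\le c\,t_n^{\gamma+\beta_2-1}$, each iterated kernel gains an exponent $\beta_2$, so for $1\le m\le i-1$ one has $(\mathcal{K}^m g)^n\le c\,\tau\sum_{j} t_{n-j+1}^{\beta_2-1}g^{j-1}$ (since $t_n\le T$ lets me discard the surplus powers), while for $m=i$ the exponent $i\beta_2-1\ge 0$ makes the kernel bounded, giving $(\mathcal{K}^i\varphi)^n\le c\,\tau\sum_{j=2}^n\varphi^{j-1}$. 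Applying the classical non-singular discrete Gronwall inequality \cite[Lemma 9.9]{JinZhou:2023book} and then converting the resulting plain sum back into the weakly singular convolution via the elementary inequality $\tau\sum_j w^{j-1}\le T^{1-\beta_2}\,\tau\sum_j t_{n-j+1}^{\beta_2-1}w^{j-1}$ (valid because $t_{n-j+1}\le T$) closes the argument while preserving the singular kernel structure.

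With the resolvent bound in hand, the conclusion follows by inserting $g^{j-1}=a_1 t_{j-1}^{-1}+a_2 t_{j-1}^{\beta_1-1}$ and invoking two kernel estimates: the standard discrete Beta bound above, and the borderline bound
\[
\tau\sum_{j=2}^n t_{n-j+1}^{\beta_2-1}t_{j-1}^{-1}\le c\,|\log\tau|\,t_n^{\beta_2-1},
\]
which is where the logarithm enters; I would obtain it by splitting the sum at $t_{j-1}\approx t_n/2$, bounding the kernel on the small-index half and summing $t_{j-1}^{-1}\sim\sum_k k^{-1}$ to produce the $\log(t_n/\tau)\le c|\log\tau|$ factor. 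Finally, using $t_n\le T$ to absorb $t_n^{\beta_2-1}\le T^{\beta_2}t_n^{-1}$ and $t_n^{\beta_1+\beta_2-1}\le T^{\beta_2}t_n^{\beta_1-1}$, together with $1\le|\log\tau|$, yields $\varphi^n\le c\bigl(a_1|\log\tau|\,t_n^{-1}+a_2\,t_n^{\beta_1-1}\bigr)$.

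The main obstacle is the borderline forcing term $a_1 t_n^{-1}$, whose exponent $0$ lies exactly outside the range covered by the usual weakly singular discrete Gronwall lemma. The delicate point is to show that the attendant logarithm is produced \emph{exactly once}, from the single convolution of the $t^{-1}$ singularity against the kernel $t^{\beta_2-1}$, and does not compound into a power of $|\log\tau|$ through the iteration. This is precisely why the resolvent formulation is essential: it confines the borderline convolution to a single occurrence, and the fact that one convolution strictly improves the singularity from $t^{-1}$ to $t^{\beta_2-1}$ with $\beta_2>0$ guarantees that all further iterations only raise the exponent by $\beta_2$ without generating additional logarithmic factors.
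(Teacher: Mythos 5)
Your argument is correct, but it takes a different technical route from the paper. The paper's proof is a reduction to the continuous case: it extends $\varphi$ to a piecewise-constant function $\varphi(t)=\varphi^n$ on $(t_{n-1},t_n]$, caps the forcing as $a_\beta(t)=a_1\max(t,\tau)^{-1}+a_2\max(t,\tau)^{\beta_1-1}$, bounds the discrete sum by the continuous convolution via $b\,t_{n-j+1}^{\beta_2-1}\le c(t-s)^{\beta_2-1}$ for $t\in(t_{n-1},t_n]$, $s\in(t_{j-2},t_{j-1})$, and then simply invokes the already-proved continuous Gronwall inequality of Lemma~\ref{lemma:Gronwall}; the logarithm then comes out of the single integral $\int_\tau^t(t-s)^{\beta_2-1}s^{-1}\,\d s$. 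You instead rebuild the resolvent estimate entirely at the discrete level: iterating the kernel $\lceil 1/\beta_2\rceil$ times, using discrete Beta estimates, invoking the non-singular discrete Gronwall lemma, and folding the plain sum back into the weakly singular convolution. Both routes are sound, and both correctly isolate the borderline convolution $t^{\beta_2-1}\ast t^{-1}$ as the sole source of the single factor $|\log\tau|$ (your observation that one pass strictly improves the singularity to $t^{\beta_2-1}$, so no powers of the log accumulate, is exactly the right point, and it is the same mechanism that operates inside the paper's continuous reduction). What the paper's interpolation trick buys is brevity --- all the iteration machinery is inherited from Lemma~\ref{lemma:Gronwall} --- at the cost of the slightly delicate kernel comparison between $t_{n-j+1}^{\beta_2-1}$ and $(t-s)^{\beta_2-1}$; what your version buys is a self-contained discrete statement whose constants are tracked without passing through the continuous extension. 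If you write yours out in full, do justify carefully the index bookkeeping in the iterated discrete kernels (the shift from $\varphi^{j-1}$ to $\varphi^j$ at each composition), which is the one place where the discrete route demands more care than the continuous one.
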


\begin{proof} Define $\varphi(t)=\varphi^n$, for $t\in (t_{n-1}, t_n]$.    Let  $a_\beta(t)=a_1 t^{-1}+a_2t^{\beta_1-1}$ for $t\ge\tau$, and $a_\beta(t)=a_1 \tau^{-1}+a_2\tau^{\beta_1-1}$ for $0<t\le\tau$.
 It is straightforward to obtain that
 \begin{align*}
  \varphi(t)\le& a_1 t_n^{-1}+a_2t_n^{\beta_1-1}+b  \sum_{j=2}^n\int_{t_{j-2}}^{t_{j-1}}  t_{n-j+1}^{\beta_2-1} \varphi(s)\ds \\
\le &a_\beta(t)+c  \sum_{j=2}^n\int_{t_{j-2}}^{t_{j-1}}  (t-s)^{\beta_2-1} \varphi(s)\ds\le a_\beta(t)+c  \int_{0}^{t}  (t-s)^{\beta_2-1} \varphi(s)\ds.
 \end{align*}
Here we use $bt_{n-j+1}^{\beta_2-1}\le c(t-s)^{\beta_2-1}$ for $t\in (t_{n-1}, t_n], \ s \in  (t_{j-2}, t_{j-1}) $ and $b  \sum_{j=2}^n\int_{t_{j-2}}^{t_{j-1}}  t_{n-j+1}^{\beta_2-1} \varphi(s)\ds=0$ for $n=1$.
Applying the Gronwall's inequality in Lemma \ref{lemma:Gronwall} leads to the desired result.
% Using the estimate
% $\tau  \sum_{j=1}^n t_{n-j+1}^{-\beta_1} t_j^{-1} \le c \tau^{-\beta} \ell_\tau$, $\tau  \sum_{j=1}^n t_{n-j+1}^{-\beta_1} t_j^{-\beta_2} \le c \tau^{-\beta} \ell_\tau$
% (see e.g., \cite[Lemma 3.11]{JinZhou:2023book}) and iterating the given inequality $k$ times yield
% \begin{equation*}
% \varphi^n \leq c a t_n^{-1} \ell_\tau^{p+k} + c \tau \sum_{j=1}^n t_{n-j+1}^{(k+1)\beta-1}\varphi^j
% \le c a t_n^{-1} \ell_\tau^{p+k}
% +  c \tau \sum_{j=1}^n \varphi^j.
% \end{equation*}
% Now the assertion follows from the discrete Gronwall's inequality with log factors \cite[Lemma 9.9]{JinZhou:2023book}.
\end{proof}
% \begin{lemma}\label{disGr}
    
% \end{lemma}


\begin{thebibliography}{10}

\bibitem{Karaa:2019}
M.~Al-Maskari and S.~Karaa.
\newblock Numerical approximation of semilinear subdiffusion equations with
  nonsmooth initial data.
\newblock {\em SIAM J. Numer. Anal.}, 57(3):1524--1544, 2019.

\bibitem{Banjai:2022}
L.~Banjai and C.~G. Makridakis.
\newblock A posteriori error analysis for approximations of time-fractional
  subdiffusion problems.
\newblock {\em Math. Comp.}, 91(336):1711--1737, 2022.

\bibitem{Banjai:CQ}
L.~Banjai and F.-J. Sayas.
\newblock {\em Integral equation methods for evolutionary PDE: A convolution
  quadrature approach}, volume~59.
\newblock Springer Nature, 2022.

\bibitem{BaoCaraballoTuanZhou:2021}
N.~T. Bao, T.~Caraballo, N.~H. Tuan, and Y.~Zhou.
\newblock Existence and regularity results for terminal value problem for
  nonlinear fractional wave equations.
\newblock {\em Nonlinearity}, 34(3):1448--1502, 2021.

\bibitem{BonitoLeiPasciak:2017}
A.~Bonito, W.~Lei, and J.~E. Pasciak.
\newblock Numerical approximation of space-time fractional parabolic equations.
\newblock {\em Comput. Methods Appl. Math.}, 17(4):679--705, 2017.

\bibitem{ChenThomeeWahlbin:1992}
C.~Chen, V.~Thom\'{e}e, and L.~B. Wahlbin.
\newblock Finite element approximation of a parabolic integro-differential
  equation with a weakly singular kernel.
\newblock {\em Math. Comp.}, 58(198):587--602, 1992.

\bibitem{Du:book}
Q.~Du.
\newblock {\em Nonlocal modeling, analysis, and computation}, volume~94 of {\em
  CBMS-NSF Regional Conference Series in Applied Mathematics}.
\newblock Society for Industrial and Applied Mathematics (SIAM), Philadelphia,
  PA, 2019.

\bibitem{DuYangZhou:2020}
Q.~Du, J.~Yang, and Z.~Zhou.
\newblock Time-fractional {A}llen-{C}ahn equations: analysis and numerical
  methods.
\newblock {\em J. Sci. Comput.}, 85(2):Paper No. 42, 30, 2020.

\bibitem{Fischer:2019}
M.~Fischer.
\newblock Fast and parallel {R}unge-{K}utta approximation of fractional
  evolution equations.
\newblock {\em SIAM J. Sci. Comput.}, 41(2):A927--A947, 2019.

\bibitem{Kopteva:2023}
S.~Franz and N.~Kopteva.
\newblock Pointwise-in-time a posteriori error control for higher-order
  discretizations of time-fractional parabolic equations.
\newblock {\em J. Comput. Appl. Math.}, 427:Paper No. 115122, 18, 2023.

\bibitem{FritzKhristenkoWohlmuth:2023}
M.~Fritz, U.~Khristenko, and B.~Wohlmuth.
\newblock Equivalence between a time-fractional and an integer-order gradient
  flow: the memory effect reflected in the energy.
\newblock {\em Adv. Nonlinear Anal.}, 12(1):Paper No. 20220262, 23, 2023.

\bibitem{hao2019stability}
D.~N. H{\`a}o, J.~Liu, N.~Van~Duc, and N.~Van~Thang.
\newblock Stability results for backward time-fractional parabolic equations.
\newblock {\em Inverse Problems}, 35(12):125006, 2019.

\bibitem{Jin:book2021}
B.~Jin.
\newblock {\em Fractional differential equations---an approach via fractional
  derivatives}, volume 206 of {\em Applied Mathematical Sciences}.
\newblock Springer, Cham, [2021] \copyright 2021.

\bibitem{JinLiZhou:SINUM2018}
B.~Jin, B.~Li, and Z.~Zhou.
\newblock Numerical analysis of nonlinear subdiffusion equations.
\newblock {\em SIAM J. Numer. Anal.}, 56(1):1--23, 2018.

\bibitem{JinRundell:2015}
B.~Jin and W.~Rundell.
\newblock A tutorial on inverse problems for anomalous diffusion processes.
\newblock {\em Inverse Problems}, 31(3):035003, 40, 2015.

\bibitem{JinZhou:2023book}
B.~Jin and Z.~Zhou.
\newblock {\em Numerical treatment and analysis of time-fractional evolution
  equations}, volume 214.
\newblock Springer Nature, 2023.

\bibitem{KaltenbacherRundell:2019a}
B.~Kaltenbacher and W.~Rundell.
\newblock On an inverse potential problem for a fractional reaction-diffusion
  equation.
\newblock {\em Inverse Problems}, 35(6):065004, 31, 2019.

\bibitem{KaltenbacherRundell:2019}
B.~Kaltenbacher and W.~Rundell.
\newblock Regularization of a backward parabolic equation by fractional
  operators.
\newblock {\em Inverse Probl. Imaging}, 13(2):401--430, 2019.

\bibitem{KaltenbacherRundell:2020}
B.~Kaltenbacher and W.~Rundell.
\newblock Recovery of multiple coefficients in a reaction-diffusion equation.
\newblock {\em J. Math. Anal. Appl.}, 481(1):123475, 23, 2020.

\bibitem{Karaa:2021}
S.~Karaa.
\newblock Positivity of discrete time-fractional operators with applications to
  phase-field equations.
\newblock {\em SIAM J. Numer. Anal.}, 59(4):2040--2053, 2021.

\bibitem{LiLinMaRao:2023}
B.~Li, Y.~Lin, S.~Ma, and Q.~Rao.
\newblock An exponential spectral method using {VP} means for semilinear
  subdiffusion equations with rough data.
\newblock {\em SIAM J. Numer. Anal.}, 61(5):2305--2326, 2023.

\bibitem{LiMa:2022}
B.~Li and S.~Ma.
\newblock Exponential convolution quadrature for nonlinear subdiffusion
  equations with nonsmooth initial data.
\newblock {\em SIAM J. Numer. Anal.}, 60(2):503--528, 2022.

\bibitem{LiYangZhou:2024}
B.~Li, Z.~Yang, and Z.~Zhou.
\newblock High-order splitting finite element methods for the subdiffusion
  equation with limited smoothing property.
\newblock {\em Math. Comp.}, 93(350):2557--2586, 2024.

\bibitem{LiSalgado:2023}
W.~Li and A.~J. Salgado.
\newblock Time fractional gradient flows: theory and numerics.
\newblock {\em Math. Models Methods Appl. Sci.}, 33(2):377--453, 2023.

\bibitem{LiLiuYamamoto:2019}
Z.~Li, Y.~Liu, and M.~Yamamoto.
\newblock Inverse problems of determining parameters of the fractional partial
  differential equations.
\newblock In {\em Handbook of fractional calculus with applications. {V}ol. 2},
  pages 431--442. De Gruyter, Berlin, 2019.

\bibitem{LiYamamoto:2019}
Z.~Li and M.~Yamamoto.
\newblock Inverse problems of determining coefficients of the fractional
  partial differential equations.
\newblock In {\em Handbook of fractional calculus with applications. {V}ol. 2},
  pages 443--464. De Gruyter, Berlin, 2019.

\bibitem{liu2010backward}
J.~Liu and M.~Yamamoto.
\newblock A backward problem for the time-fractional diffusion equation.
\newblock {\em Appl. Anal.}, 89(11):1769--1788, 2010.

\bibitem{LiuLiYamamoto:2019}
Y.~Liu, Z.~Li, and M.~Yamamoto.
\newblock Inverse problems of determining sources of the fractional partial
  differential equations.
\newblock In {\em Handbook of fractional calculus with applications. {V}ol. 2},
  pages 411--429. De Gruyter, Berlin, 2019.

\bibitem{Lopez:2008}
M.~L\'{o}pez-Fern\'{a}ndez, C.~Lubich, and A.~Sch\"{a}dle.
\newblock Adaptive, fast, and oblivious convolution in evolution equations with
  memory.
\newblock {\em SIAM J. Sci. Comput.}, 30(2):1015--1037, 2008.

\bibitem{lubich1988convolution}
C.~Lubich.
\newblock {Convolution quadrature and discretized operational calculus. I}.
\newblock {\em Numer. Math.}, 52(2):129--145, 1988.

\bibitem{Melenk:2023}
J.~M. Melenk and A.~Rieder.
\newblock An exponentially convergent discretization for space-time fractional
  parabolic equations using {$hp$}-{FEM}.
\newblock {\em IMA J. Numer. Anal.}, 43(4):2352--2376, 2023.

\bibitem{MetzlerJeon:2014}
R.~Metzler, J.~H. Jeon, A.~G. Cherstvy, and E.~Barkai.
\newblock Anomalous diffusion models and their properties: non-stationarity,
  non-ergodicity, and ageing at the centenary of single particle tracking.
\newblock {\em Phys. Chem. Chem. Phys.}, 16(44):24128--24164, 2014.

\bibitem{MetzlerKlafter:2000}
R.~Metzler and J.~Klafter.
\newblock The random walk's guide to anomalous diffusion: a fractional dynamics
  approach.
\newblock {\em Phys. Rep.}, 339(1):77, 2000.

\bibitem{QuanTangWangYang:2023}
C.~Quan, T.~Tang, B.~Wang, and J.~Yang.
\newblock A decreasing upper bound of the energy for time-fractional
  phase-field equations.
\newblock {\em Commun. Comput. Phys.}, 33(4):962--991, 2023.

\bibitem{sakamoto2011initial}
K.~Sakamoto and M.~Yamamoto.
\newblock Initial value/boundary value problems for fractional diffusion-wave
  equations and applications to some inverse problems.
\newblock {\em J. Math. Anal. Appl.}, 382(1):426--447, 2011.

\bibitem{TangYuZhou:2019}
T.~Tang, H.~Yu, and T.~Zhou.
\newblock On energy dissipation theory and numerical stability for
  time-fractional phase-field equations.
\newblock {\em SIAM J. Sci. Comput.}, 41(6):A3757--A3778, 2019.

\bibitem{thomee2007galerkin}
V.~Thom{\'e}e.
\newblock {\em Galerkin finite element methods for parabolic problems},
  volume~25.
\newblock Springer Science \& Business Media, 2007.

\bibitem{tuan2020final}
N.~H. Tuan, D.~Baleanu, T.~N. Thach, D.~O’Regan, and N.~H. Can.
\newblock Final value problem for nonlinear time fractional reaction--diffusion
  equation with discrete data.
\newblock {\em J. Comput. Appl. Math.}, 376:112883, 2020.

\bibitem{tuan2020existence}
N.~H. Tuan, T.~B. Ngoc, Y.~Zhou, and D.~O’Regan.
\newblock On existence and regularity of a terminal value problem for the time
  fractional diffusion equation.
\newblock {\em Inverse Problems}, 36(5):055011, 2020.

\bibitem{WangZhou:2020}
K.~Wang and Z.~Zhou.
\newblock High-order time stepping schemes for semilinear subdiffusion
  equations.
\newblock {\em SIAM J. Numer. Anal.}, 58(6):3226--3250, 2020.

\bibitem{wang2013total}
L.~Wang and J.~Liu.
\newblock Total variation regularization for a backward time-fractional
  diffusion problem.
\newblock {\em Inverse problems}, 29(11):115013, 2013.

\bibitem{wei2014modified}
T.~Wei and J.-G. Wang.
\newblock A modified quasi-boundary value method for the backward
  time-fractional diffusion problem.
\newblock {\em ESAIM: M2AN}, 48(2):603--621, 2014.

\bibitem{wei2019variational}
T.~Wei and J.~Xian.
\newblock Variational method for a backward problem for a time-fractional
  diffusion equation.
\newblock {\em ESAIM: M2AN}, 53(4):1223--1244, 2019.

\bibitem{yang2013solving}
M.~Yang and J.~Liu.
\newblock Solving a final value fractional diffusion problem by boundary
  condition regularization.
\newblock {\em Applied Numerical Mathematics}, 66:45--58, 2013.

\bibitem{zhang2022identification}
Z.~Zhang, Z.~Zhang, and Z.~Zhou.
\newblock Identification of potential in diffusion equations from terminal
  observation: analysis and discrete approximation.
\newblock {\em SIAM J. Numer. Anal.}, 60(5):2834--2865, 2022.

\bibitem{zhang2020numerical}
Z.~Zhang and Z.~Zhou.
\newblock Numerical analysis of backward subdiffusion problems.
\newblock {\em Inverse Problems}, 36(10):105006, 2020.

\bibitem{zhang2022backward}
Z.~Zhang and Z.~Zhou.
\newblock Backward diffusion-wave problem: stability, regularization, and
  approximation.
\newblock {\em SIAM J. Sci. Comput.}, 44(5):A3183--A3216, 2022.

\bibitem{zhang2023stability}
Z.~Zhang and Z.~Zhou.
\newblock Stability and numerical analysis of backward problem for subdiffusion
  with time-dependent coefficients.
\newblock {\em Inverse Problems}, 39(3):034001, 2023.

\end{thebibliography}
\end{document}